\title{Rewriting in Gray categories with applications to coherence}
\author{Simon Forest, Samuel Mimram}
\date{}
\begin{document}
\maketitle

\begin{abstract}
  Over the recent years, the theory of rewriting has been used and extended in
  order to provide systematic techniques to show coherence results for strict
  higher categories. Here, we investigate a further generalization to Gray
  categories, which are known to be equivalent to tricategories. This requires
  us to develop the theory of rewriting in the setting of precategories, which
  are adapted to mechanized computations and include Gray categories as
  particular cases. We show that a finite rewriting system in precategories
  admits a finite number of critical pairs, which can be efficiently computed.
  We also extend Squier's theorem to our context, showing that a convergent
  rewriting system is coherent, which means that any two parallel 3-cells are
  necessarily equal. This allows us to prove coherence results for several
  well-known structures in the context of Gray categories: monoids, adjunctions,
  Frobenius monoids.
\end{abstract}





\section*{Introduction}
Algebraic structures, such as monoids, can be defined inside arbitrary
categories. In order to generalize their definition to higher categories, the
general principle is that one should look for a \emph{coherent} version of the
corresponding algebraic theory: this roughly means that we should add enough
higher cells to our algebraic theory so that ``all diagrams commute'' up to these cells. For
instance, when generalizing the notion of monoid from monoidal categories to
monoidal $2$\categories, associativity and unitality are now witnessed by
$2$\cells, and one should add new axioms in
order to ensure their coherence: in this case, those can be chosen to be MacLane's
unit and pentagon
equations, thus resulting in the notion of pseudomonoid. The fact that these are
indeed enough to make the structure coherent constitutes a reformulation of
MacLane's celebrated coherence theorem for monoidal
categories~\cite{maclane1963natural}. In this context, a natural question is: how
can we systematically find those higher coherence cells?

Rewriting theory~\cite{baader1999term,terese2003term} provides a satisfactory
answer to this question. Its starting point is the observation that it is often
useful to provide an orientation to equations in algebraic structures, which
dates back to the work of Dehn~\cite{dehn1911unendliche} and
Thue~\cite{thue1914probleme} on presentations of groups and monoids
respectively. Namely, when the resulting rewriting system is suitably behaved
(confluent and terminating) equality can be tested efficiently (by comparing
normal forms). Moreover, confluence of the whole rewriting system can be decided
algorithmically by computing critical branchings and testing for the confluence
of those only, which are always in finite number when the rewriting system is
finite.
In fact, this result can be reformulated as the fact that the confluence
diagrams for critical branchings provide us precisely with enough cells to make
the structure coherent. This was first observed by Squier for monoids, first
formulated in homological language~\cite{squier1987word} and then generalized as
a homotopical condition~\cite{squier1994finiteness,lafont1995new}. These results
were then extended to strict higher categories by Guiraud and
Malbos~\cite{guiraud2009higher,guiraud2012coherence,guiraud2016polygraphs} based
on a notion of rewriting system adapted to this setting, which is provided by
Burroni's \emph{polygraphs}~\cite{burroni1993higher} (also called
computads~\cite{street1976limits}). In particular, their work allows recovering
the coherence laws for pseudomonoids in this way.

Our aim is to generalize those techniques in order to be able to define coherent
algebraic structures in \emph{weak} higher categories. We actually handle here
the first non-trivial case, which is the one of dimension~$3$. Namely, it is
well-known that tricategories are not equivalent to strict $3$\categories: the
``best'' one can do is to show that they are equivalent to \emph{Gray
  categories}~\cite{gordon1995coherence, gurski2013coherence}, which is an
intermediate structure between weak and strict $3$\categories, roughly
consisting in $3$\categories in which the exchange law is not required to hold
strictly. This means that classical rewriting techniques cannot be used off the
shelf in this context and one has to adapt those to Gray categories, which is
the object of this article.

It turns out that a slightly more general notion than Gray categories is adapted
to rewriting: \emph{precategories}. The notion of precategory is a
generalization of the one of sesquicategory, whose use has already been
advocated by Street in the context of rewriting~\cite{street1996categorical}.
The interest in those has also been renewed recently, because they are at the
heart of the graphical proof-assistant Globular~\cite{bar2016globular,
  bar2017data}. Gray categories are particular $3$\precategories equipped with
exchange $3$\cells satisfying suitable axioms. We first work out in details the
definition of precategories and, based on the work of
Weber~\cite{weber2009free}, show that $(n{+}1)$\precategories can be defined as
categories enriched in $n$\precategories equipped with the so-called \emph{funny
  tensor product}, see \Cref{sec:precat}. This is analogous to the well-known
fact that Gray categories are categories enriched over 2-categories equipped
with the Gray tensor product~\cite{gordon1995coherence}, that we recall in
\Cref{sec:gray}. We then define in \Cref{sec:rewriting} a notion of polygraph
adapted to precategories, called \emph{prepolygraph}. It is amenable to computer
implementation: there is an efficient representation of the morphisms in free
precategories, which allows for mechanized computation of critical branchings.
Moreover, it can be used to present other precategories, in particular Gray
categories (\Ssecr{gray-presentation}). In order to study these presentations,
we adapt the theory of rewriting to the context of prepolygraphs in
\Secr{rewriting}, and we show that our notion of rewriting system retains the
classical properties. In particular, a finite rewriting system always has a
finite number of critical branchings, which contrasts with the case of strict
categories~\cite{lafont2003towards, guiraud2009higher, mimram2014towards}. It
moreover allows for a Squier-type coherence theorem (\Thmr{squier}). Finally, in
\Cref{sec:applications}, we apply our technology to several algebraic structures
of interest, which allows us to recover known coherence theorems and find new
ones, such as for pseudomonoids (\Cref{ssec:app-pseudomonoid}),
pseudoadjunctions (\Cref{ssec:app-adjunction}), self-dualities
(\Cref{ssec:app-untyped-adjunction}), and also Frobenius pseudomonoids but up to
a termination conjecture (\Cref{ssec:app-frobenius-monoid}).

\paragraph*{Acknowledgements}
The authors would like to thank the reviewer for their high-quality
proofreading, and the associated numerous and useful comments.

\section{Precategories}
\label{sec:precat}
In this work, we use a variant of the notion of $n$-category called
\emph{precategory} whose $2$\nbd-dimen\-sional version is better known as
\emph{sesquicategory}~\cite{street1996categorical}. Many definitions of
``semi-strict'' higher categories can be described as precategories with
additional structures and equations, and this is in particular the case for Gray
categories. Moreover,
contrarily to strict higher categories, their cells can be easily described by
normal forms, making them amenable to computations.
%
%
This notion was used to give several definitions of semi-strict higher
categories~\cite{bar2017data} and is the underlying structure of the Globular
tool for higher categories~\cite{bar2016globular}. Premises of it can be found
in the work of Street~\cite{street1996categorical} and
Makkai~\cite{makkai2005word}. In what follows, we give equational and enriched
definitions of precategories (\Ssecr{precat-def} and \Ssecr{funny-tensor}).
Then, we define prepolygraphs as a direct adaptation of the notion of polygraph
for strict categories (\Ssecr{pol}), and we show that the cells of such a
prepolygraph admit a normal form (\Ssecr{cell-nf}). Finally, we recall the usual
construction of localization, in the context of $3$-dimensional precategories only
(\Ssecr{(3,2)-precategory}), since our subsequent results will mostly target
$(3,2)$\precategories.

\subsection{Globular sets}
\label{ssec:globular-sets}
Given $n\in\N$, an \emph{$n$-globular set}~$C$ is a diagram of sets
\[
  \begin{tikzcd}
    C_0
    &
    \ar[l,shift right,"\csrc_0"']
    \ar[l,shift left,"\ctgt_0"]
    C_1
    &
    \ar[l,shift right,"\csrc_1"']
    \ar[l,shift left,"\ctgt_1"]
    C_2
    &
    \ar[l,shift right,"\csrc_2"']
    \ar[l,shift left,"\ctgt_2"]
    \ldots
    &
    \ar[l,shift right,"\csrc_{n-1}"']
    \ar[l,shift left,"\ctgt_{n-1}"]
    C_n
  \end{tikzcd}
\]
such that $\csrc_i\circ \csrc_{i+1}=\csrc_i\circ \ctgt_{i+1}$ and
$\ctgt_i\circ \csrc_{i+1}=\ctgt_i\circ \ctgt_{i+1}$ for $0\leq i<n-1$. An
element~$u$ of $C_i$ is called an \emph{$i$-globe} of~$C$ and, for $i>0$, the
globes $\csrc_{i-1}(u)$ and $\ctgt_{i-1}(u)$ are respectively called the
\emph{source} and \emph{target} and~$u$.
We write~$\nGlob n$ for the category of $n$-globular sets, a morphism $f\co C\to
D$ being a family of morphisms $f_i\co C_i\to D_i$, for $0\leq i\leq n$, such
that $\csrctgt\eps_i\circ f_{i+1}=f_i\circ \csrctgt\eps_i$ for $\eps \in
\set{-,+}$. Given $m \ge n$ and $C \in \nGlob m$, we denote by $\truncf m n(C)$
the $n$\globular set obtained from~$C$ by removing the $i$\globes for $n < i \le
m$. This operation extends to a functor $\truncf m n\co \nGlob m \to \nGlob n$.

For simplicity, we often implicitly suppose that, in an $n$-globular set~$C$,
the sets $C_i$ are pairwise disjoint and write $u\in C$ for
$u\in\bigsqcup_iC_i$.
For $\eps\in\set{-,+}$ and $k\geq 0$, we write
\[
  \partial_{i,k}^\eps=\partial_{i}^\eps\circ\partial_{i+1}^\eps\circ\cdots\circ\partial_{i+k-1}^\eps
\]
for the \emph{iterated source} (when $\eps=-$) and \emph{target} (when $\eps=+$)
maps. We generally omit the index~$k$ when it is clear from the context. Also,
we sometimes simply write $\partial^\eps(u)$ for $\partial_{i,1}^\eps(u)$.
Given $i,j,k\in \set{0,\ldots,n}$ with $k<i$ and $k<j$, we write $C_i\times_kC_j$ for the
pullback
\[
  \begin{tikzcd}[column sep=tiny,row sep=small,cramped]
    & C_i\times_kC_j \ar[rd,dotted] \ar[ld,dotted]
    \ar[dd,phantom,"\dcorner",very near start]& \\
    C_i\ar[rd,"\ctgt_k"']&&\ar[ld,"\csrc_k"]C_j \\
    & C_k
  \end{tikzcd}\pbox.
\]
A sequence of globes $u_1 \in C_{i_1}, \ldots, u_p \in C_{i_p}$ is said
\emph{$i$-composable}, for some $i < \min(i_1,\ldots,i_p)$, when
$\ctgt_{i}(u_j) = \csrc_i(u_{j+1})$ for $1 \le j < p$. Given $u,v \in C_{i+1}$
with $i < n$, $u$ and $v$ are said \emph{parallel} when $\csrctgt\eps(u) =
\csrctgt\eps(v)$ for $\eps \in \set{-,+}$.

For $u\in C_{i+1}$, we sometimes write $u\co v\to w$ to indicate that
$\csrc_i(u)=v$ and $\ctgt_i(u)=w$. In low dimension, we use $n$-arrows such as
$\To$, $\TO$, $\TOO$, \etc to indicate the sources and the targets of $n$-globes
in several dimensions. For example, given a $2$\globular set $C$ and $\phi \in
C_2$, we sometimes write
\[
  \phi\co f \To g \co x \to y
\]
to indicate that
$\csrc_1(\phi) = f$, $\ctgt_1(\phi) = g$, $\csrc_0(\phi) = x$ and $\ctgt_0(\phi)
= y$. We also use these arrows in graphical representations to picture the
elements of a globular set~$C$. For example, given an $n$\globular set~$C$ with
$n \ge 2$, the drawing
\begin{equation}
  \label{eq:some-globular-set}
  \begin{tikzcd}[sep=4em]
    x
    \ar[r,"f",bend left=50,""{auto=false,name=fst}] 
    \ar[r,"g"{description},""{auto=false,name=snd}]
    \ar[r,"h"',bend right=50,""{auto=false,name=trd}]
    &
    y
    \ar[from=fst,to=snd,phantom,"\phantom{\scriptstyle\phi}\Downarrow\scriptstyle\phi"]
    \ar[from=snd,to=trd,phantom,"\phantom{\scriptstyle\psi}\Downarrow\scriptstyle\psi"]
    \ar[r,"k"]
    &
    z
  \end{tikzcd}
\end{equation}
figures two $2$\globes $\phi,\psi \in C_2$, four $1$\globes $f,g,h,k \in C_1$ and
three $0$\globes $x,y,z \in C_0$ such that
\begin{gather*}
  \csrc_1(\phi) = f, \qquad \ctgt_1(\phi) = \csrc_1(\psi)= g, \qquad 
  \ctgt_1(\psi) = h, \\
  \csrc_0(f) = \csrc_0(g) = \csrc_0(h) = x, \qquad
  \ctgt_0(f) = \ctgt_0(g) = \ctgt_0(h) = \csrc_0(k) = y, \qquad \ctgt_0(k) = 0.
\end{gather*}

\subsection[\texorpdfstring{$n$}{n}-precategories]{\texorpdfstring{$\bm n$}{n}-precategories}
\label{ssec:precat-def}
Given $n\in\N$, an \emph{$n$-precategory}~$C$ is an $n$-globular set equipped
with
\begin{itemize}
\item identity functions $\unitp{k}{}\co C_{k-1}\to C_{k}$, for $0< i\le n$,
\item composition functions
  $\comp_{k,l}\co C_k\times_{\min(k,l)-1}C_l\to C_{\max(k,l)}$, for
  $0<k,l\leq n$,
\end{itemize}
satisfying the axioms below. In this context, the elements
of~$C_i$ are called \emph{$i$\cells}. Since the dimensions of the cells determine the
functions to be used, we often omit the indices of $\unit{}$ and, given $0 < k,l
\le n$ and $i = \min(k,l) - 1$, we often write~$\comp_{i}$ for $\comp_{k,l}$.
For
example, in a $2$\precategory which has a configuration of cells as
in~\eqref{eq:some-globular-set}, there are, among others, $1$-cells $f\comp_0
k$, $h\comp_0 k$ and $2$-cells $\phi\comp_1\psi$ and $\psi\comp_0k$ given by the
composition operations. The axioms of $n$\precategories are the following:
\begin{enumerate}[label=(\roman*),ref=(\roman*)]
\item \label{precat:first}\label{precat:src-tgt-unit}for~$k < n$ and~$u\in C_k$,
  \[
    \csrc_k(\unit {u})=u=\ctgt_k(\unit {u}),
  \]
\item \label{precat:csrc-tgt}for~$i,k,l \in \set{0,\ldots,n}$ such that~$i = \min(k,l) -1$,~$(u,v)\in C_k\times_iC_l$, and~${\eps \in \set{-,+}}$,
  \begin{align*}
    \csrctgt\eps(u \comp_i v)
    &=
    \begin{cases}
      u\comp_i \csrctgt\eps(v)& \text{if~$k < l$,}\\
      \csrc(u)&\text{if~$k=l$ and~$\eps = -$,}\\
      \ctgt(v)&\text{if~$k=l$ and~$\eps = +$,}\\
      \csrctgt\eps(u)\comp_i v&\text{if~$k>l$,}
    \end{cases}
  \end{align*}
\item \label{precat:compat-id-comp}for~$i,k,l \in \set{0,\ldots,n}$ with~$i = \min(k,l) -
  1$, given~$(u,v)\in C_{k-1}\times_{i}C_l$,
  \begin{align*}
    \unit u\comp_i v
    &=
    \begin{cases}
      v&\text{if~$k \le l$,}\\
      \unit{u\comp_i v}&\text{if~$k > l$,}
    \end{cases}
  \shortintertext{and, given~$(u,v) \in C_k \times_i C_{l-1}$,}
    u\comp_i\unit{v}
    &=
    \begin{cases}
      u&\text{if~$l\le k$,}\\
      \unit{u\comp_i v}&\text{if~$l > k$,}
    \end{cases}
  \end{align*}

\item \label{precat:before-last} \label{precat:assoc}for~$i,k,l,m \in
  \set{0,\ldots,n}$ with~$i = \min(k,l) - 1 = \min(l,m) - 1$, and~$u \in
  C_k$,~$v \in C_l$ and~$w \in C_w$ such that~$u,v,w$ are $i$\composable,
  \[
    (u\comp_iv)\comp_iw
    =
    u\comp_i(v\comp_iw),
  \]
\item \label{precat:distrib}for~$i,j,k,k' \in \set{0,\ldots,n}$ such that
  \[
    j = \min(k,k') - 1
    \qtand
    i < j\zbox,
  \]
  given~$u \in C_{i+1}$ and~$(v,v') \in C_k \times_j C_{k'}$ such that~$u,v$ are $i$\composable,
  \[
    u \comp_i (v \comp_j v') = (u \comp_i v) \comp_j (u \comp_i v')
  \]
  and, given~$(u,u') \in C_k \times_j C_{k'}$ and~$v \in C_{i+1}$ such that~$u,v$
  are $i$\composable,
  \[
    (u \comp_j u') \comp_i v = (u \comp_i v) \comp_j (u' \comp_i v).
  \]
\end{enumerate}
A morphism of $n$-precategories, called an \emph{$n$-prefunctor}, is a morphism
between the underlying globular sets which preserves identities and compositions
as expected. We write $\nPCat n$ for the category of $n$-precategories. The
above description exhibits $n$\precategories as an essentially algebraic theory.
Thus, $\nPCat n$ is a locally presentable
category~\cite[Thm.~3.36]{adamek1994locally}; consequently, it is
complete and cocomplete~\cite[Cor.~1.28]{adamek1994locally}. In the following, we
write $\termcat$ for the terminal $n$\precategory for $n\ge 0$.

In dimension $2$, string diagrams can be used as usual to represent compositions
of~$2$\cells. For example, given the $2$\precategory~$C$ freely generated on the
globular set
\[
  \begin{tikzcd}
    x
    \ar[r,bend left=70,"f"{myname=f}]
    \ar[r,bend right=70,"f'"'{myname=fp,pos=0.51}]
    \cphar[r,"\Downarrow\!\phi"]
    &
    y
    \ar[r,bend left=70,"g"{myname=g}]
    \ar[r,bend right=70,"g'"'{myname=gp,pos=0.49}]
    \cphar[r,"\Downarrow\!\psi"]
    &
    z
  \end{tikzcd}
\]
we can represent the two $2$\cells
\[
  (\phi \comp_0 g) \comp_1 (f' \comp_0 \psi)
  \qquad\qqtand\qquad
  (f \comp_0 \psi) \comp_1 (\phi \comp_0 g')
\]
respectively by
\[
  \satex[scale=1.5]{phi-psi}
  \quad\qqtand\quad
  \satex[scale=1.5]{psi-phi}
\]
Note that these two $2$\cells are different, and that the diagram
\[
  \satex[scale=1.5]{psiphi}
\]
makes no sense in a generic $2$\precategory.

\subsection{Truncation functors}
\label{ssec:truncation-functors}
Similarly to strict categories~\cite{metayer2008cofibrant}\todo{Métayer ne fait
  que la troncation et l'inclusion. une meilleure idée de citation?}, the
categories $\nPCat n$ for $n \ge 0$ can be related by several functors. For
$m\geq n$, we have a truncation functor
\[
  \truncf{m}{n}\co \nPCat m \to\nPCat n
\]
where, given an $m$-precategory~$C$, $\truncf{m}{n}(C)$ is the $n$-precategory
obtained by forgetting all the $i$-cells for $n < i \le m$. This functor admits
a left adjoint
\[
  \incf{n}{m}\co\nPCat n\to\nPCat m
\]
which, to an $n$\precategory~$C$, associates the
$m$\precategory~$\incf{n}{m}(C)$ obtained by formally adding $i$\nbd-identities
for $n < i \le m$, \ie $\incf{n}{m} (C)_i=C_i$ for $i \le n$ and
$\incf{n}{m} (C)_i=C_n$ for $i > n$.

\begin{prop}
  \label{prop:trunc-incf-la-ra}
  For $m > n$, the functors $\truncf{m}{n}$ and $\incf{n}{m}$ admit both left
  and right adjoints, \ie we have a sequence of adjunctions
  \[
    \incfla{m}{n}\dashv\incf{n}{m}\dashv\truncf{m}{n}\dashv\truncfra n m.
  \]
  As a consequence, the functors $\truncf{m}{n}$ and $\incf{n}{m}$ preserve both
  limits and colimits.
\end{prop}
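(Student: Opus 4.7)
My plan is to construct both adjoints explicitly and derive the preservation of limits and colimits as immediate corollaries. Since $\truncf{m}{n}$ is already a right adjoint (to $\incf{n}{m}$), exhibiting $\truncfra{n}{m}$ as its further right adjoint shows that it also preserves colimits; dually, exhibiting $\incfla{m}{n}$ shows that $\incf{n}{m}$ preserves both limits and colimits.

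For the right adjoint, I will define $\truncfra{n}{m}(C)$, for $C\in\nPCat{n}$, to agree with $C$ in dimensions $\le n$, to have as $(n{+}1)$-cells the parallel pairs $(u,v)\in C_n\times C_n$ (with boundary $u\to v$), and, iteratively in $k\ge 2$, to have $(n{+}k)$-cells in bijection with such pairs, each being a globular identity on the corresponding $(n{+}1)$-cell. The compositions are then forced by their boundaries: for example $(u,v)\comp_n(v,w):=(u,w)$ at dimension $n{+}1$, and coordinatewise on pairs for the lower-dimensional compositions. For the left adjoint, I will define $\incfla{m}{n}(D)$, for $D\in\nPCat{m}$, to be the quotient of $\truncf{m}{n}(D)$ by the smallest congruence of $n$-precategories identifying $\partial^-_{n,i-n}(u)$ with $\partial^+_{n,i-n}(u)$ for every $u\in D_i$ and every $n<i\le m$.

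The hom-set bijections are then essentially forced by these constructions. A prefunctor $F\co D\to\truncfra{n}{m}(C)$ is entirely determined by its restriction $\truncf{m}{n}(F)\co \truncf{m}{n}(D)\to C$, because each higher component $F_i(u)$ must be the unique parallel pair with coordinates $F_n(\partial^\eps_{n,i-n}(u))$ for $\eps\in\set{-,+}$; conversely, any $n$-prefunctor extends uniquely via this formula. Symmetrically, a prefunctor $F\co D\to\incf{n}{m}(C)$ is determined by $\truncf{m}{n}(F)$ since the higher cells of $\incf{n}{m}(C)$ are iterated identities; the constraint that $F_n$ sends $\partial^-_{n,i-n}(u)$ and $\partial^+_{n,i-n}(u)$ to equal elements of $C_n$ is exactly the universal property of the quotient defining $\incfla{m}{n}(D)$.

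The main difficulty will be the routine but lengthy verification that the two constructions satisfy all the precategory axioms — especially the distributivity relations~\ref{precat:distrib} mixing cross-dimensional compositions — and that the extensions and factorisations used to build the hom-set bijections are genuine prefunctors, preserving all compositions and identities. One could in principle bypass these calculations by invoking the adjoint functor theorem: since $\nPCat{m}$ and $\nPCat{n}$ are locally presentable, it would suffice to check that $\incf{n}{m}$ preserves limits (immediate from its levelwise description) and that $\truncf{m}{n}$ preserves colimits (which holds because generators at dimensions above $n$ produce no cells below dimension $n$ under the composition operations of an $m$-precategory).
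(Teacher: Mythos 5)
Your proposal is correct and takes essentially the same route as the paper: the left adjoint $\incfla{m}{n}$ is the quotient of the $n$-truncation by the congruence generated by connecting higher cells (your identifications for all $i>n$ generate the same congruence as the paper's, which only uses $(n{+}1)$-cells), and the right adjoint $\truncfra{n}{m}$ is the codiscrete extension by parallel pairs, which, as you note, collapses to identities above dimension $n{+}1$ and thus agrees with the paper's iterated-pairs description. The paper likewise leaves the axiom checks and hom-set bijections to the reader, so your explicit construction plus the optional adjoint-functor-theorem remark is in line with its proof.
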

\begin{proof}
  Suppose given an $m$-precategory~$C$.
  The $n$-precategory $\incfla m n(C)$ has the same $i$-cells as $C$ for $i<n$
  and $\incfla m n(C)_n$ is obtained by quotienting $C_n$ under the smallest
  congruence $\sim$ such that $u\sim v$ whenever there exists an $(n+1)$-cell
  $\alpha\co u\to v$.
  The $n$-precategory $\truncfra mn(C)$ has the same $i$-cells as $C$ for
  $0\leq i\leq n$ and, for $n\leq i<m$, $\truncfra mn(C)_{i+1}$ is defined from
  $\truncfra mn(C)_i$ as the set of pairs
  $(u,v)\in \truncfra mn(C)_i\times\truncfra mn(C)_i$ with $\csrc(u)=\csrc(v)$ and
  $\ctgt(u)=\ctgt(v)$, with $\csrc(u,v)=u$ as source and $\ctgt(u,v)=v$ as
  target.
  Details are left to the reader.
\end{proof}

Given $n < m$, we write $\catsk n {(-)}$ for the functor~$\incf{n}{m} \circ
\truncf{m}{n}\co \nPCat m \to \nPCat m$ and, given an $m$\precategory~$C$, we
call~$\catsk n C$ the \emph{$n$\skeleton} of~$C$. It corresponds to the
$m$\precategory obtained from~$C$ by removing all non-trivial $i$-cells with
$i>n$. We write
\[
  \cucatsk{(-)}\co \catsk n
  {(-)} \to \id{\nPCat m}
\]
for the counit of the adjunction $\incf{n}{m} \dashv \truncf{m}{n}$.
Since $\incf{n}{m}$ and $\truncf{m}{n}$ both preserve limits and colimits by
\Propr{trunc-incf-la-ra}, so does the functor $\catsk n {(-)}$.

\subsection{The funny tensor product}
\label{ssec:funny-tensor}
We now define the funny tensor product, that we will use to give an enriched
definition of precategories. It can be thought of as a variant of the cartesian
product of categories where we restrict to morphisms where one of the components
is the identity (or, more precisely, to formal composites of such morphisms).
We give a rather direct and concise definition, and
we refer the reader to the work of Weber~\cite{weber2009free} for a more
abstract definition. Given $n \ge 0$ and two $n$\precategories~$C$ and~$D$,
the \emph{funny tensor product of~$C$ and~$D$} is the pushout
\[
  \begin{tikzcd}[sep=large]
    \catsk 0 C\times \catsk 0 D
    \ar[d,"\cucatsk C \times \catsk 0 D"']
    \ar[r,"{\catsk 0 C \times \cucatsk{D}}"]
    &
    \ar[d,dotted,"\rincfun{C,D}"]\catsk 0 C\times D\\
    C\times \catsk 0 D\ar[r,dotted,"\lincfun{C,D}"']&C\funny D
  \end{tikzcd}
  \pbox.
\]
Since $\cucatsk{(-)}$ is a natural transformation, the funny tensor product can
be extended as a functor
\[
  (-)\funny(-)\co \nPCat n \times \nPCat n \to \nPCat n\pbox.
\]


\noindent We show that it equips $\nPCat n$ with a structure of monoidal
category. First, we prove several technical lemmas.
\begin{lem}
  \label{lem:precat-distrib}
  Given $n$\precategories $C$ and $(D^i)_{i \in I}$, the canonical morphism
  \[
    \coprod_{i\in I} (C \times D^i) \to C \times (\coprod_{i\in I} D^i) 
  \]
  is an isomorphism.
\end{lem}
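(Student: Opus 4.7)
My plan is to reduce the claim to the analogous distributivity of products over coproducts in the category of sets, by showing that both products and coproducts of $n$-precategories can be computed componentwise on the underlying globular sets. I would first verify that for any $n$-precategories $A$ and $B$, the product $A \times B$ satisfies $(A \times B)_k = A_k \times B_k$ for each $k \le n$, with componentwise sources, targets, identities, and compositions. This is an immediate verification from the universal property, using that products in essentially algebraic categories such as $\nPCat n$ are computed on underlying sets.

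The key step is then to show that coproducts are also computed componentwise: for a family $(D^i)_{i \in I}$ of $n$-precategories, one has $(\coprod_i D^i)_k \cong \coprod_i D^i_k$. The approach is to build an explicit candidate $n$-precategory $E$ with $E_k := \coprod_i D^i_k$, inheriting all globular and compositional operations from the $D^i$. What makes this well-defined is globular in nature: two cells belonging to different summands $D^i$ and $D^j$ with $i \ne j$ cannot be $l$-composable for any $l$, since $l$-composability would force them to share a common $0$-cell by iterating sources and targets down, but the sets $D^i_0$ and $D^j_0$ are disjoint in $\coprod_i D^i_0$. Thus every legal composition in $E$ takes place within a single summand, the $n$-precategory axioms for $E$ follow immediately from those of each $D^i$, and a routine check of the universal property shows that $E$ together with the canonical inclusions $D^i \hookrightarrow E$ is the coproduct.

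Combining these observations, the $k$-cells of $\coprod_i (C \times D^i)$ form the set $\coprod_i (C_k \times D^i_k)$, while the $k$-cells of $C \times \coprod_i D^i$ form $C_k \times \coprod_i D^i_k$; the standard distributivity isomorphism of sets between these is natural in $k$ and is easily seen to commute with sources, targets, identities, and all compositions, yielding the desired isomorphism of $n$-precategories. The main obstacle is the componentwise computation of coproducts: this is not automatic in a general locally presentable category, but the globular discipline, which propagates source/target information down to dimension $0$, is precisely what makes coproducts of precategories behave as disjoint unions and rescues the argument.
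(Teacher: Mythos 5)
Your proof is correct and follows essentially the same route as the paper: reduce to the dimensionwise level and invoke distributivity of coproducts over products in $\Set$. The only difference is that you spell out the justification (via disjointness of $0$-cells forcing compositions to stay within a summand) for the fact that coproducts in $\nPCat n$ are computed dimensionwise, which the paper simply asserts.
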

\begin{proof}
  Write $F$ for this morphism. A morphism between $n$\precategories is an
  isomorphism if and only if the underlying morphism of globular sets is an
  isomorphism. Thus, it is sufficient to show that the isomorphism holds
  dimensionwise, \ie that the images of $F$ under the functors $(-)_j\co \nPCat
  n \to \Set$ are isomorphisms for $0 \le j \le n$. Products and coproducts are
  computed dimensionwise in $\nPCat n$, so that the functors $(-)_j$ preserve
  products and coproducts. Since coproducts distribute over products in $\Set$,
  $F_j$ is an isomorphism for $0 \le j \le n$, and so is~$F$.
\end{proof}
\begin{lem}
  \label{lem:times-units-preserve-colimits}
  Given an $n$\precategory $D$, the functor $(-) \times \catsk 0 D$ preserves colimits.
\end{lem}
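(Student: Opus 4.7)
The plan is to show that $(-) \times \catsk 0 D$ is naturally isomorphic to a coproduct functor, which will imply it preserves colimits since coproducts commute with colimits.

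First, I would observe that $\catsk 0 D = \incf{0}{n}(\truncf{n}{0}(D))$ is the $n$\precategory obtained by freely adding identities to the set $D_0$ of $0$\cells of $D$. Concretely, it is a ``discrete'' $n$\precategory whose only cells are $0$\cells, one for each element of $D_0$, together with their iterated identities. This means that $\catsk 0 D$ decomposes as a coproduct of copies of the terminal $n$\precategory:
\[
  \catsk 0 D \;\cong\; \coprod_{x \in D_0} \termcat.
\]
This can be verified either directly from the construction of $\incf{0}{n}$, or abstractly by noting that $\incf{0}{n}$ is a left adjoint (\Propr{trunc-incf-la-ra}) and therefore preserves coproducts, so it suffices to observe the identification at the level of $0$\precategories (\ie sets), where $D_0 \cong \coprod_{x \in D_0} \termcat$.

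Next, applying \Lemr{precat-distrib} to the family $(\termcat)_{x \in D_0}$, I would obtain for every $n$\precategory $C$ a natural isomorphism
\[
  C \times \catsk 0 D
  \;\cong\;
  C \times \coprod_{x \in D_0} \termcat
  \;\cong\;
  \coprod_{x \in D_0} (C \times \termcat)
  \;\cong\;
  \coprod_{x \in D_0} C,
\]
using that $\termcat$ is terminal for the last isomorphism. This exhibits the functor $(-)\times \catsk 0 D$ as naturally isomorphic to the $D_0$\indexed copower functor $\coprod_{D_0}(-)\co \nPCat n \to \nPCat n$.

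Finally, since coproducts are colimits and colimits commute with colimits in any cocomplete category, the functor $\coprod_{D_0}(-)$ preserves all colimits; hence so does $(-) \times \catsk 0 D$. The only mildly delicate point is the first step, namely justifying the decomposition of $\catsk 0 D$ as a coproduct of terminal objects; once that is in place, the rest follows formally from \Lemr{precat-distrib} and the fact that coproducts preserve colimits.
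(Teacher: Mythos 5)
Your proof is correct and follows essentially the same route as the paper: both decompose $\catsk 0 D \cong \coprod_{x \in D_0} \termcat$ using that $\incf{0}{n}$ preserves colimits (\Propr{trunc-incf-la-ra}), then apply \Lemr{precat-distrib} and the fact that coproducts commute with colimits. The only cosmetic difference is that you package the argument as a natural isomorphism $(-)\times \catsk 0 D \cong \coprod_{D_0}(-)$ of functors, while the paper computes the chain of isomorphisms directly on a colimit.
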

\begin{proof}
  Since, by \Propr{trunc-incf-la-ra}, $\incf{0}{n}$ preserves limits and
  colimits, we have $\catsk 0 D \cong \coprod_{x \in D_0} \termcat$. Given
  a diagram $C(-)\co I \to \nPCat n$, by \Lemr{precat-distrib}, we have
  \[
    (\colim_{i\in I} C(i)) \times \catsk 0 D \cong \coprod_{x \in D_0}
    \colim_{i\in I} C(i)
    \cong \colim_{i\in I} \coprod_{x \in D_0} C(i)
    \cong \colim_{i\in I} (C(i) \times \catsk 0 D)\pbox.
    \qedhere
  \]
\end{proof}
\begin{lem}
  \label{lem:assoc-isom}
  Given $n$-precategories $C,D,E$, there is an isomorphism
  \[
    \funnyass_{C,D,E} \co (C \funny D) \funny E \xto\sim C \funny (D \funny E)
  \]
  natural in $C$, $D$ and $E$.
\end{lem}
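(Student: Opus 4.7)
The plan is to exhibit both $(C \funny D) \funny E$ and $C \funny (D \funny E)$ as colimits of one and the same symmetric ``tripod'' diagram; a canonical isomorphism between them then follows from the universal property, and naturality is automatic.

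Preliminarily, I would observe that $\catsk 0(C \funny D) \cong \catsk 0 C \times \catsk 0 D$. Indeed, $\truncf n 0$ preserves colimits by \Propr{trunc-incf-la-ra}, so applying it to the defining pushout of $C \funny D$ yields a pushout whose three vertices all equal $C_0 \times D_0$ with identity legs; hence $(C \funny D)_0 = C_0 \times D_0$, and the claim follows after applying $\incf 0 n$.

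Next, I would unfold $(C \funny D) \funny E$ using its defining pushout. By \Lemr{times-units-preserve-colimits}, $(-) \times \catsk 0 E$ preserves colimits, so $(C \funny D) \times \catsk 0 E$ is itself the pushout of
\[
  C \times \catsk 0 D \times \catsk 0 E \xleftarrow{} \catsk 0 C \times \catsk 0 D \times \catsk 0 E \xrightarrow{} \catsk 0 C \times D \times \catsk 0 E.
\]
Splicing this with the outer pushout (whose third vertex $\catsk 0(C\funny D)\times E$ is identified with $\catsk 0 C \times \catsk 0 D \times E$ by the previous paragraph), one sees that $(C \funny D) \funny E$ is the colimit of the tripod diagram
\[
  \begin{tikzcd}[sep=small]
    & \catsk 0 C \times \catsk 0 D \times \catsk 0 E \ar[dl] \ar[d] \ar[dr] & \\
    C \times \catsk 0 D \times \catsk 0 E & \catsk 0 C \times D \times \catsk 0 E & \catsk 0 C \times \catsk 0 D \times E
  \end{tikzcd}
\]
whose three legs are products of counits $\cucatsk{-}$ with the remaining identities.

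Running the same computation for $C \funny (D \funny E)$, using that $\catsk 0 C \times (-)$ also preserves colimits (by the symmetric version of \Lemr{times-units-preserve-colimits}, which follows the same argument), yields the very same tripod. Hence both iterated tensors are colimits of identical diagrams, giving the canonical isomorphism $\funnyass_{C,D,E}$. Naturality in $C$, $D$ and $E$ is automatic since the tripod depends functorially on the three arguments. The main, rather mild, obstacle is simply bookkeeping the pushout decompositions; the genuine input is the distributivity of products over colimits provided by \Lemr{times-units-preserve-colimits}, which makes the symmetric tripod appear after a single expansion on each side.
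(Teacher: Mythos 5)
Your proof is correct and follows essentially the same route as the paper: expand the outer pushout, use \Lemr{times-units-preserve-colimits} to push $(-)\times\catsk 0 E$ through the inner pushout, identify both bracketings as colimits of the same three-legged diagram (up to associativity of $\times$), and conclude by the universal property with naturality checked functorially. Your explicit verification that $\catsk 0(C\funny D)\cong\catsk 0 C\times\catsk 0 D$ and your remark that the symmetric version of \Lemr{times-units-preserve-colimits} is needed for the other bracketing are points the paper leaves implicit, but they do not change the argument.
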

\begin{proof}
  Given $n$\precategories $C$, $D$ and $E$, the precategory $(C \funny D) \funny E$ is defined by the pushout
  \[
    \begin{tikzcd}[column sep=6em,row sep=4em]
      (\catsk 0 C \times \catsk 0 D) \times \catsk 0 E
      \ar[r,"(\catsk 0 C \times
      \catsk 0 D) \times \cucatsk{E}"]
      \ar[d,"\cucatsk{C \funny D} \times \catsk
      0 E"'] &
      (\catsk 0 C \times \catsk 0 D) \times E \ar[d,dotted,"\rincfun{C \funny D,E}"]\\
      (C \funny D)
      \times \catsk 0 E \ar[r,dotted,"\lincfun{C \funny D,E}"'] & ( C
      \funny D) \funny E
    \end{tikzcd}
    \pbox.
  \]
  Since, by \Lemr{times-units-preserve-colimits}, $(-) \times \catsk 0 E$
  preserves colimits, the following diagram is also a pushout
  \[
    \begin{tikzcd}[column sep=6em,row sep=4em]
      (\catsk 0 C \times \catsk 0 D) \times \catsk 0 E
      \ar[r,"(\catsk 0 C \times \cucatsk{D}) \times \catsk 0 E"] \ar[d,"(\cucatsk{C} \times \catsk 0 D) \times \catsk 0 E"']
      & (\catsk 0 C \times D) \times \catsk 0 E
      \ar[d,dotted,"\rincfun{C,D} \times \catsk 0 E"]\\
      (C \times \catsk 0 D) \times \catsk 0 E
      \ar[r,dotted,"\lincfun{C,D} \times \catsk 0 E"'] & ( C \funny D)
      \times \catsk 0 E
    \end{tikzcd}
    \pbox.
  \]
  Thus, $(C \funny D) \funny E$ is the colimit of the diagram
  \begin{equation}
    \label{eq:funny-3-pushout}
    \begin{tikzcd}[column sep=9em,row sep=3em]
      & (C \times \catsk 0 D) \times \catsk 0 E \\
      (\catsk 0 C \times \catsk 0 D) \times \catsk 0 E
      \ar[r,"(\catsk 0 C \times \cucatsk D) \times \catsk 0 E"description]
      \ar[ur,bend left=15,"(\cucatsk C \times \catsk 0 D) \times \catsk 0 E"]
      \ar[dr,bend right=15,"(\catsk 0 C \times \catsk 0 D) \times \cucatsk E"'] & (\catsk 0 C \times D) \times \catsk 0 E \\
      & (\catsk 0 C \times \catsk 0 D)
      \times E
    \end{tikzcd}
  \end{equation}
  The precategory $C \funny (D \funny E)$ admits a similar diagram, and we
  deduce easily, using the associativity of~$\times$, a canonical morphism
  $\funnyass_{C,D,E}\co (C \funny D) \funny E \to C \funny (D \funny E)$, which
  admits an inverse defined symmetrically. The morphism~$\funnyass_{C,D,E}$ is easily checked to be natural in
  $C$, $D$ and $E$.
\end{proof}
Given an $n$\precategory, there are canonical morphisms
\[
  \funnyl_C\co \termcat \funny C \xto\sim C \qqtand \funnyr_C\co C \funny \termcat \xto\sim C
\]
where $\funnyl_C$ is defined by
\[
  \begin{tikzcd}[column sep=4em]
    \catsk 0 \termcat \times \catsk 0 C
    \ar[d,"\cucatsk \termcat \times \catsk 0 C"']
    \ar[r,"\catsk 0 \termcat \times \cucatsk C"]&
    \catsk 0 \termcat\times C\ar[d,"\rincfun{\termcat,C}"]
    \ar[ddr,bend left=15,"\pi_2"]\\
    \termcat\times \catsk 0 C
    \ar[r,"\lincfun{\termcat,C}"']
    \ar[rrd,bend right=12,"\cucatsk{C} \circ \pi_2"']&\termcat\boxtimes C\ar[dr,"\funnyl_C",dotted] \\
    & & C
  \end{tikzcd}
\]
and $\funnyr_C$ is defined similarly. Both are natural in~$C$. We can conclude
that:
\begin{prop}
  $(C,\funny,\termcat,\funnyass,\funnyl,\funnyr)$ is a monoidal category.
\end{prop}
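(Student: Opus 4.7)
The plan is to verify the pentagon and triangle coherence axioms, since all structural data (the associator $\funnyass$ and the unitors $\funnyl, \funnyr$) have already been constructed as natural isomorphisms in the preceding lemmas and discussion. So the remaining content of the proposition is purely coherence.

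My strategy is to iterate the technique used in the proof of \Lemr{assoc-isom}: express each fourfold funny tensor as a colimit of a common diagram, and then invoke the universal property to conclude that the two composites around the pentagon agree. More precisely, starting from the pushout diagram defining $\funny$ and repeatedly applying \Lemr{times-units-preserve-colimits} to commute $(-)\times\catsk 0 X$ past pushouts, both $((C\funny D)\funny E)\funny F$ and $C\funny(D\funny(E\funny F))$---together with the two intermediate vertices $(C\funny(D\funny E))\funny F$ and $C\funny((D\funny E)\funny F)$ and the middle vertex $(C\funny D)\funny(E\funny F)$---can be identified with the colimit of a single ``4-star'' diagram whose center is $\catsk 0 C \times \catsk 0 D \times \catsk 0 E \times \catsk 0 F$ and whose four rays replace exactly one of the four factors by the corresponding non-skeletal precategory (via $\cucatsk{(-)}$ in the remaining three positions). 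Under these identifications, each associator in the pentagon is induced by the universal property from the identity between two presentations of this colimit. By uniqueness, both composites of associators equal the canonical identification, hence the pentagon commutes.

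For the triangle, the same method applies to $(C\funny\termcat)\funny E$ and $C\funny(\termcat\funny E)$: the associated 3-star diagram has its middle ray equal to $\catsk 0 C\times \termcat\times \catsk 0 E$ (since $\termcat=\catsk 0\termcat$), so the middle leg collapses to a product of $0$-skeleta. Tracing through the construction of $\funnyl$ and $\funnyr$, one checks that $\funnyr_C\funny E$ and $C\funny\funnyl_E$ both coincide with the canonical map induced by the universal property from this degenerate diagram, yielding the triangle equation.

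The main obstacle is bookkeeping: one has to justify, at each step, that the nested pushouts defining iterated funny tensor products may be rearranged into a single colimit over the same diagram regardless of the parenthesization. This requires systematically applying \Lemr{times-units-preserve-colimits} on both sides of each tensor, together with the standard fact that a colimit of pushouts is a pushout of colimits, but once the common presentation is obtained the coherence equations follow from the uniqueness part of the universal property without any further computation. Alternatively, one could appeal to Weber's abstract description of the funny tensor product in~\cite{weber2009free}, in which $\funny$ arises from the monoidal structure on endofunctors corresponding to a suitable monad, thus inheriting coherence for free; we sketch only the elementary argument.
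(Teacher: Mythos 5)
Your proposal is correct and follows essentially the same route as the paper, whose proof is a one-line assertion that the monoidal axioms follow from the pushout definition of $\funny$ and the cartesian monoidal structure; your argument simply spells out this claim by presenting all pentagon/triangle vertices as colimits of a common star-shaped diagram and invoking uniqueness from the universal property.
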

\begin{proof}
  The axioms of monoidal categories follow from the pushout definition of the
  funny tensor product and the cartesian monoidal structure on
  $n$\precategories.
\end{proof}
\noindent In fact, the funny tensor product is a suitable product for an
inductive enriched definition of precategories, \ie
\begin{restatable}{prop}{precatequivalentdefsprop}
  \label{prop:enriched}
  There is an equivalence of categories between $(n{+}1)$\precategories and
  categories enriched in $n$\precategories with the funny tensor product.
\end{restatable}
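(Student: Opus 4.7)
The strategy is to construct functors $\Phi\co \nPCat{n+1} \to \mathcal{E}$ and $\Psi\co \mathcal{E} \to \nPCat{n+1}$, where $\mathcal{E}$ denotes the category of (small) categories enriched in the monoidal category $(\nPCat n, \funny, \termcat)$, and verify that they are mutually quasi-inverse.

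\textbf{Forward direction.} Given $C \in \nPCat{n+1}$, define $\Phi(C)$ to have set of objects $C_0$, and hom-object $\Phi(C)(x,y) \in \nPCat n$ whose $k$\cells are the $(k{+}1)$\cells of $C$ with iterated $0$-source $x$ and iterated $0$-target $y$, equipped with the compositions $\comp_i$ and identities of $C$ for $i \ge 1$. The enriched identity $\termcat \to \Phi(C)(x,x)$ picks out $\unit{x} \in C_1$. For the enriched composition $\Phi(C)(x,y) \funny \Phi(C)(y,z) \to \Phi(C)(x,z)$, invoke the pushout definition of $\funny$: giving such a morphism amounts to giving two $n$-prefunctors
\[
  \catsk 0 \Phi(C)(x,y) \times \Phi(C)(y,z) \to \Phi(C)(x,z)
  \qqtand
  \Phi(C)(x,y) \times \catsk 0 \Phi(C)(y,z) \to \Phi(C)(x,z)
\]
agreeing on $\catsk 0 \Phi(C)(x,y) \times \catsk 0 \Phi(C)(y,z)$. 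The natural candidates are the right and left $0$-whiskerings $(f,v) \mapsto f \comp_0 v$ and $(u,g) \mapsto u \comp_0 g$, which agree on pairs of $1$\cells via $f \comp_0 g$. That these assignments are $n$-prefunctors is exactly the content of precategory axioms~\ref{precat:compat-id-comp}, \ref{precat:assoc} and \ref{precat:distrib} applied to whiskerings, and the enriched unit and associativity laws follow from the corresponding axioms \ref{precat:src-tgt-unit}, \ref{precat:compat-id-comp} and \ref{precat:assoc} for $\comp_0$. Functoriality of $\Phi$ on $(n{+}1)$-prefunctors is immediate.

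\textbf{Reverse direction.} Given $\mathcal{C} \in \mathcal{E}$, let $\Psi(\mathcal{C})$ have $0$\cells the objects of $\mathcal{C}$ and, for $k \ge 1$, $k$\cells the $(k{-}1)$\cells of the hom-objects, with $0$-source and $0$-target read off from which hom-object the cell inhabits. The compositions $\comp_i$ and identities for $i \ge 1$ are inherited from the hom-objects. The operation $\comp_0$ is reconstructed from the enriched composition: by the same universal property of $\funny$, each enriched composition morphism unfolds into a pair of whiskering $n$-prefunctors that jointly define $\comp_0$ on all $0$-composable pairs, because axiom~\ref{precat:csrc-tgt} forces any such composition to reduce to a whiskering once one of its arguments is replaced by its appropriate source or target $1$\cell. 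The remaining precategory axioms translate back to enriched associativity and unitality, and $\Psi$ is functorial by construction.

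\textbf{Equivalence and main obstacle.} The round-trips $\Psi\Phi$ and $\Phi\Psi$ are the identity on underlying objects and cells by construction, and an unwinding of the universal property shows they act as identities on morphisms as well, giving in fact an isomorphism (hence an equivalence) of categories. The conceptual heart of the argument, and the step most deserving of care, is the dictionary — provided by the pushout universal property of $\funny$ — between morphisms out of $C \funny D$ and pairs of ``whiskering'' morphisms agreeing on $0$-skeleta; this is precisely the data encoded by the non-exchange operation $\comp_0$ of a precategory, and explains why the correct enrichment uses $\funny$ rather than the cartesian product. Once this dictionary is fixed, the remainder is a careful but routine matching of the five precategory axioms \ref{precat:first}--\ref{precat:distrib} against the three enriched-category axioms.
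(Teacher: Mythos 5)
Your proposal follows essentially the same route as the paper's proof: both directions are the explicit hom-precategory/whiskering constructions, with the enriched composition obtained from the pushout universal property of $\funny$ and the precategory axioms matched against the enriched-category axioms exactly as you describe. The only (harmless) discrepancy is your claim that the round-trips are literal identities: the reverse construction rebuilds the higher cells as a coproduct $\coprod_{x,y}C(x,y)_i$, so the paper instead exhibits natural isomorphisms $\alpha_C$ and $\beta_C$, which is all that is needed for the stated equivalence.
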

\begin{proof}
  See \Appr{precat-equivalent-definitions}.
\end{proof}

\subsection{Prepolygraphs}
\label{ssec:pol}
In this section, we introduce the notion of \emph{prepolygraph} which
generalizes in arbitrary dimension the notion of rewriting system. This
definition is an adaptation to precategories of the notion of polygraph introduced by Burroni for
strict categories~\cite{burroni1993higher}. Polygraphs were also generalized by Batanin to
algebras of any finitary monad on globular sets~\cite{batanin1998computads}, and prepolygraphs
are a particular instance of this construction, for which we provide rather here an explicit construction.

For $n \ge 0$, writing $\fgf n$ for the canonical forgetful functor $\nPCat n
\to \nGlob n$, we define the category $\nPCat n^+$ as the pullback
\[
  \begin{tikzcd}[sep=3ex]
    \nPCat n^+
    \ar[d,"{\fgfv n}"',dotted]
    \ar[r,"{\fgf n^+}",dotted]&
    \nGlob{n+1}\ar[d,"\truncf {n+1} n"]\\
    \nPCat n\ar[r,"\fgf n"']&\nGlob n
  \end{tikzcd}
\]
and write $\fgf n^+\co \nPCat n^+ \to \nGlob{n+1}$ for the top arrow of the
pullback and $\fgfv n\co \nPCat n^+ \to \nPCat n$ for the left arrow. An object
$(C,C_{n+1})$ of $\nPCat n^+$ consists of an $n$-precategory~$C$ equipped with a
set~$C_{n+1}$ of $(n{+}1)$-cells and two maps $\gsrc_n,\gtgt_n\co C_{n+1}\to
C_n$ (note however that there is no notion of composition for $(n{+}1)$-cells).
There is a functor~$\fgfw n\co \nPCat {n+1} \to \nPCat n^+$ defined as the
universal arrow
\[
  \begin{tikzcd}[sep=3ex]
    \nPCat {n+1}
    \ar[rrd,bend left,"\fgf{n+1}"] \ar[ddr,bend right,"\truncf{n+1}{n}"']
    \ar[rd,dotted,"\fgfw n"'] \\
    & \nPCat n^+
    \ar[d,"\fgfv n"']
    \ar[r,"\fgf n^+"]&\nGlob{n+1}\ar[d,"\truncf {n+1} n"]\\
    & \nPCat n\ar[r,"\fgf n"']&\nGlob n
  \end{tikzcd}
\]
and, since categories and functors in the above diagram are induced by finite
limit sketches and morphisms of finite limit sketches, they are all right
adjoints (see~\cite[Thm.~4.1]{barr2005topos} for instance), so that $\fgfw n$
admits a left adjoint $\freefl n\co \nPCat n^+ \to \nPCat {n+1}$.

We define the category $\nPol n$ of $n$\polygraphs together with a functor
$\freef n\co \nPol n \to \nPCat n$ by induction on~$n$. We define $\nPol 0 = \Set$
and take $\freef 0$ to be the identity functor. Now suppose that $\nPol n$ and
$\freef n$ are defined for $n \ge 0$. We define $\nPol {n+1}$ as the pullback
\[
  \begin{tikzcd}[row sep=3ex]
    \nPol{n+1}\ar[d,"\truncf {n+1} n"',dotted]\ar[r,"{\freef n^+}",dotted]
    &\nPCat n^+\ar[d,"\fgfv n"]\\
    \nPol{n}\ar[r,"\freef n"']&\nPCat n
  \end{tikzcd}
\]
and write $\freef n^+\co \nPol{n+1} \to \nPCat n^+$ for the top arrow and
$\truncf {n+1} n$ for the left arrow of the diagram. Finally, we define
$\freef {n+1}$ as $\freefl n \circ \freef n^+$.

More explicitly, an $(n{+}1)$-prepolygraph~$\P$ consists in a diagram of sets
\[
  \begin{tikzcd}[column sep=10ex,labels={inner sep=0.5pt}]
    \P_0\ar[d,"\polinj0"{inner sep=2pt}]
    &\P_1
    \ar[dl,shift right,"\gsrc_0"',pos=0.3]
    \ar[dl,shift left,"\gsrc_0",pos=0.3]\ar[d,"\polinj1"{inner sep=2pt}]
    &\P_2\ar[dl,shift right,"\gsrc_1"',pos=0.3]\ar[dl,shift left,"\gsrc_1",pos=0.3]\ar[d,"\polinj2"{inner sep=2pt}]
    &\ldots
    &\P_{n}
    \ar[dl,shift right,"\gsrc_{n-1}"',pos=0.3]
    \ar[dl,shift
    left,"\gsrc_{n-1}",pos=0.3]\ar[d,"\polinj{n}"{inner sep=2pt}]
    &\P_{n+1}\ar[dl,shift right,"\gsrc_{n}"',pos=0.3]\ar[dl,shift left,"\gsrc_{n}",pos=0.3]\\
    \freecat{\P_0}
    &
    \freecat{\P_1}
    \ar[l,shift right,"{\csrc_0}"']
    \ar[l,shift
    left,"{\ctgt_0}"]
    &\ldots
    \ar[l,shift
    right,"{\csrc_1}"']\ar[l,shift
    left,"{\ctgt_1}"]
    &\freecat{\P_{n-1}}
    &\ar[l,shift
    right,"{\csrc_{n-1}}"']\ar[l,shift
    left,"{\ctgt_{n-1}}"]\freecat{\P_{n}}
  \end{tikzcd}
\]
such that ${\csrc_i}\circ\gsrc_{i+1}={\csrc_i}\circ\gtgt_{i+1}$ and
$\ctgt_i\circ\gsrc_{i+1}=\ctgt_i\circ\gtgt_{i+1}$, together with a
structure of $n$\precategory on the globular set on the bottom row: $\P_i$ is
the set of \emph{$i$-generators}, $\gsrc_i,\gtgt_i\co \P_{i+1}\to\freecat{\P_i}$
respectively associate to each $(i{+}1)$-generator its \emph{source} and
\emph{target}, and $\freecat{\P_i}$ is the set of \emph{$i$-cells}, \ie formal
compositions of $i$-generators. In line with the latter notation, we will often
write $\freecat\P$ for the image of an $n$\prepolygraph~$\P$ by $\freef n$.

By definition, an $(n{+}1)$\prepolygraph~$\P$ has an underlying
$n$\prepolygraph~$\truncf {n+1} n(\P)$, that we will often denote by $\restrict
n \P$. More generally, for $m \ge n$, an $m$\prepolygraph~$\P$ has an underlying
$n$\prepolygraph $\restrict n \P$ obtained by applying successively the
forgetful functors $\truncf {i+1} i$ for $m > i \ge n$.

\begin{example}
  \label{ex:pseudo-monoid-3-pol}
  We define the \emph{$3$\prepolygraph~$\P$ for pseudomonoids} as follows. We put
  \begin{align*}
    \P_0&= \set{x},
    &
    \P_1&= \set{a\co x \to x},
    &
    \P_2&= \set{\mu\co \bar 2 \To \bar 1, \eta\co \bar 0 \To \bar 1},
  \end{align*}
  where, given $n\in\N$, we write $\bar n$ for the composite
  $a \comp_0 \cdots \comp_0 a$ of $n$ copies of~$a$, and we define $\P_3$ as the
  set with the following three elements
  \[
    \setlength\arraycolsep{0pt}
    \begin{array}{r@{\ }c@{\ }r@{\ }c@{\ }l}
      \monA&\co &(\mu \comp_0 \bar 1) \comp_1 \mu &\TO& (\bar 1 \comp_0 \mu) \comp_1 \mu \\
      \monL&\co &(\eta \comp_0 \bar 1) \comp_1 \mu
                                                                 &\TO& \unit{\bar 1} \\
      \monR&\co &(\bar 1 \comp_0 \eta) \comp_1 \mu &\TO& \unit{\bar 1}
    \end{array}
    \ \zbox.
  \]
  Note that we make use of the arrows $\to$, $\To$ and $\TO$ to indicate
  the source and target of each $i$\generator for $i\in \set{1,2}$: $a$ is a
  $1$\generator such that $\gsrc_0(a) = \gtgt_0(a) = x$, $\mu$ is a
  $2$\generator such that $\gsrc_1(\mu) = a \comp_0 a$ and $\gtgt_1(\mu) = a$,
  and so on. In the following, we will keep using this notation to describe the
  generators of other prepolygraphs.
\end{example}

\subsection{Presentations}
\label{ssec:presentations}
Given an $n$\precategory~$C$ with $n > 0$, a \emph{congruence} for~$C$ is an
equivalence relation $\stdcong$ on~$C_n$ such that, for all $u,u' \in C_n$
satisfying $u \stdcong u'$,
\begin{itemize}
\item $\csrctgt\eps_{n-1}(u) = \csrctgt\eps_{n-1}(u')$ for $\eps \in \set{-,+}$,
  
\item for $v,w \in C_{i+1}$ with $0 \le i < n$ such that $v,u,w$ are
  $i$\composable, we have
  \[
    v \comp_i u \comp_i w \stdcong v \comp_i u' \comp_i w.
  \]
\end{itemize}
Given such a congruence for~$C$, there is an $n$\precategory~$C/{\stdcong}$
which is the $n$\precategory~$D$ such that $D_i = C_i$ for $i < n$ and $D_n =
C_n/{\stdcong}$ and where the identities and compositions are induced by the ones on~$C$.

Now, consider the composite functor
\[
  \begin{tikzcd}[cramped]
    \nPol{n+1}\ar[r,"\freef {n+1}"]&\nPCat{n+1}\ar[r,"\incfla {n+1}n"]&\nPCat{n}
  \end{tikzcd}
  \pbox.
\]
To an $(n{+}1)$-prepolygraph $\P$, it associates an $n$-precategory denoted by
$\prespcat\P$. Concretely, $\prespcat\P$ is isomorphic to
$\freecat{(\restrictcat\P n)}/{\stdcong^\P}$ where $\stdcong^\P$ is the
smallest congruence such that $\csrc_n(u) \stdcong^\P \ctgt_n(u)$ for $u \in
\P_{n+1}$. In the following, we say that an $(n{+}1)$\prepolygraph~$\P$ is a
\emph{presentation} of an $n$\precategory~$C$ when $C$ is isomorphic
to~$\prespcat\P$.


\subsection{Freely generated cells}
\label{ssec:freely-generated-cells}
Given $(C,C_{n+1})\in \nPCat n^+$, we
give an explicit description of the free $(n{+}1)$\precategory~$\freefl n(C,C_{n+1})$ it
generates, similar to the one given in~\cite{metayer2008cofibrant} in the case of polygraphs.
This $(n{+}1)$-precategory has~$C$ as underlying $n$-precategory so
that we focus on the description of the $(n{+}1)$-cells, which can be described
as equivalence classes of terms, called here \emph{expressions}, corresponding to formal composites of cells. These expressions are
defined inductively as follows:
\begin{itemize}
\item for every element $u\in C_{n+1}$, there is an expression, still noted $u$,
\item for every $n$-cell $u\in C_n$, there is an expression $\unit u$,
\item for every $0\leq i<n$, for every $u\in C_{i+1}$ and every expression $v$,
  there is an expression $u\comp_i v$,
\item for every $0\leq i<n$, for every expression $u$ and every $v\in C_{i+1}$,
  there is an expression $u\comp_i v$,
\item for every pair of expressions $u$ and $v$, there is an expression $u\comp_n v$.
\end{itemize}
We then define \emph{well-typed expressions} through typing rules in a sequent
calculus. We consider judgments of the form
\begin{itemize}
\item $\vdash t\co u\to v$, where $t$ is an expression and $u,v\in C_n$, with the
  intended meaning that the expression $t$ has $u$ as source and $v$ as target,
\item $\vdash t=t'\co u\to v$, where $t$ and $t'$ are expressions and $u,v\in C_n$,
  with the intended meaning that $t$ and $t'$ are equal expressions from~$u$
  to~$v$.
\end{itemize}
The associated typing rules are
\begin{itemize}
\item for every $t\in C_{n+1}$ with $\partial_n^-(t)=u$ and $\partial_n^+(t)=v$,
  \[
    \inferrule{ }{\vdash t\co u\to v}
  \]
\item for every $u\in C_n$,
  \[
    \inferrule{ }{\vdash\unit u\co u\to u}
  \]
\item for every $0\leq i<n$, every $u\in C_{i+1}$ and $v,v'\in C_n$ with $\ctgt_i(u)=\csrc_i(v)$,
  \[
    \inferrule{
      \vdash t\co v\to v'
    }{
      \vdash u\comp_i t\co (u\comp_iv)\to(u\comp_iv')
    }
  \]
\item for every $0\leq i<n$, every $u,u' \in C_n$ and $v\in C_{i+1}$ with
  $\ctgt_i(u)=\csrc_i(v)$
  \[
    \inferrule{
      \vdash t\co u\to u'
    }{
      \vdash t\comp_i v\co (u\comp_iv)\to(u'\comp_iv)
    }
  \]
\item and, for every $u,v,w \in C_n$
  \[
    \inferrule{
      \vdash t\co u\to v
      \\
      \vdash t'\co v\to w
    }{
      \vdash t\comp_n t'\co u\to w
    }
  \]
\end{itemize}
The equality rules, which express different desirable properties of the equality
relation, are introduced below. The first rules enforce that equality is an
equivalence relation:
\begin{equation}
  \label{eq:expr-equiv}
  \inferrule{\vdash t\co u\to v}{\vdash t=t\co u\to v}
  \qquad
  \inferrule{\vdash t=t'\co u\to v}{\vdash t'=t\co u\to v}
  \qquad
  \inferrule{\vdash t=t'\co u\to v\\\vdash t'=t''\co u\to v}{\vdash t=t''\co u\to v}
\end{equation}
The next ones express that that identities are neutral elements for composition:
\begin{gather*}
  \inferrule{\vdash t\co u\to v}{\vdash\unit u\comp_n t=t\co u\to v}
  \qquad
  \inferrule{\vdash t\co u\to v}{\vdash t\comp_n\unit v=t\co u\to v}
  \\
  \inferrule{
    \vdash t \co u \to u' \\
    i < n
  }
  {
    \vdash \unitp{i+1}{\csrc_i(u)} \comp_i t = t \co u \to u'
  }
  \qquad
  \inferrule{
    \vdash t \co u \to u'
  }
  {
    \vdash t \comp_i \unitp{i+1}{\ctgt_i(u)} = t \co u \to u'
  }
\end{gather*}
The next ones express that composition is associative:
{\displayskipforlongtable\begin{longtable}{c}
  \inferrule{
    \vdash t_1\co u_0\to u_1
    \\
    \vdash t_2\co u_1\to u_2
    \\
    \vdash t_3\co u_2\to u_3
  }{
    \vdash (t_1\comp_n t_2)\comp_n t_3=t_1\comp_n(t_2\comp_n t_3)\co u_0\to u_3
  }
  \crjot
  \inferrule{
    \vdash t \co v \to v' \\
    u_1,u_2 \in C_{i+1} \\
    \ctgt_i(u_1) = \csrc_i(u_2) \\
    \ctgt_i(u_2) = \csrc_i(v)
  }
  {\vdash u_1 \comp_i (u_2 \comp_i t) = (u_1 \comp_i u_2) \comp_i t \co u_1
    \comp_i u_2 \comp_i v \to u_1 \comp_i u_2 \comp_i v'}
  \crjot
  \inferrule{
    \vdash t \co u \to u' \\
    v_1,v_2 \in C_{i+1} \\
    \ctgt_i(u) = \csrc_i(v_1) \\
    \ctgt_i(v_1) = \csrc_i(v_2)
  }
  {\vdash (t \comp_i v_1) \comp_i v_2 = t \comp_i (v_1 \comp_i v_2)  \co u
    \comp_i v_1 \comp_i v_2 \to u' \comp_i v_1 \comp_i v_2}
  \crjot
  \inferrule{
    \vdash t \co v \to v' \\
    i < n \\
    u \in C_{i+1} \\
    w \in C_{i+1} \\
    \ctgt_i(u) = \csrc_i(v) \\
    \ctgt_i(v') = \csrc_i(w) } { \vdash (u \comp_i t) \comp_i w = u \comp_i (t
    \comp_i w) \co u \comp_i v \comp_i w \to u \comp_i v' \comp_i w }
\end{longtable}
}\noindent The next ones express that $(n{+}1)$-identities are compatible with low-dimensional compositions:
\begin{gather*}
  \inferrule{
    i < n \\
    u \in C_{i+1} \\
    v \in C_{n} \\
    \ctgt_i(u) = \csrc_i(v)
  }
  {
    \vdash u \comp_i \unit v = \unit {u \comp_i v}\co u \comp_i v \to u \comp_i v
  }
  \\
  \inferrule{
    u \in C_{n} \\
    i < n \\
    v \in C_{i} \\
    \ctgt_i(u) = \csrc_i(v)
  }
  {
    \vdash \unit u \comp_i v = \unit {u \comp_i v}\co u \comp_i v \to u \comp_i v
  }
\end{gather*}
The next ones express that $n$-compositions are compatible with low dimensional compositions:
\begin{gather*}
  \inferrule{
    \vdash t_1 \co v_1 \to v_2 \\
    \vdash t_2 \co v_2 \to v_3 \\
    u \in C_{i+1} \\
    \ctgt_i(u) = \csrc_i(v_1)
  }
  {
    \vdash u \comp_i (t_1 \comp_n t_2) = (u \comp_i t_1) \comp_n (u \comp_i t_2)
    \co u \comp_i v_1 \to u \comp_i v_3
  }
  \\
  \inferrule{
    \vdash t_1 \co u_1 \to u_2 \\
    \vdash t_2 \co u_2 \to u_3 \\
    v \in C_{i+1} \\
    \ctgt_i(u_1) = \csrc_i(v)
  }
  {
    \vdash (t_1 \comp_n t_2) \comp_i v = (t_1 \comp_i v) \comp_n (t_2 \comp_i v)
    \co u_1 \comp_i v \to u_3 \comp_i v
  }
\end{gather*}
The next ones express the distributivity properties between the different
low-dimensional compositions:
\begin{gather*}
  \inferrule{
    \vdash t \co w \to w' \\
    i < j < n \\
    u \in C_{i+1} \\
    \ctgt_i(u) = \csrc_i(w) \\
    v \in C_{j+1} \\
    \ctgt_j(v) = \csrc_j(w)
  }
  {
    \vdash u \comp_i (v \comp_j t) = (u \comp_i v) \comp_j (u \comp_i t) \co u
    \comp_i (v \comp_j w) \to u \comp_i (v \comp_j w')
  }
  \\
  \inferrule{
    \vdash t \co v \to v' \\
    i < j < n \\
    u \in C_{i+1} \\
    \ctgt_i(u) = \csrc_i(v) \\
    w \in C_{j+1} \\
    \ctgt_j(v) = \csrc_j(w)
  }
  {
    \vdash u \comp_i (t \comp_j w) = (u \comp_i t) \comp_j (u \comp_i w) \co u
    \comp_i (v \comp_j w) \to u \comp_i (v' \comp_j w)
  }
  \\
  \inferrule{
    \vdash t \co v \to v' \\
    i < j < n \\
    u \in C_{j+1} \\
    \ctgt_j(u) = \csrc_j(v) \\
    w \in C_{i+1} \\
    \ctgt_i(v) = \csrc_i(w)
  }
  {
    \vdash (u \comp_j t) \comp_i w = (u \comp_i w) \comp_j (t \comp_i w) \co (u
    \comp_j v) \comp_i w \to (u \comp_j v') \comp_i w
  }
  \\
  \inferrule{
    \vdash t \co u \to u' \\
    i < j < n \\
    v \in C_{j+1} \\
    \ctgt_j(u) = \csrc_j(v) \\
    w \in C_{i+1} \\
    \ctgt_i(v) = \csrc_i(w)
  }
  {
    \vdash (t \comp_j v) \comp_i w = (t \comp_i w) \comp_j (v \comp_i w) \co (u
    \comp_j v) \comp_i w \to (u' \comp_j v) \comp_i w
  }
\end{gather*}
Finally, the last ones express that equality is contextual:
{\displayskipforlongtable\begin{longtable}{c}
  \inferrule{
    \vdash t = t' \co v \to v' \\
    u \in C_{i+1} \\
    \ctgt_i(u) = \csrc_i(v)
  }
  {
    \vdash u \comp_i t = u \comp_i t'\co u \comp_i v \to u \comp_i v'
  }
  \crjot
  \inferrule{
    \vdash t = t' \co u \to u' \\
    v \in C_{i+1} \\
    \ctgt_i(u) = \csrc_i(v)
  }
  {
    \vdash t \comp_i v = t' \comp_i v\co u \comp_i v \to u' \comp_i v
  }
  \crjot
  \inferrule{
    \vdash t_1 = t_1' \co u_1 \to u_2 \\
    \vdash t_2  \co u_2 \to u_3
  }
  {
    \vdash t_1 \comp_n t_2 = t_1' \comp_n t_2 \co u_1 \to u_3
  }
  \crjot
  \inferrule{
    \vdash t_1 \co u_1 \to u_2 \\
    \vdash t_2 = t_2' \co u_2 \to u_3
  }
  {
    \vdash t_1 \comp_n t_2 = t_1 \comp_n t_2' \co u_1 \to u_3
  }
\end{longtable}
}\noindent The following lemmas show that typing is unique and well-behaved regarding
equality. They are easily shown by inductions on the derivations:
\begin{lem}[Uniqueness of typing]
  \label{lem:typing-unique}
  Given an expression $t$ such that the judgements~${\vdash t\co u\to v}$ and ${\vdash t\co u'\to v'}$
  are derivable, we have $u=u'$ and $v=v'$.
\end{lem}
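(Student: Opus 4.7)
The plan is to proceed by structural induction on the expression $t$ (equivalently, by induction on a derivation of $\vdash t \co u \to v$). The strategy relies on the observation that for each syntactic form an expression can take, there is essentially only one typing rule that can conclude with that expression, so the shape of $t$ forces the shape of its derivation, and the source/target in the conclusion are then determined by the cells appearing in $t$ together with the (inductively unique) source and target of any sub-expressions.

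The base cases are immediate. If $t \in C_{n+1}$, the only applicable rule forces $u = \partial_n^-(t)$ and $v = \partial_n^+(t)$. If $t = \unit{w}$ for some $w \in C_n$, the only applicable rule forces $u = v = w$. Both are uniquely determined by the expression itself.

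For the inductive cases $t = u_0 \comp_i t'$ with $u_0 \in C_{i+1}$ (the expression-on-the-right case), $t = t' \comp_i v_0$ with $v_0 \in C_{i+1}$ (the expression-on-the-left case), and $t = t_1 \comp_n t_2$ (the composition-of-expressions case), the induction hypothesis gives a unique source and target $w \to w'$ for each sub-expression. The corresponding typing rule then reads off the conclusion: for instance, for $t = u_0 \comp_i t'$, if $t'$ has the unique type $w \to w'$ then any derivation for $t$ must conclude $\vdash t \co (u_0 \comp_i w) \to (u_0 \comp_i w')$, and similarly for the remaining cases.

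The only subtlety is ensuring that the inductive definition of expressions is unambiguous, so that the last-applied typing rule is indeed determined by the outermost syntactic constructor of $t$. This is a matter of reading the list of production clauses for expressions as presenting a free term algebra, after which each expression has a unique top-level formation and the induction goes through by a routine case analysis on the five clauses. No real obstacle is present: the lemma is a standard inversion/uniqueness-of-typing argument, and the work lies entirely in the bookkeeping of the five cases.
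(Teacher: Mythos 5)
Your proof is correct and follows the same route as the paper, which simply notes that the lemma is "easily shown by induction on the derivations": you perform exactly that induction, with the case analysis on the formation clauses and the standard inversion observation. The remark about reading the expression grammar as a free term algebra is the intended interpretation, so there is no gap.
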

\begin{lem}
  \label{lem:eq-implies-typing}
  If $\vdash t=t'\co u\to v$ is derivable then $\vdash t\co u\to v$ and
  $\vdash t'\co u\to v$ are derivable.
\end{lem}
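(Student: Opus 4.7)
The plan is to proceed by induction on the derivation of $\vdash t = t' \co u \to v$, performing a case analysis on the last equality rule used. In each case I will exhibit, from the premises of that rule (and, when needed, from the induction hypothesis), derivations of the two typing judgments $\vdash t \co u \to v$ and $\vdash t' \co u \to v$.

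For the three rules governing the equivalence structure~\eqref{eq:expr-equiv}, this is essentially immediate: the reflexivity rule has $\vdash t \co u \to v$ as its premise; the symmetry rule gives both judgments by the induction hypothesis after swapping $t$ and $t'$; and for transitivity the induction hypothesis applied to each premise yields typings for $t$, $t'$ and $t''$ (Lemma~\ref{lem:typing-unique} guarantees the common type is consistent). For the identity and associativity rules, as well as the two rules expressing compatibility of $(n{+}1)$-identities with low-dimensional compositions and the two expressing compatibility of $n$-composition with low-dimensional compositions, the typing premises of the rule directly yield a typing derivation for one side; for the other side one simply assembles the typing derivation by applying the appropriate introduction rules for $\unit{(-)}$ and $\comp_i$, using the source/target conditions present as side-conditions in the rule. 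The same pattern handles the four distributivity rules: each conclusion equates two well-typed expressions built from the same typed subexpression $t$ and fixed cells of~$C$, and both sides are typeable by straightforward assembly.

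The only cases requiring the induction hypothesis in an essential way are the four contextual rules. For instance, in the rule
\[
  \inferrule{
    \vdash t = t' \co v \to v' \\
    u \in C_{i+1} \\
    \ctgt_i(u) = \csrc_i(v)
  }{
    \vdash u \comp_i t = u \comp_i t' \co u \comp_i v \to u \comp_i v'
  }
\]
the induction hypothesis gives $\vdash t \co v \to v'$ and $\vdash t' \co v \to v'$, to which I apply the typing rule for left whiskering by $u$ to obtain $\vdash u \comp_i t \co u \comp_i v \to u \comp_i v'$ and $\vdash u \comp_i t' \co u \comp_i v \to u \comp_i v'$. The three remaining contextual rules (right whiskering and the two rules for $\comp_n$) are handled symmetrically.

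The whole argument is essentially bookkeeping; the only point requiring mild care is to check, in the transitivity and contextual cases, that the types produced by the induction hypothesis really do match, which is where Lemma~\ref{lem:typing-unique} enters to justify that the intermediate sources and targets are uniquely determined by the expressions involved.
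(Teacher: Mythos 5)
Your proposal is correct and matches the paper's approach: the paper simply states that this lemma (together with Lemma~\ref{lem:typing-unique}) is "easily shown by inductions on the derivations," which is exactly the induction on the equality derivation with case analysis on the last rule that you carry out. Your fleshing-out of the contextual cases via the induction hypothesis and the typing rules is the intended argument.
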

\noindent
A term $t$ is \emph{well-typed} if there are $u,v\in
C_n$ such that $\vdash t\co u\to v$ is derivable using the above rules. In this
case, by \Lemr{typing-unique}, the types $u$ and $v$ are uniquely determined by~$t$,
and we write $\csrc_n(t)=u$ and $\ctgt_n(t)=v$.
We define $C_{n+1}^*$ to be the set of equivalence classes under~$=$ of
well-typed expressions. By \Lemr{eq-implies-typing}, the operations $\csrc_n$
and $\ctgt_n$ are compatible with the relation~$=$. We finally define $\freefl
n(C,C_{n+1})$ as the $(n{+}1)$-precategory with~$C$ as underlying
$n$-precategory, $C_{n+1}^*$ as set of $(n{+}1)$-cells, with sources and targets
given by the maps $\csrc_n$ and $\ctgt_n$. The compositions and identities on
the $(n{+}1)$-cells are induced in the expected way by the corresponding
syntactic constructions (this is well-defined by the axioms of~$=$). It is
routine to verify that:
\begin{theo}
  The above construction defines a functor~$\freefl n$ which is left adjoint
  to~$\fgfw n$.
\end{theo}

\subsection{Normal form for cells}
\label{ssec:cell-nf}
Suppose given $(C,C_{n+1})\in\nPCat n^+$. The set $C_{n+1}^*$ of cells of
$\freefl n(C,C_{n+1})$ was described in the previous section as a quotient of
expressions modulo a congruence $=$. In order to conveniently work with its
equivalence classes, we introduce here a notion of normal form for those. From
now on, we adopt the convention that missing parenthesis in expressions are
implicitly bracketed on the right, \ie we write $u_1\comp_n
u_2\comp_n\cdots\comp_n u_k$ instead of $u_1\comp_n(u_2\comp_n(\cdots\comp_n
u_k))$.

By removing the relations~\eqref{eq:expr-equiv} in the definition of the
congruence $=$ and orienting from left to right the remaining equations, we
obtain a relation $\tred$ which can be interpreted as a rewriting relation on
expressions:
\begin{align*}
  \unit u \comp_n t &\tred t & t \comp_n \unit u &\tred t \\
  (t_1 \comp_n t_2) \comp_n t_3 & \tred t_1 \comp_n (t_2 \comp_n t_3) & (u_1 \comp_i t_n) \comp_i u_2 &\tred u_1 \comp_i (t_n \comp_i u_2) \\
  &\cdots & &\cdots
\end{align*}
We now study the properties of $\tred$. We recall that such a relation is said
to be \emph{terminating} when there is no infinite sequence $(t_i)_{i \ge 0}$
such that $t_i \tred t_{i+1}$ for $i \ge 0$. A \emph{normal form} is an
expression $t$ such that there exists no $t'$ with $t \tred t'$. Writing
$\tred^*$ for the reflexive transitive closure of $\tred$, the relation $\tred$
is said \emph{locally confluent} when for all expressions $t$, $t_1$ and $t_2$ such that
$t \tred t_1$ and $t \tred t_2$, we have $t_1 \tred^* t'$ and $t_2 \tred^* t'$
for some expression $t'$ (diagram on the left) and \emph{confluent} when for all expressions
$t$, $t_1$ and $t_2$ such that $t \tred^* t_1$ and $t \tred^* t_2$, we have $t_1 \tred^*
t'$ and $t_2 \tred^* t'$ for some expression $t'$ (diagram on the right):
\[
  \begin{tikzcd}[sep=small]
    &\ar[dl,Rightarrow]t\ar[dr,Rightarrow]&\\
    t_1\ar[dr,dotted,Rightarrow,"*"']&&\ar[dl,dotted,Rightarrow,"*"]t_2\\
    &t'&
  \end{tikzcd}
  \qquad\qquad\qquad\qquad
  \begin{tikzcd}[sep=small]
    &\ar[dl,Rightarrow,"*"']t\ar[dr,Rightarrow,"*"]&\\
    t_1\ar[dr,dotted,Rightarrow,"*"']&&\ar[dl,dotted,Rightarrow,"*"]t_2\\
    &t'&
  \end{tikzcd}
\]
Those notions are introduced in more details in~\cite{baader1999term}.

\begin{lem}
  \label{lem:precat-terminating}
  The relation $\tred$ is terminating.
\end{lem}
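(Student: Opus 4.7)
The plan is to exhibit a polynomial interpretation $[\cdot]$ sending each well-typed expression to a natural number such that every elementary rewrite step strictly decreases it; termination then follows from well-foundedness of $\mathbb{N}$. Because every clause in the definition of $[\cdot]$ will be strictly monotone in its expression argument(s), compatibility with the contextual closure rules is automatic, and it suffices to check each of the non-contextual rules individually.

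Concretely, I would fix strictly decreasing positive constants $\kappa_0 > \kappa_1 > \cdots > \kappa_{n-1}$, each sufficiently large (the choice $\kappa_i = 2 \cdot 4^{n-i}$ works), and set
\[
  [\alpha] = 2 \text{ for } \alpha \in C_{n+1}, \qquad [\unit u] = 2, \qquad [t_1 \comp_n t_2] = 2[t_1] + [t_2] + 1,
\]
together with, for $u \in C_{i+1}$ and $i < n$,
\[
  [u \comp_i t] = \kappa_i [t] + 2 \qquad \text{and} \qquad [t \comp_i u] = \kappa_i [t] + 1.
\]
The asymmetric coefficients $2[t_1] + [t_2]$ on $\comp_n$ take care of the right-associativity rule $(t_1 \comp_n t_2) \comp_n t_3 \tred t_1 \comp_n (t_2 \comp_n t_3)$; the quadratic cost $\kappa_i^2[t]$ incurred by nesting $u_1 \comp_i (u_2 \comp_i t)$, compared with the linear $\kappa_i [t]$ of $(u_1 \comp_i u_2) \comp_i t$, handles the cell-grouping associativities for $\comp_i$; the additive asymmetry between the constants $+2$ and $+1$ is calibrated so that the genuine reassociation $(u \comp_i t) \comp_i w \tred u \comp_i (t \comp_i w)$ strictly decreases; and the strict inequalities $\kappa_i > \kappa_j$ for $i < j$ produce the strict decrease needed for the distributivity rules, which all push lower-index compositions below higher-index ones. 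The unit-removal rules and the $\comp_n$-distributivity rules reduce to requiring $\kappa_i$ larger than a fixed constant, which is ensured by the chosen scale.

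The verification then amounts to a finite list of linear inequalities in the constants, one per rule family, each of which is a direct computation. The only real subtlety, and the main obstacle, is tuning the $\kappa_i$ so that every rule decreases simultaneously: the tightest constraint comes from the mixed distributivity $u \comp_i (t \comp_j w) \tred (u \comp_i t) \comp_j (u \comp_i w)$ for $i < j < n$, where one finds a constraint of the form $\kappa_i > 2\kappa_j - 1$ rather than merely $\kappa_i > \kappa_j$, which is precisely why the $\kappa_i$ are chosen to decay exponentially rather than linearly in $i$. Once the constants are fixed, the remaining inequalities are routine.
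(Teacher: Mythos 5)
Your interpretation-based argument is correct, and it takes a genuinely different (though related) route from the paper. The paper also proves termination by a decreasing measure, but its measure is a tuple $N(t)=(c_n(t),l_{n-1}(t),r_{n-1}(t),\ldots,l_0(t),r_0(t))\in\N^{2n+1}$ of counting functions ordered lexicographically: the lexicographic priority is what absorbs the fact that a reduction involving a composition at one level can increase the counters attached to lower-dimensional compositions. You collapse this into a single $\N$-valued weighted (linear) interpretation, letting the exponentially growing coefficients $\kappa_0>\cdots>\kappa_{n-1}$ play the role of lexicographic priority; strict monotonicity of each clause then gives closure under the contextual rules for free, exactly as you say. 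I checked your clauses against the oriented rules: the $\comp_n$-associativity and unit rules, the three $\comp_i$-reassociation rules (where your $+2$ versus $+1$ asymmetry gives the needed $\kappa_i-1>0$), the identity-collapse rules, the rules distributing $\comp_n$ over $\comp_i$ (which need $\kappa_i\ge 6$, satisfied since the smallest weight is $\kappa_{n-1}=8$), and the four $i<j$ distributivity rules, whose tightest case is indeed the one you isolate, $u\comp_i(t\comp_j w)\tred(u\comp_i t)\comp_j(u\comp_i w)$, giving $\kappa_i>2\kappa_j-1$; with $\kappa_i=2\cdot4^{n-i}$ all of these hold. The trade-off is clear: the paper's lexicographic tuple needs no tuning of constants but more bookkeeping and a well-founded order on tuples, while your single polynomial interpretation is conceptually leaner but its correctness rests entirely on the finite list of coefficient inequalities, which you should write out explicitly (one per rule family) to complete the proof — as you note, each is a one-line computation.
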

\begin{proof}
  In order to show termination, we define a measure on the terms that is
  decreased by each rewriting operation. To do so, we first define counting
  functions $c_n$ and $l_i,r_i$ for $0 \le i < n$ from expressions to $\N$ that
  take into account the three kinds of operations in the expression: top
  $n$-dimensional compositions, and lower $i$-dimensional left and right
  compositions. These functions count the numbers of potential reductions in an
  expression $t$ with the associated operations. Since reductions involving
  composition operations change value of counting functions of composition
  operations of lower dimension, we will use a lexicographical ordering of the
  counting functions to obtain the wanted measure. Given an expression $t$, we
  define $c_n(t) \in \N$ and $l_{i}(t),r_{i}(t) \in \N$ for $0 \le i < n$ by
  induction on $t$ as follows:
  \begin{itemize}
  \item if $g \in C_{n+1}$, we put $c_n(g) = l_{i}(g) = r_{i}(g) = 0$ for $0
    \le i < n$,
  \item if $u \in C_n$, we put $c_n(\unit u) = l_{i}(\unit u) =
    r_{i}(\unit u) = 1$,
  \item if $t = t_1 \comp_n t_2$, we put
    \begin{align*}
      c_n(t) &= 2c_n(t_1) +
               c_n(t_2) + 1,
      \\
      l_{i}(t) &= l_{i}(t_1) + l_{i}(t_2) + 2,  \\
      r_{i}(t) &= r_{i}(t_1) + r_{i}(t_2) + 2,
    \end{align*}
  \item if $t = u \comp_j t'$, we put $c_n(t) = c_n(t')$ and
    \begin{align*}
      l_{i}(t)&= 
                \begin{cases}
                  l_{i}(t') & \text{if $j < i$,} \\
                  2l_{i}(t') + 1 & \text{if $j = i$,} \\
                  l_{i}(t') + 1 & \text{if $j > i$,}
                \end{cases}
                            &
                              r_{i}(t)&=
                                        \begin{cases}
                                          r_{i}(t') & \text{if $j < i$,} \\
                                          r_{i}(t') + 1 & \text{if $j \ge i$,}
                                        \end{cases}    
    \end{align*}
  \item if $t = t' \comp_j v$, we put $c_{n}(t) = c_n(t')$ and
    \begin{align*}
      l_{i}(t)&=
                \begin{cases}
                  l_{i}(t') & \text{if $j \le i$,} \\
                  l_{i}(t') + 1 & \text{if $j > i$,}
                \end{cases}
                            &
                              r_{i}(t)&=
                                        \begin{cases}
                                          r_{i}(t') & \text{if $j < i$,} \\
                                          2r_{i}(t') + 1 & \text{if $j = i$,} \\
                                          r_{i}(t') + 1 & \text{if $j > i$.}
                                        \end{cases}    
    \end{align*}
  \end{itemize}
  For each expression $t$, we define
  \[
    N(t) = (c_n(t),l_{n-1}(t),r_{n-1}(t),\ldots,l_{0}(t),r_{0}(t)) \in \N^{2n+1}
  \]
  and consider the lexicographical ordering~$\ltlex$ on~$\N^{2n+1}$. For the
  inductive rules of $\tred$, we observe that
  \begin{itemize}
  \item if $t = t_1 \comp_n t_2$ and $t' = t'_1 \comp_n t_2$ with $N(t_1) \ltlex
    N(t'_1)$, then $N(t) \ltlex N(t')$,
    
  \item if $t = t_1 \comp_n t_2$ and $t' = t_1 \comp_n t'_2$ with $N(t'_2) \ltlex
    N(t_2)$, then $N(t') \ltlex N(t)$,
    
  \item if $t = u \comp_i \tilde t$ and $t' = u \comp_i \tilde t'$ with
    $N(\tilde t') \ltlex N(\tilde t)$, then $N(t') \ltlex N(t)$,
    
  \item if $t = \tilde t \comp_i v$ and $t' = \tilde t' \comp_i v$ with
    $N(\tilde t') \ltlex N(\tilde t)$, then $N(t') \ltlex N(t)$.
  \end{itemize}
  It is sufficient to prove that the other reduction rules decrease the norm
  $N(-)$. We only cover the most representative cases by computing the first
  component of $N(-)$ modified by the reduction rule and showing that it is
  strictly decreasing:
  \begin{align*}
    c_n(\unit u \comp_n t) &= c_n(t) + 3 > c_n(t), \\
    c_n((t_1 \comp_n t_2) \comp_n t_3) &= 4c_n(t_1) + 2c_n(t_2) + c_n(t_3) +
                                         3 \\
                           & > 2c_n(t_1) + 2c_n(t_2) + c_n(t_3) + 2 = c_n(t_1 \comp_n (t_2 \comp_n t_3)), \\
    l_{i}(u_1 \comp_i (u_2 \comp_i t)) &= 4l_{i}(t) + 3 > 2l_{i}(t) +
                                         1 = l_{i}((u_1 \comp_i u_2) \comp_i t), \\
    r_{i}((u_1 \comp_i t) \comp_i u_2) &= 2r_{i}(t) + 3 > 2r_{i}(t) + 2 = r_{i}(u_1 \comp_i (t \comp_i u_2)), \\
    l_{i}(u \comp_i (t_1 \comp_n t_2)) &= 2l_{i}(t_1) + 2l_{i}(t_2) +
                                         5 \\
                           &> 2l_{i}(t_1) + 2l_{i}(t_2) +
                                         4 = l_{i}((u \comp_i t_1) \comp_n (u \comp_i t_2)), \\
    l_{i}(u \comp_i (v \comp_j t)) &= 2l_{i}(t) + 3 > 2l_{i}(t) + 2 =
                                     l_{i}((u \comp_i v) \comp_j (u \comp_i t)) \text{ for $j > i$.}
  \end{align*}
  Thus, if $t \tred t'$, we have $N(t') \ltlex N(t)$. Since the lexicographical
  order~$\ltlex$ on~$\N^{2n+1}$ is well-founded, the reduction rule $\tred$ is
  terminating.
\end{proof}

\begin{lem}
  \label{lem:precat-locally-confluent}
  The relation $\tred$ is locally confluent.
\end{lem}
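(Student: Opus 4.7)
My plan is to follow the standard \emph{critical pair} approach: two reductions from the same expression $t$ either act at disjoint positions (in which case they commute strictly, yielding confluence in one step on each side) or they overlap at a common subterm, producing a \emph{critical pair} that must be checked explicitly. Since the rewrite rules are all applied at the root of some subterm (they are not conditional on the internal structure beyond pattern matching), the disjoint case is handled by a routine induction on the context in which the redexes lie, using the four congruence observations about $N(-)$ already laid out in the proof of \Lemr{precat-terminating} (here reused only as compatibility of reduction with subterm contexts). So the real content is the enumeration of overlaps.

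First I would classify the rules into five groups: (a) $n$-unit elimination $\unit u \comp_n t \tred t$ and $t \comp_n \unit u \tred t$; (b) low-dimensional unit elimination $\unitp{i+1}{\csrc_i(u)} \comp_i t \tred t$ and the symmetric rule; (c) associativity of $\comp_n$ and of the mixed forms $u_1 \comp_i (u_2 \comp_i t)$, $(t \comp_i v_1) \comp_i v_2$, $(u \comp_i t) \comp_i w$; (d) $(n{+}1)$-identity compatibility $u \comp_i \unit v \tred \unit{u \comp_i v}$ and the symmetric rule; (e) distributivity of $\comp_i$ over $\comp_n$ and of $\comp_i$ over $\comp_j$ for $i < j$. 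Then I would tabulate every pair of rules $(R,R')$ whose left-hand sides can be unified at a common subterm, and for each such overlap exhibit an expression $t'$ with $t_1 \tred^* t'$ and $t_2 \tred^* t'$.

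The representative subcases fall into easily recognisable families: associativity overlaps with itself produce the usual ``Mac Lane pentagon'' type diamond; an associativity overlap with a unit rule (say $\unit u \comp_n t \comp_n t'$) closes in one step after splitting the $\comp_n$; distributivity $u \comp_i (t_1 \comp_n t_2)$ overlapping with itself or with a $\comp_n$-associativity inside the second argument closes after redistributing; distributivity $u \comp_i (v \comp_j t)$ for $i < j$ overlapping with associativity in $v$ or in $u$ also closes in one or two symmetric steps; the unit-rules on $\comp_i$ overlapping with distributivity collapse via the identity-compatibility rules $u \comp_i \unit v = \unit{u \comp_i v}$. In each case the joining expression is obtained by applying the axioms of the theory of precategories (listed in \Ssecr{precat-def}) and reducing to a common normal form.

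The main obstacle is the bookkeeping: there are on the order of a few dozen overlap families once one varies the positions of units and the dimensions $i < j < n$ in the distributivity rules. I would organise the case analysis by the outermost operator of the overlap (either $\comp_n$, or $\comp_i$ with a cell on the left, or $\comp_i$ with a cell on the right) and then by the shape of the inner redex, so that symmetric pairs are treated together. Each individual diamond is closed by at most two reductions on either side, with the joining expression dictated by a single axiom of $n$-precategories; nothing more delicate than this is needed, and at no point does one need to leave the set of well-typed expressions, thanks to \Lemr{eq-implies-typing}.
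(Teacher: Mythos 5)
Your proposal is correct and follows essentially the same route as the paper: invoke (an adaptation of) the critical pair lemma so that only overlapping redexes need checking, then close each critical branching by direct computation using the precategory axioms, exactly as the paper does with its two representative examples. The extra organisation you add (rule grouping, explicit treatment of the disjoint case) is fine but does not change the substance of the argument.
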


\begin{proof}
  By a direct adaptation of the critical pair lemma (for example
  \cite[Thm.~6.2.4]{baader1999term}), it is enough to show that all critical
  branchings are confluent, which can be checked by direct computation. For example, given
  $t_1$, $t_2$, $t_3$ and $t_4$ suitably typed, there is a critical branching given by the
  reductions
  \[
    (t_1 \comp_n (t_2 \comp_n t_3)) \comp_n t_4
    \tlred
    ((t_1 \comp_n t_2) \comp_n t_3) \comp_n t_4 \tred 
    (t_1 \comp_n t_2) \comp_n (t_3 \comp_n t_4).
  \]
  This branching is confluent since 
  \[
    (t_1 \comp_n (t_2 \comp_n t_3)) \comp_n t_4 \tred
    t_1 \comp_n ((t_2 \comp_n t_3) \comp_n t_4) \tred
    t_1 \comp_n (t_2 \comp_n (t_3 \comp_n t_4))
  \]
  and
  \[
    (t_1 \comp_n t_2) \comp_n (t_3 \comp_n t_4) \tred
    t_1 \comp_n (t_2 \comp_n (t_3 \comp_n t_4)).
  \]
  Another critical branching is given by the reductions
  \[
    (u_1 \comp_i u_2) \comp_i (t_1 \comp_n t_2) \tlred
    u_1 \comp_i (u_2 \comp_i (t_1 \comp_n t_2)) \tred
    u_1 \comp_i ((u_2 \comp_i t_1) \comp_n (u_2 \comp_i t_2))
  \]
  for $u_1,u_2 \in C_i$ with $i \le n$
  and $t_1,t_2$ suitably typed. This branching is confluent since 
  \[
    (u_1 \comp_i u_2) \comp_i (t_1 \comp_n t_2) \tred
    ((u_1 \comp_i u_2) \comp_i t_1) \comp_n ((u_1 \comp_i u_2) \comp_i t_2)
  \]
  and
  \begin{align*}
    u_1 \comp_i ((u_2 \comp_i t_1) \comp_n (u_2 \comp_i t_2)) &\tred
    (u_1 \comp_i(u_2 \comp_i t_1)) \comp_n (u_1 \comp_i (u_2 \comp_i t_2)) \\ &\tred
    ((u_1 \comp_iu_2) \comp_i t_1) \comp_n ((u_1 \comp_i u_2) \comp_i t_2).
  \end{align*}
  The other cases are similar.
\end{proof}

\begin{theo}
  \label{thm:precat-nf}
  Any cell in $u\in C_{n+1}^*$ admits a unique representative by an expression
  of the form
  \[
    u=u_1\comp_nu_2\comp_n\cdots\comp_n u_k
  \]
  where each $u_i$ decomposes as
  \begin{equation}
    \label{eq:precat-nf}
    u_i=v^i_{n}\comp_{n-1}(\cdots\comp_2(v^i_2\comp_1(v^i_1\comp_0 A^i\comp_0 w^i_1)\comp_1 w^i_2)\comp_2\cdots)\comp_{n-1} w^i_{n}
  \end{equation}
  where $A^i$ is an element of~$C_{n+1}$ and $v^i_j$ and $w^i_j$ are $j$-cells
  in $C_j$.
\end{theo}
\begin{proof}
  We have seen in \Lemr{precat-terminating} and \Lemr{precat-locally-confluent}
  that the relation $\tred$ is terminating and locally confluent. By Newman's
  lemma (see, for example, \cite[Lem.~2.7.2]{baader1999term}), it is thus confluent and every equivalence class of
  expressions contains a unique normal form, which can be obtained by reducing
  any expression to its normal form. It can be checked that those normal forms
  are in bijective correspondence with the expression of the form \eqref{eq:precat-nf}
  (essentially, those expressions are normal forms where identities have been
  suitably inserted).
\end{proof}
\noindent A cell of $\freecat C_{n+1}$ of the form~\eqref{eq:precat-nf} is
called a \emph{whisker}. By the inductive definition of prepolygraphs from
\Ssecr{pol} and \Thmr{precat-nf}, given an $m$\prepolygraph~$\P$ with $m > 0$,
an $(i{+}1)$-cell $u \in \freecat\P_{i+1}$ with $i \in \set{0,\ldots,m-1}$ can be
uniquely written as a composite of $(i{+}1)$\dimensional whiskers $u_1 \comp_i
\cdots \comp_i u_k$ for a unique $k \in \N$ that is called the \emph{length}
of~$u$ and denoted by~$\len{u}$. Moreover, each whisker $u_j$ admits a unique
decomposition of the form~\eqref{eq:precat-nf}. We will extensively use this
canonical form for cells of precategories freely generated by a prepolygraph in
the following, often omitting to invoke \Thmr{precat-nf}.

\begin{example}
  Recall the $3$\prepolygraph of pseudomonoids~$\P$ from
  \Exr{pseudo-monoid-3-pol}. \Thmr{precat-nf} allows a canonical string diagram
  representation of the elements of~$\freecat\P_2$: first, we represent the
  $2$\generators $\mu$ and $\eta$ by $\satex{mu}$ and $\satex{eta}$
  respectively. Secondly, we represent the whiskers $\bar m \comp_0 \alpha
  \comp_0 \bar n$ for $m,n\in\N$ and $\alpha \in \P_2$ by adding $m$ wires on
  the left and $n$ wires on the right of the representation of $\alpha$. For
  example, $\bar 2 \comp_0 \mu \comp_0 \bar 3$ is represented by
  \[
    \satex{whisk}\pbox.
  \]
  Finally, a $2$-cell of~$\freecat\P_2$, which decomposes
  as a composite of whiskers $w_1 \comp_1 \cdots \comp_1 w_k$, is represented by
  stacking the representation the whiskers. For example, below are shown two
  $2$-cells with their associated graphical representation:
  \begin{align*}
    (0\comp_0\mu\comp_02)\comp_1(1\comp_0\mu\comp_00)\comp_1\mu&=\satex{mon-ex1}
    \\
    (2\comp_0\mu\comp_00)\comp_1(0\comp_0\mu\comp_01)\comp_1\mu&=\satex{mon-ex2}\pbox.
  \end{align*}
  Note that, contrary to $2$-cells of strict $2$-categories, these two $2$-cells
  are not equal in $\freecat\P_2$. The above graphical representation can be used
  in order to define unambiguously the source and target of $3$-cells. Here, the
  $3$-generators $\monA$, $\monL$, and $\monR$ can be described graphically by
  \[
    \setlength\myarrayintersep{\jot}%
    \setlength\arraycolsep{0pt}
    \begin{array}{c@{\ }l@{\ }r@{\ }c@{\ }l}
      \monA&\co&\spacebelowsatex{mon-assoc-l} &\TO& \satex{mon-assoc-r} \\
      \monL&\co &\spacebelowsatex{mon-unit-l} &\TO& \satex{mon-unit-c} \\
      \monR&\co &\satex{mon-unit-r} &\TO& \satex{mon-unit-c}\pbox{\hspace*{1.5em}.}
    \end{array}
  \]
  \simon{centrer les targets de L et R}%
\end{example}
\subsection[\texorpdfstring{$(3,2)$}{(3,2)}-precategories]{$\bm{(3,2)}$-precategories}
\label{ssec:(3,2)-precategory}
In the following sections, we will mostly consider $3$\precategories that are
generated by $3$\prepolygraphs (as the one from \Exr{pseudo-monoid-3-pol}),
whose $3$\generators should moreover be thought as ``invertible operations''
(think of the $3$\generators $\monA,\monL,\monR$ of \Exr{pseudo-monoid-3-pol}). Thus, we
will in fact be dealing with $3$\precategories whose $3$\cells are all
invertible. Such $3$\precategories will usually be obtained by applying a
localization construction to the $3$\precategory~$\freecat\P$ for some
$3$\prepolygraph~$\P$, which is a direct
adaptation of the one for categories and described below.

Given a $3$\precategory~$C$, a $3$-cell~$F\co \phi\TO
\phi' \in C_3$ is \emph{invertible} when there exists $G\co \phi'\TO \phi$ such
that $F \comp_2 G = \unit{\phi}$ and $G \comp_2 F = \unit{\phi'}$. In this case,
$G$ is unique and we write it as $F^{-1}$. A \emph{$(3,2)$\precategory} is a
$3$\precategory where every $3$-cell is invertible. The $(3,2)$\precategories
form a full subcategory of~$\nPCat {3}$ denoted~$\nPCat {(3,2)}$.

There is a forgetful functor
\[
  \fginvf\co \nPCat{(3,2)} \to \nPCat {3}
\]
which admits a left adjoint $\freeinvf {(-)}$ also called \emph{localization
  functor} described as follows. Given a $3$\precategory~$C$, for every $F\co
\phi\To\phi' \in
C_3$, we write $F^+$ for a formal element of source $\phi$ and target
$\phi'$, and $F^-$ for a formal element of source $\phi'$ and target
$\phi$. A \emph{zigzag} of~$C$ is a list 
\begin{equation}
  \label{eq:zigzag-list}
  (F_1^{\eps_1},\ldots,F_k^{\eps_k})_{\phi,\phi'}
\end{equation}
for some $k \ge 0$, $F_1,\ldots,F_k \in C_{3}$ and $\eps_1,\ldots,\eps_k \in
\set{-,+}$ such that $\phi = \csrc(F_1^{\eps_1})$, $\phi' = \ctgt(F_k^{\eps_k})$
and $\ctgt(F_i^{\eps_i}) = \csrc(F_{i+1}^{\eps_{i+1}})$ for $1 \le i < k$ (there
is one empty list $()_{\phi,\phi}$ for each~$\phi \in \freecat\P_2$, by
convention). The source and the target of a zigzag as in~\eqref{eq:zigzag-list}
are $\phi$ and $\phi'$ respectively. Then, we define the truncation $\truncf 3
2(\freeinvf{C})$ as $\truncf 3 2(C)$ and $(\freeinvf{C})_{3}$ as the quotient of
the zigzags by the following equalities: for every
zigzag~$(F_1^{\eps_1},\ldots,F_k^{\eps_k})_{\phi,\phi'}$,
\begin{itemize}
\item if $F_i = \unit{\psi}$ for some $i \in \set{1,\ldots,k}$ and $\psi \in C_2$, then
  \[
    (F_1^{\eps_1},\ldots,F_k^{\eps_k})_{\phi,\phi'} =
    (F_1^{\eps_1},\ldots,F_{i-1}^{\eps_{i-1}},F_{i+1}^{\eps_{i+1}},\ldots,F_k^{\eps_k})_{\phi,\phi'},
  \]
\item if $\eps_i = \eps_{i+1} = +$ for some $i \in \set{1,\ldots,k-1}$, then
  \[
    (F_1^{\eps_1},\ldots,F_k^{\eps_k})_{\phi,\phi'} =
    (F_1^{\eps_1},\ldots,F_{i-1}^{\eps_{i-1}},(F_{i} \comp_2 F_{i+1})^{+},F_{i+2}^{\eps_{i+2}},\ldots,F_k^{\eps_k})_{\phi,\phi'},
  \]
\item if $\eps_i = \eps_{i+1} = -$ for some $i \in \set{1,\ldots,k-1}$, then
  \[
    (F_1^{\eps_1},\ldots,F_k^{\eps_k})_{\phi,\phi'} =
    (F_1^{\eps_1},\ldots,F_{i-1}^{\eps_{i-1}},(F_{i+1} \comp_2
    F_{i})^{-},F_{i+2}^{\eps_{i+2}},\ldots,F_k^{\eps_k})_{\phi,\phi'},
  \]
\item if $\set{\eps_i,\eps_{i+1}} = \set{-,+}$ and $F_i = F_{i+1}$ for some $i
  \in \set{1,\ldots,k-1}$, then
  \[
    (F_1^{\eps_1},\ldots,F_k^{\eps_k})_{\phi,\phi'} =
    (F_1^{\eps_1},\ldots,F_{i-1}^{\eps_{i-1}},F_{i+2}^{\eps_{i+2}},\ldots,F_k^{\eps_k})_{\phi,\phi'}.
  \]
\end{itemize}
Since the definitions of source and target of zigzags are compatible with the
above equalities, they induce source and target operations $\csrc,\ctgt\co
(\freeinvf{C})_{3} \to C_2$. Given
\[
  F = (F_1^{\eps_1},\ldots,F_k^{\eps_k})_{\phi_1,\phi_2} \in (\freeinvf{C})_{3}
  \qtand
  G = (G_1^{\delta_1},\ldots,G_l^{\delta_l})_{\phi_2,\phi_3} \in (\freeinvf{C})_{3},
\]
we define $F \comp_2 G$ as
\[
  F \comp_2 G = (F_1^{\eps_1},\ldots,F_k^{\eps_k},G_1^{\delta_1},\ldots,G_l^{\delta_l})_{\phi_1,\phi_3}
\]
and, given $i \in \set{0,1}$, $u \in C_{i+1}$ and $F =
(F_1^{\eps_1},\ldots,F_k^{\eps_k})_{\phi,\phi'}$ with $\ctgt_i(u) =
\csrc_i(\phi)$, we define $u \comp_i F$ as
\[
  u \comp_i F = ((u \comp_i F_1)^{\eps_1},\ldots,(u \comp_i F_k)^{\eps_k})_{u
    \comp_i \phi,u \comp_i \phi'}
\]
and, finally, given $\phi \in C_2$, we define $\unit \phi$ as $()_{\phi,\phi}$.
All these operations are compatible with the quotient equalities above, and they equip $\freeinvf{C}$ with a structure of $3$\precategory.

There is a canonical $3$\prefunctor $H\co C \to \freeinvf{C}$ sending $F\co\phi\TO\psi\in
C_3$ to $(F^+)_{\phi,\psi}$. Moreover, given a
$(3,2)$\precategory~$D$ and a $3$\prefunctor $G\co C \to D$, we can define
$G'\co \freeinvf{C} \to D$ by putting $G'(u) = G(u)$ for $u \in
C_i$ with $i \le 2$ and
\[
  G'((F_1^{\eps_1},\ldots,F_k^{\eps_k})_{\phi,\phi'}) = G'(F_1^{\eps_1}) \comp_2 \ldots
  \comp_2 G'(F_k^{\eps_k})
\]
for a zigzag $(F_1^{\eps_1},\ldots,F_k^{\eps_k})_{u,v}$ where
\[
  G'(F^\eps) =
  \begin{cases}
    G(F) & \text{if $\eps = +$} \\
    G(F)^{-1} & \text{if $\eps = -$}
  \end{cases}
\]
for $F \in C_{3}$ and $\eps \in \set{-,+}$. The definition of~$G'$ is compatible
with the quotient equalities above so that $G'$ is well-defined, and $G'$ can be shown to
uniquely factorize $G$ through $H$. Hence, $\freeinvf{(-)}$ is indeed a left
adjoint for $\fginvf$. In the following, given a $3$\precategory~$C$ and $F \in
C_3$, we often write $F$ for $H(F)$.

\section{Gray categories}
\label{sec:gray}
Strict $3$\categories are categories enriched in the monoidal category $\nCat 2$
equipped with the cartesian product. Similarly, Gray categories are categories
enriched in the monoidal category $\nCat 2$ equipped with the Gray
tensor product. The latter can be seen as an ``asynchronous'' variant of the
cartesian product, similar to the funny tensor product, where two interleavings
of the same morphisms are related by ``exchange'' cells. Typically, consider the
$1$-categories $C$ and $D$ below
\begin{align*}
  C&=
  \begin{tikzcd}[ampersand replacement=\&]
    x\ar[r,"f"]\&x'
  \end{tikzcd}
  &
  D&=
  \begin{tikzcd}[ampersand replacement=\&]
    y\ar[r,"g"]\&y'
  \end{tikzcd}
\end{align*}
their funny and Gray tensor products are respectively
\begin{align*}
  C\funny D&=
  \begin{tikzcd}[sep=small,ampersand replacement=\&]
    (x,y)
    \ar[r,"{(f,y)}"]
    \ar[d,"{(x,g)}"']
    \&(x',y)
    \ar[d,"{(x',g)}"]
    \\
    (x,y')\ar[r,"{(f,y')}"']\&(x',y')
  \end{tikzcd}
  &
  C\grayy D&=
  \begin{tikzcd}[sep=small,ampersand replacement=\&]
    (x,y)
    \ar[r,"{(f,y)}"]
    \ar[d,"{(x,g)}"']
    \ar[rd,phantom,"\Downarrow\scriptstyle\chi"]
    \&(x',y)
    \ar[d,"{(x',g)}"]
    \\
    (x,y')\ar[r,"{(f,y')}"']\&(x',y')
  \end{tikzcd}
\end{align*}
where the exchange $2$-cell~$\chi$ can be invertible or not, depending on
whether we consider the pseudo or lax variant of the Gray tensor product. We
first recall quickly the definition of the Gray tensor product, both in
its lax and pseudo variants. We then give a more explicit description
in terms of generators and relations of categories enriched in $2$\categories
with the Gray tensor product. Then, we give a way for presenting canonically a
Gray category.


\subsection{The Gray tensor products}
\label{ssec:gray-tensor}
We recall here the definitions of the Gray tensor products on $2$\categories, in
its lax and pseudo variants. We refer the reader
to~\cite[Sec.~I,4]{gray2006formal} for details.

A (strict) \emph{$2$\category} is a $2$\precategory~$C$ such that, for all
$\phi,\psi \in C_2$ with $\ctgt_0(\phi) = \csrc_0(\psi)$,
\[
  (\phi \comp_0 \csrc_1(\psi)) \comp_1 (\ctgt_1(\phi) \comp_0 \psi)
  =
  (\csrc_1(\phi) \comp_0 \psi) \comp_1 (\phi \comp_0 \ctgt_1(\psi)).
\]
We denote $\nCat 2$ the full subcategory of $\nPCat 2$ whose objects are
$2$\categories. We write $\termcat$ for the terminal $2$-category and we write
${\ast}$ for its unique $0$-cell.

Given $C$ and $D$ two $2$\categories, we write $C \graylax D$ for the
$2$\category which is presented as follows:
\begin{itemize}
\item the $0$-cells of $C \graylax D$ are the pairs $(x,y)$ where $x \in C_0$ and $y
  \in D_0$,
\item the $1$-cells of $C \graylax D$ are generated by $1$-cells
  \[
    (f,y)\co (x,y) \to (x',y) \qtand (x,g)\co (x,y) \to (x,y'),
  \]
  for $f\co x \to x' \in C_1$ and $g\co y \to y' \in C_2$,
\item the $2$-cells of $C \graylax D$ are generated by the $2$-cells
  \[
    (\phi,y)\co (f,y) \to (f',y) \qtand (x,\psi)\co (x,g) \to (x,g')
  \]
  for $\phi\co f\To f' \in C_2$, $\psi\co g\To g' \in C_2$ and $x,y \in C_0$,
  and by the $2$-cells
  \[
    \begin{tikzcd}
      (x,y)\ar[d,"{(x,g)}"']\ar[r,"{(f,y)}"]\ar[phantom,dr,"{\Downarrow\scriptstyle(f,g)}"]&(x',y)\ar[d,"{(x',g)}"]\\
      (x,y')\ar[r,"{(f,y')}"']&(x',y')
    \end{tikzcd}
  \]
  for $f\co x \to x' \in C_1$ and $g\co y \to y' \in C_1$,
\end{itemize}
under the conditions that
\begin{enumerate}[label=(\roman*),ref=(\roman*)]
\item the $1$-generators are compatible with $0$-composition, meaning that
  \begin{align*}
    (\unit x,y) &= (x,\unit y) = \unit{(x,y)}  \\
    (f \comp_0 f',y) &= (f,y) \comp_0 (f',y)\\
    (x,g \comp_0 g') &= (x,g) \comp_0 (x,g')
  \end{align*}
  for all $x \in C_0$, $y \in D_0$, $0$-composable $f,f' \in C_1$ and
  $0$-composable $g,g' \in D_1$,
\item the $2$-generators are compatible with $0$-composition, meaning that
  \begin{align*}
    (\unitp{2}{x},y) &= (x,\unitp{2}{y}) = \unitp{2}{(x,y)} \\
    (\phi_1 \comp_0 \phi_2, y) & = (\phi_1,y) \comp_0 (\phi_2,y) \\
    (x,\psi_1 \comp_0 \psi_2) &= (x,\psi_1) \comp_0 (x,\psi_2)
  \end{align*}
  for all $x \in C_0$, $y \in D_0$, $0$-composable $\phi_1,\phi_2 \in C_2$ and
  $0$-composable $\psi_1,\psi_2 \in D_2$, \ie graphically,
  \begingroup
  \displayskipforlongtable
  \begin{longtable}{C}
    \begin{spacebelowenv}
      \begin{tikzcd}[column sep=4em,cramped,ampersand replacement=\&]
        {(x,y)} \arrow[r, bend left=31,""{auto=false,name=src},"{(\unit x,y)}"]
        \arrow[r, bend right=31,""{auto=false,name=tgt},"{(\unit x,y)}"']
        \ar[r,phantom,"\Downarrow\scriptstyle{(\unitp 2 x,y)}"] \& {(x,y)}
      \end{tikzcd}
    \end{spacebelowenv}
    =
    \begin{tikzcd}[column sep=4em,cramped,ampersand replacement=\&]
      {(x,y)} \arrow[r, bend left=31,""{auto=false,name=src},"{(x,\unit y)}"] \arrow[r, bend
      right=31,""{auto=false,name=tgt},"{(x,\unit y)}"']
      \ar[r,phantom,"\Downarrow\scriptstyle{(x,\unitp 2 y)}"]
      \& {(x,y)}
    \end{tikzcd}
    =
    \begin{tikzcd}[column sep=4em,cramped,ampersand replacement=\&]
      {(x,y)} \arrow[r, bend left=31,""{auto=false,name=src},"{\unit{(x,y)}}"] \arrow[r, bend
      right=31,""{auto=false,name=tgt},"{\unit{(x,y)}}"']
      \ar[r,phantom,"\Downarrow\scriptstyle\unitp 2 {(x,y)}"]
      \& {(x,y)}
    \end{tikzcd}
    \\
    \begin{spacebelowenv}
      \begin{tikzcd}[column sep=5em,cramped,ampersand replacement=\&]
        {(x_0,y)} \arrow[r, bend left=31,""{auto=false,name=src},"{(f_1 \comp_0
          f_2,y)}"] \arrow[r, bend right=31,""{auto=false,name=tgt},"{(f'_1
          \comp_0 f'_2,y)}"'] \ar[r,phantom,"{\Downarrow\scriptstyle(\phi_1 {\comp_0}
          \phi_2,y)}"] \& {(x_2,y)}
      \end{tikzcd}
    \end{spacebelowenv}
    =
    \begin{tikzcd}[column sep=4em,cramped,ampersand replacement=\&]
      {(x_0,y)} \arrow[r, bend left=31,"{(f_1,y)}"] \arrow[r, bend right=31,"{(f'_1,y)}"']
      \ar[r,phantom,"{\Downarrow\scriptstyle(\phi_1,y)}"]
      \& {(x_1,y)} \arrow[r, bend left=31,"{(f_2,y)}"] \arrow[r, bend right=31,"{(f'_2,y)}"']
      \ar[r,phantom,"{\Downarrow\scriptstyle(\phi_2,y)}"]
      \& {(x_2,y)}
    \end{tikzcd}
    \\
    \begin{tikzcd}[column sep=5em,cramped,ampersand replacement=\&]
      {(x,y_0)} \arrow[r, bend left=31,""{auto=false,name=src},"{(x,g_1 \comp_0 g_2)}"] \arrow[r, bend
      right=31,""{auto=false,name=tgt},"{(x,g'_1 \comp_0 g'_2)}"']
      \ar[r,phantom,"{\Downarrow\scriptstyle(x,\psi_1 {\comp_0} \psi_2)}"]
      \& {(x,y_2)}
    \end{tikzcd}
    =
    \begin{tikzcd}[column sep=4em,cramped,ampersand replacement=\&]
      {(x,y_0)} \arrow[r, bend left=31,"{(x,g_1)}"] \arrow[r, bend right=31,"{(x,g'_1)}"']
      \ar[r,phantom,"{\Downarrow\scriptstyle(x,\psi_1)}"]
      \& {(x,y_1)} \arrow[r, bend left=31,"{(x,g_2)}"] \arrow[r, bend right=31,"{(x,g'_2)}"']
      \ar[r,phantom,"{\Downarrow\scriptstyle(x,\psi_2)}"]
      \& {(x,y_2)}
    \end{tikzcd}
  \end{longtable}\todo{CHAQUE FOIS: vérifier l'espacement avec le bas de page}
  \endgroup
\item the $2$-generators are compatible with $1$-composition, meaning that
  \begin{align*}
    (\unit{f},y) &= \unit{(f,y)} \\
    (\phi_1 \comp_1 \phi_2, y) & = (\phi_1,y) \comp_1 (\phi_2,y) \\
    (x,\unit{g}) &= \unit{(x,g)}\\
    (x,\psi_1 \comp_1 \psi_2) &= (x,\psi_1) \comp_1 (x,\psi_2)
  \end{align*}
  for all $\phi_i\co f_{i-1} \To f_i \co x \to x'$ and $\psi_i\co g_{i-1} \To
  g_i \co y \to y'$ for $i \in \set{1,2}$ and $f\co x \to x'$ and $g\co y \to
  y'$, \ie graphically,
  \begingroup
  \displayskipforlongtable
  \begin{longtable}{CCC}
    \begin{spacebelowenv}
      \begin{tikzcd}[column sep=5em,cramped,ampersand replacement=\&]
        {(x,y)} \arrow[r, "{(f,y)}", bend left=31] \arrow[r, "{(f,y)}"', bend
        right=31] \ar[r,phantom,"\Downarrow\scriptstyle{(\unit f,y)}"] \& {(x',y)}
      \end{tikzcd}
    \end{spacebelowenv}
    &=&
    \begin{tikzcd}[column sep=5em,cramped,ampersand replacement=\&]
      {(x,y)} \arrow[r, "{(f,y)}", bend left=31] \arrow[r, "{(f,y)}"', bend
      right=31]
      \ar[r,phantom,"\Downarrow\scriptstyle\unit{(f,y)}"]
      \& {(x',y)}
    \end{tikzcd}
    \\
    \begin{spacebelowenv}
      \begin{tikzcd}[column sep=5em,cramped,ampersand replacement=\&]
        {(x,y)}
        \arrow[r, "{(f_0,y)}",""{auto=false,name=arr}, bend left=31] \arrow[r,
        "{(f_2,y)}"',""{auto=false,name=arrxx},bend right=31]
        \ar[r,"{\Downarrow\scriptstyle(\phi_1 {\comp_1} \phi_2,y)}",phantom] \&
        {(x',y)}
      \end{tikzcd}
    \end{spacebelowenv}
    &=&
    \begin{tikzcd}[column sep=5em,cramped,ampersand replacement=\&]
      {(x,y)} \arrow[r, "{(f_1,y)}" description,""{auto=false,name=arrx}]
      \arrow[r, "{(f_0,y)}",""{auto=false,name=arr}, bend left=50] \arrow[r,
      "{(f_2,y)}"',""{auto=false,name=arrxx},bend right=50] \& {(x',y)}
      \ar[from=arr,to=arrx,phantom,"{\Downarrow\scriptstyle(\phi_1,y)}"]
      \ar[from=arrx,to=arrxx,phantom,"{\Downarrow\scriptstyle(\phi_2,y)}"]
    \end{tikzcd}
    \\
    \begin{spacebelowenv}
      \begin{tikzcd}[column sep=5em,cramped,ampersand replacement=\&]
        {(x,y)} \arrow[r, "{(x,g)}", bend left=31] \arrow[r, "{(x,g)}"', bend
        right=31] \ar[r,phantom,"\Downarrow\scriptstyle{(x,\unit g)}"] \& {(x,y')}
      \end{tikzcd}
    \end{spacebelowenv}
    &=&
    \begin{tikzcd}[column sep=5em,cramped,ampersand replacement=\&]
      {(x,y)} \arrow[r, "{(x,g)}", bend left=31] \arrow[r, "{(x,g)}"', bend
      right=31]
      \ar[r,phantom,"\Downarrow\scriptstyle\unit{(x,g)}"]
      \& {(x,y')}
    \end{tikzcd}
    \\
    \begin{tikzcd}[column sep=5em,cramped,ampersand replacement=\&]
      {(x,y)}
      \arrow[r, "{(x,g_0)}",""{auto=false,name=arr}, bend left=31] \arrow[r,
      "{(x,g_2)}"',""{auto=false,name=arrxx},bend right=31]
      \ar[r,"{\Downarrow\scriptstyle(x,\psi_1 {\comp_1} \psi_2)}",phantom]
      \& {(x,y')}
    \end{tikzcd}
    &=&
    \begin{tikzcd}[column sep=5em,cramped,ampersand replacement=\&]
      {(x,y)} \arrow[r, "{(x,g_1)}" description,""{auto=false,name=arrx}]
      \arrow[r, "{(x,g_0)}",""{auto=false,name=arr}, bend left=50] \arrow[r,
      "{(x,g_2)}"',""{auto=false,name=arrxx},bend right=50] \& {(x',y)}
      \ar[from=arr,to=arrx,phantom,"{\Downarrow\scriptstyle(x,\psi_1)}"]
      \ar[from=arrx,to=arrxx,phantom,"{\Downarrow\scriptstyle(x,\psi_2)}"]
    \end{tikzcd}
  \end{longtable}
  \endgroup
\item the interchangers are compatible with $0$-composition, meaning that
  \begin{align*}
    (\unit x,g) &= \unit{(x,g)} \\
    (f_1 \comp_0 f_2,g) & = ((f_1,y) \comp_0 (f_2,g)) \comp_1 ((f_1,g) \comp_0 (f_2,y')) \\
    (f,\unit y) &= \unit{(f,y)} \\
    (f,g_1 \comp_0 g_2) & = ((f,g_1) \comp_0 (x',g_2)) \comp_1 ((x,g_1) \comp_0 (f,g_2))
  \end{align*}
  for all $f_i\co x_{i-1} \to x_i$ and $g_i\co y_{i-1} \to y_i$ for $i \in
  \set{1,2}$ and $f\co x \to x'$ and $g\co y \to y'$, \ie graphically,
  \begingroup%
  \displayskipforlongtable%
  \begin{longtable}{C}
    \begin{spacebelowenv}
      \begin{tikzcd}[column sep=4em,cramped,ampersand replacement=\&]
        {(x,y)} \ar[rd,phantom,"{\overset{(\unit x,g)}\Leftarrow}"]
        \arrow[d, "{(x,g)}"'] \arrow[r, "{(\unit x,y)}"] \& {(x,y)} \arrow[d, "{(x,g)}"] \\
        {(x,y')} \arrow[r, "{(\unit x,y)}"'] \& {(x,y')}
      \end{tikzcd}
    \end{spacebelowenv}
    =
    \begin{tikzcd}[row sep=3em,column sep=4em,cramped,ampersand replacement=\&]
      {(x,y)}
      \ar[d,bend left=49,"{(x,g)}"]
      \ar[d,phantom,"{\overset{\unit{(x,g)}}\Leftarrow}"]
      \ar[d,bend right=49,"{(x,g)}"']
      \\
      (x,y')
    \end{tikzcd}
    \\
    \begin{spacebelowenv}
      \begin{tikzcd}[column sep=4em,cramped,ampersand replacement=\&]
        {(x_0,y)} \ar[rd,phantom,"{\Downarrow\scriptstyle(f_1 {\comp_0} f_2,g)}"]\arrow[r, "{(f_1 \comp_0 f_2,y)}"] \arrow[d, "{(x_0,g)}"'] \& {(x_2,y)} \arrow[d, "{(x_2,g)}"]\\
        {(x_0,y')} \arrow[r, "{(f_1 \comp_0 f_2,y')}"'] \& {(x_2,y')}
      \end{tikzcd}
    \end{spacebelowenv}
    =
    \begin{tikzcd}[column sep=4em,cramped,ampersand replacement=\&]
      {(x_0,y)} \arrow[d, "{(x_0,g)}"'] \arrow[r, "{(f_1,y)}"]
      \ar[rd,"{\Downarrow\scriptstyle(f_1,g)}",phantom]
      \& {(x_1,y)} \arrow[d, "{(x_1,g)}" description] \arrow[r, "{(f_2,y)}"]
      \ar[rd,"{\Downarrow\scriptstyle(f_2,g)}",phantom]
      \& {(x_2,y)} \arrow[d, "{(x_2,g)}"] \\
      {(x_0,y')} \arrow[r, "{(f_1,y')}"']                      \& {(x_1,y')} \arrow[r, "{(f_2,y')}"']                                 \& {(x_2,y')}
    \end{tikzcd}
    \\
      \begin{tikzcd}[column sep=4em,cramped,ampersand replacement=\&]
        {(x,y)} \ar[rd,phantom,"{\Downarrow\scriptstyle(f,\unit y)}"]
        \arrow[d, "{(x,\unit y)}"'] \arrow[r, "{(f,y)}"] \& {(x',y)} \arrow[d, "{(x',\unit y)}"] \\
        {(x,y)} \arrow[r, "{(f,y)}"'] \& {(x',y)}
      \end{tikzcd}
    =
    \begin{tikzcd}[column sep=4em,cramped,ampersand replacement=\&]
      {(x,y)}
      \ar[r,bend left=31,"{(f,y)}"]
      \ar[r,phantom,"{\Downarrow\scriptstyle\unit{(f,y)}}"]
      \ar[r,bend right=31,"{(f,y)}"']
      \&
      (x',y)
    \end{tikzcd}
    \\
    \begin{tikzcd}[column sep=4em,cramped,ampersand replacement=\&]
      {(x,y_0)}
      \arrow[d, "{(x,g_1 {\comp_0} g_2)}"'] \arrow[r, "{(f,y_0)}"]
      \ar[rd,phantom,"{\Downarrow\scriptstyle(f,g_1 {\comp_0} g_2)}"]
      \& {(x',y_0)} \arrow[d, "{(x',g_1{\comp_0} g_2)}"] \\
      {(x,y_2)} \arrow[r, "{(f,y_2)}"']                                    \& {(x',y_2)}
    \end{tikzcd}
    =
    \begin{tikzcd}[column sep=4em,cramped,ampersand replacement=\&]
      {(x,y_0)}
      \arrow[r, "{(f,y_0)}"] \arrow[d, "{(x,g_1)}"']
      \ar[rd,phantom,"{\Downarrow\scriptstyle(f,g_1)}"]
      \& {(x',y_0)} \arrow[d, "{(x',g_1)}"] \\
      {(x,y_1)} \arrow[r, "{(f,y_1)}" description] \arrow[d, "{(x,g_2)}"']
      \ar[rd,phantom,"{\Downarrow\scriptstyle(f,g_2)}"]
      \& {(x',y_1)} \arrow[d, "{(x',g_2)}"] \\
      {(x,y_2)} \arrow[r, "{(f,y_2)}"']                                    \& {(x',y_2)}
    \end{tikzcd}
  \end{longtable}
  \endgroup
\item the interchangers commute with the $2$-generators, meaning that
  \begin{align*}
    ((f,g) \comp_1 ((x,g) \comp_0 (\phi,y'))) &= (( (\phi,y) \comp_0 (x',g)) \comp_1 (f',g)) \\
    ((f,g) \comp_1 ((x,\psi) \comp_0 (f,y'))) &= (( (f,y) \comp_0 (x',\psi)) \comp_1 (f,g'))
  \end{align*}
  for $\phi\co f\To f'\co x \to x'$ and $\psi\co g\To g'\co y \to y'$, \ie graphically,
  \begingroup
  \displayskipforlongtable
  \begin{longtable}{C}
    \begin{spacebelowenv}
    \begin{tikzcd}[column sep=4em,row sep=normal,cramped]
      {(x,y)} \arrow[d, "{(x,g)}"'] \arrow[r, "{(f,y)}", bend left=31,""{name=src}]                    & {(x',y)} \arrow[d, "{(x',g)}"] \\
      {(x,y')} \arrow[r, "{(f,y')}", bend left=31,""{name=tgt}]
      \ar[r,phantom,"{\Downarrow\scriptstyle(\phi,y')}"]
      \arrow[r, "{(f',y')}"', bend right=31,""{name=tgtx}] & {(x',y')}                     
      \ar[from=src,to=tgt,phantom,"{\Downarrow\scriptstyle(f,g)}"]
    \end{tikzcd}
    \end{spacebelowenv}
    =
    \begin{tikzcd}[column sep=4em,row sep=normal,cramped]
      {(x,y)}
      \ar[r,phantom,"{\Downarrow\scriptstyle(\phi,y)}"]
      \arrow[d, "{(x,g)}"']
      \arrow[r, "{(f,y)}", bend left=31]
      \arrow[r, "{(f',y)}"', bend right=31,""{name=src}] & {(x',y)} \arrow[d, "{(x',g)}"] \\
      {(x,y')} \arrow[r, "{(f',y')}"', bend right=31,""{name=tgt}]                                                        & {(x',y')}                     
      \ar[from=src,to=tgt,phantom,"{\Downarrow\scriptstyle(f',g)}",pos=0.65]
    \end{tikzcd}
    \\
    \begin{tikzcd}[column sep=2.5em,row sep=2.6em,cramped]
      {(x,y)}
      \ar[d,phantom,"{\overset{(x,\psi)}{\Leftarrow}}"]
      \arrow[d, "{(x,g')}"', bend right=49] \arrow[r, "{(f,y)}"] \arrow[d, "{(x,g)}", bend left=49,""{name=src}] & {(x',y)} \arrow[d, "{(x',g)}", bend left=49,""{name=tgt}] \\
      {(x,y')} \arrow[r, "{(f,y')}"']                                                                       & {(x',y')}                                    
      \ar[from=src,to=tgt,phantom,"{\overset{(f,g)}\Leftarrow}",pos=0.6]
    \end{tikzcd}
    =
    \begin{tikzcd}[column sep=2.5em,row sep=2.6em,cramped]
      {(x,y)}
      \arrow[d, "{(x,g')}"', bend right=49,""{name=src}] \arrow[r, "{(f,y)}"]
      & {(x',y)} \arrow[d, "{(x',g)}", bend left=49]
      \arrow[d, "{(x',g')}"', bend right=49,""{name=tgt}]
      \ar[d,phantom,"{\overset{(x',\psi)}\Leftarrow}"]
      \\
      {(x,y')} \arrow[r, "{(f,y')}"']                                    & {(x',y')}                                                                          
      \ar[from=src,to=tgt,phantom,"{\overset{(f,g')}\Leftarrow}",pos=0.27]
    \end{tikzcd}\pbox.
  \end{longtable}
  \endgroup
\end{enumerate}
The construction extends to a bifunctor $\nCat 2 \times \nCat 2 \to \nCat 2$ by
defining, for $F\co C \to C'$ and $G\co D \to D'$, $F\graylax G$ as the unique
functor mapping
\begin{align*}
  (\phi,y) &\mapsto (F(\phi),G(y)) \\
  (x,\psi) &\mapsto (F(x),G(\psi)) \\
  (f,g) &\mapsto (F(f),G(g))
\end{align*}
for all $x \in C_0$, $y \in D_0$, $\phi \in C_2$, $\psi \in D_2$, $f \in C_1$
and $g \in D_1$.

For $C,D,E \in \nCat 2$, there is a $2$-functor
\[
  \grayasslax_{C,D,E}\co (C \graylax D) \graylax E \xto\sim C \graylax (D \graylax E)
\]
which is an isomorphism natural in $C,D,E$ and uniquely defined by the following
mappings on generators
\begin{align*}
    ((\phi,y),z) & \mapsto (\phi,(y,z)) &
    ((f,g),z) & \mapsto (f,(g,z)) \\
    ((x,\psi),z) & \mapsto (x,(\psi,z)) &
    ((f,y),h) & \mapsto (f,(y,h)) \\
    ((x,y),\gamma) & \mapsto (x,(y,\gamma)) &
    ((x,g),h) & \mapsto (x,(g,h))
\end{align*}
for $\phi\co f \To f'\co x \to x' \in C_2$, $\psi\co g \To g'\co y \to y' \in
D_2$ and $\gamma\co h\To h'\co z \to z' \in E_2$.

For $C \in \nCat 2$, there are two $2$-functors
\[
  \grayllax_C\co \termcat \graylax C \to C \qtand \grayrlax_C\co C \graylax \termcat \to C
\]
which are isomorphisms, natural in $C$, and uniquely defined by the mappings
\begin{align*}
  \grayllax((\ast,\psi)) = \psi \qtand \grayrlax((\psi,\ast)) = \psi
\end{align*}
for $\psi \in C_2$.
By checking coherence conditions between $\grayasslax$, $\grayllax$ and
$\grayrlax$, we get that:
\begin{prop}
  \label{prop:gray-tensor-product}
  The bifunctor $\graylax$ together with the unit~$\termcat$ and the natural
  isomorphisms $\grayasslax$, $\grayllax$ and $\grayrlax$ equip $\nCat 2$ with a structure
  of a monoidal category.
\end{prop}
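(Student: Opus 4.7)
The proof reduces to verifying the pentagon and triangle axioms for monoidal categories, since the bifunctoriality of $\graylax$ and the naturality of $\grayasslax$, $\grayllax$, $\grayrlax$ have been established in the preceding paragraphs. Both axioms are equations between $2$-functors whose sources are iterated Gray tensor products of $2$-categories, and each of the structural isomorphisms in play was itself uniquely defined by its action on the generators of such presented $2$-categories. Since any $2$-functor out of a $2$-category given by generators and relations is determined by its action on generators (once it is known to respect the relations, which in our case is automatic from the definitions), it is enough to check both axioms on generators.

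For the pentagon, I would enumerate the generators of $((C \graylax D) \graylax E) \graylax F$: these fall into five families coming from the $2$-generators $(\phi,y,z,w)$, $(x,\psi,z,w)$, $(x,y,\gamma,w)$, $(x,y,z,\delta)$, together with the various interchangers $(f,g)$, $(f,h)$, $(f,k)$, $(g,h)$, $(g,k)$, $(h,k)$ obtained by pairing $1$-cells from two distinct factors (with the third and fourth factors contributing identities). Using the explicit formulas given above for $\grayasslax$ on generators, the two paths of the pentagon both send each such generator to the evidently corresponding generator of $C \graylax (D \graylax (E \graylax F))$, only differing in the bracketing of tuples, which is taken care of by the associativity of the cartesian product on $0$-cells.

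For the triangle, I would argue similarly: the generators of $(C \graylax \termcat) \graylax D$ are parameterized by pairs of data from $C$ and $D$, since $\termcat$ contributes only the unique $0$-cell $\ast$ and identities. Applying $\grayasslax_{C,\termcat,D}$ and then $C \graylax \grayllax_D$ on one hand, and $\grayrlax_C \graylax D$ on the other, both yield the identity mapping on generators of $C \graylax D$ when we canonically identify $(x,\ast,y)$ with $(x,y)$. This is exactly the triangle identity.

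The main obstacle is bookkeeping rather than mathematical content: one must enumerate each family of generators and each relation, and verify that both sides of each coherence axiom agree. Since all the structural isomorphisms are defined by simple reassociation of tuples, and no genuine calculation with $2$-dimensional composition is required, the verification is long but mechanical; a referee-style proof can reasonably leave these routine checks to the reader after indicating the strategy.
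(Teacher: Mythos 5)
Your proposal is correct and matches the paper's (very terse) treatment: the paper simply asserts that the result follows "by checking coherence conditions between $\grayasslax$, $\grayllax$ and $\grayrlax$", i.e.\ the pentagon and triangle axioms, which is exactly what you verify. Your additional observation — that both sides of each axiom are $2$-functors out of a $2$-category presented by generators and relations, so it suffices to compare them on the listed generators, where the structural maps act by mere re-bracketing of tuples — is the same strategy the paper implicitly relies on in defining $\grayasslax$, $\grayllax$, $\grayrlax$ by their values on generators.
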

\noindent The monoidal structure $(\nCat 2,\graylax,\termcat,\grayasslax,\grayllax,\grayrlax)$ is
called the \emph{lax Gray tensor product}.

A variant of the Gray tensor is
called the \emph{pseudo Gray tensor product} is the monoidal structure
$(\nCat 2,\grayy,\termcat,\grayass,\grayl,\grayr)$ where, given $C,D \in \nCat 2$,
$C\grayy D$ is defined the same way as ${C \graylax D}$, except that we moreover
require that the $2$-cells $(f,g)$ of $C \grayy D$ be invertible. The natural
isomorphisms $\grayass,\grayl,\grayr$ are uniquely defined by similar mappings
than those defining $\grayasslax,\grayllax,\grayrlax$, and we have:
\begin{prop}
  \label{prop:gray-tensor-product-pseudo}
  The bifunctor $\grayy$ together with the unit~$\termcat$ and the natural isomorphisms
  $\grayass$, $\grayl$, $\grayr$ equip $\nCat 2$ with a structure of a monoidal
  category.
\end{prop}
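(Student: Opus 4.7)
The plan is to reduce this proposition to the previous one (\Propr{gray-tensor-product}) by exploiting the fact that the pseudo variant $\grayy$ is obtained from the lax variant $\graylax$ by imposing that the interchange $2$-generators be invertible. Formally, for $C,D \in \nCat 2$, there is a canonical quotient $2$-functor $\pi_{C,D}\co C \graylax D \to C \grayy D$ obtained as the presentation of $C \grayy D$ by the same generators and relations as $C \graylax D$, augmented with the relations asserting that each interchanger $(f,g)$ admits an inverse. This $\pi_{C,D}$ is natural in $C$ and $D$, i.e.\ $\pi$ defines a monoidal-like projection between the two bifunctors.

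I would first define the structural natural transformations $\grayass$, $\grayl$, $\grayr$ on generators by the formulas stated just after \Propr{gray-tensor-product}, which are identical to those of the lax case. To show they are well-defined $2$-functors, it suffices to check that they respect the defining relations of the pseudo Gray tensor product; but the non-invertibility relations are already verified in the lax case, and the new invertibility relations are preserved because each formula sends an interchanger to an interchanger (in one more variable), which is invertible in the target by construction of $\grayy$. They are isomorphisms: one defines inverses $\grayass^{-1}$, $\grayl^{-1}$, $\grayr^{-1}$ by symmetric formulas on generators and checks that the composites act as identities on generators and hence on all cells.

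For naturality, given $F\co C \to C'$ and similarly for $D$ and $E$, the square expressing naturality of $\grayass_{C,D,E}$ in $C$ is an equality of $2$-functors out of $(C \grayy D) \grayy E$, and by the universal property of the presentation it suffices to check it on generators, where it is immediate from the explicit formulas. The same argument handles $\grayl$ and $\grayr$. Finally, for the coherence axioms (pentagon and triangle), one again argues by the universal property: both sides of each coherence diagram are $2$-functors whose domain is freely presented by generators and relations, and they are already known to agree in the lax case by \Propr{gray-tensor-product}; since the formulas defining the pseudo versions on generators are identical to the lax ones, the same equalities hold in the pseudo Gray tensor product.

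The main (and only) real point to verify is that the presentation-based arguments actually go through after adding the invertibility relations, i.e.\ that no subtle conflict arises between the $2$-cell equalities imposed on interchangers and the existence of their inverses. This is unproblematic since invertibility is an algebraic property (expressible by adjoining inverses and two equations) that is preserved by any $2$-functor, so all structural maps extend uniquely from the lax to the pseudo setting and all verifications descend along $\pi$.
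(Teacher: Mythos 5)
Your proposal is correct and takes essentially the same route as the paper, which simply defines $\grayass$, $\grayl$, $\grayr$ by the same mappings on generators as in the lax case and verifies the monoidal axioms exactly as for \Cref{prop:gray-tensor-product}, the only additional point being the one you identify: the formulas send interchangers to (whiskerings of) interchangers, so the added invertibility relations are respected. A minor terminological quibble: passing from $\graylax$ to $\grayy$ adjoins inverse $2$-cells rather than being a quotient, but since two $2$-functors agreeing on an interchanger automatically agree on its inverse, this does not affect your argument.
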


\subsection{Gray categories}
\label{ssec:gray-cat}
For each of the two variants of Gray tensor product defined in the previous
section, there is an associated notion of $3$-dimensional category as we now
describe here.

A \emph{lax Gray category}~\cite[I,4.25]{gray2006formal} is a category
enriched in the category of $2$-categories equipped with the lax Gray tensor
product. A more explicit definition using generators and
relations can be given as follows. A \emph{Gray category} is a $3$\precategory~$C$ together with, for
all every $0$\composable pair of $2$-cells $\phi\co f \To f'\co x \to y$ and $\psi\co g \To g'\co y \to z$,
a $3$-cell
\[
  \begin{tabularx}{\linewidth}{XR@{}C@{\ }C@{\ }CX}
    &X_{\phi,\psi}\co
    &(\phi \comp_0 g) \comp_1 (f' \comp_0 \psi)
    &\TO
    & (f \comp_0
      \psi) \comp_1 (\phi \comp_0 g')
    &
    \\
    \noalign{\vskip\belowdisplayskip\hbox{which can be represented using string diagrams by}\vskip\abovedisplayskip}
    &
    &
    \satex[scale=1.5]{phi-psi}
    &
    \TO
    &
    \satex[scale=1.5]{psi-phi}
  \end{tabularx}
\]
\noindent
called \emph{interchanger} and satisfying the following sets of axioms:
\begin{enumerate}[label=(\roman*),ref=(\roman*)]
\item compatibility with compositions and identities: for $\phi\co f \To f'$,
  $\phi'\co f' \To f''$, $\psi\co g\To g'$, $\psi'\co g'\To g''$ in $C_2$ and
  $e$, $h$ in $C_1$ such
  that $e$, $\phi$, $\psi$ and $h$ are $0$\composable, we have
  \begingroup
  \begin{align*}
    X_{\unit{f},\psi} &= \unit{f \comp_0 \psi} &
                                                 X_{\phi \comp_1 \phi',\psi} &= ((\phi \comp_0 g) \comp_1 X_{\phi',\psi})
                                                                               \comp_2 (X_{\phi,\psi} \comp_1 (\phi' \comp_0 g')) \\
    X_{\phi,\unit{g}} &= \unit{\phi \comp_0 g} &
                                                 X_{\phi,\psi \comp_1 \psi'} &= (X_{\phi,\psi} \comp_1 (f' \comp_0 \psi'))
                                                 \comp_2 ((f \comp_0 \psi) \comp_1 X_{\phi,\psi'})
  \end{align*}
  \endgroup
  and
  \begin{align*}
    X_{e \comp_0 \phi,\psi} &= e \comp_0 X_{\phi,\psi} 
    & X_{\phi, \psi \comp_0 h} &= X_{\phi,\psi} \comp_0 h.
  \end{align*}
  Moreover, given $\phi,\psi \in C_2$ and $f \in C_1$ such that $\phi$, $f$ and
  $\psi$ are $0$\composable, we have
  \[
    X_{\phi \comp_0 f, \psi} = X_{\phi, f \comp_0 \psi}
  \]

\item exchange law for $3$-cells: for all $A\co \phi \TO \psi \in C_3$ and $B\co
  \psi \TO \psi' \in C_3$ such that $A$ and $B$ are $1$\composable, we have
  \[ 
    (A \comp_1 \psi) \comp_2 (\phi' \comp_1 B) = 
    (\phi \comp_1 B) \comp_2 (A \comp_1 \psi')
  \]
\item compatibility between interchangers and $3$-cells:
  given
  \[
    A \co \phi \TO \phi'\co u\To u' \in C_3 \qtand B\co\psi\TO \psi'\co v \To v'
    \in C_3,
  \]
  such that $A$, $B$ are $0$\composable, we have
  \begin{align*}
    ((A \comp_0 v) \comp_1 (u' \comp_0
    \psi)) \comp_2 X_{\phi',\psi} &= X_{\phi,\psi} \comp_2 ((u \comp_0 \psi)
    \comp_1 (A \comp_0 v')) \\
    ((\phi \comp_0 v) \comp_1 (u' \comp_0 B))
    \comp_2 X_{\phi,\psi'} &= X_{\phi,\psi} \comp_2 ((u \comp_0 B) \comp_1 (\phi
    \comp_0 v')).
  \end{align*}
\end{enumerate}
A \emph{morphism between two lax Gray categories}~$C$ and~$D$ is a $3$-prefunctor~$F\co
C \to D$ such that $F(X_{\phi,\psi}) = X_{F(\phi),F(\psi)}$.

We similarly have a notion of \emph{pseudo Gray category} which is a category
enriched in the category of $2$-categories equipped with the pseudo Gray tensor
product. In terms of generators and relations, a pseudo Gray category is a lax
Gray category~$C$ where the $3$-cell $X_{\phi,\psi}$ is invertible for every
$0$-composable $2$-cells
$\phi,\psi \in C_2$. A morphism between two pseudo Gray categories $C,D$ is a
morphism of lax Gray categories between $C$ and $D$.

In the following, a \emph{$(3,2)$-Gray category} is a lax Gray category whose
underlying $3$\precategory is a $(3,2)$\precategory. Note that it is then also a
pseudo Gray category. As one can expect, a localization of a lax Gray category
gives a $(3,2)$-Gray category:
\begin{prop}
  \label{prop:gray-induces-3-2-gray}
  If $C$ is a lax Gray category, then $\freeinvf{C}$ is canonically a
  $(3,2)$-Gray category.
\end{prop}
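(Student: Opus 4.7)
By construction every $3$-cell of $\freeinvf{C}$ is invertible, so $\freeinvf{C}$ is already a $(3,2)$\precategory, and since $\truncf{3}{2}(\freeinvf{C}) = \truncf{3}{2}(C)$, the two share the same $0$-, $1$- and $2$-cells. It therefore makes sense to define the interchangers in $\freeinvf{C}$ by pushing forward those of $C$ along the canonical $3$\prefunctor $H\co C \to \freeinvf{C}$, i.e.\ to set $X^{\freeinvf{C}}_{\phi,\psi} = H(X^{C}_{\phi,\psi}) = (X^{C}_{\phi,\psi})^{+}_{\ldots}$. The work then consists in checking that the three families of Gray axioms (i)--(iii) hold for this choice.

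Axiom~(i) is immediate: all of the equalities it requires are equalities between expressions built from interchangers, $2$-cells and the operations $\comp_0,\comp_1,\unit{-}$, and these operations are untouched by localization; so applying $H$ to the corresponding equalities in $C$ yields axiom~(i) in $\freeinvf{C}$. For axioms~(ii) and (iii), one first reduces to single-step zigzags: by the definitions of $\comp_2$ and of the whiskerings $u\comp_i(-)$ and $(-)\comp_i v$ on zigzags, both sides of each axiom are \emph{multilinear} (with respect to $\comp_2$) in the zigzag components of the $3$-cells involved. It therefore suffices to prove each axiom in the four cases where the $3$-cells $A$, $B$ (or $A$ alone in~(iii)) are of the form $(F^{\eps})$ for some $F \in C_3$ and $\eps\in\set{-,+}$. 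When all signs are $+$, each axiom is simply the image under $H$ of the corresponding equation in $C$.

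The main obstacle is handling the cases involving negative signs, i.e.\ the inverse $3$-cells. The strategy is to move the inverse across using the already-established positive version of the axiom together with invertibility. For instance, to prove the exchange axiom~(ii) for $A = (F^{-})$ with $F\co \phi'\TO \phi$ in $C$, one starts from the exchange law for $F$ and $B$ in $C$, pushes it to $\freeinvf{C}$ as
\[
  H(F) \comp_2 (\phi\comp_1 B) \;=\; (\phi'\comp_1 B) \comp_2 H(F),
\]
pre- and post-composes with $H(F)^{-1} = A$, and uses associativity and the group-like relations $A \comp_2 H(F) = \unit{\phi}$, $H(F)\comp_2 A = \unit{\phi'}$ to obtain $(A\comp_1 \psi)\comp_2(\phi'\comp_1 B) = (\phi\comp_1 B)\comp_2 (A\comp_1 \psi')$. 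Axiom~(iii) for $A = (F^{-})$ is handled in exactly the same way, using in addition that $X^{\freeinvf{C}}_{\phi,\psi}$ is invertible in the $(3,2)$\precategory $\freeinvf{C}$. Once these manipulations are carried out, multilinearity extends the result to arbitrary zigzags, completing the verification that $\freeinvf{C}$ is canonically a $(3,2)$-Gray category.
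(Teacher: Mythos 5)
Your proposal is correct and follows essentially the same route as the paper: push the Gray structure of $C$ forward along the canonical prefunctor $H$, establish the exchange and naturality axioms for single generators with a negative sign by composing the positive equation with inverses, and then extend to arbitrary zigzags by decomposing them as alternating composites and applying the one-step identities repeatedly (the paper's ``$4kl$ times'' argument is exactly your multilinearity step). The only cosmetic difference is that you spell out the definition of the interchangers via $H$ and the trivial axiom (i), which the paper leaves implicit.
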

\begin{proof}
  Given $1$-composable $3$-cells $F\co \phi \TO \phi'$ and $G\co \psi \TO \psi' \in C_3$, by the
  exchange law for $3$-cells, we have, in $\freeinvf{C}_3$,
  \[ 
    (F \comp_1 \psi) \comp_2 (\phi' \comp_1 G) = 
    (\phi \comp_1 G) \comp_2 (F \comp_1 \psi').
  \]
  By inverting $F \comp_1 \psi$ and $F \comp_1 \psi'$, we obtain
  \[
    (\phi' \comp_1 G) \comp_2 (\finv{F} \comp_1 \psi') = 
    (\finv{F} \comp_1 \psi) \comp_2 (\phi \comp_1 G).
  \]
  Similarly, 
  \[ 
    (\phi \comp_1 \finv{G}) \comp_2 (F \comp_1 \psi) = 
    (F \comp_1 \psi') \comp_2 (\phi' \comp_1 \finv{G})
  \]
  and
  \[ 
    (\finv{F} \comp_1 \psi') \comp_2 (\phi \comp_1 \finv{G}) = 
    (\phi' \comp_1 \finv{G}) \comp_2 (\finv{F} \comp_1 \psi).
  \]
  Now, given general $1$-composable $F\co \phi \TO \phi', G\co \psi \TO \psi' \in
  \freeinvf{C}_3$, we have that
  \[
    F = F_1 \comp_2 \finv{F_2} \comp_2 \cdots \comp_2 F_{2k-1} \comp_2 \finv{F_{2k}}
  \]
  and
  \[
    G = G_1 \comp_2 \finv{G_2} \comp_2 \cdots \comp_2 G_{2l-1} \comp_2 \finv{G_{2l}}
  \]
  for some $k,l \ge 1$ and $F_i,G_j \in C_3$ for $1 \le i \le 2k$ and
  $1 \le j \le 2l$. By applying the formulas above $4kl$ times to exchange the
  $F_i$'s with the $G_j$'s, we get
  \[
    (F \comp_1 \psi) \comp_2 (\phi' \comp_1 G) = 
    (\phi \comp_1 G) \comp_2 (F \comp_1 \psi').
  \]
  A similar argument gives the compatibility between interchangers and $3$-cells
  of $\freeinvf{C}$. Thus, $\freeinvf{C}$ is a $(3,2)$-Gray category.
\end{proof}

\subsection{Gray presentations}
\label{ssec:gray-presentation}
Starting from a $3$\prepolygraph~$\P$, such as the one of
\Exr{pseudo-monoid-gray-pres}, we want to add $3$\generators to~$\P$ and
relations on the $3$\cells of~$\freecat\P_3$ in order to obtain a presentation of
a lax Gray category. This can of course be achieved naively by adding, for each
pair of $0$\composable $2$\cells $\phi,\psi$ in $\freecat\P_2$, a $3$\generator
corresponding to the interchanger ``$X_{\phi,\psi}$'', together with the
relevant relations, but the resulting presentation has a large number of
generators, and we detail below a more economical way of proceeding in order to
present lax Gray categories.

A \emph{Gray presentation} is a $4$\prepolygraph~$\P$ containing the following distinguished
generators:
\begin{enumerate}[label=(\roman*)]
  
\item for $0$-composable $\alpha,g,\beta$ with $\alpha\co f \To f',\beta\co g
  \To g' \in \P_2$, $g \in \freecat\P_1$, a $3$-generator $X_{\alpha,g,\beta}\in
  \P_3$ called \emph{interchange generator}, which is of type
  \begingroup
  \vskip\abovedisplayskip
  \tabskip=0pt plus 1fill\halign
  to\linewidth{\tabskip=0pt\relax\tabskip=0pt\relax
    $#$\hfill&\hfill $#$\hfill&\hfill $#$\hfill&\hfill $#$\hfill\tabskip=0pt plus 1fill\cr
      X_{\alpha,g,\beta}\co
      &
        (\alpha\comp_0 g\comp_0 h)\comp_1(f'\comp_0 g\comp_0\beta)
      &\ \TO\ 
      &(f\comp_0 g\comp_0\beta)\comp_1(\alpha\comp_0 g\comp_0 h')
      \cr
      \noalign{\goodbreak\vskip\belowdisplayskip\hbox{\hskip\leftmargin which can be represented using string diagrams by}\vskip\abovedisplayskip}
      &
      \satex[scale=1.5]{a-b}
      &
      \TO
      &
      \satex[scale=1.5]{b-a}
        \cr
    }
  \endgroup
  \item for every pair of $3$-generators~$A,B \in \P_3$ and $e,e',h,h' \in
  \freecat\P_1$ and $\chi \in \freecat\P_2$ as in
  \begin{equation}
    \label{eq:peiffer-gen}
    \begin{tikzcd}[column sep=10ex,row sep=3ex]
      &
      x
      \ar[r,bend left,"f"]
      \ar[r,phantom,"{\scriptstyle\phi}\Downarrow\overset{A}{\underset{\phantom{A}}\TO}\Downarrow{\scriptstyle\phi'}"]
      \ar[r,bend right,"g"']
      &
      y
      \ar[dr,"h"description]
      \\
      w
      \ar[ur,"e"description]
      \ar[rrr,phantom,"\phantom{\scriptstyle\chi\!}\Downarrow{\!\scriptstyle\chi}"]
      \ar[dr,"e'"description]
      &&&
      z
      \\
      &
      x'
      \ar[r,bend left,"f'"]
      \ar[r,phantom,"{\scriptstyle\psi}\Downarrow\overset{B}{\underset{\phantom{B}}\TO}\Downarrow{\scriptstyle\psi'}"]
      \ar[r,bend right,"g'"']
      &
      y'
      \ar[ur,"h'"description]
    \end{tikzcd}
  \end{equation}
  a $4$-generator of type $\Gamma \TOO \Delta$, called \emph{independence
    generator},
  where
  \[
    \Gamma = ((e \comp_0 A \comp_0 h) \comp_1 \chi \comp_1 (e' \comp_0 \psi \comp_0
    h')) \comp_2 ((e \comp_0 \phi' \comp_0 h) \comp_1 \chi \comp_1 (e' \comp_0 B \comp_0
    h'))
  \]
  and
  \[
    \Delta = ((e \comp_0 \phi \comp_0 h) \comp_1 \chi \comp_1 (e' \comp_0 B \comp_0
    h')) \comp_2 ((e \comp_0 A \comp_0 h) \comp_1 \chi \comp_1 (e' \comp_0 \psi' \comp_0
    h'))
  \]
  and can be pictured as
  \[
    \begin{tikzcd}[column sep=8ex,row sep=5ex]
      \satex{peiffer}
      \tar[r,"B"]
      \tar[d,"A"']
      \ar[rd,phantom,"\TOO"]
      &
      \satex{peiffer-r}
      \tar[d,"A"]
      \\
      \satex{peiffer-l}
      \tar[r,"B"']
      &
      \satex{peiffer-e}
    \end{tikzcd}
  \]
\item for all $0$\composable $A,g,\beta$ with $A\in\P_3$, $g \in \freecat\P_1$
  and $\beta\in\P_2$, and respectively, $0$\composable $\alpha,g',B$ with
  $\alpha\in\P_2$, $g'\in\freecat\P_1$ and $B\in\P_3$ as on the first, \resp
  second line below
  \begin{equation}
    \label{eq:inat-gen}
    \begin{split}
    \begin{tikzcd}[column sep=8ex,ampersand replacement=\&]
      x
      \ar[r,phantom,"{\scriptstyle\phi}\Downarrow\overset{A}{\underset{\phantom{A}}\TO}\Downarrow\scriptstyle\phi'"]
      \ar[r,bend left=40,"f"]
      \ar[r,bend right=40,"f'"']
      \&
      x'
      \ar[r,"g"]
      \&
      y'
      \ar[r,phantom,"\Downarrow\scriptstyle\beta"]
      \ar[r,bend left=40,"h"]
      \ar[r,bend right=40,"h'"']
      \&
      y
    \end{tikzcd}
    \\
    \begin{tikzcd}[column sep=8ex,ampersand replacement=\&]
      x
      \ar[r,phantom,"\Downarrow\scriptstyle\alpha"]
      \ar[r,bend left=40,"f"]
      \ar[r,bend right=40,"f'"']
      \&
      x'
      \ar[r,"g'"]
      \&
      y'
      \ar[r,phantom,"{\scriptstyle\psi}\Downarrow\overset{B}{\underset{\phantom{B}}\TO}\Downarrow\scriptstyle\psi'"]
      \ar[r,bend left=40,"h"]
      \ar[r,bend right=40,"h'"']
      \&
      y
    \end{tikzcd}
    \end{split}
  \end{equation}
  a $4$-generator, called \emph{interchange naturality
    generator}, respectively of type
  \begin{align*}
    ((A\comp_0 g\comp_0 h)\comp_1(f'\comp_0 g\comp_0\beta))
    \comp_2
    X_{\phi',g \comp_0 \beta}
    &\TOO
    X_{\phi,g \comp_0 \beta}
    \comp_2
    ((f\comp_0 g\comp_0\beta)\comp_1(A\comp_0 g\comp_0 h'))
    \shortintertext{and}
    ((\alpha\comp_0 g'\comp_0 h)\comp_1(f'\comp_0 g'\comp_0 B))
    \comp_2
    X_{\alpha \comp_0 g',\psi'}
    &\TOO
    X_{\alpha \comp_0 g',\psi}
    \comp_2
    ((f\comp_0 g'\comp_0 B)\comp_1(\alpha\comp_0 g'\comp_0 h'))
  \end{align*}
  where the $X_{\chi_1,\chi_2}$'s appearing in the sources and targets will be
  defined below for any $0$-composable $\chi_1,\chi_2 \in \freecat\P_2$; the
  first kind of interchange naturality generator can be pictured by
    \[
      \begin{tikzcd}[column sep=8ex,row sep=5ex]
        \satex{inter-nat-gen}
        \tar[r,"X"]
        \tar[d,"A"']
        \ar[rd,phantom,"\TOO"]
        &
        \satex{inter-nat-gen-r}
        \tar[d,"A"]
        \\
        \satex{inter-nat-gen-l}
        \tar[r,"X"']
        &
        \satex{inter-nat-gen-e}
      \end{tikzcd}
    \]
\end{enumerate}
The $3$\cells $X_{\phi,\psi} \in \freecat\P_3$, which are used in the above
definition, generalize interchange generators to any pair of $0$-composable
$2$-cells $\phi$ and $\psi$. Their definition consists in a suitable composite
of the generators $X_{\alpha,u,\beta}$ and is detailed below. Let us give an
idea of the definition of those $3$\cells on an example. Consider a Gray
presentation~$\Q$ with
\[
\Q_0
= \set{x},\quad \Q_1 = \set{\bar1\co x \to x} \qtand \Q_2 = \set{\tau\co
  \bar1\To\bar1}
\]
where $\tau$ is pictured by $\satex{pearl}$. Then, the
following sequence of ``moves'' is an admissible definition
for~$X_{\tau\comp_1\tau,\tau\comp_1\tau}$:
\begin{equation}
  \label{eq:ex-move-sequence}
  \satex{exch-ex-b} \qTO \satex{exch-ex-l-1} \qTO \satex{exch-ex-l-2}
  \qTO \satex{exch-ex-l-3} \qTO \satex{exch-ex-e}\pbox.
\end{equation}
Each ``move'' above is a $3$\cell of the form $\phi\comp_1 X_{\tau,\unit
  x,\tau}\comp_1 \psi$ for some $\phi,\psi \in \freecat\Q_2$ and where~$X_{\tau,\unit x,\tau}$ is an interchange generator provided by the
definition of Gray presentation. Another admissible sequence of moves is the
following:
\begin{equation*}
  \satex{exch-ex-b} \qTO \satex{exch-ex-r-1} \qTO \satex{exch-ex-r-2}
  \qTO \satex{exch-ex-r-3} \qTO \satex{exch-ex-e}
\end{equation*}
We see that there are multiple ways one can define the $3$\cells~$X_{\phi,\psi}$
based on the interchange generators of a Gray presentation~$\P$. We will show in
\Propr{interchange-coherence} that, in the end, the choice does not matter,
because all the possible definitions give rise to the same $3$\cell
in~$\prespcat\P$. Still, we need to introduce a particular structure that allows
us to represent all the possible definitions of the $3$\cells~$X_{\phi,\psi}$
and reason about them. This structure consists in a graph~$\phi \shuffle \psi$
associated to each pair of $0$\composable $2$-cells~$\phi$ and $\psi$
in~$\freecat\P_2$: intuitively, a vertex in this graph will correspond to an
interleaving of the $2$\generators of~$\phi$ and~$\psi$, and an edge will
correspond to a ``move'' as above, \ie an interchange
generator~$X_{\alpha,g,\beta}$ in context that exchanges two
$2$\generators~$\alpha$ from~$\phi$ and~$\beta$ from~$\psi$, which appear
consecutively in an interleaving of~$\phi$ and~$\psi$.
Given two $0$\composable $2$\cells
\[
  \phi = \phi_1 \pcomp_1 \cdots \pcomp_1 \phi_k \in \freecat\P_2
  \qquad\text{and}\qquad
  \psi = \psi_1\pcomp_1 \cdots \pcomp_1 \psi_{k'} \in \freecat\P_2
\]
with~$\phi_i = f_i \pcomp_0 \alpha_i \pcomp_0 g_i$ and~$\psi_j = f'_j \pcomp_0
\alpha'_j \pcomp_0 g'_j$ for some~$f_i,g_i,f'_j,g'_j \in \freecat\P_1$
and~$\alpha_i,\alpha'_j \in \P_2$, we define the
graph~$\phi \shuffle \psi$
\begin{itemize}
\item whose vertices are the \emph{shuffles} of the words~$\letter {l}_1 \ldots \letter
  l_k$ and~$\letter r_1 \ldots \letter r_{k'}$ on the alphabet
  \[
    \Sigma_{\phi,\psi} = \set{\letter l_1,\ldots,\letter l_k,\letter
      r_1,\ldots,\letter r_{k'}},
  \]
  \ie words of length~$k+k'$ which are order-preserving interleavings of the words~$l_1\ldots l_k$
  and~$\letter r_1 \ldots \letter r_{k'}$,
\item whose edges are of the form~$\wtrans_{w,w'}\co w \letter
  l_i \letter r_j w' \to w \letter r_j \letter l_i w'$ for some~$i \in
  \N^*_k$,~$j \in \N^*_{k'}$ and some words~$w,w' \in
  \freecat\Sigma_{\phi,\psi}$ such that~$w \letter l_i \letter r_j w' \in (\phi
  \shuffle \psi)_0$, intuitively representing the local \eq{swaps} one can do to move a
  letter~$\letter l_i$ to the right of a letter~$\letter r_j$ in a word.
\end{itemize}
Given~$i,j,p,q\in \N$ with~$0 \le i\le k$,~$0\le j \le k'$,~$0 \le p \le k-i+1$,~$0
\le q \le k'-j+1$, and a shuffle~$u$ of the words
\[
  \letter l_i \ldots \letter l_{i+p-1}
  \qtand
  \letter
  r_j \ldots \letter r_{j+q-1}\zbox,
\]
we define~$\winterp u^{i,j}_{\phi,\psi} \in \freecat\P_2$ (or simply $\winterp
u^{i,j}$) by induction on~$p$ and~$q$:
\[
  \winterp u^{i,j} =
  \begin{cases}
    (\phi_i \pcomp_0 \ctgt_1(\psi_j)) \pcomp_1 \winterp {u'}^{i+1,j} & \text{if~$u
      = \letter l_i  u'$,} \\
    (\ctgt_1(\phi_i) \pcomp_0 \psi_j) \pcomp_1 \winterp {u'}^{i,j+1} & \text{if~$u
      = \letter r_j  u'$,} \\
    \ctgt_1(\phi_i) \pcomp_0 \ctgt_1(\psi_j) & \text{if~$u$ is the empty word,}
  \end{cases}
\]
where, by
convention,~$\ctgt_1(\phi_0) = \csrc_1(\phi_1)$ and~$\ctgt_1(\psi_0) = \csrc_1(\psi_1)$.
Note that the indices of~$\winterp{u}^{i,j}$ are uniquely determined if~$u$ has at least
an~$\letter l$ letter and an~$\letter r$ letter.
Intuitively, the letters~$\letter l_i$ and~$\letter r_j$ correspond to the
$2$\cells~$\phi_i \pcomp_0 (-)$ and~$(-) \pcomp_0 \psi_j$ where the
$1$\cells~$(-)$ are most of the time uniquely determined by the context, so
that~$\winterp{u}^{1,1}$ for~$u \in (\phi \shuffle \psi)_0$ is an interleaving
of the~$\phi_i \pcomp_0 (-)$ and~$(-) \pcomp_0 \psi_j$. Now,
given
\begin{gather*}
  \wtrans_{u,v}\co u \letter l_i\letter r_j v \to u \letter r_j \letter l_iv 
  \shortintertext{in $(\phi\shuffle\psi)_1$, we define the $3$\cell}
  \winterp{\wtrans_{u,v}}_{\phi,\psi}\co
  \winterp{u \letter l_i\letter r_j v}^{1,1}_{\phi,\psi} \TO \winterp{u \letter r_j \letter l_i v}^{1,1}_{\phi,\psi}
  \shortintertext{in $\freecat\P_3$ by}
  \winterp{\wtrans_{u,v}}_{\phi,\psi} = \winterp{u}^{1,1}_{\phi,\psi} \pcomp_1 (f_i \pcomp_0 X_{\alpha_i,g_i \pcomp_0
    f'_j,\alpha'_j} \pcomp_0 g'_j) \pcomp_1 \winterp{v}^{i+1,j+1}_{\phi,\psi}\zbox.
\end{gather*}
We thus obtain a functor
\[
  \winterp{-}_{\phi,\psi}\co\freecat{(\phi \shuffle \psi)} \to
  \freecat\P(\csrc_1(\phi) \pcomp_0 \csrc_1(\psi),\ctgt_1(\phi) \pcomp_0
  \ctgt_1(\psi))
\]
where~$\freecat{(\phi \shuffle \psi)}$ is the free $1$\category
on~$\phi\shuffle\psi$ considered as a $1$\polygraph, and
where~$\winterp{-}_{\phi,\psi}$ is defined by the
mappings
\begin{align*}
  u \in (\phi \shuffle \psi)_0 &\mapsto \winterp{u}^{1,1}_{\phi,\psi} \in \freecat\P_2 \\
  \wtrans_{u,v} \in (\phi \shuffle \psi)_1 &\mapsto \winterp{\wtrans_{u,v}}_{\phi,\psi} \in \freecat\P_3\zbox.
\end{align*}
For example, for~$\Q$ defined as above and~$\phi = \psi = \tau
\pcomp_1 \tau$,~$\winterp{\letter l_1 \letter l_2 \letter r_1 \letter
  r_2}_{\phi,\psi}$ and~$\winterp{\letter l_1 \letter r_1 \letter l_2 \letter
  r_2}_{\phi,\psi}$ are respectively the $2$\cells of~$\freecat\Q_2$
\[
  \begin{tabularx}{0.7\linewidth}{>{\hfil}Xc>{\hfil}X}
    $\satex{exch-ex-b}$ &and& $\satex{exch-ex-l-1}$
  \end{tabularx}
\]
and~$\winterp{\wtrans_{\letter l_1,\letter r_2}}_{\phi,\psi}$ and~$\winterp{\wtrans_{\letter l_1\letter r_1,\eps}}_{\phi,\psi}$ are respectively
the $3$\cells of~$\freecat\Q_3$
\[
  \begin{tabularx}{0.7\linewidth}{>{\hfil}Xc>{\hfil}X}
    $\satex{exch-ex-b} \qTO \satex{exch-ex-l-1}$
    &and&
          $\satex{exch-ex-l-1}\qTO \satex{exch-ex-l-2}$\pbox.
  \end{tabularx}
\]

\noindent We write~$\stdpath_{\phi,\psi}$ for the path
\[
   \wtrans_{u_1,v_1} \pcomp_1 \cdots \pcomp_1 \wtrans_{u_{kk'},v_{kk'}}\in
   {\freecat{(\phi \shuffle \psi)}}(\letter l_1 \ldots \letter l_k\letter r_1
   \ldots\letter r_{k'},\letter r_1 \ldots\letter r_{k'} \letter l_1
  \ldots \letter l_k)
\]
defined by induction by
\[
  u_1 = \letter l_1 \ldots \letter l_{k-1} \qquad\qtand\qquad v_1 =\letter r_2 \ldots
  \letter r_{k'}
\]
and where~$u_{i+1},v_{i+1}$ are the unique words of~$\freecat\Sigma_{\phi,\psi}$ such that
\[
  \ctgt_0(\wtrans_{u_i,v_i}) = u_{i+1} \letter l_p\letter r_q v_{i+1}
  \qquad\text{with}\qquad v_{i+1} = \letter r_{q+1} \ldots\letter r_{k'} \letter
  l_{p+1} \ldots \letter l_k
\]
 for some~$p,q \in \N$. We can finally end the definition of Gray presentations
 by putting
 \[
   X_{\phi,\psi} = [\stdpath_{\phi,\psi}]_{\phi,\psi}.
 \]
 For example, for~$\Q$ defined as above,~$X_{\tau\pcomp_1\tau,\tau\pcomp_1\tau}$
 is the composite of $3$\cells of~$\freecat\Q_3$ given
 by~\eqref{eq:ex-move-sequence}.

\begin{example}
  \label{ex:pseudo-monoid-gray-pres}
  We define the \emph{Gray presentation of pseudomonoids} as the
  $4$\prepolygraph obtained by extending the $3$\prepolygraph for
  pseudomonoids~$\P$ seen in \Exr{pseudo-monoid-3-pol}. First, we add to $\P_3$
  the $3$\nbd-generators
  \[
    \hss
    \begin{array}{rccc@{\hspace{2em}}rccc}
      X_{\mu,\bar n,\mu}\co & \satex{mon-ich-mu-mu-l} &\TO &
                                                             \satex{mon-ich-mu-mu-r}
      & X_{\mu,\bar n,\eta}\co & \satex{mon-ich-mu-eta-l} &\TO &
                                                                 \spacebelowsatex{mon-ich-mu-eta-r} \\
      X_{\eta,\bar n,\mu}\co&\satex{mon-ich-eta-mu-l} &\TO&
                                                            \satex{mon-ich-eta-mu-r} &
                                                                                       X_{\eta,\bar n,\eta}\co&\satex{mon-ich-eta-eta-l} &\TO&
                                                                                                                                               \satex{mon-ich-eta-eta-r}
    \end{array}
    \hss
  \]
  for $n\in \N$. Second, we define $\P_4$ as a minimal set of $4$-generators
  such that, given a configuration of cells of $\freecat{(\restrictcat\P 3)}$ as
  in \eqref{eq:peiffer-gen}, there is a corresponding independence generator in
  $\P_4$, and given a configuration of cells of $\freecat{(\restrictcat\P 3)}$
  as in the first or the second line of \eqref{eq:inat-gen}, there is a
  corresponding interchange naturality generator in $\P_4$.
\end{example}

\noindent Our notion of Gray presentation is correct, in the sense that:
\begin{restatable}{theo}{graypresgraycatthm}
  \label{thm:gray-pres-gray-cat}
  Given a Gray presentation~$\P$, the presented precategory~$\prespcat\P$ is
  canonically a lax Gray category.
\end{restatable}
\begin{proof}
  See \Appr{gray-pres-gray-cat}.
\end{proof}
\begin{coro}
  \label{coro:gray-pres-gray-gpd}
  Given a Gray presentation $\P$, $\freeinvf{\prespcat\P}$ is canonically a
  $(3,2)$-Gray category.
\end{coro}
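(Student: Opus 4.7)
The corollary should follow almost immediately by chaining two results already established in the paper. The plan is to first apply Theorem~\ref{thm:gray-pres-gray-cat} to the Gray presentation~$\P$, which endows $\prespcat\P$ with the structure of a lax Gray category: this provides, for every $0$-composable pair of $2$-cells in $\prespcat\P_2$, the interchanger $3$-cells satisfying the compatibility axioms listed in Subsection~\ref{ssec:gray-cat}. Once $\prespcat\P$ is recognized as a lax Gray category, Proposition~\ref{prop:gray-induces-3-2-gray} directly yields that $\freeinvf{\prespcat\P}$ is a $(3,2)$-Gray category, since $\freeinvf{(-)}$ takes any lax Gray category to a $(3,2)$-Gray category.

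Concretely, I would organize the argument as follows. First, by Theorem~\ref{thm:gray-pres-gray-cat}, fix the lax Gray structure on $\prespcat\P$, specified by the interchangers $X_{\phi,\psi}$ for $\phi,\psi$ a $0$-composable pair in $\prespcat\P_2$. Second, invoke Proposition~\ref{prop:gray-induces-3-2-gray}: its proof shows that the exchange law and compatibility axioms between interchangers and $3$-cells automatically lift from the $3$-cells of the underlying lax Gray category to the zigzag $3$-cells in the localization, by pushing inverses through the identities using the relations in $\freeinvf{\prespcat\P}$. Since every $3$-cell of $\freeinvf{\prespcat\P}$ is invertible by construction of the localization (see Subsection~\ref{ssec:(3,2)-precategory}), the underlying $3$-precategory of $\freeinvf{\prespcat\P}$ is a $(3,2)$-precategory, which is precisely what is required for a $(3,2)$-Gray category.

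There is really no obstacle: both ingredients are already established, so the only thing to verify is that the structures assembled by the two results are indeed compatible, which is immediate since $\freeinvf{(-)}$ is applied to the $3$-precategory underlying the lax Gray structure given by Theorem~\ref{thm:gray-pres-gray-cat}. The proof can therefore be stated in one or two sentences invoking the two previous results.
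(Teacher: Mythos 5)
Your proposal is correct and is exactly the paper's proof: apply Theorem~\ref{thm:gray-pres-gray-cat} to make $\prespcat\P$ a lax Gray category, then apply Proposition~\ref{prop:gray-induces-3-2-gray} to conclude that $\freeinvf{\prespcat\P}$ is a $(3,2)$-Gray category. The extra remarks about how the localization handles inverses merely restate the content of that proposition and add nothing that needs separate verification.
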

\begin{proof}
  By \Thmr{gray-pres-gray-cat} and \Propr{gray-induces-3-2-gray}.
\end{proof}



\section{Rewriting}
\label{sec:rewriting}
In this section, we get to the heart of the matter and introduce our tools in
order to show coherence results for presented Gray categories. These are
obtained as generalizations of techniques developed in rewriting theory by
rewriting morphisms in free precategories, and having a relation~$\sequiv$ on
pairs of parallel rewriting $3$\cells which plays the role of witness for
confluence.
We first define coherence and show how coherence can be
obtained from a property of confluence on $3$\precategories
(\Ssecr{coherence}). Then, we adapt the elementary notions of rewriting to the
setting of~$3$\prepolygraphs (\Ssecr{rewriting-on-3-pol}) together with
classical results: a criterion for termination based on reduction orders
(\Ssecr{termination}), a critical pair lemma together with a finiteness property
on the number of critical branchings (\Ssecr{critical-branchings}). Our main
result of this section is a coherence theorem for Gray presentations
(\Cref{thm:gray-coherence}), together with an associated coherence criterion
(\Cref{thm:squier}) that will be our main tool for the examples of the next section.

\subsection{Coherence in Gray categories}
\label{ssec:coherence}
The aim of this article is to provide tools to study the coherence of presented
Gray categories, by which we mean the following. A $3$\precategory~$C$ is
\emph{coherent} when, for every pair of parallel $3$\cells
$F_1,F_2\co \phi\TO\psi\in C_3$, we have $F_1=F_2$. By extension, a Gray presentation~$\P$ is
\emph{coherent} when the underlying $(3,2)$\precategory of the $(3,2)$\nbd-Gray
category~$\freeinvf{\prespcat\P}$ is coherent (remember that $\prespcat\P$ is a
lax Gray category by \Cref{thm:gray-pres-gray-cat}, which implies that
$\freeinvf{\prespcat\P}$ is a $(3,2)$\nbd-Gray category by
\Cref{prop:gray-induces-3-2-gray}). Gray presentations~$\P$ with no other
$4$\generators than the independence generators and the interchange naturality
generators are usually not coherent. For example, in the Gray
presentation~$\P$ of pseudomonoids given in \Exr{pseudo-monoid-gray-pres}, we do
not expect the following parallel $3$\cells
\begin{equation}
  \label{eq:parallel-not-equal}
  \begin{tikzcd}[row sep={2.5em,between origins}]
    & \satex{mon-cp1-r} \tar[r]& \satex{mon-cp1-r-2} \tar[rd] \\
    \satex{mon-cp1} \tar[ru]\tar[rd] & & & \satex{mon-cp1-e} \\
    & \satex{mon-cp1-l} \tar[r]& \satex{mon-cp1-l-2} \tar[ru]
  \end{tikzcd}
\end{equation}
to be equal in~$\freeinvf{\prespcat\P}$. For coherence, we need to add ``tiles''
in~$\P_4$ to fill the ``holes'' created by parallel $3$\cells as the ones above.
A trivial way to do this is to add a $4$\generator~$R\co F_1 \TOO F_2$ for every
pair of parallel $3$\cells $F_1$ and $F_2$ of~$\freecat\P$. However, this method
gives quite big presentations, whereas we aim at small ones, so that the number
of axioms to verify in concrete instances is as little as possible. We expose a
better method in \Ssecr{critical-branchings}, in the form of \Cref{thm:squier}: we
will see that it is enough to add a tile of the form
\[
  \begin{tikzcd}[sep=small,cramped]
    & \phi\tar[ld,"S_1"'] \tar[rd,"S_2"] \\
    \phi_1 \tar[rd,"F_1"']&\sequiv& \phi_2\tar[ld,"F_2"] \\
    & \psi
  \end{tikzcd}
\]
for every critical branching $(S_1,S_2)$ of~$\P$ for which we chose $3$\cells
$F_1,F_2$ that make the branching $(S_1,S_2)$ joinable (definitions are
introduced below).

We now show how the coherence property can be obtained
starting from a $3$\precategory whose $3$\cells satisfy a property of confluence,
motivating the adaptation of rewriting theory to $3$\prepolygraphs in later
sections in order to study the coherence of Gray presentations. In fact, we
can already prove an analogous of the Church-Rosser property coming from
rewriting theory in the context of confluent categories.

A $3$\precategory~$C$ is \emph{confluent} when, for
$2$\cells~$\phi,\phi_1,\phi_2 \in C_2$ and $3$\cells
\[
  F_1\co \phi \TO \phi_1
  \qtand
  F_2\co \phi \TO \phi_2
\]
of~$C$, there exist a $2$\cell $\psi \in C_2$ and $3$\cells
\[
  G_1\co \phi_1 \TO \psi \in C_3
  \qtand
  G_2\co \phi_2 \TO \psi \in C_3
\]
of~$C$ such that~$F_1 \pcomp_2 G_1 = F_2 \pcomp_2 G_2$:
\[
  \begin{tikzcd}[sep=small,cramped]
    & \phi\tar[ld,"F_1"'] \tar[rd,"F_1"] \\
    \phi_1 \tar[rd,dotted,"G_1"']&& \phi_2\tar[ld,dotted,"G_2"] \\
    & \psi
  \end{tikzcd}
\]
The $3$-cells of a
$(3,2)$\precategory associated to a confluent $3$\precategory admits a simple
form, as in:
\begin{prop}
  \label{prop:confluent-cr}
  Given a confluent $3$\precategory~$C$, every $3$-cell $F\co \phi \TO \phi' \in \freeinvf
  C$ can be written $F = G \comp_2 \finv H$ for some $G\co \phi \TO \psi \in
  C_3$ and $H\co \phi'\TO \psi \in C_3$.
\end{prop}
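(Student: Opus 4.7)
The plan is to work with the explicit description of $(\freeinvf C)_3$ as a quotient of zigzags from \Ssecr{(3,2)-precategory}: I will rewrite any zigzag representative of $F$, using the quotient equalities defining $\freeinvf C$ together with the confluence of $C$, into the normalized shape $(G^+, H^-)$ with $G, H \in C_3$. So I fix a representative $(F_1^{\eps_1}, \ldots, F_k^{\eps_k})_{\phi,\phi'}$ of $F$.

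The key step is a \emph{swap lemma}: whenever the zigzag contains a $-+$ pattern at positions $(i, i+1)$, one has $\csrc(F_i) = \csrc(F_{i+1})$ by well-typedness of the zigzag, so confluence of $C$ yields $3$-cells $G\co \ctgt(F_i) \TO \psi$ and $H\co \ctgt(F_{i+1}) \TO \psi$ in $C_3$ with $F_i \comp_2 G = F_{i+1} \comp_2 H$. A short computation in $\freeinvf C$, inserting $H \comp_2 \finv H = \unit{\ctgt(F_{i+1})}$ and then cancelling $\finv{F_i} \comp_2 F_i$, gives
\[
F_i^- \comp_2 F_{i+1}^+ \;=\; G \comp_2 \finv H \;=\; G^+ \comp_2 H^-,
\]
so the fragment $(F_i^-, F_{i+1}^+)$ may be replaced by $(G^+, H^-)$ without changing the underlying element of $\freeinvf C$. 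Iterating these swaps yields a rewriting process on zigzags; to see it terminates, I use the weight $w = \sum_{i \,:\, \eps_i = -} i$, the sum of positions of negative signs. Each swap strictly increases $w$ by $1$, and $w$ is bounded above (by $k + (k-1) + \cdots + (k-m+1)$, where $m$ is the number of negative signs), so the process halts at a zigzag containing no $-+$ pattern, necessarily of the shape $(G_1^+, \ldots, G_m^+, H_1^-, \ldots, H_{k-m}^-)$.

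Finally, I would collapse the two blocks using the quotient equalities $(A^+, B^+) = ((A \comp_2 B)^+)$ and $(A^-, B^-) = ((B \comp_2 A)^-)$: set $G := G_1 \comp_2 \cdots \comp_2 G_m$ and $H := H_{k-m} \comp_2 \cdots \comp_2 H_1$, both in $C_3$, with an empty block replaced by the appropriate identity cell $\unit \phi$ or $\unit{\phi'}$. This produces $F = G^+ \comp_2 H^- = G \comp_2 \finv H$, as required. The main obstacle is designing the termination measure: the naive count of $-+$ patterns is not monotone under swapping (a swap at $(i,i+1)$ may create a new $-+$ pattern at $(i-1,i)$ when $\eps_{i-1} = -$), but the position-sum weight bypasses this issue cleanly.
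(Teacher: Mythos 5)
Your proof is correct and follows essentially the same route as the paper: the heart of both arguments is that confluence lets you replace each local peak $F_i^-\, F_{i+1}^+$ (i.e.\ $\finv{F_i} \comp_2 F_{i+1}$) by a valley $G \comp_2 \finv H$, and then combine the resulting blocks of positive and negative entries. The only difference is bookkeeping — you normalize the sign word with an explicit termination measure, while the paper inducts on the number of alternations by resolving the last peak and merging it into the previous factor — so the two proofs are essentially the same.
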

\noindent The above property says that confluent categories satisfy a
``Church-Rosser property'' (\cite[Def.~2.1.3]{baader1999term}, for example), and
is analogous to the classical result stating that confluent rewriting systems
are Church-Rosser (\cite[Thm.~2.1.5]{baader1999term}, for example).
\begin{proof}
  By the definition of $\freeinvf C$, a $3$-cell $F\co \phi \TO \phi' \in \freeinvf C$
  can be written
  \[
    F = \finv G_1 \comp_2 H_1 \comp_2 \cdots \comp_2 \finv{G_k} \comp_2 H_k
  \]
  for some $k \ge 0$, $G_i\co \chi_i \TO \phi_{i-1}$ and $H_i\co
  \chi_i \TO \phi_i$ for $1 \le i \le k$ with $\phi_0 = \phi$ and $\phi_k =
  \phi'$, as in
  \[
    \begin{tikzcd}[sep=small,cramped]
      & \chi_1 \tar[ld,"G_1"'] \tar[rd,"H_1"]& & \cdots \tar[ld,"G_2"']
      \tar[rd,"H_{k-1}"] & & \chi_k \tar[ld,"G_k"'] \tar[rd,"H_k"] & \\
      \phi_0 & & \phi_1 & \cdots & \phi_{k-1} & & \phi_k
    \end{tikzcd}
    \pbox.
  \]
  We prove the property by induction on $k$. If $k = 0$, $F$ is an identity and
  the result follows. Otherwise, since $C$ is confluent, there exists $\psi_k$,
  $G_k'\co \phi_{k-1} \to \psi_{k}$ and $H_{k}'\co \phi_{k} \to \psi_{k}$ with
  \[
    \begin{tikzcd}[sep=small,cramped]
      & \chi_k \tar[ld,"G_k"'] \tar[rd,"H_k"] & \\
      {\phantom{\phi_k}\mathmakebox[0pt]{\phi_{k-1}}} \tar[rd,"G_{k}'"'] & = & {\phi_k} \tar[ld,"H_k'"] \\
      & {\psi_{k}}
    \end{tikzcd}
    \pbox.
  \]
  By induction, the morphism
  \[
    \finv G_1 \comp_2 H_1 \comp_2 \cdots \comp_2 \finv{G_{k-2}} \comp_2 H_{k-2}
    \comp_2 \finv{G_{k-1}} \comp_2 (H_{k-1} \comp_2 G_{k}')
  \]
  can be written $G \comp_2 \finv{H}$ for some $\psi$ in $C_2$ and
  $G\co \phi_0 \TO \psi$, $H \co \psi_k \TO \psi$ in $C_3$. Since $G_k \comp_2
  G_k' = H_k \comp_2 H_k'$, we have $\finv{G_k} \comp_2 H_k = G_k'
  \comp_2 \finv{H_k'}$. Hence, 
  \[
    F = G \comp_2 \finv{H} \comp_2 \finv{H_k'} = G
    \comp_2 \finv{(H_k' \comp_2 H)}
  \]
  which is of the wanted form.
\end{proof}
\noindent Starting from a confluent $3$\precategory, we have the following
simple criterion to deduce the coherence of the associated $(3,2)$\precategory:
\begin{prop}
  \label{prop:confluent-impl-coherence}
  Let $C$ be a confluent $3$\precategory which moreover satisfies that, for every
  $F_1,F_2\co \phi \TO \phi' \in C_3$, we have $F_1 = F_2$ in the
  localization~$\freeinvf C$. Then, $\freeinvf C$ is coherent. In particular, if
  $C$ is a confluent $3$\precategory satisfying that, for every $F_1,F_2\co \phi
  \TO \phi'\in C_3$, there is $G\co \phi'\TO \phi''\in C_3$ such that $F_1 \comp_2 G = F_2
  \comp_2 G$ in~$C_3$, then $\freeinvf C$ is coherent.
\end{prop}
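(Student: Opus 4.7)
The plan is to use the Church--Rosser style decomposition provided by \Propr{confluent-cr} to reduce parallel $3$-cells in $\freeinvf C$ to parallel $3$-cells of $C$, where the hypothesis directly applies.

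First, consider parallel $3$-cells $F_1,F_2 \co \phi \TO \phi'$ in $\freeinvf C$. By \Propr{confluent-cr}, we can write $F_1 = G_1 \comp_2 \finv{H_1}$ and $F_2 = G_2 \comp_2 \finv{H_2}$ with $G_i \co \phi \TO \psi_i$ and $H_i \co \phi' \TO \psi_i$ in $C_3$ for $i\in\set{1,2}$. Now apply the confluence of $C$ to the parallel pair $H_1,H_2 \co \phi' \TO \psi_1,\psi_2$ of $3$-cells of $C$: there exist $\chi \in C_2$ and $3$-cells $K_1 \co \psi_1 \TO \chi$, $K_2 \co \psi_2 \TO \chi$ in $C_3$ such that $H_1 \comp_2 K_1 = H_2 \comp_2 K_2$ in $C_3$, and hence in $\freeinvf C$. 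Write $H = H_1 \comp_2 K_1 = H_2 \comp_2 K_2 \co \phi' \TO \chi$.

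Composing on the right with $H$ in $\freeinvf C$, we obtain
\[
  F_1 \comp_2 H = G_1 \comp_2 \finv{H_1} \comp_2 H_1 \comp_2 K_1 = G_1 \comp_2 K_1
\]
and similarly $F_2 \comp_2 H = G_2 \comp_2 K_2$. Both $G_1 \comp_2 K_1$ and $G_2 \comp_2 K_2$ are parallel $3$-cells of $C$ (from $\phi$ to $\chi$), so by the assumed hypothesis they are equal in $\freeinvf C$. Therefore $F_1 \comp_2 H = F_2 \comp_2 H$ in $\freeinvf C$, and since $H$ is invertible there (every $3$-cell of $\freeinvf C$ is), we conclude $F_1 = F_2$, proving coherence of $\freeinvf C$.

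For the ``in particular'' part, I will verify that the stronger hypothesis implies the one just used: given parallel $F_1,F_2 \co \phi \TO \phi' \in C_3$, pick $G \co \phi' \TO \phi''$ in $C_3$ with $F_1 \comp_2 G = F_2 \comp_2 G$ in $C_3$; this equality then also holds in $\freeinvf C$, where $G$ is invertible, so cancelling $G$ yields $F_1 = F_2$ in $\freeinvf C$, and the first part applies. No step poses a real obstacle here; the only subtlety is remembering that equalities of $C_3$ become equalities in $\freeinvf C$ without requiring any extra work, and that invertibility of the auxiliary $3$-cells $H$ and $G$ lives in $\freeinvf C$ by definition of the localization.
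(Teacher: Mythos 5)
Your proof is correct and follows essentially the same route as the paper: decompose $F_i = G_i \comp_2 \finv{H_i}$ via \Cref{prop:confluent-cr}, close a square with confluence, and invoke the hypothesis on a parallel pair of $3$-cells of $C$. The only (harmless) difference is a mirror-image choice — you apply confluence to $(H_1,H_2)$ and the hypothesis to $(G_1\comp_2 K_1, G_2\comp_2 K_2)$, then cancel the invertible cell $H$, whereas the paper applies confluence to $(G_1,G_2)$ and the hypothesis to $(H_1\comp_2 K_1, H_2\comp_2 K_2)$.
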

\begin{proof}
  Let $F_1,F_2\co \phi \TO \phi' \in \freeinvf{C}_3$. By \Cref{prop:confluent-cr},
  for $i \in \set{1,2}$, we have $F_i = G_i \comp_2 \finv{H_i}$ for some $\psi_i
  \in C_2$, $G_i \co \phi \TO \psi_i \in C_3$ and $H_i \co \phi' \TO \psi_i \in
  C_3$, as in
  \[
    \begin{tikzcd}[column sep={between origins,3.2em},row sep={between origins,2.5em},cramped]
      & \psi_1 & \\
      \phi \tar[ur,"G_1"] \tar[dr,"G_2"']& & \phi' \tar[ul,"H_1"'] \tar[dl,"H_2"] \\
      & \psi_2
    \end{tikzcd}
    \pbox.
  \]
  By confluence, there are $\psi \in C_2$ and $K_i\co \psi_i \TO \psi \in C_3$ for $i \in
  \set{1,2}$, such that $G_1 \comp_2 K_1 = G_2 \comp_2 K_2$. By the second
  hypothesis, we have $H_1 \comp_2 K_1 = H_2 \comp_2 K_2$ so that
  \begin{align*}
    G_1 \comp_2 \finv{H_1} &= G_1 \comp_2 K_1 \comp_2 \finv{(H_1 \comp_2 K_1)} \\
    &= G_2 \comp_2 K_2 \comp_2 \finv{(H_2 \comp_2 K_2)} \\
    &= G_2 \comp_2 \finv{H_2}.
  \end{align*}
  Hence, $F_1 = F_2$. For the last part, note that if $F_1 \comp_2 G = F_2
  \comp_2 G$, then $\eta(F_1) = \eta(F_2)$, where~$\eta$ is the canonical
  $3$\prefunctor~$C \to \freeinvf C$.
\end{proof}

\subsection[Rewriting on \texorpdfstring{$3$}{3}-prepolygraphs]{Rewriting on $\bm 3$-prepolygraphs}
\label{ssec:rewriting-on-3-pol}
As we have seen in the previous section, coherence can be deduced from a
confluence property on the $3$\cells of $3$\precategories. Since confluence of
classical rewriting systems is usually shown using tools coming from rewriting theory, it
motivates an adaptation of it in the context of~$3$\prepolygraphs
for the aim of studying the coherence of Gray presentations.

Given a $3$\prepolygraph~$\P$, a \emph{rewriting step of~$\P$} is a $3$-cell~$S
\in \freecat\P_3$ of the form
\[
  \lambda \comp_1 (l \comp_0 A \comp_0 r) \comp_1
  \rho
\]
for some $l,r \in \freecat\P_1$, $\lambda,\rho \in \freecat\P_2$ and $A \in
\P_3$, with $l,A,r$ $0$-composable and $\lambda,l \comp_0 A \comp_0 r,\rho$
$1$-composable. For such $S$, we say that $A$ is the \emph{inner $3$\generator}
of~$S$. A \emph{rewriting path} is a $3$-cell $F\co \phi \TO \phi'$
in~$\freecat\P_3$. Such a rewriting path has a \emph{length} $\len F \in \N$
which is defined as in \Cref{ssec:cell-nf}. Also, by \Cref{thm:precat-nf}, it
can be uniquely written as a composite of rewriting steps $S_1\comp_2 \cdots
\comp_2 S_{\len F}$, since rewriting steps are exactly $3$\dimensional whiskers.
Given $\phi,\psi \in \freecat\P_2$, \emph{$\phi$ rewrites to $\psi$} when there
exists a rewriting path $F\co \phi \TO \psi$. A~\emph{normal form} is a $2$-cell
$\phi \in \freecat\P_2$ such that for all $\psi \in \freecat\P_2$ and $F\co
\phi\TO \psi$, we have $F = \unit \phi$.\penalty-50{} $\P$ is \emph{terminating}
when there does not exist an infinite sequence of rewriting steps $F_i\co \phi_i
\TO \phi_{i+1}$ for~$i \ge 0$;

A \emph{branching} is a pair
rewriting paths $F_1\co \phi\TO \phi_1$ and $F_2\co \phi\TO \phi_2$ with the same source. The
\emph{symmetric branching} of a branching $(F_1,F_2)$ is $(F_2,F_1)$. A
branching $(F_1,F_2)$ is \emph{local} when both $F_1$ and $F_2$ are rewriting
steps. A branching $(F_1,F_2)$ is \emph{joinable} when there exist rewriting paths $G_1\co \phi_1 \TO
\psi$ and $G_2\co \phi_2 \TO \psi$; moreover, given a congruence $\sequiv$ on
$\freecat\P$, if we have that $F_1 \comp_2 G_1 \sequiv F_2 \comp_2
G_2$, as in
\[
  \begin{tikzcd}[sep=small,cramped]
    &\phi\tar[dl,"F_1"']\tar[dr,"F_2"]&\\
    \phi_1\tar[dr,dotted,"G_1"']&\sequiv&\tar[dl,dotted,"G_2"]\phi_2\\
    &\psi
  \end{tikzcd}
\]
we say that the branching is \emph{confluent (for $\sequiv$)}.

A \emph{rewriting system~$(\P,\sequiv)$} is the data of a $3$\prepolygraph $\P$
together with a congruence~$\sequiv$ on~$\freecat\P$. $(\P,\sequiv)$ is
(\emph{locally}) \emph{confluent} when every (local) branching is confluent. It
is \emph{convergent} when it is locally confluent and $\P$ is terminating. Given
a $4$\prepolygraph~$\P$, there is a canonical rewriting system $(\restrictcat \P
3,\stdcong^\P)$ (recall the definition of~$\stdcong^\P$ given in
\Ssecr{presentations}) where $\stdcong^\P$ intuitively witnesses that the
``space'' between two parallel $3$\cells can be filled with elementary tiles
that are the elements of~$\P_4$. In the following, most of the concrete
rewriting systems we study are of this form.


The analogues of several well-known properties of abstract
rewriting systems can be proved in our context. In particular, the classical
proof by well-founded induction of Newman's lemma
(\cite[Lem.~2.7.2]{baader1999term}, for example), can be directly adapted in
order to show that:
\begin{theo}
  \label{thm:newman-modulo}
  A rewriting system which is convergent is confluent.
\end{theo}
\begin{proof}
  Let $(\P,\sequiv)$ be a rewriting system which is convergent. Let $\TO^+
  \subseteq \freecat\P_2 \times \freecat\P_2$ be the partial order such that
  $\phi \TO^+ \psi$ if there exists a rewriting path $F\co \phi \TO \psi \in
  \freecat\P_3$ with $\len{F} > 0$. Since the underlying rewriting system is
  terminating, $\TO^+$ is well-founded. Thus, we can prove the theorem by
  induction on~$\TO^+$. Suppose given a branching $F_1\co \phi\TO \phi_1 \in
  \freecat \P_3$ and $F_2\co \phi\TO \phi_2 \in \freecat \P_3$. If $\len{F_1}=0$
  or $\len{F_2}=0$, then the branching is confluent. Otherwise, $F_i = S_i
  \comp_2 F_i'$ with $S_i\co \phi \TO \phi_i'$ a rewriting step and $F_i'\co
  \phi_i' \TO \phi_i$ a rewriting path for $i \in \set{1,2}$. Since the
  rewriting system is locally confluent, there are $\psi \in \freecat\P_2$ and
  rewriting paths $G_i\co \phi_i' \TO \psi$ for $i \in \set{1,2}$ such that $S_1
  \comp_2 G_1 \sequiv S_2 \comp_2 G_2$. Since the rewriting system is
  terminating and $\sequiv$ is stable by composition, by composing the $G_i$'s
  with a path $G\co \psi \TO \psi'$ where $\psi'$ is a normal form, we can
  suppose that $\psi$ is a normal form. By induction on $\phi_1'$ and $\phi_2'$,
  there are rewriting paths $H_i \co \phi_i \TO \psi_i'$ and $F_i''\co \psi \TO
  \psi_i'$ such that $F_i' \comp_2 H_i \sequiv G_i \comp_2 F_i''$ for $i \in
  \set{1,2}$. Since $\psi$ is in normal form, $F_i'' = \unit \psi$ and we have
  $H_i\co \phi_i \TO \psi$ for $i \in \set{1,2}$ as in
  \[
    \begin{tikzcd}[sep=1.5em,cramped]
      & & \phi \ar[dd,phantom,"\sequiv"]\tar[dl,"S_1"'] \tar[dr,"S_2"] & &\\
      & |[alias=Lp]| \phi_1' \tar[dl,"F_1'"']\tar[dr,"G_1"] & & |[alias=Rp]| \phi_2' \tar[dl,"G_2"'] \tar[dr,"F_2'"] \\
      \phi_1 \tar[rr,"H_1"'{name=L}]& & \psi & & \phi_2 \tar[ll,"H_2"{name=R}]
      \ar[phantom,from=Lp,to=L,"\sequiv"]
      \ar[phantom,from=Rp,to=R,"\sequiv"]
    \end{tikzcd}
    \pbox.
  \]
  Moreover,
  \begin{align*}
    F_1 \comp_2 H_1 &\sequiv S_1 \comp_2 (F_1' \comp_2 H_1) \\
                    &\sequiv S_1 \comp_2 G_1 \\
                    &\sequiv S_2 \comp_2 G_2 \\
                    &\sequiv S_2 \comp_2 (F_2' \comp_2 H_2) \\
                    &\sequiv F_2 \comp_2 H_2.
                      \tag*{\qedhere}
  \end{align*}
\end{proof}
\noindent \Cref{thm:newman-modulo} implies that, up to post-composition, all the
parallel paths of a convergent rewriting system are equivalent. Later, this will
allow us to apply \Cref{prop:confluent-impl-coherence} for showing the coherence
of Gray presentations.
\begin{lem}
  \label{lem:equiv-on-nf}
  Given a convergent rewriting system $(\P,\sequiv)$ and rewriting
  paths $F_1,F_2\co \phi \TO \phi' \in \freecat\P_3$ as in
  \[
    \begin{tikzcd}[cramped]
      \phi
      \tar[d,bend right=49,"F_1"']
      \tar[d,bend left=49,"F_2"]
      \\
      \phi'
    \end{tikzcd}
  \]
  there exists $G\co \phi' \TO \psi \in \freecat\P_3$ such that
  $F_1 \comp_2 G \sequiv F_2 \comp_2 G$, \ie
  \[
    \begin{tikzcd}[sep=small,baseline=(\tikzcdmatrixname-2-3.base),cramped]
      &
      \phi
      \tar[ld,"F_1"']
      \tar[rd,"F_2"]
      & \\
      \phi'
      \tar[rd,"G"']
      & \sequiv &
      \phi'
      \tar[ld,"G"]
      \\
      & \psi
    \end{tikzcd}
    \pbox.
  \]
\end{lem}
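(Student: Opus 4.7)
The plan is to reduce the problem to confluence by pushing $\phi'$ to a normal form: once the common target is in normal form, the two closing paths produced by confluence have nowhere to go and must both be identities, so the witness $G$ is the same on both sides.

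Concretely, first I would invoke termination of $\P$ to pick a rewriting path $G \co \phi' \TO \psi \in \freecat\P_3$ with $\psi$ a normal form (start at $\phi'$ and keep applying rewriting steps; this process halts by termination). Then $F_1 \comp_2 G$ and $F_2 \comp_2 G$ form a branching with common source $\phi$ and common target $\psi$:
\[
  \begin{tikzcd}[sep=small,cramped]
    & \phi \tar[ld,"F_1"'] \tar[rd,"F_2"] & \\
    \phi' \tar[rd,"G"'] & & \phi' \tar[ld,"G"] \\
    & \psi
  \end{tikzcd}
\]

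Next, since $(\P,\sequiv)$ is convergent, \Cref{thm:newman-modulo} applies and gives confluence of this branching: there exist $\psi'' \in \freecat\P_2$ and rewriting paths $H_1, H_2 \co \psi \TO \psi'' \in \freecat\P_3$ such that
\[
  (F_1 \comp_2 G) \comp_2 H_1 \sequiv (F_2 \comp_2 G) \comp_2 H_2.
\]
But $\psi$ is a normal form, so by definition the only rewriting path with source $\psi$ is $\unit{\psi}$. Hence $H_1 = H_2 = \unit{\psi}$ and $\psi'' = \psi$. Substituting this in the equivalence above yields $F_1 \comp_2 G \sequiv F_2 \comp_2 G$, as required.

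There is no real obstacle: the argument is a direct combination of termination (to reach a normal form) with \Cref{thm:newman-modulo} (to equalize up to $\sequiv$). The only subtle point is recognizing that choosing $\psi$ to be a normal form forces the confluence diagram to close trivially on one side, thereby collapsing the two witnesses into a single $G$.
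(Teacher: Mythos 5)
Your proof is correct and follows essentially the same route as the paper: use termination to extend to a normal form $\psi$, apply confluence (which the paper likewise obtains from \Cref{thm:newman-modulo}) to the branching $(F_1 \comp_2 G, F_2 \comp_2 G)$, and observe that normality of $\psi$ forces the closing paths to be $\unit{\psi}$.
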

\begin{proof}
  Given $F_1,F_2$ as above, since the rewriting system is terminating, there is
  a rewriting path $G\co \phi' \TO \psi$ where $\psi$ is a normal form. By
  confluence, there exist $G_1\co \psi \TO \psi'$ and $G_2\co \psi \TO \psi'$
  such that $F_1 \comp_2 G \comp_2 G_1 \sequiv F_2 \comp_2 G \comp_2 G_2$. Since
  $\psi$ is a normal form, we have $G_1 = G_2 = \unit {\psi}$. Hence, $F_1
  \comp_2 G \sequiv F_2 \comp_2 G$.
\end{proof}
\noindent Note that, in \Lemr{equiv-on-nf}, we do not necessarily have
\[
  \begin{tikzcd}[cramped]
    \phi
    \tar[d,bend right=49,"F_1"']
    \tar[d,bend left=49,"F_2"]
    \ar[d,phantom,"\sequiv"]
    \\
    \phi'
  \end{tikzcd}
\]
which explains why the method we develop in this section for showing coherence
will only apply to $(3,2)$\precategories, but not to general $3$\precategories.

\subsection{Termination}
\label{ssec:termination}

Here, we show a termination criterion for rewriting systems $(\P,\sequiv)$ based
on a generalization of the notion of reduction order in classical rewriting
theory where we require a compatibility between the order and the composition
operations of cells.

A \emph{reduction order} for a $3$\prepolygraph~$\P$ is a well-founded partial
order~$<$ on~$\freecat\P_2$ such that:
\begin{itemize}
  
\item given~$A\co \phi\TO\phi' \in \P_3$, we have~$\phi > \phi'$,
\item given~$l,r \in \freecat\P_1$ and parallel~$\phi,\phi' \in \freecat\P_2$ such
  that~$l,\phi,r$ are $0$\composable and~$\phi > \phi'$, we have
  \[
    l
    \pcomp_0 \phi \pcomp_0 r > l \pcomp_0 \phi' \pcomp_0 r\zbox,
  \]
  
\item given $1$\composable~$\lambda,\phi,\rho \in \freecat\P_2$, and~$\phi' \in
  \freecat\P_2$ parallel to~$\phi$ such that~$\phi > \phi'$, we have
  \[
    \lambda \pcomp_1 \phi
    \pcomp_1 \rho > \lambda \pcomp_1 \phi' \pcomp_1 \rho\zbox.
  \]
\end{itemize}
\noindent The termination criterion is then:
\begin{prop}
  \label{prop:term-order-implies-terminating}
  If $(\P,\sequiv)$ is a rewriting system such that there exists a reduction
  order for $\P$, then $(\P,\sequiv)$ is terminating.
\end{prop}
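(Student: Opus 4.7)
The plan is to show the contrapositive: any infinite sequence of rewriting steps would produce an infinite strictly decreasing chain of $2$\cells with respect to the reduction order $<$, contradicting well-foundedness. Note that the congruence $\sequiv$ plays no role here, since termination is a property of the underlying $3$\prepolygraph $\P$ alone.

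Concretely, suppose there is an infinite sequence of rewriting steps $F_i \co \phi_i \TO \phi_{i+1}$ for $i \ge 0$. By definition of a rewriting step, each $F_i$ can be written
\[
  F_i = \lambda_i \pcomp_1 (l_i \pcomp_0 A_i \pcomp_0 r_i) \pcomp_1 \rho_i
\]
for some $A_i \co \alpha_i \TO \alpha'_i \in \P_3$, some $l_i, r_i \in \freecat\P_1$ and some $\lambda_i, \rho_i \in \freecat\P_2$, so that
\[
  \phi_i = \lambda_i \pcomp_1 (l_i \pcomp_0 \alpha_i \pcomp_0 r_i) \pcomp_1 \rho_i
  \qtand
  \phi_{i+1} = \lambda_i \pcomp_1 (l_i \pcomp_0 \alpha'_i \pcomp_0 r_i) \pcomp_1 \rho_i.
\]
Then I would invoke the three clauses of the definition of reduction order in cascade: the first clause gives $\alpha_i > \alpha'_i$; feeding this into the second clause (with the $1$\cells $l_i$ and $r_i$) yields $l_i \pcomp_0 \alpha_i \pcomp_0 r_i > l_i \pcomp_0 \alpha'_i \pcomp_0 r_i$; feeding this in turn into the third clause (with the $2$\cells $\lambda_i$ and $\rho_i$) yields $\phi_i > \phi_{i+1}$.

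Therefore $(\phi_i)_{i \ge 0}$ is an infinite strictly decreasing sequence in $(\freecat\P_2,<)$, which contradicts well-foundedness of $<$. Hence no such infinite sequence of rewriting steps can exist and $(\P,\sequiv)$ is terminating. There is no real obstacle in the argument; the work is entirely encoded in the axioms of reduction order, which have been designed precisely to make each rewriting step decrease a fixed well-founded measure on $2$\cells.
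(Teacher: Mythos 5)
Your proof is correct and follows essentially the same route as the paper: decompose each rewriting step as $\lambda_i \pcomp_1 (l_i \pcomp_0 A_i \pcomp_0 r_i) \pcomp_1 \rho_i$, cascade the three clauses of the reduction order to get $\phi_i > \phi_{i+1}$, and conclude by well-foundedness. Your remark that $\sequiv$ plays no role is also consistent with the paper's treatment, since termination is a property of $\P$ alone.
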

\begin{proof}
  The definition of a reduction order implies that, given a rewriting step
  $\lambda \comp_1 (l \comp_0 A \comp_0 r) \comp_1 \rho$ with $l,r \in
  \freecat\P_1$, $\lambda,\rho \in \freecat\P_2$ and $A\co \phi \TO \phi' \in
  \P_3$ suitably composable, we have
  \[
    \lambda \comp_1 (l \comp_0 \phi
    \comp_0 r) \comp_1 \rho > \lambda \comp_1 (l \comp_0 \phi' \comp_0 r) \comp_1
    \rho. 
  \]
  So, given a sequence of $2$-composable rewriting steps $(F_i)_{i < k}$,
  where $k \in \N \cup \set{\infty}$, $F_i\co \phi_i \TO \phi_{i+1} \in \P_3$
  for $i < k$, we have $\phi_i > \phi_{i+1}$ for $i < k$. Since $>$ is
  well-founded, it implies that $k \in \N$ Hence, the rewriting sytem
  $(\P,\sequiv)$ is terminating.
\end{proof}

In order to build a reduction order for a Gray presentation~$\P$, we have to
build in particular a reduction order for the subset of~$\P_3$ made of
interchange generators. We introduce below a sufficient criterion for the
existence of such a reduction order. The idea is to consider the lengths of the
$1$\cells of the whiskers in the decompositions of $2$\cells and show that they
are decreasing in some way when an interchange generator is applied.

Let~$\Nfseq$ be the set of finite sequences of elements of~$\N$. We
order~$\Nfseq$ by~$\seqord$ where
\[
  (a_1,\ldots,a_k) \seqord (b_1,\ldots,b_l)
\]
when~$k = l$ and there exists~$i \in \N$ with $1 \le i \le k$ such that~$a_j =
b_j$ for some~$j < i$ and~$a_i < b_i$. Note that~$\seqord$ is well-founded.
Given a $2$\prepolygraph~$\P$, there is a function~$\intnorm\co \freecat\P_2 \to
\Nfseq$ such that, given~$\phi \in \freecat\P_2$, decomposed uniquely (using
\Cref{thm:precat-nf}) as
\[
  \phi = (l_1 \pcomp_0 \alpha_1 \pcomp_0 r_1) \pcomp_1 \cdots \pcomp_1 (l_k
  \pcomp_0 \alpha_k \pcomp_0 r_k)
\]
for some~$k \in \N$,~$l_i,r_i \in \freecat\P_1$ and~$\alpha_i \in \P_2$ for~$i
\in \set{1,\ldots,k}$,~$\intnorm(\phi)$ is defined by
\[
  \intnorm(\phi) = (\len{l_k},\len{l_{k-1}},\ldots,\len{l_1}). 
\]
Then,~$\intnorm$ induces a partial order~$\intord$ on~$\freecat\P_2$ by
putting~$\phi \intord \psi$ when~$\csrctgt\eps_1(\phi) = \csrctgt\eps_1(\psi)$
for~$\eps \in \set{-,+}$ and~$\intnorm(\phi) \seqord \intnorm(\psi)$
for~$\phi,\psi \in \freecat\P_2$.

Given a Gray presentation~$\P$, we say that~$\P$ is \emph{positive}
when~$\len{\ctgt_1(\alpha)} > 0$ for all~$\alpha \in \P_2$.
Under positiveness,
the order~$\intord$ can be considered as a reduction order for the subset of
$3$\generators of a Gray presentation made of interchangers, as in
\begin{prop}
  \label{prop:term-criterion-interchanger}
  Let~$\P$ be a positive Gray presentation. The partial
  order~$\intord$ has the following properties:
  \begin{enumerate}[label=(\roman*),ref=(\roman*)]
  \item \label{prop:term-criterion-interchanger:X}  for every~$\alpha,\beta \in \P_2$
    and~$f \in \freecat\P_1$ such that~$\alpha,f,\beta$ are $0$\composable,
    \[
      \csrc_2(X_{\alpha,f,\beta})
      \intordgt \ctgt_2(X_{\alpha,f,\beta})\zbox,
    \]
  \item \label{prop:term-criterion-interchanger:0-comp} for~$\phi,\phi' \in
    \freecat\P_2$ and~$l,r \in \freecat\P_1$ such that~$l,\phi,r$ are
    $0$\composable, if~$\phi \intordgt \phi'$, then
    \[
      l \pcomp_0 \phi \pcomp_0 r \intordgt l \pcomp_0 \phi' \pcomp_0 r\zbox,
    \]
  \item \label{prop:term-criterion-interchanger:1-comp}
    for~$\phi,\phi',\lambda,\rho \in \freecat\P_2$ such that~$\lambda,\phi,\rho$
    are $1$\composable, if~$\phi \intordgt \phi'$, then
    \[
      \lambda \pcomp_1 \phi \pcomp_1 \rho \intordgt \lambda \pcomp_1 \phi'
      \pcomp_1 \rho\zbox.
    \]
  \end{enumerate}
\end{prop}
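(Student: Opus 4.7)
The plan is to verify each of the three properties by direct computation on the canonical whisker decomposition provided by \Thmr{precat-nf}. For any $\phi \in \freecat\P_2$ I write
\[
\phi = (l_1 \pcomp_0 \alpha_1 \pcomp_0 r_1) \pcomp_1 \cdots \pcomp_1 (l_k \pcomp_0 \alpha_k \pcomp_0 r_k)
\]
so that $\intnorm(\phi) = (\len{l_k}, \ldots, \len{l_1})$.

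For \ref{prop:term-criterion-interchanger:X}, write $\alpha\co a \To a'$ and $\beta\co b \To b'$ in $\P_2$. Reading off the whisker decompositions of source and target of $X_{\alpha,f,\beta}$ directly from its definition gives
\[
\intnorm(\csrc_2(X_{\alpha,f,\beta})) = (\len{a'}+\len{f},\, 0) \quad\text{and}\quad \intnorm(\ctgt_2(X_{\alpha,f,\beta})) = (0,\, \len{a}+\len{f}).
\]
Positiveness of $\P$ applied to $\alpha \in \P_2$ forces $\len{a'} > 0$, so the first coordinates differ with the source strictly larger, which is precisely \ref{prop:term-criterion-interchanger:X}.

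For \ref{prop:term-criterion-interchanger:0-comp}, distributivity of $\pcomp_0$ over $\pcomp_1$ yields the canonical form
\[
l \pcomp_0 \phi \pcomp_0 r = \bigl((l \pcomp_0 l_1) \pcomp_0 \alpha_1 \pcomp_0 (r_1 \pcomp_0 r)\bigr) \pcomp_1 \cdots \pcomp_1 \bigl((l \pcomp_0 l_k) \pcomp_0 \alpha_k \pcomp_0 (r_k \pcomp_0 r)\bigr),
\]
so $\intnorm(l \pcomp_0 \phi \pcomp_0 r)$ is obtained from $\intnorm(\phi)$ by adding the constant $\len{l}$ to every coordinate. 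The hypothesis $\phi \intordgt \phi'$ already forces $\phi'$ to have the same number of whiskers as $\phi$, since $\seqord$ only compares equal-length sequences, so the same coordinatewise shift applies to $\phi'$; as adding a constant preserves the strict lex order and parallelism is obviously preserved, \ref{prop:term-criterion-interchanger:0-comp} follows.

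For \ref{prop:term-criterion-interchanger:1-comp}, the canonical whisker list of $\lambda \pcomp_1 \phi \pcomp_1 \rho$ is the concatenation of those of $\lambda$, $\phi$ and $\rho$, so $\intnorm(\lambda \pcomp_1 \phi \pcomp_1 \rho)$ is the concatenation of $\intnorm(\rho)$, $\intnorm(\phi)$ and $\intnorm(\lambda)$, in that order, since the last whisker contributes the most significant entry. Because $\phi$ and $\phi'$ have equal numbers of whiskers, the $\lambda$- and $\rho$-blocks are identical in the two resulting tuples, so the first position of discrepancy necessarily lies in the middle block and inherits its direction from $\phi \intordgt \phi'$. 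There is no significant obstacle here: once \Thmr{precat-nf} is invoked the entire verification is mechanical, and positiveness of $\P$ intervenes only in \ref{prop:term-criterion-interchanger:X} to exclude the degenerate case $\len{a'} = 0$.
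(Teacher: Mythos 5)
Your proof is correct and follows essentially the same route as the paper's: compute $\intnorm$ of the source and target of $X_{\alpha,f,\beta}$ from the whisker decomposition (obtaining $(\len{\ctgt_1(\alpha)}+\len f,0)$ versus $(0,\len{\csrc_1(\alpha)}+\len f)$ and invoking positiveness for the strict first-coordinate inequality), then track the effect of $l\pcomp_0-\pcomp_0 r$ and $\lambda\pcomp_1-\pcomp_1\rho$ on the canonical whisker representation. The only difference is that you spell out the verifications of \ref{prop:term-criterion-interchanger:0-comp} and \ref{prop:term-criterion-interchanger:1-comp} (constant shift of coordinates, block concatenation of norms), which the paper leaves as a routine observation.
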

\begin{proof}
  Given $\alpha,\beta \in \P_2$ and $f \in \freecat\P_1$ with
  $\alpha,f,\beta$ are $0$-composable, recall that
  $X_{\alpha,f,\beta}$ is such that
  \[
    X_{\alpha,f,\beta}\co (\alpha \comp_0 f \comp_0 \csrc_1(\beta)) \comp_1
    (\ctgt_1(\alpha) \comp_0 f \comp_0 \beta) \TO (\csrc_1(\alpha) \comp_0 f
    \comp_0 \beta) \comp_1 (\alpha \comp_0 f \comp_0 \ctgt_1(\beta))
  \]
  Then, we have
  \[
    \intnorm(\csrc_2(X_{\alpha,f,\beta})) = (\len{\ctgt_1(\alpha)} + \len{f},0) \qqtand
    \intnorm(\ctgt_2(X_{\alpha,f,\beta})) = (0,\len{\csrc_1(\alpha)} + \len{f}).
  \]
  Since $\P$ is positive, we have $\len{\ctgt_1(\alpha)} > 0$ so that
  $\intnorm(\csrc_2(X_{\alpha,f,\beta})) \intordgt \intnorm(\ctgt_2(X_{\alpha,f,\beta}))$. Now,
  \ref{prop:term-criterion-interchanger:0-comp}
  and~\ref{prop:term-criterion-interchanger:1-comp} can readily be obtained by
  considering the whisker representations of $\phi$ and $\phi'$ and observing the
  action of $l \comp_0 - \comp_0 r$ and $\lambda \comp_1 - \comp_1 \rho$ on
  these representations and the definition of~$\intnorm$.
\end{proof}
\noindent The positiveness condition is required to prevent $2$\cells with
``floating components'', since Gray presentations with such $2$\cells might not
terminate. For example, given a Gray presentation $\P$ where $\P_0$ and $\P_1$
have one element and $\P_2$ has two $2$\generators $\satex{cup}$ and
$\satex{cap}$, there are $2$\cells of~${\freecat\P}$ with ``floating bubbles''
which induce infinite reduction sequence with interchange generators as the
following one:
\[
  \satex{capcup-cycle1}
  \qTO
  \satex{capcup-cycle2}
  \qTO
  \satex{capcup-cycle3}
  \qTO
  \satex{capcup-cycle4}
  \qTO
  \satex{capcup-cycle1}
  \qTO
  \cdots
\]

\subsection{Critical branchings}
\label{ssec:critical-branchings}
In term rewriting systems, a classical result called the ``critical pair lemma''
states that local confluence is a consequence of the confluence of a subset of
local branchings, called \emph{critical branchings}. The latter can be described
as pairs of rewrite rules that are minimally overlapping, see
\cite[Sec.~6.2]{baader1999term} for details. Note that we used this result
earlier in the proof of \Lemr{precat-locally-confluent}.

Here, we show a similar result for rewriting on Gray presentations (introduced
in \Cref{ssec:gray-presentation}). For this purpose, we give a definition of
critical branchings which is similar to term rewriting systems, \ie as minimally
overlapping local branchings, where we moreover filter out some branchings that
involve interchange generators and that are automatically confluent by our
definition of Gray presentation. Then, we give a coherence theorem for Gray
presentation based on the analysis critical branchings together with an
associated coherence criterion, and we finish the section by stating a
finiteness property on the critical branchings.

Let $\P$ be a $3$\prepolygraph. Given a local branching $(S_1\co \phi
\TO \phi_1,S_2\co \phi \TO \phi_2)$ of~$\P$, we say that the branching
$(S_1,S_2)$ is
\begin{itemize}
\item \emph{trivial} when $S_1=S_2$,
\item \index{minimal branching}\emph{minimal} when for all other local branching~$(S'_1,S'_2)$ such
  that
  \[
    S_i=\lambda\pcomp_1(l\pcomp_0 S'_i\pcomp_0 r)\pcomp_1\rho
  \]
  for~$i \in \set{1,2}$ for some $1$\cells~$l,r$ and $2$\cells~$\lambda,\rho$,
  we have that~$l,r,\lambda,\rho$ are all identities,
\item \emph{independent} when
  \begin{align*}
    S_1&=((l_1\comp_0 A_1\comp_0 r_1)\comp_1\chi\comp_1(l_2\comp_0\phi_2\comp_0 r_2))
    \\
    \shortintertext{and}
    S_2&=((l_1\comp_0\phi_1\comp_0 r_1)\comp_1\chi\comp_1(l_2\comp_0 A_2\comp_0 r_2))
  \end{align*}
  for some $l_i,r_i \in \freecat\P_1$ and $A_i\co \phi_i \TO \phi'_i \in \P_3$
  for $i \in \set{1,2}$ and $\chi \in \freecat\P_2$.
\end{itemize}
If moreover $\P = \restrictcat{\Q} 3$, where $\Q$ is a Gray presentation, we
say that the the branching $(S_1,S_2)$ is
\begin{itemize}
\item \emph{natural} when 
  \[
    S_1=((A\comp_0 g\comp_0 h)\comp_1(f'\comp_0 g\comp_0\psi))
  \]
  for some $A\co \phi\TO\phi' \co f \To f' \in \P_3$, $\psi\co h \To h'\in
  \freecat\P_2$ and $g \in \freecat\P_1$, and
  \[
    S_2 = \winterp{\wtrans_{u,v}}_{\phi,g \comp_0 \psi} \qtext{with}
    u = \letter l_1 \ldots
    \letter l_{\len{\phi} - 1} \qtand
    v = \letter r_2 \ldots \letter
    r_{\len{\psi}}
  \]
  and similarly for the situation on the second line
  of~\eqref{eq:inat-gen},
\item \emph{critical} when it is minimal, and both its symmetrical branching and
  it are neither trivial nor independent nor natural.
\end{itemize}
In the following, we suppose given a Gray presentation $\Q$ and we write
$(\P,\sequiv)$ for $(\restrictcat{\Q} 3,\stdcong^{\Q})$. Our next goal is to show an adapted
version of the critical pair lemma. We start by two technical lemmas:
\begin{lem}
  \label{lem:existence-min-branching}
  For every local branching $(S_1,S_2)$ of~$\P$, there is a minimal branching
  $(S'_1,S'_2)$ and $1$-cells $l,r \in \freecat\P_1$ and $2$-cells $\lambda,\rho
  \in \freecat\P_2$ such that $S_i = \lambda \comp_1 (l \comp_0 S'_i \comp_0 r)
  \comp_1 \rho$ for $i \in \set{1,2}$.
\end{lem}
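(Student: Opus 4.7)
The plan is to extract the largest common outer context from $S_1$ and $S_2$, so that the residual branching is automatically minimal. First I would write each rewriting step explicitly as $S_i = \lambda_i \comp_1 (l_i \comp_0 A_i \comp_0 r_i) \comp_1 \rho_i$ with $A_i \in \P_3$, and use \Thmr{precat-nf} to decompose the common source $\phi = \csrc_2(S_1) = \csrc_2(S_2)$ uniquely as a composition of $2$-whiskers $\phi = u_1 \comp_1 \cdots \comp_1 u_k$ with $u_j = v_j \comp_0 \alpha_j \comp_0 w_j$, $\alpha_j \in \P_2$ and $v_j, w_j \in \freecat\P_1$. Expanding $\csrc_2(A_i)$ into its own $2$-whisker form and invoking uniqueness of normal forms, the $2$-cell $l_i \comp_0 \csrc_2(A_i) \comp_0 r_i$ must occupy a contiguous block $u_{p_i+1},\ldots,u_{p_i+m_i}$ of whiskers in $\phi$, where $p_i = \len{\lambda_i}$ and $m_i = \len{\csrc_2(A_i)}$.

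Next, set $a = \min(p_1,p_2)$ and $b = \max(p_1+m_1,p_2+m_2)$, and define $\lambda = u_1 \comp_1 \cdots \comp_1 u_a$ and $\rho = u_{b+1} \comp_1 \cdots \comp_1 u_k$ (empty composites being interpreted as appropriate identities). Because $\freecat\P_1$ is the free category on the quiver $(\P_0,\P_1)$, and the $1$-cells $v_{a+1},\ldots,v_b$ all share the same $0$-source $\csrc_0(\phi)$, they admit a well-defined longest common prefix, which I take as $l$; symmetrically, let $r$ be the longest common suffix of $w_{a+1},\ldots,w_b$. Writing $v_j = l \comp_0 v'_j$ and $w_j = w'_j \comp_0 r$, the middle portion of $\phi$ factors as $l \comp_0 \psi' \comp_0 r$ where $\psi' = (v'_{a+1} \comp_0 \alpha_{a+1} \comp_0 w'_{a+1}) \comp_1 \cdots \comp_1 (v'_b \comp_0 \alpha_b \comp_0 w'_b)$. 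Inside $\psi'$ each generator $A_i$ sits in an induced smaller context, yielding a rewriting step $S'_i$ with $S_i = \lambda \comp_1 (l \comp_0 S'_i \comp_0 r) \comp_1 \rho$ by construction.

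It will then remain to verify minimality of $(S'_1,S'_2)$. Suppose for contradiction that $S'_i = \mu \comp_1 (p \comp_0 T_i \comp_0 q) \comp_1 \nu$ for some local branching $(T_1,T_2)$ and common context. Since $a = \min(p_1,p_2)$, at least one of $A_1,A_2$ occupies the very first whisker of $\psi'$, and distributing $p \comp_0 (-) \comp_0 q$ over the internal composition of $T_i$ and appealing to the uniqueness of the whisker decomposition of $S'_i$ forces $\mu$ to be an identity; symmetrically $\nu$ is an identity, by the choice $b = \max(p_1+m_1,p_2+m_2)$. Then $\psi' = p \comp_0 \csrc_2(T_i) \comp_0 q$ forces $p$ to be a common $0$-prefix of every $v'_j$ for $a < j \le b$, but maximality of $l$ means the $v'_j$ share no non-trivial common prefix, so $p$ is an identity, and $q$ is an identity by symmetry. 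The main delicate point I anticipate is the careful position-tracking via the uniqueness in \Thmr{precat-nf}, both for the initial identification of the blocks containing the $A_i$ and for ruling out further non-trivial contexts in the minimality check.
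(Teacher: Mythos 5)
There is a genuine gap in your construction of the $1$-cell $l$ (and symmetrically $r$). You take $l$ to be the longest common prefix of the left whiskers $v_{a+1},\ldots,v_b$ of $\phi$ over the whole hull, but nothing guarantees that this $l$ is a prefix of the actual $1$-contexts $l_1$ and $l_2$ of the two rewriting steps, and that is exactly what the claimed factorization $S_i = \lambda \comp_1 (l \comp_0 S'_i \comp_0 r) \comp_1 \rho$ requires: by the uniqueness part of \Thmr{precat-nf}, a $3$-cell of the form $l \comp_0 S'_i \comp_0 r$ has, in its unique whisker decomposition, an inner $3$-generator whose left $1$-context admits $l$ as a prefix, so the factorization forces $l_i = l \comp_0 l'_i$. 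This fails whenever the source of a $3$-generator itself has all of its whiskers sharing a non-trivial left $1$-prefix. Concretely, take a Gray presentation with one $0$-generator, one $1$-generator $c$, a $2$-generator $\alpha\co c \To c$ and a $3$-generator $A$ with $\csrc_2(A) = (c \comp_0 \alpha) \comp_1 (c \comp_0 \alpha)$: already for the trivial branching $(A,A)$ your recipe gives $a=0$, $b=2$, $v_1 = v_2 = c$, hence $l = c$, yet $A$ cannot be written as $\lambda \comp_1 (c \comp_0 S' \comp_0 r) \comp_1 \rho$ for any rewriting step $S'$, since the unique decomposition of $A$ has identity contexts. The same over-greedy choice also infects your minimality check, whose last step rests on $l$ being maximal among common prefixes of the $v'_j$.

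The construction can be repaired: $l$ must be the longest $1$-cell that is simultaneously a prefix of $l_1$, of $l_2$, and of the left whiskers $v_j$ for those positions $j$ of the hull lying outside both blocks occupied by $\csrc_2(A_1)$ and $\csrc_2(A_2)$ (for positions inside the block of $A_i$, being a prefix of $l_i$ already suffices), and dually for $r$; the minimality verification then needs, besides the factorization of $\psi'$, the uniqueness of the decomposition of $S'_i$ to see that an alleged further context $p$ is a prefix of the residual $1$-contexts $l'_1,l'_2$, and one concludes by maximality of $l$ with respect to this corrected family. Note that the paper sidesteps this bookkeeping entirely: its proof is a short well-founded induction on $\len{\csrc_2(S_1)} + \len{\csrc_1(S_1)}$ — if the branching is not minimal, peel off any non-trivial common context, apply the induction hypothesis to the strictly smaller inner branching, and recompose the two contexts, normalizing via \Thmr{precat-nf}; no explicit maximal common context is ever constructed. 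Your choice of $\lambda$ and $\rho$ and your use of uniqueness of whisker decompositions are fine; it is only the determination of $l$ and $r$, and the ensuing factorization and maximality claims, that must be corrected.
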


\begin{proof}
  We show this by induction on $N(S_1)$ where $N(S_1) = \len{\csrc_2(S_1)} +
  \len{\csrc_1(S_1)}$. Suppose that the property is true for all local
  branchings $(S'_1,S'_2)$ with $N(S'_1) < N(S_1)$. If $(S_1,S_2)$ is not
  minimal, then there are rewriting steps $S'_1,S'_2 \in \freecat\P_3$, $l,r \in
  \freecat\P_1$ and $\lambda,\rho \in \freecat\P_2$ such that $S_i = \lambda
  \comp_1 (l \comp_0 S'_i \comp_0 r) \comp_1 \rho$ for $i \in \set{1,2}$, such
  that $l,r,\lambda,\rho$ are not all identities. Since
  \[
    \len{\csrc_1(S_1)} = \len{l} + \len{\csrc_1(S'_1)} + \len{r} \qtand
    \len{\csrc_2(S_1)} = \len{\lambda}
    + \len{\csrc_2(S'_1)} + \len{\rho},
  \]
  we have $N(S'_1) < N(S_1)$ so there is a minimal branching $(S''_1,S''_2)$ and
  $l',r' \in \freecat\P_1$, $\lambda',\rho' \in \freecat\P_2$ such that $S'_i =
  \lambda' \comp_1 (l' \comp_0 S''_i \comp_0 r') \comp_1 \rho'$ for $i \in
  \set{1,2}$. By composing with $\lambda,\rho,l,r$ and normalizing as in
  \Cref{thm:precat-nf}, we obtain the conclusion of the lemma.
\end{proof}
\begin{lem}
  \label{lem:triv-indep-natural-confluent}
  A local branching of~$\P$ which is either trivial or independent or natural is
  confluent.
\end{lem}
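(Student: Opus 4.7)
We treat the three cases in turn. The \emph{trivial case} $S_1 = S_2$ is settled by taking $G_1 = G_2 = \unit{\phi_1}$, giving $S_1 \comp_2 G_1 = S_2 \comp_2 G_2$ directly, which is $\sequiv$.

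For the \emph{independent case}, writing $A_i \co \phi_i \TO \phi'_i$ for $i \in \set{1,2}$, we set
\begin{align*}
  G_1 &= (l_1 \comp_0 \phi'_1 \comp_0 r_1) \comp_1 \chi \comp_1 (l_2 \comp_0 A_2 \comp_0 r_2), \\
  G_2 &= (l_1 \comp_0 A_1 \comp_0 r_1) \comp_1 \chi \comp_1 (l_2 \comp_0 \phi'_2 \comp_0 r_2).
\end{align*}
With the identifications $(e, e', h, h', A, B) = (l_1, l_2, r_1, r_2, A_1, A_2)$ in the definition of an independence generator, the composite $S_1 \comp_2 G_1$ is precisely the source $\Gamma$ of the corresponding independence generator of $\Q_4$ and $S_2 \comp_2 G_2$ is its target $\Delta$, so that $S_1 \comp_2 G_1 \sequiv S_2 \comp_2 G_2$.

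For the \emph{natural case}, by symmetry we treat only the first subcase of~\eqref{eq:inat-gen}: $S_1 = (A \comp_0 g \comp_0 h) \comp_1 (f' \comp_0 g \comp_0 \psi)$ with $A \co \phi \TO \phi' \in \P_3$, and $S_2$ is the swap of the last $2$-generator of $\phi$ past the first $2$-generator of $g \comp_0 \psi$. The plan is to reduce to the special case where $\psi$ has length one, where the interchange naturality generator of $\Q_4$ applies directly. Decompose $\psi = \psi_1 \comp_1 \psi'$ with $\psi_1 = k \comp_0 \beta \comp_0 m$ of length~$1$ for some $\beta \in \P_2$; using the distributivity axioms of precategories, both $S_1$ and $S_2$ factor as $S_i = \tilde S_i \comp_1 (f' \comp_0 g \comp_0 \psi')$, where $(\tilde S_1, \tilde S_2)$ is the natural branching with $\psi_1$ in place of $\psi$. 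For this reduced branching, take
\[
  \tilde G_1 = X_{\phi', g \comp_0 \psi_1}, \qquad \tilde G_2 = \tilde G'_2 \comp_2 ((f \comp_0 g \comp_0 \psi_1) \comp_1 (A \comp_0 g \comp_0 \ctgt_1(\psi_1))),
\]
where $\tilde G'_2$ is the composite of the remaining swaps of $\stdpath_{\phi, g \comp_0 \psi_1}$, so that $\tilde S_2 \comp_2 \tilde G'_2 = X_{\phi, g \comp_0 \psi_1}$. The interchange naturality generator of $\Q_4$ for the data $(A, g \comp_0 k, \beta)$, whiskered by $m$ on the right, then witnesses $\tilde S_1 \comp_2 \tilde G_1 \sequiv \tilde S_2 \comp_2 \tilde G_2$. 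Setting $G_i = \tilde G_i \comp_1 (f' \comp_0 g \comp_0 \psi')$, distributivity gives $S_i \comp_2 G_i = (\tilde S_i \comp_2 \tilde G_i) \comp_1 (f' \comp_0 g \comp_0 \psi')$, and compatibility of $\sequiv$ with whiskering concludes. The step I expect to be the main obstacle is the bookkeeping needed to identify $\tilde S_1 \comp_2 \tilde G_1$ and $\tilde S_2 \comp_2 \tilde G_2$ with the source and target of the interchange naturality generator, which requires carefully unfolding $X_{-, g \comp_0 \psi_1}$ via $\stdpath$ and absorbing the $1$-cells $k$ and $m$ into $g$ and the right context, respectively.
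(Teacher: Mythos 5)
Your proposal is correct and follows essentially the same route as the paper, whose proof is a one-line appeal to exactly these ingredients: identities for trivial branchings, the independence generators for independent ones, and the interchange naturality generators for natural ones. Your explicit witnesses (the $\Gamma/\Delta$ identification in the independent case, and peeling off the first whisker $\psi_1$ of $\psi$ before whiskering by $f'\comp_0 g\comp_0\psi'$ in the natural case, with the $\comp_0 m$ compatibility of $X_{-,-}$ handled as in the paper's appendix) just spell out the details the paper leaves implicit.
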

\begin{proof}
  A trivial branching is, of course, confluent. Independent and natural
  branching are confluent thanks respectively to the independence generators and interchange
  naturality generators of a Gray presentation.
\end{proof}

\goodbreak\noindent The critical pair lemma adapted to our context is then:
\begin{theo}[Adapted critical pair lemma]
  \label{thm:cp}
  The rewriting system $(\P,\sequiv)$ is locally confluent if and only if every
  critical branching is confluent.
\end{theo}
\begin{proof}
  If the rewriting system is locally confluent, then, in particular, every
  critical branching is confluent. For the converse implication, by
  \Lemr{existence-min-branching}, to check that all local branchings are
  confluent, it is enough to check that all minimal local branchings are
  confluent. Among them, by \Lemr{triv-indep-natural-confluent}, it is enough to
  check the confluence of the critical branchings.
\end{proof}
\noindent We now state the main result of this section, namely a coherence
theorem for Gray presentations based
on the analysis of the critical branchings:
\begin{theo}[Coherence]
  \label{thm:gray-coherence}
  Let $\Q$ be a Gray presentation and $(\P,\sequiv) = (\restrictcat{\Q}
  3,\stdcong^{\Q})$ be the associated rewriting system. If $\P$ is terminating and all
  the critical branchings of $(\P,\sequiv)$ are confluent, then $\Q$ is a
  coherent Gray presentation.
  \end{theo}
\begin{proof}
  By \Cref{thm:cp}, the rewriting system $(\P,\sequiv)$ is locally confluent, and by
  \Cref{thm:newman-modulo} it is confluent. Since $\prespcat{\Q} =
  \freecat\P/_{\sequiv}$, it implies that $\prespcat{\Q}$ is a confluent
  $3$\precategory. To conclude, it is sufficient to show that the criterion in
  the last part of \Cref{prop:confluent-impl-coherence} is satisfied. But the latter
  is a consequence of \Lemr{equiv-on-nf}.
\end{proof}
\noindent Note that \Cref{thm:gray-coherence} requires the rewriting system
$(\P,\sequiv)$ to be confluent. If it is not the case, one can try to first
apply a modified version of the classical Knuth-Bendix completion
procedure~\cite{knuth1970simple} (see also \cite[Sec.~7]{baader1999term}) which,
in addition to adding new $3$\generators in order to make the system confluent,
also adds $4$\generators in order to make it confluent up to $\sequiv$, in order
to hopefully obtain a confluent Gray presentation. Such a procedure is detailed
in the closely related setting of coherent presentations of monoids
in~\cite{guiraud2013homotopical}, where it is called the Knuth-Bendix-Squier
completion procedure.

Our coherence theorem implies a coherence criterion similar to the ones shown
by Squier, Otto and Kobayashi~\cite[Thm.~5.2]{squier1994finiteness} and
Guiraud and Malbos~\cite[Prop.~4.3.4]{guiraud2009higher}, which states that adding a tile for
each critical branching is enough to ensure coherence:

\begin{theo}
  \label{thm:squier}
  Let $\Q$ be a Gray presentation, such that $\restrictcat\Q 3$ is terminating
  and, for every critical branching $(S_1\co\phi\TO\phi_1,S_2\co\phi\TO\phi_2)$
  of~$\restrictcat\Q 3$, there exist $\psi \in \freecat\Q_2$, $F_i\co\phi_i \TO
  \psi\in\freecat\Q_3$ for $i \in \set{1,2}$ and $G\co S_1 \comp_2 F_1 \TOO S_2
  \comp_2 F_2 \in \Q_4$. Then, $\Q$ is a coherent Gray presentation.
\end{theo}
\begin{proof}
  The definition of~$\Q_4$ ensures that all the critical
  branchings are confluent, so that \Cref{thm:gray-coherence} applies.
\end{proof}%
\noindent Note that, in \Cref{thm:squier}, we do not need to add a $4$\generator~$G$
as in the statement for a critical branching $(S_1,S_2)$ if there is already a
generator $G'$ for the symmetrical branching $(S_2,S_1)$, so that a stronger
statement holds.

To finish this section, we mention a finiteness property for critical
branchings of Gray presentations. This property contrasts with the case of
strict $n$\categories, where finite presentations can have an infinite number of
critical branchings~\cite{lafont2003towards, guiraud2009higher}.

\begin{restatable}{theo}{grayfinitecriticalbranchings}%
  \label{thm:finite-cp}
  Given a Gray presentation $\Q$ where $\Q_2$ and $\Q_3$ are finite and
  $\len{\csrc_2(A)} > 0$ for every $A \in \Q_3$, there is a finite number of
  local branchings $(S_1,S_2)$ with rewriting steps $S_1,S_2 \in \freecat\Q_3$
  such that $(S_1,S_2)$ is a critical branching.
\end{restatable}
\begin{proof}
  See \Appr{finiteness-cp}.
\end{proof}
\noindent The proof of \Cref{thm:finite-cp} happens to be constructive, so that we
can extract an algorithm to compute the critical branchings for such Gray
presentations. An implementation of this algorithm was used to compute the
critical branchings of the examples of the next section.
  
  


\section{Applications}
\label{sec:applications}
We now illustrate the techniques of the previous section and show the coherence
of Gray presentations corresponding to several well-known algebraic structures. For
each structure, we introduce a Gray presentation and study the confluence of the
critical branchings of the associated rewriting system. Then, when the rewriting
system is terminating, we can directly apply \Cref{thm:squier} to deduce the
coherence of the presentation. This will be the case for pseudomonoids,
pseudoadjunctions and Frobenius pseudomonoids. We moreover study the example of
self-dualities, where the associated rewriting system is not terminating, for
which we use specific techniques in order to prove a weak coherence result.

\subsection{Pseudomonoids}
\label{ssec:app-pseudomonoid}

In \Exr{pseudo-monoid-gray-pres}, we introduced a Gray presentation~$\P$ for the
theory of pseudomonoids. The set~$\P_4$ of $4$-generators contains only the
required ones in a Gray presentation, so that we do not expect~$\P$ to be
coherent (see~\eqref{eq:parallel-not-equal} for an example). We will show that the rewriting system is terminating and thus,
\Thmr{squier}, adding a $4$-generator corresponding to each
critical branching will turn the presentation into a coherent one. Those
branchings can be computed as in the proof of \Thmr{finite-cp}, which is
constructive: we obtain, up to symmetrical branchings, five critical branchings:
\[
      \begin{tikzcd}[ampersand replacement=\&,sep={5em,between origins},cramped,baseline=(\tikzcdmatrixname-1-1.center)]
        \satex{mon-cp1} \tar[r] \tar[d] \& \satex{mon-cp1-r}
        \\
        \satex{mon-cp1-l}
      \end{tikzcd}
  \qquad\qquad
      \begin{tikzcd}[ampersand replacement=\&,sep={5em,between origins},cramped,baseline=(\tikzcdmatrixname-1-1.center)]
        \satex{mon-cp2} \tar[r,""{name=src}] \tar[d] \&\satex{mon-cp2-r}  \\
        \satex{mon-cp2-l}
      \end{tikzcd}
  \qquad\qquad
      \begin{tikzcd}[ampersand replacement=\&,sep={5em,between origins},baseline=(\tikzcdmatrixname-1-1.center),cramped]
        \satex{mon-cp4} \tar[d] \tar[r,""{name=srcp}] \& \satex{mon-cp4-l}
        \\
        \satex{mon-cp4-r} \&
      \end{tikzcd}%
\]
\[
  \begin{tikzcd}[ampersand replacement=\&,sep={5em,between origins},cramped,baseline=(\tikzcdmatrixname-1-1.center)]
    \satex{mon-cp3} \tar[r] \tar[d] \&\satex{mon-cp3-r}
    \\
    \satex{mon-cp3-l}
  \end{tikzcd}
  \qquad
  \qquad
  \qquad
  \begin{tikzcd}[ampersand replacement=\&,sep={5em,between origins},baseline=(\tikzcdmatrixname-1-1.center),cramped]
    \satex{mon-cp5} \tar[r] \tar[d] \&\satex{mon-cp5-l}
    \\
    \satex{mon-cp5-r}
  \end{tikzcd}
\]
We observe that each of these branchings is joinable, and we define formal new
$4$-generators $R_1, R_2, R_3, R_4, R_5$ that fill the holes:
\[
  \begin{tikzcd}[ampersand replacement=\&,sep={5.5em,between origins},baseline=(\tikzcdmatrixname-2-1.center)]
    \satex{mon-cp1} \tar[r] \tar[d] \& \satex{mon-cp1-r} \ar[phantom,d,"\overset{R_1}\LLeftarrow"] \tar[r] \&
    \satex{mon-cp1-r-2} \tar[d,""{auto=false,name=src}] \\
    \satex{mon-cp1-l} \tar[r] \& \satex{mon-cp1-l-2} \tar[r] \&
    \satex{mon-cp1-e}
  \end{tikzcd}
  \quad
  \begin{tikzcd}[ampersand replacement=\&,sep={5.5em,between origins},baseline=(\tikzcdmatrixname-2-1.center)]
    \satex{mon-cp2} \tar[r,""{name=src,auto=false}] \tar[d] \&\satex{mon-cp2-r} \tar[d] \\
    \satex{mon-cp2-l}  \&\satex{mon-cp2-r-2} \tar[l,""{name=tgt,auto=false}]
    \ar[from=src,to=tgt,phantom,"\overset{R_2}\LLeftarrow"]
  \end{tikzcd}
  \quad
  \begin{tikzcd}[ampersand replacement=\&,sep={5.5em,between origins},baseline=(\tikzcdmatrixname-2-1.center)]
    \satex{mon-cp4} \tar[r,""{auto=false,name=src}] \tar[d] \& \satex{mon-cp4-l}
    \tar[d] \\
    \satex{mon-cp4-r} \&\satex{mon-cp4-l-2} \tar[l,""{auto=false,name=tgt}]
    \ar[phantom,from=src,to=tgt,"\overset{R_3}\LLeftarrow"]
  \end{tikzcd}
\]%
\[
  \begin{tikzcd}[ampersand replacement=\&,sep={5.5em,between origins},baseline=(\tikzcdmatrixname-2-1.center)]
    \satex{mon-cp3} \tar[d] \tar[r,""{auto=false,name=src}]  \&
    \satex{mon-cp3-r} \tar[d]
    \\
    \satex{mon-cp3-l} \ar[r,equal,""{auto=false,name=tgt}] \& \satex{mon-cp3-l}
    \ar[phantom,"\overset{R_4}\LLeftarrow",from=src,to=tgt]
  \end{tikzcd}
  \qquad
  \qquad
  \begin{tikzcd}[ampersand replacement=\&,sep={5.5em,between origins},baseline=(\tikzcdmatrixname-2-1.center)]
    \satex{mon-cp5}
    \tar[d] \tar[r,""{auto=false,name=src}] \& \satex{mon-cp5-l} \tar[d] \\
    \satex{mon-cp5-r} \ar[r,equal,""{auto=false,name=tgt}] \& \satex{mon-cp5-r}
    \ar[phantom,from=src,to=tgt,"\overset{R_5}\LLeftarrow"]
  \end{tikzcd}
\]
We then define~$\PMon$ as the Gray presentation obtained from~$\P$ of
\Exr{pseudo-monoid-gray-pres} by adding $R_1,\ldots,R_5$ to~$\P_4$.

As claimed above, in order to deduce coherence, we need to show the termination
of~$\PMon$. For this purpose, we use the tools of \Secr{rewriting} and build a
reduction order. We split the task in two and define a first order that
handles the termination of the $\monA,\monL,\monR$ generators, and then a second one that handles
the termination of interchange generators. For the first task, we use a technique similar
to the one used in~\cite{lafont1992penrose}. Given~$n \in \N$, we
write~$\ltex^1$ for the partial order on~$\N^n$ such that, given $a,b \in \N^n$,
$a \ltex^1 b$ when $a_i \le b_i$ for all $i \in \set{1,\ldots,n}$ and there exists
$j \in \set{1,\ldots,n}$ such that $a_{j} < b_{j}$. Let~$\Monex$ be the
$2$\precategory
\begin{itemize}
\item which has only one $0$-cell: $\Monex_0 = \set{\ast}$,
\item whose $1$-cells are the natural numbers: $\Monex_1 = \N$,
\item whose $2$-cells $m \To n$ for $m,n\in \N$ are the strictly monotone
  functions
  \[
    \phi\co (\N^m,\ltex^1) \to (\N^n,\ltex^1).
  \]
\end{itemize}
Moreover, $\unit\ast = 0$ and composition of $1$-cells is given by addition.
Given $m \in \Monex_1$, $\unit m$ is the identity function on~$\N^m$, and given
$m,n,k,k' \in \N$ and $\chi \co k \to k' \in \Monex_2$, the $2$-cell
\[
  m \comp_0 \chi
  \comp_0 n \co {m+k+n} \To {m+k'+n}
\]
is the function $\chi'\co \N^{m+k+n} \to \N^{m+k'+n}$ such that, for~$x = (x_1,\ldots,x_{m+k+n})\in
\N^{m+k+n}$, for $i \in \set{1,\ldots,m+k'+n}$,
\[
  \chi'(x)_i =
  \begin{cases}
   x_i & \text{if $i \le m$} \\
   \chi(x_{m+1},\ldots,x_{m+k})_{i - m} & \text{if $m < i \le m + k'$} \\
   x_{i - k' + k} & \text{if $i > m + k'$}
  \end{cases}
\]
and, given $m,n,p \in \N$, $\phi \co m \To n \in \Monex_2$ and $\psi \co n \To p
\in \Monex_2$, $\phi \comp_1 \psi$ is defined as $\psi \circ \phi$ and one shows
readily that these operations indeed give strictly monotone functions. One
easily checks that $\Monex$ is a strict $2$\category.
Given $m,m',n,n' \in \N$ and $\phi\co m\To n,\psi\co m'\To n' \in
\Monex$, we write $\phi \ltex^2 \psi$ when $m = m'$, $n = n'$ and $\phi(x)
\ltex^1 \psi(x)$ for all~$x \in \N^m$. We have that:
\begin{prop}
  $\ltex^2$ is well-founded on $\Monex_2$.
\end{prop}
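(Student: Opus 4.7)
The plan is to reduce well-foundedness of $\ltex^2$ on $\Monex_2$ to well-foundedness of $\ltex^1$ on $\N^n$, by evaluating at a single point.

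First I would observe that if $\phi \ltex^2 \psi$, then $\phi$ and $\psi$ have the same source $m$ and target $n$, so any $\ltex^2$-decreasing sequence $\phi_0 \gtex^2 \phi_1 \gtex^2 \phi_2 \gtex^2 \cdots$ consists of functions $\phi_i\co \N^m \to \N^n$ sharing the same arities. Next, I would specialize the defining inequality of $\ltex^2$ at the point $0 = (0,\ldots,0) \in \N^m$: by definition of $\ltex^2$, we obtain a strictly $\ltex^1$-decreasing sequence
\[
  \phi_0(0) \gtex^1 \phi_1(0) \gtex^1 \phi_2(0) \gtex^1 \cdots
\]
in $\N^n$.

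It then suffices to show that $(\N^n,\ltex^1)$ is itself well-founded. For this, I would introduce the sum-of-coordinates map $s\co \N^n \to \N$ defined by $s(a_1,\ldots,a_n) = a_1 + \cdots + a_n$, and observe that $a \ltex^1 b$ implies $s(a) < s(b)$ (since $a_i \le b_i$ for every $i$ with at least one strict inequality). Hence any infinite $\ltex^1$-decreasing sequence in $\N^n$ would project along $s$ to an infinite strictly decreasing sequence in $(\N,<)$, which is impossible.

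Combining these two observations yields the contradiction: the hypothetical infinite chain $\phi_0 \gtex^2 \phi_1 \gtex^2 \cdots$ produces an infinite $\ltex^1$-descending chain in $\N^n$, ruled out by the previous paragraph. I don't foresee any real obstacle here; the key conceptual point is simply that the pointwise nature of $\ltex^2$ makes evaluation at a fixed input a well-foundedness-preserving projection, so nothing about strict monotonicity of the $\phi_i$ is actually needed for this argument.
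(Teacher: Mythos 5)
Your argument is correct and is essentially the paper's own proof: the paper defines $N(\phi)=\phi(z)_1+\cdots+\phi(z)_n$ with $z=(0,\ldots,0)$ and notes that $\psi\ltex^2\phi$ forces $N(\psi)<N(\phi)$, which is exactly your two steps (evaluate at $0$, then sum the coordinates) composed into one map. No substantive difference.
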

\begin{proof}
  We define a function $N\co \Monex_2 \to \N$ by
  \begin{center}
    $N(\phi) = \phi(z)_1 + \cdots + \phi(z)_n$ \qquad for $\phi\co m \To n \in
    \Monex_2$
  \end{center}
  where $z = (0,\ldots,0)$. Now, if $\psi\co m \To n \in \Monex_2$ is such that
  $\psi \ltex^2 \phi$, then $\psi(z) \ltex^1 \phi(z)$ so that
  $N(\psi)<N(\phi)$. Thus, $\ltex^2$ on $\Monex_2$ is well-founded.
\end{proof}
\noindent We observe that the order $\ltex^2$ is compatible with the structure of $\Monex$:
\begin{prop}
  \label{prop:lessexists-stable}
  Given $m,n,m',n',k,k' \in \N$, $\mu\co m' \To m$, $\nu\co n \To n'$,
  and $\phi,\phi'\co k\To k' \in \Monex_2$ such that $\phi \gtex^2 \phi'$, we
  have
  \begin{enumerate}[label=(\roman*),ref=(\roman*)]
  \item \label{prop:lessexists-stable:0comp}$m \comp_0 \phi \comp_0 n \gtex^2 m \comp_0 \phi' \comp_0 n$,

  \item \label{prop:lessexists-stable:1comp}$\mu \comp_1 \phi \comp_1 \nu \gtex^2 \mu \comp_1 \phi' \comp_1 \nu$.
  \end{enumerate}
\end{prop}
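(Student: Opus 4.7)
The plan is to prove both statements by unfolding the explicit definitions of $\comp_0$, $\comp_1$ and $\ltex^2$ given just above, and comparing coordinate by coordinate in $\N^{k'}$ (resp.\ $\N^{n'}$).

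For \ref{prop:lessexists-stable:0comp}, I would fix an arbitrary $x = (x_1,\ldots,x_{m+k+n}) \in \N^{m+k+n}$ and set $y = (x_{m+1},\ldots,x_{m+k}) \in \N^k$. Using the formula for $m \comp_0 (-) \comp_0 n$ recalled just before the proposition, the values $(m \comp_0 \phi \comp_0 n)(x)_i$ and $(m \comp_0 \phi' \comp_0 n)(x)_i$ agree for $i \le m$ (both equal $x_i$) and for $i > m+k'$ (both equal $x_{i-k'+k}$), while on the middle coordinates $m < i \le m+k'$ they reduce respectively to $\phi(y)_{i-m}$ and $\phi'(y)_{i-m}$. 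By hypothesis $\phi(y) \gtex^1 \phi'(y)$ in $\N^k$, so these middle coordinates are pointwise $\ge$ and strictly greater in at least one index. Combining, $(m \comp_0 \phi \comp_0 n)(x) \gtex^1 (m \comp_0 \phi' \comp_0 n)(x)$, as required.

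For \ref{prop:lessexists-stable:1comp}, I would use the fact that $\mu \comp_1 \phi \comp_1 \nu$ is by definition the composite function $\nu \circ \phi \circ \mu$. Fix $x \in \N^{m'}$ and put $y = \mu(x) \in \N^k$. By the hypothesis $\phi \gtex^2 \phi'$ applied at $y$, we have $\phi(y) \gtex^1 \phi'(y)$. The key step is then to invoke the defining property of $\nu$ as a $2$-cell of $\Monex$, namely that $\nu \co (\N^n, \ltex^1) \to (\N^{n'}, \ltex^1)$ is strictly monotone, hence $\nu(\phi(y)) \gtex^1 \nu(\phi'(y))$. This gives the conclusion at $x$, and since $x$ was arbitrary, $(\mu \comp_1 \phi \comp_1 \nu) \gtex^2 (\mu \comp_1 \phi' \comp_1 \nu)$.

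Both parts are essentially bookkeeping; no nontrivial obstacle appears. The only subtlety worth highlighting is \ref{prop:lessexists-stable:1comp}, where strict monotonicity of $\nu$ (rather than mere monotonicity) is needed to transport the strict inequality in some coordinate through $\nu$, which is precisely why $\Monex_2$ was defined to consist of strictly monotone maps.
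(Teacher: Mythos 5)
Your proof is correct and follows essentially the same route as the paper's: coordinatewise comparison of $(m \comp_0 \phi \comp_0 n)(x)$ and $(m \comp_0 \phi' \comp_0 n)(x)$ for \ref{prop:lessexists-stable:0comp}, and transporting $\phi(\mu(x)) \gtex^1 \phi'(\mu(x))$ through $\nu$ for \ref{prop:lessexists-stable:1comp}. Your remark that the preservation of the strict relation $\gtex^1$ by $\nu$ is exactly what the definition of $\Monex_2$ as strictly monotone maps provides is precisely the point the paper's proof uses (there phrased simply as ``$\nu$ is monotone'').
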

\begin{proof}
  Given~$x \in \N^{m+k+n}$, we have~$\phi(x_{m+1},\ldots,x_{m+k}) \gtex^1
  \phi'(x_{m+1},\ldots,x_{m+k})$ so
  \[
    (m \pcomp_0 \phi \pcomp_0 n)(x) \gtex^1 (m
    \pcomp_0 \phi' \pcomp_0 n)(x)\zbox.
  \]
  Thus,~\ref{prop:lessexists-stable:0comp} holds. Moreover, given~$y \in
  \N^{m'}$, we have~$\phi(\mu(y)) \gtex^1 \phi'(\mu(y))$. Since~$\nu$ is monotone,
  we have~$\nu(\phi(\mu(y))) \gtex^1 \nu(\phi'(\mu(y)))$.
  Thus,~\ref{prop:lessexists-stable:1comp} holds.
\end{proof}
\noindent We define a $2$\prefunctor $F\co \freecat{\PMon}_2 \to \Monex$ by
the universal property of the $2$\polygraph $\restrictcat\PMon{2}$, \ie $F$ is
the unique functor such that $F(\ast) = \ast$, $F(\bar 1) = 1$, $F(\mu) = f_\mu$
and $F(\eta) = f_\eta$ where
\[
  f_\mu\co \N^2 \to \N^1 \qquad\qquad f_\eta \co \N^0 \to \N^1
\]
are defined by $f_\mu(x,y) = 2x + y + 1$ for all $x,y \in \N$ and $f_\eta() =
1$. The interpretation exhibits the $3$\generators $\monA$, $\monL$ and $\monR$ of~$\PMon$
as decreasing operations:
\begin{prop}
  \label{prop:mon-interp-gen-decreasing}
  The followings hold:
  \begin{enumerate}[label=(\roman*),ref=(\roman*)]
  \item \label{prop:mon-interp-gen-decreasing:A}$F(\csrc_2(\monA)) \gtex^2 F(\ctgt_2(\monA))$,

  \item \label{prop:mon-interp-gen-decreasing:L} $F(\csrc_2(\monL)) \gtex^2 F(\ctgt_2(\monL))$,
  \item \label{prop:mon-interp-gen-decreasing:R} $F(\csrc_2(\monR)) \gtex^2 F(\ctgt_2(\monR))$,
  \item \label{prop:mon-interp-gen-decreasing:X} $F(\ctgt_2(X_{\alpha,m,\beta})) = F(\csrc_2(X_{\alpha,m,\beta}))$ for $\alpha,\beta \in \PMon_2$ and
    $m \in \N$.
  \end{enumerate}
\end{prop}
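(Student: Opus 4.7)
The proof is essentially a direct computation: once $F$ is unfolded on the source and target of each generator using the definitions of $f_\mu$, $f_\eta$, and the composition operations in $\Monex$, one gets two explicit functions $\N^m \to \N^n$ that can be compared pointwise. There is no conceptual obstacle; the only care required is in tracking coordinate indices and in setting up the comparison so that it applies uniformly to every generator.

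For parts \ref{prop:mon-interp-gen-decreasing:A}, \ref{prop:mon-interp-gen-decreasing:L} and \ref{prop:mon-interp-gen-decreasing:R}, the plan is to compute both sides. For $\monA$, unfolding gives
\[
F((\mu \comp_0 \bar 1) \comp_1 \mu)(x,y,z) = 2(2x+y+1) + z + 1 = 4x+2y+z+3,
\]
\[
F((\bar 1 \comp_0 \mu) \comp_1 \mu)(x,y,z) = 2x + (2y+z+1) + 1 = 2x+2y+z+2,
\]
and the first is strictly greater than the second for all $(x,y,z) \in \N^3$, so the source is $\gtex^2$ the target. For $\monL$ one obtains $x \mapsto x + 3$ versus $x \mapsto x$, and for $\monR$ one obtains $x \mapsto 2x + 2$ versus $x \mapsto x$; in both cases the inequality $\gtex^1$ holds for every $x \in \N$. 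Hence $F$ sends the source of each of $\monA$, $\monL$, $\monR$ to a function strictly dominating $F$ of its target.

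For part \ref{prop:mon-interp-gen-decreasing:X}, the approach is to observe that the source and target of an interchange generator $X_{\alpha,m,\beta}$, where $\alpha\co f\To f'$ and $\beta\co h\To h'$, differ only by the order in which $\alpha$ and $\beta$ are applied, and that via $F$ these act on disjoint blocks of input coordinates. More precisely, writing $p = F(f)$, $p' = F(f')$, $q = m$, $r = F(h)$, $r' = F(h')$, both
\[
F\bigl((\alpha \comp_0 \bar m \comp_0 h) \comp_1 (f' \comp_0 \bar m \comp_0 \beta)\bigr)
\]
and
\[
F\bigl((f \comp_0 \bar m \comp_0 \beta) \comp_1 (\alpha \comp_0 \bar m \comp_0 h')\bigr)
\]
are, by functoriality of $F$ and by the definition of $\comp_0$ in $\Monex$, equal to the function $\N^{p+q+r} \to \N^{p'+q+r'}$ sending
\[
(x_1,\ldots,x_{p+q+r}) \mapsto \bigl(F(\alpha)(x_1,\ldots,x_p),\, x_{p+1},\ldots,x_{p+q},\, F(\beta)(x_{p+q+1},\ldots,x_{p+q+r})\bigr).
\]
Thus both sides coincide, giving \ref{prop:mon-interp-gen-decreasing:X}. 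The only technical point in the whole proof is writing out this last function carefully and checking that the two composite expressions yield it; the remaining verifications are just the arithmetic displayed above.
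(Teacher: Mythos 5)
Your proof is correct and essentially the paper's: a direct computation of $F$ on the sources and targets of $\monA$, $\monL$, $\monR$ followed by a pointwise comparison, with the interchanger case coming down to the interchange law in $\Monex$ --- the paper simply invokes the already-checked fact that $\Monex$ is a strict $2$-category, whereas you verify that instance by explicit coordinates, which amounts to the same check. The only discrepancy is arithmetic and harmless: you obtain $2x+2y+z+2$ for $F(\ctgt_2(\monA))(x,y,z)$ where the paper writes $2x+2y+z+1$; either value is strictly dominated by $4x+2y+z+3$ pointwise, so the conclusion is unchanged.
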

\begin{proof}
  Let $\phi = F(\csrc_2(\monA))$ and $\psi = F(\ctgt_2(\monA))$. By calculations, we get that
  \[
    \phi(x,y,z) = (4x + 2y + z + 3) \qqtand \psi(x,y,z) = (2x + 2y + z + 1)
  \]
  for
  $x,y,z \in \N$,
  so $\phi(x,y,z) \gtex^1 \psi(x,y,z)$ for all $x,y,z \in \N$.
  The cases~\ref{prop:mon-interp-gen-decreasing:L} and~\ref{prop:mon-interp-gen-decreasing:R} are shown similarly.
  \ref{prop:mon-interp-gen-decreasing:X} is a consequence of the fact that
  $\Monex$ is a strict $2$\category.
\end{proof}
\noindent
We define a partial order~$<$ on~$\freecat{\PMon}_2$ by
putting, for $\phi,\psi \in \freecat{\PMon}_2$,
\begin{center}
  $\phi < \psi$ when $F(\phi) \ltex^2 F(\psi)$ or [$F(\phi) =
  F(\psi)$ and $\intnorm(\phi) <_\omega \intnorm(\psi)$].
\end{center}
\begin{prop}
  \label{prop:pseudomon-term-order}
  The partial order~$<$ on~$\freecat{\PMon}_2$ is a reduction order for~$\PMon$.
\end{prop}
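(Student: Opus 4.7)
The plan is to verify the three defining conditions of a reduction order by combining the preparatory propositions just established. The order $<$ on $\freecat{\PMon}_2$ is, by construction, a lexicographic combination: we compare first by $F(-)$ under $\ltex^2$, and only when this equality holds do we compare by $\intnorm(-)$ under $\seqord$. Accordingly, well-foundedness will come from chasing an infinite descending chain through the two layers: if $\phi_1 > \phi_2 > \cdots$ were an infinite strictly decreasing sequence, then $F(\phi_i) \getex^2 F(\phi_{i+1})$ for all $i$, and well-foundedness of $\ltex^2$ on $\Monex_2$ forces $F(\phi_i) = F(\phi_{i+1})$ for all sufficiently large $i$; but then $\intnorm(\phi_i)$ would form an infinite strictly $\seqord$-decreasing chain, contradicting the well-foundedness of $\seqord$ noted in \Ssecr{termination}.

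Next, I would verify the strict decrease on each $3$-generator of~$\PMon_3$. For the generators $\monA$, $\monL$, $\monR$, \Propr{mon-interp-gen-decreasing}\ref{prop:mon-interp-gen-decreasing:A}--\ref{prop:mon-interp-gen-decreasing:R} directly yields $F(\csrc_2(A)) \gtex^2 F(\ctgt_2(A))$, which gives $\csrc_2(A) > \ctgt_2(A)$. For an interchange generator $X_{\alpha,m,\beta}$, \Propr{mon-interp-gen-decreasing}\ref{prop:mon-interp-gen-decreasing:X} tells us $F(\csrc_2) = F(\ctgt_2)$, so the comparison falls through to the second component; since $\PMon$ is positive (both $\mu$ and $\eta$ have $\ctgt_1$ of length one), \Propr{term-criterion-interchanger}\ref{prop:term-criterion-interchanger:X} gives $\intnorm(\csrc_2(X_{\alpha,m,\beta})) \seqordgt \intnorm(\ctgt_2(X_{\alpha,m,\beta}))$, hence $\csrc_2(X_{\alpha,m,\beta}) > \ctgt_2(X_{\alpha,m,\beta})$.

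Finally, compatibility with $\comp_0$ and $\comp_1$ will follow by a uniform case split on which layer of the lexicographic order witnesses $\phi > \phi'$. If $F(\phi) \gtex^2 F(\phi')$, then since $F$ is a $2$-prefunctor, $F(l \comp_0 \phi \comp_0 r) = F(l) \comp_0 F(\phi) \comp_0 F(r)$ and similarly for~$\comp_1$, so \Propr{lessexists-stable}\ref{prop:lessexists-stable:0comp}--\ref{prop:lessexists-stable:1comp} yields the corresponding strict inequality on the images, whence $l \comp_0 \phi \comp_0 r > l \comp_0 \phi' \comp_0 r$ and $\lambda \comp_1 \phi \comp_1 \rho > \lambda \comp_1 \phi' \comp_1 \rho$. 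If instead $F(\phi) = F(\phi')$ and $\intnorm(\phi) \seqordgt \intnorm(\phi')$, then functoriality of~$F$ preserves the equality of the first components under composition, and \Propr{term-criterion-interchanger}\ref{prop:term-criterion-interchanger:0-comp}--\ref{prop:term-criterion-interchanger:1-comp} preserves the strict decrease of the second, giving the desired inequality in both compositions.

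No step here is genuinely hard: all the necessary well-foundedness, functoriality, and monotonicity facts have been isolated in the preceding propositions, and the argument is essentially an assembly. The only point that requires care is making sure the lexicographic structure of $<$ interacts cleanly with composition, which is why the case split into ``strict on $F$'' versus ``equal on $F$, strict on $\intnorm$'' is essential.
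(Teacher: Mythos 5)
Your proof is correct and follows essentially the same route as the paper's: the generator check via \Cref{prop:mon-interp-gen-decreasing} (with \Cref{prop:term-criterion-interchanger}\ref{prop:term-criterion-interchanger:X} and positivity handling the interchangers), and compatibility with compositions via \Cref{prop:lessexists-stable} and \Cref{prop:term-criterion-interchanger}\ref{prop:term-criterion-interchanger:0-comp}--\ref{prop:term-criterion-interchanger:1-comp}. You merely spell out explicitly the well-foundedness of the lexicographic combination and the case split on which layer witnesses the strict inequality, details the paper leaves implicit.
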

\begin{proof}
  Let $G \in \PMon_3$. If~$G \in \set{\monA,\monL,\monR}$, then, by
  \Propr{mon-interp-gen-decreasing}, $\ctgt_2(G) < \csrc_2(G)$. Otherwise, if
  ${G = X_{\alpha,u,\beta}}$ for some $\alpha,\beta \in \PMon_2$ and $u \in
  \freecat{\PMon}_1$, then, by
  \Propr{mon-interp-gen-decreasing}\ref{prop:mon-interp-gen-decreasing:X},
  \[
    F(\ctgt_2(G)) = F(\csrc_2(G)) \quad\qtand\quad \intnorm(\ctgt_2(G)) <_\omega
    \intnorm(\csrc_2(G)).
  \]
  So $\ctgt_2(G) < \csrc_2(G)$. The other requirements for~$<$
  to be a reduction order are consequences of \Propr{lessexists-stable} and
  \Propr{term-criterion-interchanger}\ref{prop:term-criterion-interchanger:0-comp}\ref{prop:term-criterion-interchanger:1-comp}.
\end{proof}
\noindent Finally, we can use our coherence criterion to show that:
\begin{theo}
  \label{thm:pseudomon-coherent}
  $\PMon$ is a coherent Gray presentation.
\end{theo}
\begin{proof}
  By~\Propr{pseudomon-term-order}, $\PMon$ has a reduction order, so the
  rewriting system~$\PMon$ is terminating by
  \Propr{term-order-implies-terminating}. Since $R_1,\ldots,R_5 \in \PMon_4$, by
  \Thmr{squier}, $\freeinvf{\prespcat{\PMon}}$ is a coherent
  $(3,2)$-Gray category.
\end{proof}

\subsection{Pseudoadjunctions}
\label{ssec:app-adjunction}
We now show the coherence of the Gray presentation of pseudoadjunctions
introduced below. The way we do this is again by using
\Thmr{squier}. However, we need a specific argument to show
the termination of the interchange generators on the associated rewriting
system. For this, we introduce a notion of ``connected'' diagrams and we use a
result of~\cite{delpeuch2018normalization} stating that interchange generators
terminate on such connected diagrams.

We define the $3$\prepolygraph for pseudoadjunctions as the $3$\prepolygraph~$\P$ such
that
\[
  \P_0 = \set{\appfont x,\appfont y} \qtand \P_1 = \set{\appfont f\co \appfont x
    \to \appfont y, \appfont g \co \appfont y \to \appfont x} \qtand \P_2 =
  \set{\mathsf\eta\co \unit{\appfont x} \To \appfont f \comp_0 \appfont g,
    \varepsilon\co \appfont g \comp_0 \appfont f \To \unit{\appfont y}}
\]
where~$\eta$ and~$\varepsilon$ are pictured as~$\satex{cap}$ and~$\satex{cup}$
respectively, and~$\P_3$ is defined by $\P_3 = \set{\adjN,\adjNinv}$, where
\[
  \adjN\co (\eta \comp_0 \appfont f) \comp_1 (\appfont f \comp_0 \varepsilon) \TO \unit
  {\appfont f}
  \qtand
  \adjNinv\co (\appfont g \comp_0 \eta) \comp_1 (\varepsilon \comp_0 \appfont b) \TO
  \unit {\appfont g}
\]
which can be represented by
\[
  \begin{tikzcd}
    \satex{adj2-l}\tar[r,"\adjN"]&\satex{adj2-r}
  \end{tikzcd}
  \qtand
  \begin{tikzcd}
    \satex{adj1-l}\tar[r,"\adjNinv"]&\satex{adj1-r}
  \end{tikzcd}
  \pbox.
\]
We then extend~$\P$ to a Gray presentation by adding $3$\generators
corresponding to interchange generators and $4$\generators corresponding to
independence generator and interchange naturality generator, just like we did
for pseudomonoids in \Exr{pseudo-monoid-gray-pres}. For coherence, we need to
add other $4$\generators to
$\P_4$. Provided that~$\P$ is terminating, by \Thmr{squier},
adding $4$\generators that fill the holes created by critical branchings is
enough, just like for pseudomonoids.

Using the constructive proof of \Thmr{finite-cp}, we compute all the critical
branchings of~$\P$. We then obtain, up to symmetrical branchings, two critical branchings:
\[
  \begin{tikzcd}[column sep=3ex,baseline=(\tikzcdmatrixname-1-1.center)]
    \satex{adj-cp1-l}\tar[dr]\tar[rr]&&\satex{adj-cp1-c}\\
    &\satex{cup}
  \end{tikzcd}
  \qquad\quad
  \begin{tikzcd}[column sep=3ex,baseline=(\tikzcdmatrixname-1-1.center)]
    \satex{adj-cp2-l}\tar[dr]\tar[rr]& &\satex{adj-cp2-c}\\
    &\satex{cap}
  \end{tikzcd}
\]
We observe that each of these branchings is joinable, and we define formal new
$4$\generators $R_1,R_2$ that fill the holes:
\[
  \begin{tikzcd}[column sep=3ex,baseline=(\tikzcdmatrixname-1-1.center)]
    \satex{adj-cp1-l}\tar[dr]\tar[rr]&\ar[d,phantom,pos=0.3,"\overset{R_1}\LLeftarrow"]&\tar[dl]\satex{adj-cp1-c}\\
    &\satex{cup}
  \end{tikzcd}
  \qquad\quad
  \begin{tikzcd}[column sep=3ex,baseline=(\tikzcdmatrixname-1-1.center)]
    \satex{adj-cp2-l}\tar[dr]\tar[rr]&\ar[d,phantom,pos=0.3,"\overset{R_2}\LLeftarrow"description]&\tar[dl]\satex{adj-cp2-c}\\
    &\satex{cap}
  \end{tikzcd}
\]
We then define~$\PAdj$ as the Gray presentation obtained from~$\P$ by adding
$R_1$ and~$R_2$ to~$\P_4$.

We aim at showing that this rewriting system is terminating by exhibiting a
reduction order. However, we cannot use \Propr{term-criterion-interchanger}
to handle interchangers (as for the case of pseudomonoids) since $\P$ is not
positive. Instead, we invoke the result of
\cite{delpeuch2018normalization} which states the termination of interchangers on
``connected diagrams''. Given a $2$\prepolygraph~$\Q$, a $2$-cell of
$\freecat\Q_2$ is connected when, intuitively, each $2$\generator on its
graphical representation is accessible by a path starting from a top or bottom
input. For example, given $\Q$ such that $\Q_0 = \set{\ast}$, $\Q_1 = \set{\bar
  1}$ and $\Q_2 = \set{\satex{cap}\co \bar 0 \To \bar 2, \satex{cup}\co \bar 2
  \To \bar 0}$, we can build the following two $2$-cells of $\freecat\Q_2$:
\[
  \satex{adj-ex-conn} \hspace*{8em} \satex{adj-ex-not-conn}
\]
where the one on the left is connected whereas the one on the right is not,
since the two generators of the ``bubble'' cannot be accessed from the top or
bottom border.

A more formal definition can be
obtained by computing the ``connected components'' of the diagram, together with
a map between the top and bottom inputs of the diagram to the associated
connected components. This is adequatly represented by cospans of~$\Set$. Based
on this idea, we define a $2$\precategory that allows us to compute the connected
components of a $2$-cell of~$\freecat\Q$. Let~$\N_m$ be the set
$\set{1,\ldots,m}$ for $m \ge 0$.

We define the $2$\precategory~$\cospancat$ as
the $2$\precategory such that:
\begin{itemize}
\item it has a unique $0$-cell, denoted $\ast$,
  
\item the $1$-cells are the natural numbers, with $0$ as unit and addition as composition,
  
\item the $2$-cells $m \To n$ are the classes of equivalent cospans
  $\tikzcdin{\N_m \ar[r,"f"]\& S \& \N_n \ar[l,"g"']}$ in~$\Set$,
\end{itemize}
where two cospans $
\begin{tikzcd}[cramped,column sep=small]
  A \ar[r,"f"] & S & \ar[l,"g"'] B
\end{tikzcd}
$ and
$
\begin{tikzcd}[cramped,column sep=small]
  A \ar[r,"f'"] & S' & \ar[l,"g'"'] B
\end{tikzcd}
$ are said \emph{equivalent} when there exists an isomorphism $h\co S \to S' \in
\Set$ such that $f' = h \circ f$ and $g' = h \circ g$. The unit of $m \in
\cospancat_1$ is the cospan $\tikzcdin{\N_m \ar[r,"\id {\N_m}"] \& \N_m \& \N_m
  \ar[l,"\id {\N_m}"']}$, and, given $\phi \co m_1 \To m_2 \in \cospancat_2$ and
$\psi \co m_2 \To m_3 \in \cospancat_2$, represented by the cospans
\[
  \tikzcdin{\N_{m_1} \ar[r,"f"] \& S \& \N_{m_2} \ar[l,"g"']}
  \qtand
  \tikzcdin{\N_{m_2} \ar[r,"f'"] \& S' \& \N_{m_3} \ar[l,"g'"']}
\]
respectively, their composite is represented by the cospan
\[
  \begin{tikzcd}[sep=small]
    & & S'' \ar[dd,phantom,"{\dcorner}",very near start] & &\\
    & S \ar[ru,"h",dotted] & & S' \ar[lu,"h'"',dotted] \\
    \N_{m_1} \ar[ru,"f"] & & \N_{m_2} \ar[lu,"g"'] \ar[ru,"{f'}"] & & \N_{m_3} \ar[lu,"{g'}"']
  \end{tikzcd}
\]
where the middle square is a pushout. Given $\phi\co m \To n \in \cospancat_2$
represented by
\[
  \tikzcdin{\N_{m} \ar[r,"f"] \& S \& \N_{n} \ar[l,"g"']}
\]
and $p,q \in \cospancat_1$, the $2$-cell $p \comp_0 \phi \comp_0 q$ is
represented by the cospan
\[
  \begin{tikzcd}[sep=small]
    & \N_p \sqcup S \sqcup \N_q &  \\
    \N_{p + m + q} \ar[ru,pos=0.3,"(\id {\N_p} \sqcup f \sqcup \id {\N_q}) \circ
      \theta_{p,m,q}"] & & \N_{p + n + q} \ar[lu,pos=0.3,"(\id {\N_p} \sqcup g \sqcup \id
      {\N_q}) \circ \theta_{p,n,q}"']
  \end{tikzcd}
\]
where $\theta_{p,r,q}\co \N_{p + r + q} \to \N_{p} \sqcup \N_r \sqcup \N_q$, for
$r \in \N$, is the obvious bijection.
One easily verifies that $\cospancat$ is in fact a $2$\category (fact that will be
useful when dealing with interchange generators later).

Given a $2$\prepolygraph~$\Q$, by the universal property of $2$\prepolygraphs, we
define a $2$\prefunctor $\coninterp_\Q\co \freecat\Q \to \cospancat$ such that
\begin{itemize}
\item the image of $x \in \Q_0$ is $\ast$,
\item the image of $a \in \Q_1$ is $1$,
\item the image of $\alpha\co f \To g \in \Q_2$ is represented by the unique
  cospan $\tikzcdin{\N_{\len f} \ar[r,"\ast"] \& \set{\ast} \& \N_{\len g}
    \ar[l,"\ast"']}$
\end{itemize}
We can now give our definition for connectedness: a $2$-cell $\phi \in \freecat\Q_2$ is
\emph{connected} when $\coninterp_\Q(\phi)$ is represented by a cospan
$\tikzcdin{\N_{m} \ar[r,"f"] \& S \& \N_{n} \ar[l,"g"']}$, with $m =
\len{\csrc_1(\phi)}$ and $n = \len{\ctgt_1(\phi)}$, such that $f,g$ are jointly
epimorphic. Since the latter property is invariant by equivalences of cospan, if
$\phi$ is connected, then for every representative $\tikzcdin{\N_{m} \ar[r,"f"] \& S
  \& \N_{n} \ar[l,"g"']}$ of $\coninterp_\Q(\phi)$, $f,g$ are jointly epimorphic.

\bigskip \noindent In the case of~$\PAdj$, as one can expect, the $3$\generators $\adjN$
and $\adjNinv$ do not change connexity:
\begin{lem}
  \label{lem:adj-connectivity-3gen}
  We have
  \[
    \coninterp_{\PAdj}((\eta \comp_0 \appfont f) \comp_1 (\appfont f \comp_0
    \varepsilon)) = \coninterp_{\PAdj}(\unit{\appfont f})
  \]
  and
  \[
    \coninterp_{\PAdj}( (\appfont g \comp_0 \eta)
    \comp_1 (\varepsilon \comp_0 \appfont g)) = \coninterp_{\PAdj}(\unit{\appfont g}).
  \]
\end{lem}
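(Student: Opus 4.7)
The plan is to establish both equalities by direct computation of cospan representatives, using that $\coninterp_{\PAdj}$ is a $2$-prefunctor and therefore preserves both whiskerings and $1$-composition. Since the statement is about equality in $\cospancat_2$, whose elements are equivalence classes of cospans, it suffices to exhibit equivalent cospans on each side.

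First I would compute the images of the generators: by definition of $\coninterp_{\PAdj}$ on $2$-generators, $\coninterp_{\PAdj}(\eta)$ is represented by $\N_0 \to \set{\ast} \leftarrow \N_2$ and $\coninterp_{\PAdj}(\varepsilon)$ by $\N_2 \to \set{\ast} \leftarrow \N_0$, with all maps uniquely determined. Then, applying the explicit description of $p \comp_0 \phi \comp_0 q$ in $\cospancat$ together with $\coninterp_{\PAdj}(\appfont f) = \coninterp_{\PAdj}(\appfont g) = 1$, I obtain that $\coninterp_{\PAdj}(\eta \comp_0 \appfont f)$ is represented by a cospan
\[
  \N_1 \xrightarrow{\iota} \set{\ast} \sqcup \N_1 \xleftarrow{h} \N_3,
\]
where $\iota$ sends $1$ to the right summand, $h$ sends $1,2$ to $\ast$ and $3$ to the right summand, while $\coninterp_{\PAdj}(\appfont f \comp_0 \varepsilon)$ is represented by a symmetric cospan $\N_3 \xrightarrow{h'} \N_1 \sqcup \set{\ast} \xleftarrow{\iota'} \N_1$.

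Second, I would compute the $1$-composite in $\cospancat$, which is given by the pushout of $\set{\ast} \sqcup \N_1 \xleftarrow{h} \N_3 \xrightarrow{h'} \N_1 \sqcup \set{\ast}$ in $\Set$. Tracking the identifications induced by the three elements of $\N_3$, all four elements of the disjoint unions collapse to a single equivalence class; concretely, $1 \in \N_3$ forces the left-hand $\ast$ to be identified with the left-hand element of $\N_1$, $2 \in \N_3$ identifies both $\ast$'s, and $3 \in \N_3$ identifies the right-hand $\ast$ with the right-hand element of $\N_1$. Hence $\coninterp_{\PAdj}((\eta \comp_0 \appfont f) \comp_1 (\appfont f \comp_0 \varepsilon))$ is represented by the cospan $\N_1 \to \set{\ast} \leftarrow \N_1$. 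This is equivalent to the unit cospan $\N_1 \xrightarrow{\id} \N_1 \xleftarrow{\id} \N_1$ representing $\coninterp_{\PAdj}(\unit{\appfont f})$, via the unique bijection between $\set{\ast}$ and $\N_1$ (which trivially makes both triangles commute).

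The second equality is proved in exactly the same way, exchanging the roles of $\appfont f$ and $\appfont g$ and the positions of the whiskerings. The only point that requires a little care is the bookkeeping of the pushout and the orientation of the whiskerings (left vs.\ right), but since every apex involved is already a singleton and the connecting maps between singletons are unique, no real obstacle arises.
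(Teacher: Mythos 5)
Your proposal is correct and follows essentially the same route as the paper, which simply asserts "by calculations" that the cospan $\N_1 \to \set{\ast} \leftarrow \N_1$ represents both sides; you have just carried out those calculations explicitly (images of $\eta$, $\varepsilon$, the whiskering formula, the pushout collapsing to a singleton, and the equivalence with the identity cospan on $\N_1$), and all the steps check out.
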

\begin{proof}
  By calculations, we verify that
  \[
    \begin{tikzcd}[sep=small]
      & \set{\ast} & \\
      \N_{1} \ar[ru,"\ast"] & & \N_{1} \ar[lu,"\ast"']
    \end{tikzcd}
  \]
  is a representative of both $\coninterp_{\PAdj}((\eta \comp_0 \appfont f)
  \comp_1 (\appfont f \comp_0 \varepsilon))$ and $\coninterp_{\PAdj}(\unit{\appfont
    f})$, so that
  \[
    \coninterp_{\PAdj}((\eta \comp_0 \appfont f)
  \comp_1 (\appfont f \comp_0 \varepsilon)) = \coninterp_{\PAdj}(\unit{\appfont
    f})
  \]
  and similarly,
  \[
    \coninterp_{\PAdj}( (\appfont g \comp_0 \eta)
    \comp_1 (\varepsilon \comp_0 \appfont g)) = \coninterp_{\PAdj}(\unit{\appfont g}). \qedhere
  \]
\end{proof}

\noindent Moreover, connexity is preserved by interchangers in general:
\begin{lem}
  \label{lem:connectivity-int}
  Let $\P$ be a $2$\prepolygraph. Let $\alpha,\beta \in \P_2$ and $g \in
  \freecat\P_1$ such that $\alpha,g,\beta$ are $0$\composable. Then,
  \[
    \coninterp_\P((\alpha \comp_0 g \comp_0 \csrc_1(\beta)) \comp_1
    (\ctgt_1(\alpha) \comp_0 g \comp_0 \beta))
    =
    \coninterp_\P((\csrc_1(\alpha) \comp_0 g \comp_0 \beta) \comp_1
    (\alpha \comp_0 g \comp_0 \ctgt_1(\beta)))
  \]
\end{lem}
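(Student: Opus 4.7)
The plan is to exploit the fact, mentioned just after the definition of $\cospancat$, that $\cospancat$ is a strict $2$\category. Since $\coninterp_\P\co \freecat\P \to \cospancat$ is a $2$\prefunctor, it preserves the $0$\composition and $1$\composition operations as well as the whiskering by $1$\cells. Hence, if we set $\alpha'$ to be the $2$\cell $\alpha \comp_0 g \comp_0 \unit{\csrc_0(\beta)}$ and $\beta'$ to be $\unit{\ctgt_0(\alpha)} \comp_0 g \comp_0 \beta$ in $\freecat\P$, the two $2$\cells appearing in the statement are respectively
\[
(\alpha' \comp_0 \csrc_1(\beta')) \comp_1 (\ctgt_1(\alpha') \comp_0 \beta')
\qqtand
(\csrc_1(\alpha') \comp_0 \beta') \comp_1 (\alpha' \comp_0 \ctgt_1(\beta')),
\]
up to the axioms of precategories. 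It thus suffices to show that the images of these two $2$\cells under $\coninterp_\P$ coincide, which, by functoriality, reduces to establishing the interchange law for $\coninterp_\P(\alpha')$ and $\coninterp_\P(\beta')$ inside $\cospancat$.

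The key step is therefore to verify that $\cospancat$ satisfies the interchange law, \ie that it is a strict $2$\category in the sense of \Ssecr{gray-tensor}. This can be checked directly on the concrete description of $\cospancat_2$: given $\phi\co m \To n$ and $\psi\co m' \To n'$ in $\cospancat_2$ represented by cospans $\N_m \to S \leftarrow \N_n$ and $\N_{m'} \to S' \leftarrow \N_{n'}$, both sides of the interchange equation are represented, up to the canonical isomorphism, by the cospan
\[
\N_{m+m'} \to S \sqcup S' \leftarrow \N_{n+n'}
\]
obtained by taking disjoint unions of the representing cospans. This follows from the fact that the pushouts involved in the composition of cospans, when one side is an identity cospan $\N_k \to \N_k \leftarrow \N_k$, reduce to disjoint unions, and disjoint unions are associative up to canonical isomorphism.

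Granted this, the result follows immediately: $\coninterp_\P$ maps both $2$\cells of the statement to the same $2$\cell in $\cospancat$, namely the one represented by a cospan obtained by forming the disjoint union of the image cospans of $\alpha'$ and $\beta'$. The only mild obstacle is to be careful with the bookkeeping between the source/target decompositions on both sides, but this is handled uniformly by the fact that the interchange law holds on the nose in $\cospancat$.
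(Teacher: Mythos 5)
Your proposal follows the same route as the paper: the paper's entire proof of this lemma is the one-line observation that it is a direct consequence of $\cospancat$ being a strict $2$\category (together with the fact that $\coninterp_\P$ is a $2$\prefunctor), and your disjoint-union-of-cospans computation is exactly the verification of the interchange law that the paper leaves to the reader when asserting that $\cospancat$ is a $2$\category. One bookkeeping slip: with $\alpha' = \alpha \comp_0 g \comp_0 \unit{\csrc_0(\beta)} = \alpha \comp_0 g$ and $\beta' = \unit{\ctgt_0(\alpha)} \comp_0 g \comp_0 \beta = g \comp_0 \beta$, the cell $(\alpha' \comp_0 \csrc_1(\beta')) \comp_1 (\ctgt_1(\alpha') \comp_0 \beta')$ contains two copies of $g$ and is not even composable in general; the whisker $g$ should be absorbed into only one of the two cells (e.g.\ $\alpha' = \alpha \comp_0 g$ and $\beta' = \beta$), after which the two cells of the statement are precisely the two sides of the interchange law for $\alpha',\beta'$ and your argument goes through verbatim.
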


\begin{proof}
  This is a direct consequence of the fact that $\cospancat$ is a strict $2$\category.
\end{proof}

\noindent We now prove a technical lemma that we will use to show the connexity of the
$2$-cells in~$\freecat{\PAdj}_2$:
\begin{lem}
  \label{lem:epi-cospan-connectivity}
  Let $\P$ be a $2$\prepolygraph and $\phi,\phi' \in \freecat\P_2$ and
  $\tikzcdin{\N_{n_1} \ar[r,"f"] \& S \& \N_{n_2} \ar[l,"g"']}$ be a
  representative of $\coninterp_\P(\phi)$ for some $n_1,n_2 \in \N$ such that
  $\phi,\phi'$ are $1$\composable and $f$ is surjective. Then, $\phi \comp_1
  \phi'$ is connected if and only if $\phi'$ is connected.
\end{lem}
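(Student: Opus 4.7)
The plan is to unfold the definition of $\coninterp_\P(\phi\comp_1\phi')$ as a pushout in $\Set$ and then reduce the connexity question to a direct set-theoretic check, exploiting the surjectivity of $f$ to control the image of the left leg of the composite cospan.

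Fix a representative $\N_{n_2}\xto{f'} S'\xot{g'}\N_{n_3}$ of $\coninterp_\P(\phi')$ (so $n_3=\len{\ctgt_1(\phi')}$). By the definition of composition in $\cospancat$ recalled in the excerpt, $\coninterp_\P(\phi\comp_1\phi')$ is represented by the cospan
\[
  \begin{tikzcd}[sep=small]
    & S''\ar[dd,phantom,"\dcorner",very near start] & \\
    S\ar[ur,"h"] & & S'\ar[ul,"h'"'] \\
    \N_{n_1}\ar[u,"f"] & \N_{n_2}\ar[ul,"g"]\ar[ur,"f'"'] & \N_{n_3}\ar[u,"g'"']
  \end{tikzcd}
\]
whose top span is a pushout. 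In $\Set$, this pushout is the quotient $S''=(S\sqcup S')/{\sim}$, where $\sim$ is the equivalence relation generated by the elementary identifications $g(i)\sim f'(i)$ for $i\in\N_{n_2}$, with $h,h'$ the compositions of the coproduct injections with the quotient map. By definition, $\phi\comp_1\phi'$ is connected iff the two legs $h\circ f\co\N_{n_1}\to S''$ and $h'\circ g'\co\N_{n_3}\to S''$ are jointly epimorphic, and using that $f$ is surjective, the image of $h\circ f$ equals $h(S)$, so connexity of $\phi\comp_1\phi'$ amounts to the equality
\[
  h(S)\cup h'(g'(\N_{n_3}))=S''.
\]

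Assume first that $\phi'$ is connected, i.e. $f'$ and $g'$ are jointly epi in $\Set$. Every element of $S''$ is of the form $h(s)$ for some $s\in S$ or $h'(s')$ for some $s'\in S'$. In the first case it lies in $h(S)$. In the second case, joint epiness gives $s'=f'(i)$ for some $i$, in which case $h'(s')=h'(f'(i))=h(g(i))\in h(S)$, or $s'=g'(k)$ for some $k$, in which case $h'(s')\in h'(g'(\N_{n_3}))$. This settles one direction.

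Conversely, suppose $\phi'$ is not connected, so there exists $s'\in S'$ with $s'\notin f'(\N_{n_2})\cup g'(\N_{n_3})$. The key point, which I expect to be the main technical step, is that the equivalence class of $s'$ in $S\sqcup S'$ is exactly $\{s'\}$: the generating pairs $g(i)\sim f'(i)$ all involve an element $f'(i)$ of $S'$ lying in the image of $f'$, so an element of $S'$ outside that image never appears in any elementary identification and therefore its $\sim$-class is a singleton. Consequently $h'(s')\notin h(S)$ (since its class meets $S$ trivially) and $h'(s')\notin h'(g'(\N_{n_3}))$ (since its class meets $g'(\N_{n_3})$ trivially, as $s'\notin g'(\N_{n_3})$). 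This shows $h(S)\cup h'(g'(\N_{n_3}))\subsetneq S''$, so $\phi\comp_1\phi'$ is not connected, concluding the proof.
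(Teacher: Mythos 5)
Your proof is correct and follows essentially the same route as the paper: both unfold the composite cospan as the pushout $(S\sqcup S')/\!\sim$ with $\sim$ generated by $g(i)\sim f'(i)$, use surjectivity of $f$ to identify the image of the left leg with $h(S)$, and for the converse observe that an element of $S'$ outside the images of $f'$ and $g'$ has a singleton equivalence class (the paper phrases this as "$y$ is the only preimage of $g''(y)$ and $g''(y)$ is not in the image of $f''$", arguing directly rather than by contraposition). The only cosmetic difference is that the paper's forward direction uses a factorization of $g$ through $f$ instead of your elementwise case analysis, which changes nothing of substance.
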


\begin{proof}
  Let $\tikzcdin{\N_{n_2} \ar[r,"f'"] \& S' \& \N_{n_3} \ar[l,"g'"']}$ be a
  representative of~$\coninterp_\P(\phi')$ for some $n_2,n_3 \in \N$. Then,
  $\coninterp_\P(\phi \comp_1 \phi')$ is represented by $\tikzcdin[sep=2.5em]{\N_{n_1}
    \ar[r,"f'' \circ f"] \& S'' \& \N_{n_3} \ar[l,"g'' \circ g'"']}$ where
  $S''$, $f''$ and~$g''$ are defined by the pushout of~$g$ and~$f'$ as in
  \[
    \begin{tikzcd}[sep=small]
      & & S'' \ar[dd,phantom,very near
      start,"{\dcorner}"] & &\\
      & S \ar[ru,"f''",dotted]& & S' \ar[lu,"g''"',dotted] \\
      \N_{n_1} \ar[ru,"f"] & & \N_{n_2} \ar[lu,"g"'] \ar[ru,"f'"] & & \N_{n_3} \ar[lu,"g'"']
    \end{tikzcd}
    \pbox.
  \]
  Suppose that $\phi'$ is connected, \ie $f'$ and~$g'$ are jointly
  surjective. Since $f$ is surjective by hypothesis and ~$f''$ and~$g''$ are jointly
  surjective (by the universal property of pushout), we have that $f'' \circ f,
  g'' \circ f', g'' \circ g'$ are jointly surjective. Moreover,
  \[
    g'' \circ f' = f'' \circ g = f'' \circ f \circ h
  \]
  where $h$ is a factorization of~$g$ through~$f$ (that exists, since $f$ is
  supposed surjective). Thus, we conclude that $f'' \circ f, g'' \circ g'$ are
  jointly surjective.
  
  Conversely, suppose that $f'' \circ f$ and $g'' \circ g'$ are jointly
  surjective and let $y \in S'$. We have to show that $y$ is in the image
  of~$f'$ or~$g'$. Recall that
  \[
    S'' \cong (S \sqcup S')/\sim
  \]
  where $\sim$ is the equivalence relation induced by $g(x) \sim f'(x)$ for $x
  \in \N_{n_2}$: either $y$ is in the image of~$f'$, or we have both that $y$ is the only preimage of
  $g''(y)$ by $g''$ and $g''(y)$ is not in the image of~$f''$. In the former case,
  we conclude directly, and in the latter, since $f'' \circ f$ and $g'' \circ g'$
  are jointly surjective, there is $x \in \N_{n_3}$ such that $g'' \circ
  g' (x) = g''(y)$, so that $g'(x) = y$, which is what we wanted. Thus, $f'$ and~$g'$ are jointly surjective, \ie $\phi'$ is connected.
\end{proof}

\noindent We can now prove our connectedness result for pseudoadjunctions:
\begin{prop}
  \label{prop:adj-connex}
  For every $\phi \in \freecat{\PAdj}_2$, $\phi$ is connected.
\end{prop}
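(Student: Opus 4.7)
The plan is to proceed by induction on the whisker length $k$ of $\phi$, using the canonical decomposition $\phi = w_1 \comp_1 \cdots \comp_1 w_k$ provided by \Thmr{precat-nf}, in which each whisker $w_i$ has the form $l_i \comp_0 \alpha_i \comp_0 r_i$ with $l_i, r_i \in \freecat{\PAdj}_1$ and $\alpha_i \in \{\eta, \varepsilon\}$. The base case $k=0$ is immediate: $\phi = \unit u$, and $\coninterp_{\PAdj}(\phi)$ is represented by an identity cospan whose legs are jointly surjective.

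For the inductive step, I would first establish by a direct computation of $\coninterp_{\PAdj}$ on a single whisker that $l \comp_0 \varepsilon \comp_0 r$ has a surjective left leg, while $l \comp_0 \eta \comp_0 r$ has a surjective right leg. If $\alpha_1 = \varepsilon$, then \Lemr{epi-cospan-connectivity} applied to the decomposition $\phi = w_1 \comp_1 (w_2 \comp_1 \cdots \comp_1 w_k)$ reduces the connectedness of $\phi$ to that of the tail, which holds by the induction hypothesis. Symmetrically, if $\alpha_k = \eta$, I would invoke the analogue of \Lemr{epi-cospan-connectivity} for the right leg (which follows by the same argument upon reversing the orientation of cospans) to reduce to the connectedness of the prefix $w_1 \comp_1 \cdots \comp_1 w_{k-1}$.

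The main obstacle is the remaining case, where $\alpha_1 = \eta$ and $\alpha_k = \varepsilon$: neither extremal whisker offers the surjectivity required by \Lemr{epi-cospan-connectivity}. My plan here is to split $\phi = \phi_L \comp_1 \phi_R$ at the largest index $j$ with $\alpha_j = \eta$, so that $\phi_L = w_1 \comp_1 \cdots \comp_1 w_j$ is a composite of $\eta$-whiskers and $\phi_R = w_{j+1} \comp_1 \cdots \comp_1 w_k$ a composite of $\varepsilon$-whiskers. A pushout computation in $\cospancat$ shows that the property of having a surjective right (resp.\ left) leg is preserved by composition, so $\phi_L$ has a surjective right leg and $\phi_R$ has a surjective left leg — and both are connected by the previous two cases. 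To conclude that $\phi$ is connected, I would then inspect the pushout defining $\coninterp_{\PAdj}(\phi_L \comp_1 \phi_R)$: the right-surjectivity of $\phi_L$ and left-surjectivity of $\phi_R$ force each equivalence class to contain representatives from the middle sets of both $\coninterp_{\PAdj}(\phi_L)$ and $\coninterp_{\PAdj}(\phi_R)$, and the typing constraint on the interface $\ctgt_1(\phi_L) = \csrc_1(\phi_R)$ — specifically that $\eta$ produces the pair $fg$ while $\varepsilon$ consumes the pair $gf$, with $f \neq g$ as $1$-generators — prevents the formation of ``bubble'' components internal to the interface, thereby guaranteeing that each class is top- or bottom-accessible in $\phi$.
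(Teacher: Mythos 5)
Your base case, your treatment of the two boundary cases via \Lemr{epi-cospan-connectivity} (the case $\alpha_1=\varepsilon$ is exactly the paper's argument, and the mirrored version for $\alpha_k=\eta$ is fine), and the connectedness of $\phi_L,\phi_R$ by induction are all correct. The gap is in the remaining case $\alpha_1=\eta$, $\alpha_k=\varepsilon$, and it is twofold. First, $\phi_L$, the prefix up to the \emph{last} $\eta$-whisker, is not in general a composite of $\eta$-whiskers: only $\phi_R$ is homogeneous, while $\phi_L$ may contain $\varepsilon$-whiskers. Since an $\varepsilon$-whisker does \emph{not} have a surjective target leg, your appeal to ``right-leg surjectivity is preserved by composition'' does not apply, and the claimed right-leg surjectivity of $\phi_L$ is unjustified — indeed it can fail for $2$\cells ending in an $\eta$-whisker: in $\freecat{\PAdj}_2$ the cell $(\varepsilon \comp_0 \appfont g) \comp_1 (\appfont g \comp_0 \eta)\co \appfont g \comp_0 \appfont f \comp_0 \appfont g \To \appfont g \comp_0 \appfont f \comp_0 \appfont g$ is well typed, connected, ends with an $\eta$-whisker, but its target leg misses the component of the $\varepsilon$. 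So that step would itself require a nontrivial inductive argument using the typing, not just closure under composition.

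Second, even granting both surjectivity claims, the conclusion does not follow: right-surjectivity of $\phi_L$ and left-surjectivity of $\phi_R$ only say that every component of each piece reaches the \emph{interface}, not that every glued class reaches the outer boundary of $\phi$. The configuration of an $\eta$ placed directly above an $\varepsilon$ (ill typed in $\PAdj$, but perfectly meaningful at the level of cospans) satisfies both hypotheses piecewise and yet glues to a floating bubble. Hence the entire weight of the argument falls on your final sentence — that $\appfont f \comp_0 \appfont g \neq \appfont g \comp_0 \appfont f$ ``prevents bubble components internal to the interface'' — which is precisely the content of the proposition and is asserted rather than proved. This is where the paper does the actual work: it runs a minimal-counterexample argument, locates the first $\varepsilon$ and the $\eta$ immediately above it, and case-splits on their horizontal offset, sliding them past one another using \Lemr{connectivity-int} (interchange invariance of $\coninterp$) when they are separated, cancelling the zigzag using \Lemr{adj-connectivity-3gen} when the offset is $\pm 1$, and using the typing inequality only to exclude the equal-offset stacking. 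Some argument of that kind — a genuine use of the typing at the level of adjacent $\eta$/$\varepsilon$ pairs, not just boundary surjectivity bookkeeping — is needed to close your remaining case.
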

\begin{proof}
  Assume by contradiction that it is not true and let $N \in \N$ be the smallest
  natural number such that the set $S = \set{\phi \in \freecat{\PAdj}_2 \mid
    \len\phi = N \text{ and } \phi \text{ is not connected}}$ is not empty.
  Given $\phi \in S$, let
  \[
    (f_1 \comp_0 \alpha_1 \comp_0 h_1) \comp_1 \cdots \comp_1 (f_N
    \comp_0 \alpha_N \comp_0 h_N)
  \]
  be a decomposition of~$\phi$.

  Note that there is at least one $i \in \set{1,\ldots,N}$ such that $\alpha_i =
  \varepsilon$. Indeed, given $f,h \in \freecat{\PAdj}_1$ such that $f,\eta,h$ are
  $0$\composable, a representative $\tikzcdin{\N_{m} \ar[r,"u"] \& T \& \N_n
    \ar[l,"v"']}$ of $\coninterp_\Q(f \comp_0 \eta \comp_0 h)$ has the property
  that $v$ is an epimorphism. Since epimorphisms are stable by pushouts, given
  $\phi' \in \freecat{\PAdj}_2$ such that $\phi' = (f'_1 \comp_0 \eta \comp_0
  h'_1) \comp_1 \cdots \comp_1 (f'_k \comp_0 \eta \comp_0 h'_k)$ with $f'_i,h'_i
  \in \freecat{\PAdj}_1$ for $i \in \set{1,\ldots,k}$, a representative
  $\tikzcdin{\N_{m'} \ar[r,"u'"] \& T' \& \N_{n'} \ar[l,"v'"']}$ of
  $\coninterp_{\PAdj}(\phi')$ has the property that $v'$ is an epimorphism (by
  induction on~$k$), and in particular, $\phi'$ is connected. Consider the minimal index $i_0$
  such that there is $\phi \in S$ with $\alpha_{i_0} = \varepsilon$.

  Suppose first that $i_0 = 1$. Then, given a representative $\tikzcdin{\N_{m_1}
    \ar[r,"u_1"] \& T_1 \& \N_{m_2} \ar[l,"v_1"']}$ of $\coninterp_{\PAdj}(f_1
  \comp_0 \alpha_1 \comp_0 h_1)$, we easily check that $u_1$ is an epimorphism.
  By \Lemr{epi-cospan-connectivity}, we deduce that
  \[
    (f_2 \comp_0 \alpha_2
    \comp_0 h_2) \comp_1 \cdots \comp_1 (f_k \comp_0 \alpha_k \comp_0 h_k)
  \]
    is not
  connected, contradicting the minimality of~$N$.

  Suppose $i_0 > 1$. By the definition of $i_0$, we have $\alpha_{i_0 - 1} =
  \eta$. There are different cases depending on~$\len{f_{i_0-1}}$ (see~\Cref{fig:adj-conn}):
  \begin{figure}
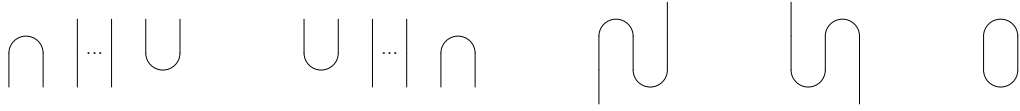

    \centering
    \[
      \satex{adj-conn-fig-1}
      \qquad
      \qquad
      \satex{adj-conn-fig-2}
      \qquad
      \qquad
      \satex{adj-conn-fig-3}
      \qquad
      \qquad
      \satex{adj-conn-fig-4}
      \qquad
      \qquad
      \satex{adj-conn-fig-5}
    \]
    \caption{The different cases}
    \label{fig:adj-conn}
  \end{figure}
  \begin{itemize}
  \item if $\len{f_{i_0-1}} \le \len{f_{i_0}} - 2$, then, since $\ctgt_1(f_{i_0
      - 1} \comp_0 \alpha_{i_0 - 1} \comp_0 h_{i_0 - 1}) = \csrc_1(f_{i_0}
    \comp_0 \alpha_{i_0} \comp_0 h_{i_0})$, we have
    \[
      f_{i_0} = f_{i_0 - 1}
      \comp_0 \ctgt_1(\eta) \comp_0 g \qtand h_{i_0 - 1} = g \comp_0 \csrc_1(\varepsilon)
      \comp_0 h_{i_0}
    \]
    for some $g \in \freecat{\PAdj}_1$.
    By \Lemr{connectivity-int}, we have
    \[
      \coninterp_{\PAdj}( (\eta \comp_0 g \comp_0 \csrc_1(\varepsilon)) \comp_1
      (\ctgt_1(\eta) \comp_0 g \comp_0 \varepsilon)) = 
      \coninterp((\csrc_1(\eta) \comp_0 g \comp_0 \varepsilon) \comp_1
      (\eta \comp_0 g \comp_0 \ctgt_1(\varepsilon)))
    \]
    thus, by functoriality of $\coninterp_{\PAdj}$, the morphism $\phi'$ defined by
    \begin{multline*}
      \phi' = (f_1 \comp_0 \alpha_1 \comp_0 h_1) \comp_1 \cdots \comp_1 (f_{i_0 - 2}
      \comp_0 \alpha_{i_0 - 2} \comp_0 h_{i_0 - 2}) \\
      \comp_1 (f_{i_0 - 1} \comp_0 g \comp_0 \varepsilon \comp_0 h_{i_0}) \comp_1
      (f_{i_0 - 1} \comp_0 \eta \comp_0 g \comp_0 h_{i_0}) \\
      \comp_1 (f_{i_0 + 1} \comp_0 \alpha_{i_0 + 1} \comp_0 h_{i_0+1}) \comp_1
      \cdots \comp_1 (f_k \comp_0 \alpha_k \comp_0 h_k)
    \end{multline*}
    satisfies that $\coninterp_{\PAdj}(\phi) = \coninterp_{\PAdj}(\phi')$. So
    $\phi'$ is not connected, and the $(i_0 {-} 1)$-th $2$\generator in the
    decomposition of $\phi'$ is $\varepsilon$, contradicting the minimality of~$i_0$;
    
  \item if $\len{f_{i_0-1}} \ge \len{f_{i_0}} + 2$, then the case is similar to the
    previous one;
    
  \item if $\len{f_{i_0 - 1}} = \len{f_{i_0}} - 1$, then, since
    $\coninterp_{\PAdj}((\eta \comp_0 \appfont f) \comp_1 (\appfont f \comp_0
    \varepsilon)) = \coninterp_{\PAdj}(\unit{\appfont f})$ by
    \Lemr{adj-connectivity-3gen}, the $2$-cell $\phi'$ defined by
    \begin{multline*}
      \phi' = (f_1 \comp_0 \alpha_1 \comp_0 h_1) \comp_1 \cdots \comp_1 (f_{i_0 - 2}
      \comp_0 \alpha_{i_0 - 2} \comp_0 h_{i_0 - 2}) \\
      \comp_1 (f_{i_0 + 1} \comp_0 \alpha_{i_0 + 1} \comp_0 h_{i_0+1}) \comp_1
      \cdots \comp_1 (f_k \comp_0 \alpha_k \comp_0 h_k)
    \end{multline*}
    satisfies $\coninterp_{\PAdj}(\phi) = \coninterp_{\PAdj}(\phi')$ (by
    functoriality of~$\coninterp_{\PAdj}$), so that $\phi'$ is not connected, contradicting the minimality of~$N$;
  
  \item if $\len{f_{i_0-1}} = \len{f_{i_0}} + 1$, then the situation is similar
    to the previous one, since, by \Lemr{adj-connectivity-3gen},
    \[
      \coninterp_{\PAdj}( (\appfont g \comp_0 \eta)
      \comp_1 (\varepsilon \comp_0 \appfont g)) = \coninterp_{\PAdj}(\unit{\appfont g});
    \]
    
  \item finally, the case $\len{f_{i_0-1}} = \len{f_{i_0}}$ is impossible since
    \[
      f_{i_0 - 1} \comp_0 \ctgt_1(\alpha_{i_0 - 1}) \comp_0 h_{i_0 - 1} =
      f_{i_0} \comp_0 \csrc_1(\alpha_{i_0}) \comp_0 h_{i_0}
    \]
    and
    \[
      \ctgt_1(\alpha_{i_0 - 1}) = \appfont f \comp_0 \appfont g \neq \appfont g
      \comp_0 \appfont f = \csrc_1(\alpha_{i_0}).\qedhere
    \]

  \end{itemize}
\end{proof}

\noindent We are now able to prove termination:
\begin{prop}
  \label{prop:adj-terminating}
  The rewriting system $\PAdj$ is terminating.
\end{prop}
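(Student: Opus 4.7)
The plan is to build a reduction order for $\PAdj$ by combining two observations, since $\PAdj$ is not positive (because $\len{\ctgt_1(\varepsilon)} = 0$) and so \Propr{term-criterion-interchanger} cannot be invoked directly for the interchange generators. The two classes of $3$\generators behave very differently: $\adjN$ and $\adjNinv$ strictly reduce the number of $2$\generators appearing in the whisker decomposition of a $2$\cell (each eliminates one $\eta$ and one $\varepsilon$, replacing them by an identity), whereas every interchange generator $X_{\alpha,g,\beta}$ preserves this count.

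Let me define $\|\phi\| \in \N$ as the number of $2$\generators in the unique decomposition of $\phi \in \freecat{\PAdj}_2$ given by \Thmr{precat-nf}. A direct inspection of sources and targets of the $3$\generators of $\PAdj$ shows that an $\adjN$ or $\adjNinv$ rewriting step strictly decreases $\|{-}\|$ by~$2$, while an interchange rewriting step preserves it. This alone is not enough to get termination, since infinite reduction sequences consisting only of interchange steps are a priori possible, but it restricts the problem to that case.

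To rule out those, I would invoke \Propr{adj-connex}: every $2$\cell of $\freecat{\PAdj}_2$ is connected in the sense of $\coninterp_{\PAdj}$. By \Lemr{connectivity-int} the interchange generators preserve $\coninterp_{\PAdj}({-})$, and by \Lemr{adj-connectivity-3gen} the generators $\adjN$ and $\adjNinv$ do as well; by functoriality of $\coninterp_{\PAdj}$, this extends to every rewriting step of $\PAdj$. Hence any reduction sequence starting from a $2$\cell of $\freecat{\PAdj}_2$ stays entirely among connected $2$\cells.

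Suppose now, towards a contradiction, that there is an infinite sequence of rewriting steps $(F_i\co \phi_i \TO \phi_{i+1})_{i \ge 0}$ in $\freecat{\PAdj}_3$. Since $\|\phi_{i+1}\| \le \|\phi_i\|$ at each step, with strict inequality whenever $F_i$ is an $\adjN$ or $\adjNinv$ step, the sequence $(\|\phi_i\|)_i$ is eventually constant; from some index $N$ on, every $F_i$ is an interchange step. Then $(F_i)_{i \ge N}$ is an infinite sequence of interchange steps starting from the connected $2$\cell $\phi_N$, contradicting the termination of interchangers on connected diagrams established in~\cite{delpeuch2018normalization}. This proves that $\PAdj$ is terminating. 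The main delicate point will be to match the definition of connectedness used here (via $\coninterp_{\PAdj}$ and joint surjectivity of cospan legs) with the one employed in~\cite{delpeuch2018normalization}, so that the cited termination result genuinely applies to our rewriting sequences.
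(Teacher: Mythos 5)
Your proof is correct and follows essentially the same route as the paper: a length argument showing that any infinite reduction sequence must eventually consist only of interchange steps, then \Propr{adj-connex} together with the termination of interchangers on connected diagrams from~\cite{delpeuch2018normalization} to rule that out. The opening promise of ``building a reduction order'' is never actually carried out (and isn't needed), and the connectedness-preservation remarks are superfluous given \Propr{adj-connex}, but neither affects the validity of the argument.
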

\begin{proof}
  Suppose by contradiction that there is an infinite sequence $S_i\co \phi_i \TO
  \phi_{i+1}$ for $i \ge 0$ with $S_i$ a rewriting step in~$\freecat{\PAdj}_3$.
  Since
  \[
    \len{\csrc_2(\adjN)} = \len{\csrc_2(\adjNinv)} = 2 \qtand
    \len{\ctgt_2(\adjN)} = \len{\ctgt_2(\adjNinv)} = 0,
  \]
  if the inner $3$\generator of~$S_i$ is~$\adjN$ or~$\adjNinv$, for some $i \ge
  0$, then $\len{\phi_{i+1}} = \len{\phi_i} - 2$. Since
  \[
    \csrc_2(X_{\alpha,f,\beta}) = \ctgt_2(X_{\alpha,f,\beta}) = 2
  \]
  for $0$\composable $\alpha \in \PAdj_2$, $f \in \freecat{\PAdj}_1$, $\beta
  \in \PAdj_2$, it means that there is $i_0 \ge 0$ such that for $i \ge i_0$,
  the inner generator of~$S_i$ is an interchanger. By
  \cite[Thm.~16]{delpeuch2018normalization}, there is no infinite sequence of
  rewriting steps made of interchangers. Thus, by \Propr{adj-connex}, there is
  no infinite sequence of rewriting steps whose inner $3$\generator is an
  interchanger of~$\PAdj$, contradicting the existence of $(S_i)_{i \ge 0}$.
  Thus, $\PAdj$ is terminating.
\end{proof}

\noindent Finally, we can apply our coherence criterion and show that:
\begin{theo}
  \label{thm:adj-coherent}
  $\PAdj$ is a coherent Gray presentation.
\end{theo}
\begin{proof}
  By \Propr{adj-terminating}, $\restrictcat\PAdj 3$ is terminating. Since $R_1,R_2 \in
  \PAdj_4$, by \Thmr{squier}, the conclusion follows.
\end{proof}

\subsection{Self-dualities}
\label{ssec:app-untyped-adjunction}
We consider a variant of the preceding example, by considering the theory corresponding to pseudoadjunctions between an endofunctor and itself.
This new example requires a special treatment since the underlying rewriting
system is not terminating, and, more fundamentally, the induced $(3,2)$-Gray
category is not expected to be fully coherent. We show instead a partial
coherence result.

We define the $3$\prepolygraph
for self-dualities as the $3$\prepolygraph~$\P$ such that
\[
  \P_0 = \set{\ast} \qtand \P_1 = \set{\bar 1\co \ast \to \ast} \qtand \P_2 =
  \set{\eta\co \unit{\ast} \To \bar 2, \varepsilon\co
    \bar 2 \To \unit{\ast}}
\]
where we write $\bar n$ for $\underbrace{\bar 1 \comp_0 \cdots \comp_0 \bar
  1}_n$ for $n \in \N$. The $2$\generators $\eta$ and $\varepsilon$ are pictured as
$\satex{cap}$ and $\satex{cup}$ respectively, and $\P_3$ is defined by $\P_3 =
\set{\adjN,\adjNinv}$ where
\[
  \adjN\co (\eta \comp_0 \bar 1) \comp_1 (\bar 1 \comp_0 \varepsilon) \TO \unit
  {\bar 1}
  \qtand
  \adjNinv\co (\bar 1 \comp_0 \eta) \comp_1 (\varepsilon \comp_0 \bar 1) \TO
  \unit {\bar 1}
\]
which is pictured again by
\[
  \begin{tikzcd}
    \satex{adj2-l}\tar[r,"\adjN"]&\satex{adj2-r}        
  \end{tikzcd}
  \qtand
  \begin{tikzcd}
    \satex{adj1-l}\tar[r,"\adjNinv"]&\satex{adj1-r}
  \end{tikzcd}
\]
As before, we then extend $\P$ to a Gray presentation by adding $3$\generators
corresponding to interchange generators and $4$\generators corresponding to
independence generators and interchange naturality generators. We also add the
same $4$\generators that we added for pseudoadjunctions
\[
  \begin{tikzcd}[column sep=3ex,baseline=(\tikzcdmatrixname-1-1.center)]
    \satex{adj-cp1-l}\tar[dr]\tar[rr]&\ar[d,phantom,pos=0.3,"\overset{R_1}\LLeftarrow"]&\tar[dl]\satex{adj-cp1-c}\\
    &\satex{cup}
  \end{tikzcd}
  \qquad\quad
  \begin{tikzcd}[column sep=3ex,baseline=(\tikzcdmatrixname-1-1.center)]
    \satex{adj-cp2-l}\tar[dr]\tar[rr]&\ar[d,phantom,pos=0.3,"\overset{R_2}\LLeftarrow"description]&\tar[dl]\satex{adj-cp2-c}\\
    &\satex{cap}
  \end{tikzcd}
\]
to $\P$ and we denote $\PUAdj$ the resulting Gray presentation. Here, it is not
possible to apply \Thmr{squier} to obtain a coherence result, as in previous
section. Indeed, $\PUAdj$ is not terminating, since we have the reduction
\[
  \satex{capcup-cycle1}
  \qTO
  \satex{capcup-cycle2}
  \qTO
  \satex{capcup-cycle3}
  \qTO
  \satex{capcup-cycle4}
  \qTO
  \satex{capcup-cycle1}
\]
Moreover, this endomorphism $3$-cell is not expected to be an identity,
discarding hopes for the presentation to be coherent.
Following~\cite{dunn2016coherence}, we can still aim at showing a partial
coherence result by restricting to $2$-cells which are connected, in the sense
of the previous section. In this case, termination can actually be shown
by using the same arguments as for
pseudoadjunctions. 
However, the critical pairs are not joinable either since, for instance, we have
\[
  \satex{dual-nj-l}
  \quad\OT\quad
  \satex{dual-nj-c}
  \quad\TO\quad
  \satex{dual-nj-r}
\]
for which there is little hope that a Knuth-Bendix completion will provide a
reasonably small presentation. However, one can obtain a rewriting system,
introduced below, which
is terminating on connected $2$-cells and confluent by orienting the
interchangers. Using this rewriting system, we are able to show a partial
coherence result.

We define an alternate rewriting system $\Q$ where
\begin{center}
 $\Q_i = \P_i$ for $i \in
\set{0,1,2}$ \quad and\quad $\Q_3 = \set{\adjN,\adjNinv} \sqcup \intsub\Q_3$ 
\end{center}
 where
$\intsub\Q_3$ contains the following $3$-generators, called
\emph{$\Q$-interchange generators}:
\[
  \hss
  \begin{array}{rccc@{\hspace{3em}}rccc}
    X'_{\eta,\bar n,\eta}\co& \satex{dual-capcap-l} &\TO& \satex{dual-capcap-r}
    &
      X'_{\eta,\bar n,\varepsilon}\co& \satex{dual-capcup-l} &\TO& \spacebelowsatex{dual-capcup-r} \\
    X'_{\varepsilon,\bar n,\eta}\co& \satex{dual-cupcap-l} &\TO& \satex{dual-cupcap-r}
    &
      X'_{\varepsilon,\bar n,\varepsilon}\co& \satex{dual-cupcup-l} &\TO& \satex{dual-cupcup-r}
  \end{array}
  \hss
\]
for $n \in \N$.

There is a morphism of $3$\precategories $\Gamma\co \freecat\Q \to
\freeinvf{\prespcat\P}$ uniquely defined by $\Gamma(u) = u$ for $u \in
\freecat\Q_i$ with $i \in \set{0,1,2}$ and mapping the $3$-generators as follows:
\begin{align*}
  \adjN &\mapsto \adjN & \adjNinv & \mapsto \adjNinv\\
  X'_{\eta,\bar n,\eta} &\mapsto \finv X_{\eta,\bar n,\eta} &
  X'_{\eta,\bar n,\varepsilon} &\mapsto X_{\eta,\bar n,\varepsilon} \\
  X'_{\varepsilon,\bar n,\eta} &\mapsto \finv X_{\varepsilon,\bar n,\varepsilon} &
  X'_{\varepsilon,\bar n,\varepsilon} &\mapsto X_{\varepsilon,\bar n,\varepsilon}
\end{align*}
for $n \in \N$. We get a rewriting system $(\Q,\sequiv)$ by putting $F \sequiv
F'$ if and only if $\Gamma(F) = \Gamma(F')$ for parallel $F,F' \in
\freecat\Q_3$. By inspecting the $3$-generators of $\Q_3$, we can show that,
given $F\co \phi\TO \phi' \in \freecat\Q_3$, $\phi$ is connected if and only if
$\phi'$ is connected. Indeed, one easily checks that for every $A \in \Q_3$, we
have $\coninterp_\Q(\csrc_2(A)) = \coninterp_\Q(\ctgt_2(A))$, so that
$\coninterp_\Q(\phi) = \coninterp_\Q(\phi')$.

We first show a weak termination property for~$\Q$, stating that it is terminating on
connected $2$-cells:
\begin{prop}
  \label{prop:untyped-adj-terminating}
  Given a connected $2$\cell~$\phi$ in~$\freecat\Q_2$, there is no infinite
  sequence~$F_i\co \phi_i\TO \phi_{i+1}$ of rewriting steps
  where~$\phi_0 = \phi$.
\end{prop}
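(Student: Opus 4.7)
The plan is to exhibit a contradiction: assume there exists an infinite sequence $(F_i)_{i \ge 0}$ of rewriting steps $F_i \co \phi_i \TO \phi_{i+1}$ in $\freecat\Q_3$ with $\phi_0 = \phi$ connected, and derive a contradiction by combining a length argument with the interchanger termination result of \cite[Thm.~16]{delpeuch2018normalization} already invoked in the proof of \Propr{adj-terminating}.

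\emph{Step 1: connectedness is preserved along the sequence.} I would first verify that every reduction step in $\Q$ sends connected $2$-cells to connected $2$-cells, so that all $\phi_i$ are connected. For the snake generators $\adjN$ and $\adjNinv$ this is exactly \Lemr{adj-connectivity-3gen}: the functor $\coninterp_\Q$ sends both source and target to the identity cospan on $1$. For the $\Q$-interchange generators $X'_{\alpha,\bar n,\beta}$, we have $\coninterp_\Q(\csrc_2(X'_{\alpha,\bar n,\beta})) = \coninterp_\Q(\ctgt_2(X'_{\alpha,\bar n,\beta}))$ because $\cospancat$ is a strict $2$-category, exactly as in \Lemr{connectivity-int}. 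Combining these with the fact that $\coninterp_\Q$ is a $2$-prefunctor (so it is compatible with whiskering), every $\phi_i$ has the same image in $\cospancat$ as $\phi_0$, hence is connected.

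\emph{Step 2: only finitely many snake rewrites can occur.} Since $\len{\csrc_2(\adjN)} = \len{\csrc_2(\adjNinv)} = 2$ and $\len{\ctgt_2(\adjN)} = \len{\ctgt_2(\adjNinv)} = 0$, any rewriting step whose inner $3$-generator is $\adjN$ or $\adjNinv$ strictly decreases the length of the underlying $2$-cell by $2$, while each $\Q$-interchange generator $X'_{\alpha,\bar n,\beta}$ preserves length (source and target both decompose as two whiskers). Since lengths are natural numbers, only finitely many of the $F_i$ can have $\adjN$ or $\adjNinv$ as inner generator, so there exists $i_0 \ge 0$ such that for every $i \ge i_0$ the inner $3$-generator of $F_i$ belongs to $\intsub\Q_3$.

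\emph{Step 3: termination of pure interchanger sequences on connected diagrams.} At this point I would invoke \cite[Thm.~16]{delpeuch2018normalization} to rule out the infinite tail $(F_i)_{i \ge i_0}$: the orientation chosen for the $\Q$-interchangers $X'_{\eta,\bar n,\eta}$, $X'_{\eta,\bar n,\varepsilon}$, $X'_{\varepsilon,\bar n,\eta}$, $X'_{\varepsilon,\bar n,\varepsilon}$ matches the normalization direction for which that result guarantees termination on connected $2$-cells, just as was done for $\PAdj$ in the proof of \Propr{adj-terminating}. This yields the desired contradiction.

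The main obstacle is the last step: one must check that the specific orientation of the $\Q$-interchangers (which differs from the $\P$-interchangers on $X'_{\eta,\bar n,\eta}$ and $X'_{\varepsilon,\bar n,\eta}$) fits the hypotheses of \cite[Thm.~16]{delpeuch2018normalization}. This is the reason the orientation was rigged so that $\eta$ generators are pushed in one direction and $\varepsilon$ generators in the opposite one, exactly matching the canonical normalization pattern; if this correspondence needs to be made explicit, a fallback would be to define a weight function on connected $2$-cells---for instance counting, for each generator, a positional index relative to the boundary---which strictly decreases under each $\Q$-interchanger and is well-founded thanks to connectedness (which prevents the floating-bubble counterexample displayed earlier in the paper).
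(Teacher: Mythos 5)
Your Steps 1 and 2 are fine and coincide with the first reduction in the paper: rewriting steps whose inner generator is $\adjN$ or $\adjNinv$ strictly decrease the length of the $2$\cell, so any infinite sequence must eventually consist only of steps whose inner generator lies in $\intsub\Q_3$. The gap is in Step 3, which is the heart of the statement. The result \cite[Thm.~16]{delpeuch2018normalization} concerns interchange-only rewriting in which \emph{all} interchangers are oriented uniformly (the orientation of the generators $X_{\alpha,g,\beta}$, which is the one used for $\PAdj$ in \Propr{adj-terminating}); the $\Q$-interchangers do not have this orientation, since $X'_{\eta,\bar n,\eta}$ and $X'_{\varepsilon,\bar n,\eta}$ are mapped by $\Gamma$ to \emph{inverses} of interchangers. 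This is a type-dependent, mixed orientation ($\eta$'s pushed one way, $\varepsilon$'s the other, and $\eta$-over-$\varepsilon$ configurations resolved to $\varepsilon$-over-$\eta$), and such a rewriting system is not an instance of the one whose termination is established in the cited theorem. You flag this yourself as ``the main obstacle'', but the claim that the orientation ``exactly matches the canonical normalization pattern'' is precisely what is not true of the citation as used, so the step does not go through as written.

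Your fallback (a single positional weight on generators, well-founded ``thanks to connectedness'') is too vague to repair this: a single aggregated index cannot obviously decrease under all four families given their opposite orientations. What is actually needed, and what the paper does, is a lexicographic measure tailored to the mixed orientation: for $\phi = (\bar m_1 \comp_0 \alpha_1 \comp_0 \bar n_1) \comp_1 \cdots \comp_1 (\bar m_k \comp_0 \alpha_k \comp_0 \bar n_k)$ one takes $N(\phi) = (N_1(\phi), N_2^\eta(\phi), N_2^\varepsilon(\phi))$, where $N_1$ counts the pairs $i<j$ with $\alpha_i = \eta$ and $\alpha_j = \varepsilon$ (strictly decreased by $X'_{\eta,\bar n,\varepsilon}$ and $X'_{\varepsilon,\bar n,\eta}$, preserved by the other two), and $N_2^\eta$, $N_2^\varepsilon$ are the tuples of right-whisker lengths of the $\eta$'s (in reversed order) and of the $\varepsilon$'s, strictly decreased lexicographically by $X'_{\eta,\bar n,\eta}$ and $X'_{\varepsilon,\bar n,\varepsilon}$ respectively. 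Well-foundedness of the lexicographic order then kills the interchanger tail; note that this measure argument does not even use connectedness (the chosen orientation alone rules out the floating-bubble cycle), so your Step 1, while correct, is not what makes the argument work.
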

\begin{proof}
  Since any rewriting step whose inner $3$\generator is~$\adjN$ or~$\adjNinv$
  decreases by two the number of $2$\generators in a diagram, it is enough to
  show that there is no infinite sequence of composable rewriting steps made of
  elements of~$\intsub\Q_3$. For this purpose, we combine several counting
  functions: a function~$N_1$ which counts the potential number of
  rules~$X'_{\eta,-,\eps}$ and~$X'_{\eps,-,\eta}$ which can be applied, and
  functions~$N_2^\eta$ and~$N_2^\eps$ which counts the potential number of
  rules~$X'_{\eta,-,\eta}$ and~$X'_{\eps,-,\eps}$ which can be applied respectively. Given a
  $2$\cell
  \[
    \phi = (\bar m_1 \pcomp_0 \alpha_1
    \pcomp_0 \bar n_1) \pcomp_1 \cdots \pcomp_1 (\bar m_k \pcomp_0 \alpha_k \pcomp_0
    \bar n_k)
  \]
  of~$\freecat\Q_2$, with~$\alpha_i \in \Q_2$ and~$m_i,n_i \in \N$
  for~$i \in \set{1,\ldots,k}$, we define~$N_1(\phi) \in \N$ by
  \[
    N_1(\phi) =  \setsize{\set{ (i,j) \in \N^2 \mid 1\le i < j \le k \text{ and } \alpha_i =
      \eta \text{ and } \alpha_j = \varepsilon}}.
  \]
  Moreover, if we write~$p,q \in \set{0,\ldots,k}$ and~$i_1,\ldots,i_p,j_1,\ldots,j_q \in
  \N$ for the unique integers such that
  \[
    1\le i_1 < \cdots < i_p\le k \qquad 1\le j_1 < \cdots < j_q\le k \qquad \set{i_1,\ldots,i_p,j_1,\ldots,j_q} = \set{1,\ldots,k}
  \]
  and~$\alpha_{i_r} = \eta$ and~$\alpha_{j_s} = \varepsilon$ for~$r \in \set{1,\ldots,p}$
  and~$s \in \set{1,\ldots,q}$, we define~$N^\eta_2(\phi) \in \N^p$ and~$N^\varepsilon_2(\phi)
  \in \N^q$ by
  \[
    N^\eta_2(\phi) = (n_{i_p},\ldots,n_{i_1})
    \qquad\qtand\qquad
    N^\varepsilon_2(\phi)
    = (n_{j_1},\ldots,n_{j_q}).
  \]
  Finally, we define~$N(\phi) \in \N^{1+p+q}$ by
  \[
    N(\phi) = (N_1(\phi),N^\eta_2(\phi),N^\varepsilon_2(\phi))
  \]
  and we equip~$\N^p$,~$\N^q$ and~$\N^{1+p+q}$ with the lexicographical
  ordering~$\ltlex$. Now, keeping~$\phi$ as above, let
  \[
    \lambda \pcomp_1 (l \pcomp_0 A \pcomp_0 r) \pcomp_1
    \rho \co \phi \To \phi'\in \freecat\Q_3
  \]
  be a rewriting step for some~$l,r \in
  \freecat\Q_1$,~$\lambda,\rho,\phi' \in \freecat\Q_2$ and~$A \in \Q_3$ with
  \[
    \phi' =
    (\bar{m}'_1 \pcomp_0 \alpha'_1 \pcomp_0 \bar{n}'_1) \pcomp_1 \cdots \pcomp_1
    (\bar{m}'_k \pcomp_0 \alpha'_k \pcomp_0 \bar{n}'_k) 
  \]
  for some~$\alpha'_i \in \Q_2$
  and~$m'_i,n'_i \in \N$ for~$i \in \set{1,\ldots,k}$. We distinguish the three following cases.
  \begin{itemize}
  \item 
  If~$A = X'_{\eta,\bar
    u,\varepsilon}$ or~$A = X'_{\varepsilon,\bar u,\eta}$ for some~$u \in \N$, then~$N_1(\phi') = N_1(\phi) - 1$.
\item Otherwise, if~$A = X_{\eta,\bar u,\eta}$ for some~$u \in \N$, then we
  have~$N_1(\phi) = N_1(\phi')$ and, writing~$r$ for~$\len\lambda + 1$, we
  have~$n_{s} = n'_{s}$ for~$s \in \set{1,\ldots,k} \setminus\set{r,r+1}$.
  Moreover, we have~$n'_{r+1} \le n_{r+1} - 2$, so that~$N^\eta_2(\phi') \ltlex
  N^\eta_2(\phi)$. For example, consider the application of $X'_{\eta,\bar
    n,\eta}$ on $\csrc_2(X'_{\eta,\bar n,\eta})$ without additional whiskering.
  In this case, we have that $N^\eta_2(\csrc_2(X'_{\eta,\bar n,\eta})) =
  (n+2,0)$ while $N^\eta_2(\ctgt_2(X'_{\eta,\bar n,\eta})) = (0,n)$, so that the
  value of $N^\eta_2(-)$ is decreased with respect to $\ltlex$ by the
  application of $X'_{\eta,\bar n,\eta}$.

  \item
  Otherwise,~$A = X'_{\varepsilon,\bar u,\varepsilon}$ for some~$u \in \N$.
  Then~$N^\eta_2(\phi) = N^\eta_2(\phi')$ and, by a similar argument as
  before,~$N^\varepsilon_2(\phi') \ltlex N^\varepsilon_2(\phi)$. 
  \end{itemize}
  In any case, we get
  that~$N(\phi) \ltlex N(\phi')$. Since~$\ltlex$ is well-founded, we conclude
  that there is no infinite sequence of rewriting steps~$R_i\co \phi_i \TO
  \phi_{i+1}$ for~$i \in \N$ with~$\phi_0$ connected.
\end{proof}

We now aim at showing the confluence of the branchings of~$\Q$. The idea is to
use a critical pair lemma and a Newman's lemma adapted to the specific setting
of~$\Q$ where the notion of critical branching is different and where we only
consider connected $2$\cells as sources.
We say that a branching $(S_1,S_2)$ of $\Q$ is \emph{connected} when
$\csrc_2(S_1)$ is connected. We say that it is \emph{$\Q$\critical} when it is
local, minimal, not trivial and not independent. We first state adapted versions of the critical pair lemma and
Newman's lemma to the setting of~$\Q$:%
\begin{lem}
  \label{lem:untyped-adj-cp}
  If all connected $\Q$\critical branchings $(S_1,S_2)$ of $(\Q,\sequiv)$ are
  confluent, then all connected local branchings of $(\Q,\sequiv)$ are
  confluent.
\end{lem}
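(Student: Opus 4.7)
The plan is to adapt the proof strategy of \Cref{thm:cp} to the restricted setting of connected branchings of the rewriting system~$(\Q,\sequiv)$. Let $(S_1,S_2)$ be a connected local branching.

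First, I would invoke \Lemr{existence-min-branching} (whose statement and proof carry over unchanged to any $3$\prepolygraph, not only those underlying a Gray presentation) to obtain a minimal local branching $(S_1',S_2')$ and cells $l,r \in \freecat\Q_1$, $\lambda,\rho \in \freecat\Q_2$ such that $S_i = \lambda \comp_1 (l \comp_0 S_i' \comp_0 r) \comp_1 \rho$ for $i \in \set{1,2}$. By the definition of $\Q$\critical, the minimal branching $(S_1',S_2')$ falls into one of three disjoint cases: trivial, independent, or $\Q$\critical.

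Next, I would handle each case. If $(S_1',S_2')$ is trivial, then $S_1 = S_2$ and confluence is immediate. If it is independent, the two rule applications act on disjoint sub-diagrams and commute in $\freeinvf{\prespcat\P}$ by the exchange law for $3$-cells (which holds since $\freeinvf{\prespcat\P}$ is a $(3,2)$-Gray category by \Propr{gray-induces-3-2-gray}); via the definition of $\sequiv$ through $\Gamma$, this commutation yields joiners $G_1',G_2'$ with $S_1' \comp_2 G_1' \sequiv S_2' \comp_2 G_2'$. If it is $\Q$\critical, the hypothesis directly provides such joiners, provided $(S_1',S_2')$ itself has a connected source. In all three cases, contextualizing by $G_i = \lambda \comp_1 (l \comp_0 G_i' \comp_0 r) \comp_1 \rho$ lifts confluence from $(S_1',S_2')$ to $(S_1,S_2)$, since $\sequiv$ is a congruence on $\freecat\Q$.

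The hard part is to justify that every minimal $\Q$\critical branching of~$\Q$ has a connected source, so that the hypothesis applies. The key observation is that non-independence of a minimal branching forces the two overlapping rule applications to share at least one $2$-generator in their sources, and a finite case analysis over the pairs of $3$-generators in~$\Q$ (from among $\adjN$, $\adjNinv$ and the four $\Q$-interchangers $X'_{\eta,\bar n,\eta}$, $X'_{\eta,\bar n,\varepsilon}$, $X'_{\varepsilon,\bar n,\eta}$, $X'_{\varepsilon,\bar n,\varepsilon}$) should show that every such overlap has a source in which the $2$-generators reach the top or bottom boundary of the diagram, and is therefore connected in the sense of \Ssecr{app-adjunction}.
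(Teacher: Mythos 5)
Your overall structure (extract a minimal branching via \Lemr{existence-min-branching}, split into trivial / independent / $\Q$\critical, handle the first two directly, use the hypothesis for the third, then whisker the joiners back through the context using that $\sequiv$ is a congruence) is exactly the intended ``direct adaptation of \Thmr{cp}''. The trivial and independent cases are fine as you argue them. But the step you isolate as ``the hard part'' is resolved the wrong way, and the claim you make there is false: it is \emph{not} true that every minimal $\Q$\critical branching has a connected source. The paper's own case analysis (proof of \Lemr{untyped-adj-cp-confluent}) exhibits the ``non-separated structural-structural'' $\Q$\critical branchings, whose source is a floating circle built from $\eta$ and $\varepsilon$ (no $2$\generator reaches the boundary); these are not connected and, moreover, are explicitly noted not to be confluent. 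So a case analysis cannot show what you want, and if it could, combined with the confluence of connected $\Q$\critical branchings it would make \emph{all} $\Q$\critical branchings confluent, contradicting the existence of those non-confluent ones.

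What is actually needed is a different statement: connectedness must be shown to \emph{descend from the ambient branching to the extracted minimal one}, i.e.\ if $\lambda \comp_1 (l \comp_0 \psi \comp_0 r) \comp_1 \rho$ is connected then $\psi$ is connected. This is what lets you apply the hypothesis, which only covers connected $\Q$\critical branchings, to the minimal branching sitting inside a connected local one. The argument is short: a component of the cospan apex of $\coninterp_\Q(\psi)$ not hit by $\psi$'s boundary legs can only be glued to other components along boundary wires of $\psi$ under whiskering by $l,r$ and vertical composition with $\lambda,\rho$; since it contains none, it survives as a component of the ambient cospan not hit by the ambient boundary, contradicting connectedness of the source of $(S_1,S_2)$. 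With this replacement for your final paragraph, the rest of your proof goes through and coincides with the paper's intended argument.
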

\begin{proof}
  By a direct adaptation of the proof of \Thmr{cp} to connected $2$-cells and
  rewriting steps between connected $2$-cells.
\end{proof}

\begin{lem}
  \label{lem:untyped-adj-newman}
  If all connected local branchings of $(\Q,\sequiv)$ are confluent, then
  all connected branchings of $(\Q,\sequiv)$ are confluent.
\end{lem}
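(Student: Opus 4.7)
The plan is to adapt the proof of Theorem \ref{thm:newman-modulo} (Newman's lemma in our setting) to work with the restricted class of connected $2$-cells, exploiting the fact established just after the definition of $(\Q,\sequiv)$ that connectedness is preserved by every $3$-generator of $\Q$, hence by every rewriting path in $\freecat\Q_3$. This observation is crucial: once we start with a connected source $\phi$, every $2$-cell reachable from $\phi$ via a rewriting path is again connected, so all local and induction steps stay inside the ``connected world'' where we know termination (Proposition \ref{prop:untyped-adj-terminating}) and, by hypothesis, local confluence.

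The formal argument is by well-founded induction on $\phi$ with respect to the relation $\TO^+$ defined by $\phi \TO^+ \phi'$ iff there is a non-identity rewriting path $\phi \TO \phi'$ in $\freecat\Q_3$, restricted to connected $2$-cells. By Proposition \ref{prop:untyped-adj-terminating} this restriction is well-founded. Given a connected branching $(F_1\co \phi\TO\phi_1,\ F_2\co \phi\TO\phi_2)$, if either $F_i$ has length $0$, confluence is immediate. Otherwise write $F_i = S_i \comp_2 F_i'$ with $S_i\co \phi\TO \phi_i'$ a rewriting step and $F_i'\co \phi_i'\TO \phi_i$ a rewriting path, for $i\in\set{1,2}$. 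The pair $(S_1,S_2)$ is a connected local branching, so by hypothesis there exist $\psi\in\freecat\Q_2$ and rewriting paths $G_i\co \phi_i'\TO \psi$ with $S_1\comp_2 G_1 \sequiv S_2 \comp_2 G_2$. Since $\phi$ is connected, so are $\phi_1'$ and $\phi_2'$, and then so is $\psi$.

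Now we apply the induction hypothesis to $\phi_1'$ (which satisfies $\phi \TO^+ \phi_1'$): the connected branching $(F_1',G_1)$ is confluent, yielding $\psi_1\in\freecat\Q_2$ and rewriting paths $H_1\co \phi_1\TO \psi_1$ and $K_1\co \psi\TO\psi_1$ with $F_1'\comp_2 H_1 \sequiv G_1\comp_2 K_1$. Similarly, applying the induction hypothesis to $\phi_2'$, the connected branching $(G_2,F_2')$ is confluent, so there are $\psi_2\in\freecat\Q_2$ and rewriting paths $K_2\co \psi\TO\psi_2$ and $H_2'\co \phi_2\TO\psi_2$ with $G_2\comp_2 K_2 \sequiv F_2'\comp_2 H_2'$. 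Both $\psi_1$ and $\psi_2$ are connected. A third application of the induction hypothesis, this time to the connected branching $(K_1,K_2)$ with source $\psi$, produces $\psi'\in\freecat\Q_2$ and rewriting paths $L_1\co \psi_1\TO\psi'$ and $L_2\co \psi_2\TO\psi'$ with $K_1\comp_2 L_1\sequiv K_2\comp_2 L_2$. Setting $H_1'' = H_1 \comp_2 L_1$ and $H_2'' = H_2' \comp_2 L_2$, a diagram chase using the stability of $\sequiv$ under composition yields
\[
  F_1 \comp_2 H_1'' \sequiv F_2 \comp_2 H_2'',
\]
which is the desired confluence witness.

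The only non-routine ingredient is invoking the preservation of connectedness along rewriting paths at each step so that the induction hypothesis applies and so that Proposition \ref{prop:untyped-adj-terminating} provides the well-foundedness needed for the induction; the rest is the standard tiling argument familiar from Newman's lemma. I expect the main obstacle in a fully formal write-up to be the careful bookkeeping of the three induction steps and the associated $\sequiv$-equalities, but this is purely diagrammatic and poses no conceptual difficulty.
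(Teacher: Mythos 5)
Your proof is correct and follows essentially the same route as the paper, which simply adapts Newman's lemma (Theorem~\ref{thm:newman-modulo}) to connected $2$-cells using Proposition~\ref{prop:untyped-adj-terminating} and the preservation of connectedness along rewriting paths. The only cosmetic difference is internal to the Newman induction: the paper's Theorem~\ref{thm:newman-modulo} extends the local confluence witnesses to a normal form so that only two applications of the induction hypothesis are needed, whereas you apply it a third time to the branching $(K_1,K_2)$ at $\psi$ — both variants are valid since $\phi \TO^+ \psi$ and $\psi$ is connected.
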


\begin{proof}
  By a direct adaptation of \Thmr{newman-modulo} to connected $2$-cells and
  rewriting steps between connected $2$-cells, using
  \Propr{untyped-adj-terminating}.
\end{proof}
\noindent By the above properties, in order to deduce the confluence of the
branchings of~$\Q$, it is enough to check that the critical branchings of~$\Q$
are confluent, fact that we verify in the following property:%
\begin{lem}
  \label{lem:untyped-adj-cp-confluent}
  The connected $\Q$\critical branchings of $(\Q,\sequiv)$ are confluent.
\end{lem}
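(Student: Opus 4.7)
The plan is to enumerate all connected $\Q$\critical branchings by pairs of inner $3$\generators, and then check the confluence of each one modulo $\sequiv$. Recall that, by definition, $F \sequiv F'$ iff $\Gamma(F) = \Gamma(F')$ in $\freeinvf{\prespcat\P}$, and the latter is a $(3,2)$\nbd-Gray category by \Propr{gray-induces-3-2-gray} and \Thmr{gray-pres-gray-cat}. This will be the main tool for verifying the equations needed to close each branching.

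First, I would classify the $\Q$\critical branchings $(S_1,S_2)$ according to the inner $3$\generators $A_1$ and $A_2$ of $S_1$ and $S_2$ into three families:
\begin{enumerate}[label=(\roman*)]
\item $A_1, A_2 \in \set{\adjN, \adjNinv}$;
\item exactly one of $A_1, A_2$ is an element of $\set{\adjN, \adjNinv}$ and the other lies in $\intsub\Q_3$;
\item both $A_1, A_2 \in \intsub\Q_3$.
\end{enumerate}
In family~(i), since $\Q_3$ contains $\adjN, \adjNinv$ with exactly the same shapes as the $3$\generators of the same name in $\PAdj$, the minimal non-trivial, non-independent overlaps are precisely those giving the two critical branchings of \Ssecr{app-adjunction} (up to symmetry). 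By construction, $\Q$ contains the same $4$\generators $R_1, R_2$ as $\PAdj$, so these branchings are confluent via $R_1$ and $R_2$ directly.

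For family~(ii), the overlap happens when one of the two $2$\generators in the source of an interchange generator $X'_{\gamma,\bar n,\delta}$ coincides with one of the two $2$\generators in the source of an $\adjN$ or $\adjNinv$, where $\gamma, \delta \in \set{\eta, \varepsilon}$. Running through the possibilities for $(\gamma, \delta) \in \set{\eta,\varepsilon}^2$ and the choice of which endpoint is shared gives a finite, bounded list of shapes to check. A key observation is that certain shapes (notably those in which a $\eta$ appears \emph{above} a $\varepsilon$ and far enough apart that no zigzag is formed locally) yield disconnected sources and hence can be discarded. For the remaining connected ones, I would first normalize the image of each branch in $\freeinvf{\prespcat\P}$ using interchangers and the independence generators of $\Q_4$ to bring the two $\adjN/\adjNinv$\nbd-reducible parts side by side, then close the branching using $R_1$ or $R_2$; the equality of the resulting $3$\cells modulo $\sequiv$ then follows by applying $\Gamma$ and using the axioms of a Gray category, invertibility of interchangers in $\freeinvf{\prespcat\P}$, and interchange naturality.

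For family~(iii), the overlap of two oriented interchangers corresponds to three $2$\generators $\alpha, \beta, \gamma$ standing side by side, with two possible swaps applicable to consecutive pairs. Each choice produces a path ending in the configuration where the three generators appear in reverse order, and joinability reduces to showing that both ways to perform the three required pairwise swaps give $\sequiv$\nbd-equivalent $3$\cells. This is exactly the content of the Yang--Baxter-like coherence that holds in any Gray category between interchangers $X_{\alpha,\beta}, X_{\beta,\gamma}, X_{\alpha,\gamma}$, and follows from \Propr{interchange-coherence} (or more directly from the fact that $\Gamma$ sends each side to a composite of genuine interchangers in $\freeinvf{\prespcat\P}$, in which the equality holds).

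The main obstacle will be family~(ii): it has the largest number of subcases, and for each one the verification requires carefully writing down the two branches, applying $\Gamma$, and tracking the signs of interchangers (since $\Gamma$ maps $X'_{\eta,\bar n,\eta}$ and $X'_{\varepsilon,\bar n,\eta}$ to \emph{inverses} of genuine interchangers). The bookkeeping is routine but intricate, and ensuring that each source is actually connected is what rules out the configurations in which no simplification is possible.
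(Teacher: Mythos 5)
There is a genuine gap, and it sits in your family~(iii). In $\Q$ the interchange rules are oriented in a mixed way ($\Gamma$ sends $X'_{\eta,\bar n,\eta}$ and $X'_{\varepsilon,\bar n,\eta}$ to \emph{inverses} of interchangers), and not every overlap of two $\Q$-interchange steps is a ``three generators side by side'' Yang--Baxter configuration: the paper's case analysis also produces \emph{half-separated} and \emph{non-separated} overlaps, where the two swapped pairs are linked by wires. The non-separated ones (the ``bubble'' configurations, a cap and a cup closed up by wires) are \emph{not} confluent at all --- they are exactly the source of the non-termination/non-coherence phenomenon --- and the lemma only survives because their sources are disconnected, so they are excluded by the connectedness hypothesis. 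You invoke connectedness in family~(ii) but never in family~(iii), and you assert confluence of \emph{all} structural-structural overlaps; as stated this is false. Moreover, even for the connected ones, \Cref{prop:interchange-coherence} (or coherence in the $(3,2)$-Gray category) only yields the $\sequiv$-equality of parallel composites; confluence in $(\Q,\sequiv)$ additionally requires exhibiting joining rewriting paths in $\freecat\Q$ built from the \emph{oriented} rules $X'$, and because of the mixed orientations this joinability is not a formal consequence of coherence --- it has to be checked shape by shape (the paper lists eight separated and four half-separated kinds, each joined by an explicit diagram and then compared, via $\Gamma$, to a natural or independence branching of $(\PUAdj,\stdcong^{\PUAdj})$).

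A secondary, less damaging error: your family~(i) is empty. The sources of $\adjN$ and $\adjNinv$ can never overlap minimally, since the shared whisker would have to contain both an $\eta$ and an $\varepsilon$; the paper records this as the absence of operational-operational $\Q$-critical branchings. The two critical branchings recalled from \Cref{ssec:app-adjunction} are in fact overlaps of an operational generator with an interchange generator, so they belong to your family~(ii) (they reappear in the paper as the half-separated structural-operational cases, closed using $R_1,R_2$). This misattribution would not by itself break the argument, but together with the family~(iii) issue it shows the enumeration underlying your case analysis is not the correct one.
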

\begin{proof}
  We first consider the $\Q$\critical branchings~$(S_1,S_2)$ that are
  \emph{structural-structural}, \ie such that the inner $3$\generators of $S_1$
  and $S_2$ are $\Q$-interchange generators.
  We classify them as \emph{separated} and \emph{half-separated} and \emph{non-separated}. There are eight kinds of separated structural-structural
  $\Q$\critical branchings listed below:
  \begingroup
  \vskip\abovedisplayskip
  \tabskip=0pt plus 1fill\halign
  to\linewidth{\tabskip=0pt plus 1fill\relax\hfill #\hfill&\hfill
    $#$\hfill&\hfill $#$\hfill&\hfill $#$\hfill&\hfill $#$\hfill&\hfill $#$\hfill\tabskip=0pt plus 1fill\cr
    (1)&\satex[scale=.88]{untyped-adj/untypedadj-st-st-1-l-1}
    &\OT&\satex[scale=.88]{untyped-adj/untypedadj-st-st-1}
    &\TO&\spacebelowsatex[scale=.88]{untyped-adj/untypedadj-st-st-1-r-1} 
    \cr
    (2)&\satex[scale=.88]{untyped-adj/untypedadj-st-st-2-l-1}
    &\OT&\satex[scale=.88]{untyped-adj/untypedadj-st-st-2}
    &\TO&\spacebelowsatex[scale=.88]{untyped-adj/untypedadj-st-st-2-r-1} 
    \cr
    (3)&\satex[scale=.88]{untyped-adj/untypedadj-st-st-3-l-1}
    &\OT&\satex[scale=.88]{untyped-adj/untypedadj-st-st-3}
    &\TO&\spacebelowsatex[scale=.88]{untyped-adj/untypedadj-st-st-3-r-1} 
    \cr
    (4)&\satex[scale=.88]{untyped-adj/untypedadj-st-st-4-l-1}
    &\OT&\satex[scale=.88]{untyped-adj/untypedadj-st-st-4}
    &\TO&\spacebelowsatex[scale=.88]{untyped-adj/untypedadj-st-st-4-r-1} 
    \cr
    (5)&\satex[scale=.88]{untyped-adj/untypedadj-st-st-5-l-1}
    &\OT&\satex[scale=.88]{untyped-adj/untypedadj-st-st-5}
    &\TO&\spacebelowsatex[scale=.88]{untyped-adj/untypedadj-st-st-5-r-1} 
    \cr
    (6)&\satex[scale=.88]{untyped-adj/untypedadj-st-st-6-l-1}
    &\OT&\satex[scale=.88]{untyped-adj/untypedadj-st-st-6}
    &\TO&\spacebelowsatex[scale=.88]{untyped-adj/untypedadj-st-st-6-r-1} 
    \cr
    (7)&\satex[scale=.88]{untyped-adj/untypedadj-st-st-7-l-1}
    &\OT&\satex[scale=.88]{untyped-adj/untypedadj-st-st-7}
    &\TO&\spacebelowsatex[scale=.88]{untyped-adj/untypedadj-st-st-7-r-1} 
    \cr
    (8)&\satex[scale=.88]{untyped-adj/untypedadj-st-st-8-l-1}
    &\OT&\satex[scale=.88]{untyped-adj/untypedadj-st-st-8}
    &\TO&\satex[scale=.88]{untyped-adj/untypedadj-st-st-8-r-1}\pbox.\cr}
  \vskip\belowdisplayskip
  \endgroup
  \noindent Each one can be shown confluent for $\sequiv$ by considering the confluence of
  a natural branching in~${(\PUAdj,\stdcong^{\PUAdj})}$. For example, (5) is
  joinable as follows:
  \[
    \begin{tikzcd}[column sep=2.5ex]
      \satex[scale=.86]{untyped-adj/untypedadj-st-st-5} \tar[r]\tar[d]&
      \satex[scale=.86]{untyped-adj/untypedadj-st-st-5-l-1} \tar[r]&
      \satex[scale=.86]{untyped-adj/untypedadj-st-st-5-l-2} \tar[d]\\
      \satex[scale=.86]{untyped-adj/untypedadj-st-st-5-r-1} \tar[r]&
      \satex[scale=.86]{untyped-adj/untypedadj-st-st-5-r-2} \tar[r]&
      \satex[scale=.86]{untyped-adj/untypedadj-st-st-5-e}
    \end{tikzcd}
  \]
  Up to inverses, it corresponds to the following confluent natural branching of $(\PUAdj,\stdcong^{\PUAdj})$:
  \[
    \begin{tikzcd}[column sep=2.5ex]
      \satex[scale=.86]{untyped-adj/untypedadj-st-st-5} \tar[d,dotted]&
      \satex[scale=.86]{untyped-adj/untypedadj-st-st-5-l-1} \tar[l,dotted]
      \ar[d,phantom,"\stdcong^{\PUAdj}"]
      &
      \satex[scale=.86]{untyped-adj/untypedadj-st-st-5-l-2} \tar[l]\tar[d]\\
      \satex[scale=.86]{untyped-adj/untypedadj-st-st-5-r-1} &
      \satex[scale=.86]{untyped-adj/untypedadj-st-st-5-r-2} \tar[l,dotted]&
      \satex[scale=.86]{untyped-adj/untypedadj-st-st-5-e} \tar[l,dotted]
    \end{tikzcd}
  \]
  By the definition of $\sequiv$, (5) is confluent for $\sequiv$. The other
  kinds of separated structural-structural $\Q$\critical branchings are
  confluent by similar arguments.

  \medskip \noindent There are four kinds of half-separated structural-structural $\Q$\critical
  branchings listed below
  \begingroup
  \vskip\abovedisplayskip
  \tabskip=0pt plus 1fill\halign
  to\linewidth{\tabskip=0pt\relax\tabskip=0pt\relax\hfill #\hfill\quad&\hfill
    $#$\hfill\quad&\hfill $#$\hfill\quad&\hfill $#$\hfill\quad&\hfill $#$\hfill\quad&\hfill $#$\hfill\tabskip=0pt plus 1fill\cr
      (1)&
           \satex{untyped-adj/untypedadj-half-1-l}
      &\OT&\;
            \satex{untyped-adj/untypedadj-half-1}\;
      &\TO&
            \spacebelowsatex{untyped-adj/untypedadj-half-1-r}
      \cr
      (2)&
           \satex{untyped-adj/untypedadj-half-2-l}
      &\OT&\;\satex{untyped-adj/untypedadj-half-2}\;
      &\TO&\spacebelowsatex{untyped-adj/untypedadj-half-2-r} \cr
      (3)&
           \satex{untyped-adj/untypedadj-half-3-l}
      &\OT&\;\satex{untyped-adj/untypedadj-half-3}\;
      &\TO&\spacebelowsatex{untyped-adj/untypedadj-half-3-r} \cr
      (4)&
           \satex{untyped-adj/untypedadj-half-4-l}
      &\OT&\;\satex{untyped-adj/untypedadj-half-4}\;
      &\TO&\satex{untyped-adj/untypedadj-half-4-r}\pbox.\cr}
    \vskip\belowdisplayskip
  \endgroup
  \noindent Each one can be shown confluent for~$\sequiv$ by considering the confluence of a natural
  branching in~${(\PUAdj,\stdcong^{\PUAdj})}$. For example, (1) is joinable as follows
  \[
    \begin{tikzcd}
      \satex{untyped-adj/untypedadj-half-1}\tar[d]\tar[r]&\satex{untyped-adj/untypedadj-half-1-l}\tar[d]\\
      \satex{untyped-adj/untypedadj-half-1-r}\tar[r]&\satex{untyped-adj/untypedadj-half-1-e}
    \end{tikzcd}
  \]
  Up to inverses, it corresponds to the following confluent natural branching of $(\PUAdj,\stdcong^{\PUAdj})$:
  \[
    \begin{tikzcd}
      \satex{untyped-adj/untypedadj-half-1}\tar[r,dotted,""{name=src,auto=false}]&\satex{untyped-adj/untypedadj-half-1-l}\tar[d,dotted]\\
      \satex{untyped-adj/untypedadj-half-1-r}\tar[u]\tar[r,""{name=tgt,auto=false}]&\satex{untyped-adj/untypedadj-half-1-e}
      \ar[from=src,to=tgt,phantom,"\stdcong^{\PUAdj}"]
    \end{tikzcd}
  \]
  By definition of $\sequiv$, it implies that (1) is confluent for $\sequiv$.

  There are two kinds of non-separated structural-structural $\Q$\critical
  branchings listed below:
  \[
    \begin{array}{r@{\quad}c@{\quad}c@{\quad}c@{\quad}c@{\quad}c}
      (1)&\satex{untyped-adj/untyped-sep-1-l}
      &\OT&\;\satex{untyped-adj/untyped-sep-1}\;
      &\TO&\spacebelowsatex{untyped-adj/untyped-sep-1-r} 
      \\
      (2)&
           \satex{untyped-adj/untyped-sep-2-l}
      &\OT&\;\satex{untyped-adj/untyped-sep-2}\;
      &\TO&\satex{untyped-adj/untyped-sep-2-r}
    \end{array}
  \]
  They are not confluent but they are not connected branchings.

  We now consider \emph{structural-operational} $\Q$\critical branchings, \ie
  those $\Q$\critical branchings~$(S_1,S_2)$ such that the inner $3$\generator
  of $S_1$ is a $\Q$-interchange generator and the inner $3$\generator of $S_2$
  is $\adjN$ or $\adjNinv$. We classify them as \emph{separated} and
  \emph{half-separated}. There are four kinds of separated
  structural-operational $\Q$\critical branchings listed below:
  \begingroup
  \vskip\abovedisplayskip
  \tabskip=0pt plus 1fill\halign
  to\linewidth{\tabskip=0pt\relax\tabskip=0pt\relax\hfill #\hfill\quad&\hfill
    $#$\hfill\quad&\hfill $#$\hfill\quad&\hfill $#$\hfill\quad&\hfill $#$\hfill\quad&\hfill $#$\hfill\tabskip=0pt plus 1fill\cr
      (1)&\satex{untyped-adj/untyped-st-op-sep-1-l}
      &\OT&\;\spacebelowsatex{untyped-adj/untyped-st-op-sep-1}\;
      &\TO&\satex{untyped-adj/untyped-st-op-sep-1-r} 
      \cr
      (2)&\satex{untyped-adj/untyped-st-op-sep-2-l}
      &\OT&\;\spacebelowsatex{untyped-adj/untyped-st-op-sep-2}\;
      &\TO&\satex{untyped-adj/untyped-st-op-sep-2-r}
      \cr
      (3)&\satex{untyped-adj/untyped-st-op-sep-3-l}
      &\OT&\;\spacebelowsatex{untyped-adj/untyped-st-op-sep-3}\;
      &\TO&\satex{untyped-adj/untyped-st-op-sep-3-r} 
      \cr
      (4)&\satex{untyped-adj/untyped-st-op-sep-4-l}
      &\OT&\;\satex{untyped-adj/untyped-st-op-sep-4}\;
      &\TO&\satex{untyped-adj/untyped-st-op-sep-4-r}\pbox.\cr}
    \vskip\belowdisplayskip
    \endgroup
  As above, each one can be shown confluent by considering a natural branching of
  $(\PUAdj,\stdcong^{\PUAdj})$.

  There are two kinds of half-separated structural-operational $\Q$\critical
  branchings listed below:
  \[
    \begin{array}{r@{\quad}c@{\quad}c@{\quad}c@{\quad}c@{\quad}c}
      (1)&\satex{adj-cp1-c}
      &\OT&\;\spacebelowsatex{adj-cp1-l}\;
      &\TO&\satex{adj-cp1-r} 
      \\
      (2)&\satex{adj-cp2-l}
      &\OT&\;\satex{adj-cp2-c}\;
      &\TO&\satex{adj-cp2-r}
    \end{array}
  \]
  As above, each one of them can be proved confluent by considering the
  associated critical branching in $(\PUAdj,\stdcong^{\PUAdj})$.

  Note that there are no \emph{operational-operational} $\Q$\critical branching,
  \ie $\Q$\critical branchings $(S_1,S_2)$ where the inner $3$\generators of
  both~$S_1$ and~$S_1$ are in $\set{\adjN,\adjNinv}$. Hence, all connected
  $\Q$\critical branchings are confluent.
\end{proof}

\goodbreak\noindent We can now show our weak confluence property:
\begin{prop}
  \label{prop:untyped-adj-confluent}
  All the connected branchings of $(\Q,\sequiv)$ are confluent.
\end{prop}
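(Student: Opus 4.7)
The proof should be essentially immediate from the three preceding lemmas, which were clearly arranged in this order precisely so that the proposition drops out as a chain of implications. My plan is to simply compose them.

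First I would invoke \Lemr{untyped-adj-cp-confluent}, which establishes that every connected $\Q$\critical branching of $(\Q,\sequiv)$ is confluent. Then I would apply \Lemr{untyped-adj-cp}, the adapted critical pair lemma, to conclude that every connected local branching of $(\Q,\sequiv)$ is confluent. Finally, \Lemr{untyped-adj-newman}, the version of Newman's lemma adapted to the connected setting and which depends on the weak termination result \Propr{untyped-adj-terminating}, upgrades confluence of connected local branchings to confluence of all connected branchings.

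Since all three steps are already proved, no genuine obstacle remains. The only point deserving a brief mention is why these lemmas suffice: any branching $(F_1,F_2)$ we need to handle has a connected source by hypothesis, which is exactly the regime in which the three lemmas operate; there is no need to consider non-connected $2$\cells, which would be problematic since the rewriting system is not terminating on them (as witnessed by the infinite reduction with floating bubbles shown earlier). So the complete proof is essentially a one-liner chaining \Lemr{untyped-adj-cp-confluent}, \Lemr{untyped-adj-cp}, and \Lemr{untyped-adj-newman} in that order.
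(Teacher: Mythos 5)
Your proof is correct and matches the paper's own argument exactly: the paper also proves this proposition by simply chaining \Lemr{untyped-adj-cp-confluent}, \Lemr{untyped-adj-cp}, and \Lemr{untyped-adj-newman}. Nothing further is needed.
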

\begin{proof}
  By \Lemr{untyped-adj-newman}, \Lemr{untyped-adj-cp} and \Lemr{untyped-adj-cp-confluent}.
\end{proof}

In order to obtain a weak coherence property for~$\PUAdj$, we first adapt
several properties stated in \Ssecr{coherence}.
\begin{lem}
  \label{lem:uadj-(3,2)-Q-canonical-form}
  Given $F\co \phi\TO \phi' \in \freeinvf{\prespcat\Q}$ where either $\phi$ or
  $\phi'$ is connected, we have $F = G \comp_2 \finv{H}$ for some $G\co \phi \TO \psi$
  and $H\co \phi' \TO \psi$.
\end{lem}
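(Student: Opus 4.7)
The plan is to adapt the proof of \Cref{prop:confluent-cr} to our weaker setting, using the confluence of connected branchings established in \Cref{prop:untyped-adj-confluent}.

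First, I would represent $F$ as a zigzag by lifting it to $\freecat\Q$: by the description of $\freeinvf{(-)}$ and the fact that $\prespcat\Q = \freecat\Q/{\sequiv}$, any $3$-cell $F\co \phi \TO \phi'$ of $\freeinvf{\prespcat\Q}$ can be written as
\[
  F = \finv{[G_1]} \comp_2 [H_1] \comp_2 \cdots \comp_2 \finv{[G_k]} \comp_2 [H_k]
\]
for some $k \ge 0$ and rewriting paths $G_i \co \chi_i \TO \phi_{i-1}$ and $H_i \co \chi_i \TO \phi_i$ in $\freecat\Q$, with $\phi_0 = \phi$ and $\phi_k = \phi'$, where $[-]$ denotes the equivalence class modulo~$\sequiv$.

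The critical observation is that connectedness propagates along this zigzag. Recall from the discussion preceding \Propr{untyped-adj-terminating} that, for every $A \in \Q_3$, we have $\coninterp_\Q(\csrc_2(A)) = \coninterp_\Q(\ctgt_2(A))$, so that any rewriting path in $\freecat\Q$ has a connected source iff it has a connected target. Thus, if $\phi = \phi_0$ is connected, then $\chi_1$ is connected (via $G_1$), then $\phi_1$ is connected (via $H_1$), and by induction all $\chi_i$ and $\phi_i$ are connected; the symmetric argument handles the case where $\phi'$ is connected.

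I then proceed by induction on $k$. The case $k = 0$ is immediate, since $F$ is an identity and $F = \unit \phi \comp_2 \finv{\unit \phi}$. For the inductive step, since the branching $(G_k\co \chi_k \TO \phi_{k-1},\, H_k\co \chi_k \TO \phi_k)$ has a connected source, \Propr{untyped-adj-confluent} provides rewriting paths $G'_k\co \phi_{k-1} \TO \psi$ and $H'_k\co \phi_k \TO \psi$ in $\freecat\Q$ with $G_k \comp_2 G'_k \sequiv H_k \comp_2 H'_k$, so that $\finv{[G_k]} \comp_2 [H_k] = [G'_k] \comp_2 \finv{[H'_k]}$ in $\freeinvf{\prespcat\Q}$. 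Substituting and regrouping, $F$ rewrites as
\[
  F = \bigl(\finv{[G_1]} \comp_2 [H_1] \comp_2 \cdots \comp_2 \finv{[G_{k-1}]} \comp_2 ([H_{k-1}] \comp_2 [G'_k])\bigr) \comp_2 \finv{[H'_k]},
\]
where the parenthesized expression is a zigzag of length $k-1$ from $\phi$ to $\psi$, all of whose intermediate vertices are connected (by the same propagation argument, using that $\phi_{k-1}$ and $\psi$ are connected, the latter since it is the target of~$G'_k$ whose source~$\phi_{k-1}$ is connected). By the induction hypothesis, this is of the form $G'' \comp_2 \finv{H''}$ for some $G''\co \phi \TO \psi'$ and $H''\co \psi \TO \psi'$, so that
\[
  F = G'' \comp_2 \finv{H''} \comp_2 \finv{[H'_k]} = G'' \comp_2 \finv{([H'_k] \comp_2 H'')},
\]
which has the desired form.

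The main subtlety lies in ensuring that each application of \Propr{untyped-adj-confluent} is legitimate, i.e.\ that the relevant branching has a connected source; this is exactly what the connectedness-propagation observation guarantees, and is the reason the hypothesis ``$\phi$ or $\phi'$ is connected'' cannot be dropped.
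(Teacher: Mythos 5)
Your proof is correct and is exactly the argument the paper intends: its own proof of this lemma is a one-line appeal to ``a direct adaptation of \Cref{prop:confluent-cr} involving connected $2$-cells only, using \Cref{prop:untyped-adj-confluent}'', and your write-up carries out precisely that adaptation, with the connectedness-propagation along the zigzag (via $\coninterp_\Q(\csrc_2(A)) = \coninterp_\Q(\ctgt_2(A))$ for $A \in \Q_3$) made explicit to justify each use of connected confluence.
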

\begin{proof}
  By a direct adaptation of \Propr{confluent-cr} involving connected $2$-cells
  only, and using \Propr{untyped-adj-confluent}.
\end{proof}

\begin{lem}
  \label{lem:untyped-adj-small-coherence}
  Given $F_1,F_2\co \phi\TO \phi' \in \prespcat\Q_3$, if $\phi$ is
  connected, then $F_1 = F_2$ in $\freeinvf{\prespcat\Q}_3$.
\end{lem}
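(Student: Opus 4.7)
The plan is to show that $F_1$ and $F_2$ become equal after post-composition with a suitable rewriting path $G$, from which invertibility in the localization will force $F_1 = F_2$ in $\freeinvf{\prespcat\Q}_3$. This follows the strategy of \Propr{confluent-impl-coherence}, restricted to branchings whose source is connected, where both \Propr{untyped-adj-terminating} and \Propr{untyped-adj-confluent} are available.

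First I would lift $F_1$ and $F_2$ to representatives $\tilde F_1, \tilde F_2 \co \phi \TO \phi' \in \freecat\Q_3$ and check that $\phi'$ is also connected. By induction on the length of a rewriting path, it suffices to see that each $3$-generator $A \in \Q_3$ satisfies $\coninterp_\Q(\csrc_2(A)) = \coninterp_\Q(\ctgt_2(A))$: for $\adjN$ and $\adjNinv$ this is exactly \Lemr{adj-connectivity-3gen}, and for the $\Q$-interchange generators it follows from the fact that $\cospancat$ is a strict $2$-category, in the spirit of \Lemr{connectivity-int}. Hence every $2$-cell reachable from $\phi$ by a rewriting path, in particular $\phi'$, is connected.

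Next, by \Propr{untyped-adj-terminating} applied to $\phi'$, any maximal sequence of rewriting steps starting from $\phi'$ is finite, so there exists a rewriting path $G \co \phi' \TO \psi \in \freecat\Q_3$ with $\psi$ a normal form. The pair $(\tilde F_1 \comp_2 G,\, \tilde F_2 \comp_2 G)$ is then a branching with source $\phi$, which is connected, so by \Propr{untyped-adj-confluent} there exist rewriting paths $H_1, H_2 \co \psi \TO \psi'$ such that
\[
  \tilde F_1 \comp_2 G \comp_2 H_1 \sequiv \tilde F_2 \comp_2 G \comp_2 H_2.
\]
Since $\psi$ is a normal form, the only rewriting path starting at $\psi$ is $\unit\psi$, so $H_1 = H_2 = \unit\psi$ and thus $\tilde F_1 \comp_2 G \sequiv \tilde F_2 \comp_2 G$. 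Projecting to the quotient $\prespcat\Q = \freecat\Q/{\sequiv}$ gives $F_1 \comp_2 [G] = F_2 \comp_2 [G]$ in $\prespcat\Q_3$.

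Finally, in $\freeinvf{\prespcat\Q}$ the $3$-cell $[G]$ is invertible by construction, so post-composing the previous equality with $\finv{[G]}$ yields $F_1 = F_2$ in $\freeinvf{\prespcat\Q}_3$. The one delicate point is the preservation of connectedness along arbitrary rewriting paths, which legitimizes invoking \Propr{untyped-adj-terminating} and \Propr{untyped-adj-confluent} at $\phi'$ and along the branching; once that is settled, the remaining argument is a direct adaptation of the confluence-to-coherence scheme of \Propr{confluent-impl-coherence} combined with \Lemr{equiv-on-nf}.
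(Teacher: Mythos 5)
Your proof is correct and follows essentially the same route as the paper's: establish that connectedness propagates to $\phi'$, use termination to reach a normal form $\psi$ via $G$, use confluence of the connected branching to force $H_1=H_2=\unit\psi$, and conclude by invertibility of $G$ in the localization. The only difference is that you spell out the lifting to $\freecat\Q_3$ and the preservation of $\coninterp_\Q$ by the $3$-generators, which the paper treats implicitly.
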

\begin{proof}
  Since $\phi$ is connected, $\phi'$ is connected. By
  \Propr{untyped-adj-terminating}, there is $G\co \phi' \TO \psi \in
  \prespcat\Q_3$ such that $\psi$ is a normal form for $\Q$. By
  \Propr{untyped-adj-confluent}, there is $H_1,H_2\co \psi \TO \psi' \in
  \prespcat\Q_3$ such that $F_1 \comp_2 G \comp_2 H_1 = F_1 \comp_2 G \comp_2
  H_2$. Since $\psi$ is a normal form, $H_1 = H_2 = \unit \psi$. So $F_1 \comp_2
  G = F_2 \comp_2 G$, thus $F_1 = F_2$ in $\freeinvf{\prespcat\Q}_3$.
\end{proof}

\begin{lem}
  \label{lem:uadj-parallel-implies-eq}
  Given $F_1,F_2\co \phi\TO \phi' \in \freeinvf{\prespcat\Q}_3$, if $\phi$ is
  connected, then $F_1 = F_2$ in $\freeinvf{\prespcat\Q}_3$.
\end{lem}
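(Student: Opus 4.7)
The proof follows the same strategy as \Cref{prop:confluent-impl-coherence}, restricted to the subcategory of connected $2$\cells, using the weaker versions of confluence and coherence already established for~$\Q$.

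The plan is to first put each $F_i$ in a canonical ``span'' form. By \Cref{lem:uadj-(3,2)-Q-canonical-form}, for each $i \in \set{1,2}$ we can write
\[
  F_i = G_i \comp_2 \finv{H_i}
\]
for some $\psi_i \in \freecat\Q_2$, $G_i\co \phi \TO \psi_i$ and $H_i\co \phi' \TO \psi_i$ in $\prespcat\Q_3$. Next, observe that $\phi'$ is also connected: indeed, writing $F_1$ as a zigzag of $3$\cells of $\prespcat\Q$ and using the stability of connectedness under rewriting in $\freecat\Q$ (noted after the construction of $\Q$, which follows from the fact that $\coninterp_\Q(\csrc_2(A)) = \coninterp_\Q(\ctgt_2(A))$ for every $A \in \Q_3$), connectedness is transported along every step of the zigzag. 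Similarly, $\psi_1$ and $\psi_2$ are connected since $G_1, G_2$ start at the connected $2$\cell~$\phi$.

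The second key step is to close the span on the source side using confluence. The branching $(G_1, G_2)$ is a connected branching, so by \Cref{prop:untyped-adj-confluent} there exist a $2$\cell~$\psi$ and rewriting paths $K_1\co \psi_1 \TO \psi$ and $K_2\co \psi_2 \TO \psi$ in $\prespcat\Q_3$ such that
\[
  G_1 \comp_2 K_1 \sequiv G_2 \comp_2 K_2\pbox.
\]
The third key step is to close the span on the target side using the small coherence property. The composites $H_1 \comp_2 K_1$ and $H_2 \comp_2 K_2$ are parallel $3$\cells in $\prespcat\Q_3$ whose common source $\phi'$ is connected, so by \Cref{lem:untyped-adj-small-coherence}, they are equal in $\freeinvf{\prespcat\Q}_3$.

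Combining these ingredients, we compute in $\freeinvf{\prespcat\Q}_3$:
\begin{align*}
  F_1 &= G_1 \comp_2 \finv{H_1} \\
      &= G_1 \comp_2 K_1 \comp_2 \finv{(H_1 \comp_2 K_1)} \\
      &= G_2 \comp_2 K_2 \comp_2 \finv{(H_2 \comp_2 K_2)} \\
      &= G_2 \comp_2 \finv{H_2} = F_2\zbox,
\end{align*}
using the first equality of spans (up to $\sequiv$, which identifies the two composites in the localization) and the second equality of cospans (by the small coherence lemma). The main subtlety is ensuring that the two relations obtained—$\sequiv$ on $\freecat\Q_3$ and the equality in $\freeinvf{\prespcat\Q}_3$—both collapse to equality in the localization, which is exactly the content of \Cref{lem:untyped-adj-small-coherence} applied to the second span.
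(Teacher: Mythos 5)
Your proof is correct and follows exactly the route the paper intends: it is the spelled-out ``direct adaptation'' of the proof of \Cref{prop:confluent-impl-coherence}, using \Cref{lem:uadj-(3,2)-Q-canonical-form} for the span decomposition, \Cref{prop:untyped-adj-confluent} to close the branching $(G_1,G_2)$ on the connected source, and \Cref{lem:untyped-adj-small-coherence} on the parallel cells out of the connected $\phi'$. Nothing essential differs from the paper's argument.
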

\begin{proof}
  By a direct adaptation of the proof of \Propr{confluent-impl-coherence}, 
  using \Lemr{uadj-(3,2)-Q-canonical-form} and \Lemr{untyped-adj-small-coherence}.
\end{proof}
\noindent We can now conclude with the weak coherence property for~$\PUAdj$:
\begin{theo}
  Given~$F_1,F_2\co \phi \TO \phi' \in \smash{\freeinvf{\prespcat{\PUAdj}}_3}$
  with~$\phi$ or~$\phi'$ connected, we have~$F_1 = F_2$.
\end{theo}
\begin{proof}
  Let~$\Gamma'\co \smash{\freeinvf{\prespcat\Q} \to
    \freeinvf{\prespcat{\PUAdj}}}$ be the $3$\prefunctor which is the
  factorization of~$\Gamma$ through the canonical
  $3$\prefunctor~$\freecat{(\restrict 3 \Q)} \to
  \smash{\freeinvf{\prespcat\Q}}$. By definition
  of~$\smash{\freeinvf{\prespcat\PUAdj}}$, for~$i \in \set{1,2}$, we have
  \[
    F_i = G_{i,1} \pcomp_2 \finv H_{i,1} \pcomp_2
    \cdots \pcomp_2 G_{i,k_i} \pcomp_2 \finv H_{i,k_i}
  \]
  for some~$k_i \in \N$,
  $2$\cells~$\phi_{i,0},\ldots,\phi_{i,k_i},\psi_{i,1},\ldots,\psi_{i,k_i} \in
  \freecat\Q_2$ such that~$\phi_{i,0} = \phi$ and~$\phi_{i,k_i} = \phi'$, and,
  for~$j \in \set{1,\ldots,k_i}$, $3$\cells
  \[
    G_{i,j}\co \phi_{i,j-1} \TO \psi_{i,j}
    \qtand
    H_{i,j} \co \phi_{i,j} \TO \psi_{i,j}
  \]
  of~$\prespcat\PUAdj_3$. Since either~$\phi$ or~$\phi'$ is connected, we have
  that all the~$\phi_{i,j}$'s and the~$\psi_{i,j}$'s are connected. Moreover,
  all the~$G_{i,j}$'s and the~$H_{i,j}$'s are in the image of~$\Gamma'$. So,
  for~$i \in \set{1,2}$,~$F_i = \Gamma'(F'_i)$ for some~$F'_i \co \phi \TO \phi'
  \in \smash{\freeinvf{\prespcat\Q}}$. By \Lemr{uadj-parallel-implies-eq}, we
  have~$F'_1 = F'_2$, so that~$F_1 = F_2$.
\end{proof}


\subsection{Frobenius pseudomonoids}
\label{ssec:app-frobenius-monoid}

We now consider the example of Frobenius
pseudomonoids~\cite{street2004frobenius}, which categorifies the classical notion
of Frobenius monoids. Sadly, it is only a partial example since we were not able
to handle the units of the structure (if we add them, the critical branchings are not
confluent) and to show that our presentation is terminating, even though we
believe that the latter is true. We nevertheless give the computation
of critical branchings for this example, hoping that a termination argument will
be found later.

We define the $3$\prepolygraph~$\P$ for (non-unitary) Frobenius pseudomonoids as
follows. We put
\[
  \P_0 = \set{\ast} \qtand \P_1 = \set{\bar 1} \qtand \P_2 = \set{\mu\co \bar 2
    \to \bar 1,
    \delta\co \bar 1 \to \bar 2}
\]
where we denote $\bar n$ by $\underbrace{\bar 1 \comp_0 \cdots \comp_0 \bar
  1}_n$ for $n \in \N$. We picture $\mu$ and $\delta$ by $\satex{mu}$ and
$\satex{delta}$ respectively, and we define $\P_3$ by $\P_3 =
\set{\frobN,\frobNinv,\frobA,\frobAco,\frobM,\frobMco}$ where
\begin{align*}
  \satex{frob-l}&\xTO{\mathsf{N}}\satex{frob-c}
  &
  \satex{frob-assoc-l}&\xTO{\frobA}\satex{frob-assoc-r}
  &
  \satex{frob-bmu-l}&\xTO{\frobM}\satex{frob-bmu-r}
  \\
  \satex{frob-r}&\xTO{\frobNinv}\satex{frob-c}
  &
  \satex{frob-coassoc-l}&\xTO{\frobAco}\satex{frob-coassoc-r}
  &
  \satex{frob-bdelta-l}&\xTO{\frobMco}\satex{frob-bdelta-r}
\end{align*}
As before, we then extend $\P$ to a Gray presentation by adding $3$\generators
corresponding to interchange generators and $4$\generators corresponding to
independence generators and interchange naturality generators.

Using the constructive proof of~\Thmr{finite-cp}, we find 19 critical
branchings, and we use them to define a set of nineteen $4$-generators
$R_1,\ldots,R_{19}$ that we add to~$\P_4$. These critical branchings are shown
in~\Cref{fig:pseudofrobenius-cps}.
\begin{figure}[p]\ContinuedFloat*
  \centering
  \begingroup
  \def\mycolsep{5em}
  \openup15pt
  \begin{gather*}
    \mathmakebox[\linewidth][c]{\hfil\begin{tikzcd}[row sep={5.8em,between origins},column sep={\mycolsep,between origins},ampersand replacement=\&,baseline=(\tikzcdmatrixname-2-1.center)]
        \satexnoscale[scale=0.8]{ass-zig1} \ar[d,equal]\tar[r]\ar[rd,phantom,"\DDownarrow R_1"description] \& \satexnoscale[scale=0.8]{ass-zig2} \tar[d]
        \\
        \satexnoscale[scale=0.8]{ass-zig1} \tar[r] \&  \satexnoscale[scale=0.8]{ass-zig3}
      \end{tikzcd}
      \hfil
      \begin{tikzcd}[row sep={5.8em,between origins},column sep={\mycolsep,between origins},ampersand replacement=\&,baseline=(\tikzcdmatrixname-2-1.center)]
        \satexnoscale[scale=0.8]{coass-zig1}
        \ar[rd,phantom,"\DDownarrow R_{2}",start anchor=center,end anchor=center]
        \tar[r]
        \ar[d,equal]
        \& \satexnoscale[scale=0.8]{coass-zig2} \tar[d]
        \\
        \satexnoscale[scale=0.8]{coass-zig1} \tar[r] \& \satexnoscale[scale=0.8]{coass-zig3}
      \end{tikzcd}
      \hfil
      \begin{tikzcd}[row sep={5.8em,between origins},column sep={\mycolsep,between origins},ampersand replacement=\&,baseline=(\tikzcdmatrixname-2-1.center)]
        \satexnoscale[scale=0.8]{cotrans-ass1} \ar[d,equal] \tar[r] \ar[rd,start anchor=center,end
        anchor=center,phantom,"\DDownarrow R_3"]
        \& \satexnoscale[scale=0.8]{cotrans-ass2} \tar[d]
        \\
        \satexnoscale[scale=0.8]{cotrans-ass1}  \tar[r] \& \satexnoscale[scale=0.8]{cotrans-ass3}
      \end{tikzcd}
      \hfil
      \begin{tikzcd}[row sep={5.8em,between origins},column sep={\mycolsep,between origins},ampersand replacement=\&,baseline=(\tikzcdmatrixname-2-1.center)]
        \satexnoscale[scale=0.8]{coass-trans1} \tar[r] \ar[d,equal]\ar[rd,"\DDownarrow R_4",start
        anchor=center,end anchor=center,phantom] \& \satexnoscale[scale=0.8]{coass-trans2} \tar[d] \\
        \satexnoscale[scale=0.8]{coass-trans1} \tar[r] \& \satexnoscale[scale=0.8]{coass-trans3}
      \end{tikzcd}\hfil}
    \\
    \mathmakebox[\linewidth][c]{\hfil\begin{tikzcd}[row sep={4.9em,between origins},column sep={\mycolsep,between origins},ampersand replacement=\&,baseline=(\tikzcdmatrixname-2-1.center)]
        \satexnoscale[scale=0.8]{mon-hex1}\tar[d]\tar[r]\&\satexnoscale[scale=0.8]{mon-hex2}\tar[r]\ar[d,phantom,"\DDownarrow
        R_5"description]\&\satexnoscale[scale=0.8]{mon-hex3}\tar[d]\\
        \satexnoscale[scale=0.8]{mon-hex6}\tar[r]\&\satexnoscale[scale=0.8]{mon-hex5}\tar[r]\&\satexnoscale[scale=0.8]{mon-hex4}
      \end{tikzcd}
      \hfil
      \begin{tikzcd}[row sep={4.9em,between origins},column sep={\mycolsep,between origins},ampersand replacement=\&,baseline=(\tikzcdmatrixname-2-1.center)]
        \satexnoscale[scale=0.8]{comon-hex1} \tar[d] \tar[r] \& \satexnoscale[scale=0.8]{comon-hex2} \tar[r]
        \ar[d,phantom,"\DDownarrow R_6"description]\&
        \satexnoscale[scale=0.8]{comon-hex3} \tar[d] \\
        \satexnoscale[scale=0.8]{comon-hex4} \tar[r] \& \satexnoscale[scale=0.8]{comon-hex5} \tar[r] \&
        \satexnoscale[scale=0.8]{comon-hex6}
      \end{tikzcd}\hfil}
    \\
    \mathmakebox[\linewidth][c]{\hfil\begin{tikzcd}[row sep={4.9em,between origins},column sep={\mycolsep,between origins},ampersand replacement=\&,baseline=(\tikzcdmatrixname-2-1.center)]
        \satexnoscale[scale=0.8]{zag-ass1} \tar[r] \tar[d] \& \satexnoscale[scale=0.8]{zag-ass2}
        \tar[r]\ar[d,phantom,"\DDownarrow R_7"description] \&
        \satexnoscale[scale=0.8]{zag-ass3} \tar[d] \\
        \satexnoscale[scale=0.8]{zag-ass4} \tar[r] \& \satexnoscale[scale=0.8]{zag-ass5} \tar[r] \&
        \satexnoscale[scale=0.8]{zag-ass6}
      \end{tikzcd}
      \hfil
      \begin{tikzcd}[row sep={4.9em,between origins},column sep={\mycolsep,between origins},ampersand replacement=\&,baseline=(\tikzcdmatrixname-2-1.center)]
        \satexnoscale[scale=0.8]{coass-zag1} \tar[r] \tar[d] \& \satexnoscale[scale=0.8]{coass-zag4} \tar[r]
        \ar[d,phantom,"\DDownarrow R_8"description] \&
        \satexnoscale[scale=0.8]{coass-zag5} \tar[d] \\
        \satexnoscale[scale=0.8]{coass-zag2} \tar[r] \& \satexnoscale[scale=0.8]{coass-zag3} \tar[r] \&
        \satexnoscale[scale=0.8]{coass-zag6}
      \end{tikzcd}\hfil}
    \\
    \mathmakebox[\linewidth][c]{\hfil\begin{tikzcd}[ampersand replacement=\&,row sep={4.9em,between origins},column sep={\mycolsep,between origins},baseline=(\tikzcdmatrixname-2-1.center)]
        \satexnoscale[scale=0.8]{zig-echmu1} \tar[d] \tar[r] \& \satexnoscale[scale=0.8]{zig-echmu2} \tar[r]
        \ar[d,phantom,"\DDownarrow R_{9}"description]\&
        \satexnoscale[scale=0.8]{zig-echmu3} \tar[d] \\
        \satexnoscale[scale=0.8]{zig-echmu4} \tar[r] \& \satexnoscale[scale=0.8]{zig-echmu5} \tar[r] \&
        \satexnoscale[scale=0.8]{zig-echmu6}
      \end{tikzcd}
      \hfil
      \begin{tikzcd}[row sep={4.9em,between origins},column sep={\mycolsep,between origins},ampersand replacement=\&,baseline=(\tikzcdmatrixname-2-1.center)]
        \satexnoscale[scale=0.8]{echde-zig1} \tar[d] \tar[r] \& \satexnoscale[scale=0.8]{echde-zig2} \tar[r]
        \ar[d,phantom,"\DDownarrow R_{10}"description]\&
        \satexnoscale[scale=0.8]{echde-zig3} \tar[d] \\
        \satexnoscale[scale=0.8]{echde-zig4} \tar[r] \& \satexnoscale[scale=0.8]{echde-zig5} \tar[r] \&
        \satexnoscale[scale=0.8]{echde-zig6}
      \end{tikzcd}\hfil}
  \end{gather*}
  \endgroup
  \caption{The critical branchings for Frobenius pseudomonoids}
  \label{fig:pseudofrobenius-cps}
\end{figure}%
\begin{figure}[p!]\ContinuedFloat
  \centering
  \begingroup
  \def\mycolsep{5em}
  \openup15pt
  \begin{gather*}
    \mathmakebox[\linewidth][c]{\hfil\begin{tikzcd}[row sep={6.2em,between origins},column sep={\mycolsep,between origins},ampersand replacement=\&,baseline=(\tikzcdmatrixname-2-1.center)]
        \satexnoscale[scale=0.8]{echmu-zig4} \tar[r] \& \satexnoscale[scale=0.8]{echmu-zig5} \tar[r]
        \ar[d,phantom,"\DDownarrow R_{11}"description] \&
        \satexnoscale[scale=0.8]{echmu-zig6} \tar[d] \\
        \satexnoscale[scale=0.8]{echmu-zig1} \tar[u] \tar[r] \& \satexnoscale[scale=0.8]{echmu-zig2} \tar[r] \&
        \satexnoscale[scale=0.8]{echmu-zig3}
      \end{tikzcd}
      \hfil
      \begin{tikzcd}[ampersand replacement=\&,row sep={6.2em,between origins},column sep={\mycolsep,between origins},baseline=(\tikzcdmatrixname-2-1.center)]
        \satexnoscale[scale=0.8]{zig-echde4} \tar[r] \& \satexnoscale[scale=0.8]{zig-echde5}
        \tar[r]\ar[d,phantom,"\DDownarrow R_{12}"description] \&
        \satexnoscale[scale=0.8]{zig-echde6} \tar[d] \\
        \satexnoscale[scale=0.8]{zig-echde1} \tar[u] \tar[r] \& \satexnoscale[scale=0.8]{zig-echde2} \tar[r] \&
        \satexnoscale[scale=0.8]{zig-echde3}
      \end{tikzcd}\hfil}
    \\
    \begin{tikzcd}[ampersand replacement=\&,row sep={6.2em,between origins},column sep={\mycolsep,between origins},baseline=(\tikzcdmatrixname-2-1.center)]
      \satexnoscale[scale=0.8]{cotrans-trans1} \tar[r] \tar[d] \& \satexnoscale[scale=0.8]{cotrans-trans2}
      \ar[d,phantom,"\DDownarrow R_{13}"]\tar[r,""{name=arrb}] \&
      \satexnoscale[scale=0.8]{cotrans-trans3} \tar[d] \\
      \satexnoscale[scale=0.8]{cotrans-trans5} \tar[r] \& \satexnoscale[scale=0.8]{cotrans-trans6} \tar[r,""{name=arre}] \&
      \satexnoscale[scale=0.8]{cotrans-trans4}
    \end{tikzcd}
    \\
    \begin{tikzcd}[row sep={6.2em,between origins},column sep={\mycolsep,between origins},ampersand replacement=\&,baseline=(\tikzcdmatrixname-2-1.center)]
      \satexnoscale[scale=0.8]{trans-ass1} \tar[r] \tar[d] \& \satexnoscale[scale=0.8]{trans-ass2} \tar[r,""{name=arrb,auto=false}] \&
      \satexnoscale[scale=0.8]{trans-ass3} \tar[r] \& \satexnoscale[scale=0.8]{trans-ass4} \tar[d] \\
      \satexnoscale[scale=0.8]{trans-ass5} \tar[r] \& \satexnoscale[scale=0.8]{trans-ass6} \tar[r,""{name=arre,auto=false}] \&
      \satexnoscale[scale=0.8]{trans-ass7} \tar[r]
      \& \satexnoscale[scale=0.8]{trans-ass8} \ar[from=arrb,to=arre,"\DDownarrow R_{14}",phantom]
    \end{tikzcd}
    \qquad
    \begin{tikzcd}[row sep={6.2em,between origins},column sep={\mycolsep,between origins},ampersand replacement=\&,baseline=(\tikzcdmatrixname-2-1.center)]
      \satexnoscale[scale=0.8]{coass-cotrans1} \tar[r] \tar[d] \& \satexnoscale[scale=0.8]{coass-cotrans2} \tar[r,""{name=arrb,auto=false}] \&
      \satexnoscale[scale=0.8]{coass-cotrans3} \tar[r] \& \satexnoscale[scale=0.8]{coass-cotrans4} \tar[d] \\
      \satexnoscale[scale=0.8]{coass-cotrans5} \tar[r] \& \satexnoscale[scale=0.8]{coass-cotrans6} \tar[r,""{name=arre,auto=false}] \&
      \satexnoscale[scale=0.8]{coass-cotrans7} \tar[r]
      \&
      \satexnoscale[scale=0.8]{coass-cotrans8}\ar[from=arrb,to=arre,phantom,"\DDownarrow R_{15}"description]
    \end{tikzcd}
  \end{gather*}
  \endgroup
  \caption{The critical branchings for Frobenius pseudomonoids}
  \label{fig:pseudofrobenius-cpsbis}
\end{figure}%
\begin{figure}[p!]\ContinuedFloat
  \centering
  \begingroup
  \def\mycolsep{5em}
  \openup15pt
  \begin{gather*}
    \begin{tikzcd}[ampersand replacement=\&,row sep={6.2em,between origins},column sep={\mycolsep,between origins},baseline=(\tikzcdmatrixname-2-1.center)]
      \satexnoscale[scale=0.8]{echde-trans1} \tar[r] \tar[d] \& \satexnoscale[scale=0.8]{echde-trans2} \tar[r,""{name=arrb,auto=false}] \&
      \satexnoscale[scale=0.8]{echde-trans3} \tar[r] \& \satexnoscale[scale=0.8]{echde-trans4} \tar[d] \\
      \satexnoscale[scale=0.8]{echde-trans5} \tar[r] \& \satexnoscale[scale=0.8]{echde-trans6} \tar[r,""{name=arre,auto=false}] \&
      \satexnoscale[scale=0.8]{echde-trans7} \tar[r]
      \& \satexnoscale[scale=0.8]{echde-trans8}\ar[from=arrb,to=arre,phantom,"\DDownarrow R_{16}"]
    \end{tikzcd}
    \qquad
    \begin{tikzcd}[ampersand replacement=\&,row sep={6.2em,between origins},column sep={\mycolsep,between origins},baseline=(\tikzcdmatrixname-2-1.center)]
      \satexnoscale[scale=0.8]{cotrans-echmu1} \tar[r] \tar[d] \& \satexnoscale[scale=0.8]{cotrans-echmu2} \tar[r,""{name=arrb,auto=false}] \&
      \satexnoscale[scale=0.8]{cotrans-echmu3} \tar[r] \& \satexnoscale[scale=0.8]{cotrans-echmu4} \tar[d] \\
      \satexnoscale[scale=0.8]{cotrans-echmu5} \tar[r] \& \satexnoscale[scale=0.8]{cotrans-echmu6} \tar[r,""{name=arre,auto=false}] \&
      \satexnoscale[scale=0.8]{cotrans-echmu7} \tar[r]
      \& \satexnoscale[scale=0.8]{cotrans-echmu8}
      \ar[from=arrb,to=arre,phantom,"\DDownarrow R_{17}"]
    \end{tikzcd}
    \\
    \begin{tikzcd}[ampersand replacement=\&,row sep={6.2em,between origins},column sep={\mycolsep,between origins},baseline=(\tikzcdmatrixname-2-1.center)]
      \satexnoscale[scale=0.8]{echmu-trans4} \tar[r]
      \ar[rrrd,phantom,"\DDownarrow R_{18}",start anchor=center,end anchor=center]
      \& \satexnoscale[scale=0.8]{echmu-trans5} \tar[r]
      \&
      \satexnoscale[scale=0.8]{echmu-trans6} \tar[r] \& \satexnoscale[scale=0.8]{echmu-trans7} \tar[d] \\
      \satexnoscale[scale=0.8]{echmu-trans1} \tar[r] \tar[u] \& \satexnoscale[scale=0.8]{echmu-trans2} \tar[r] \&
      \satexnoscale[scale=0.8]{echmu-trans3} \ar[r,equal] \& \satexnoscale[scale=0.8]{echmu-trans3}
    \end{tikzcd}
    \qquad
    \begin{tikzcd}[ampersand replacement=\&,row sep={6.2em,between origins},column sep={\mycolsep,between origins},baseline=(\tikzcdmatrixname-2-1.center)]
      \satexnoscale[scale=0.8]{cotrans-echde4} \tar[r]
      \ar[rrrd,phantom,"\DDownarrow R_{19}",start anchor=center,end anchor=center]
      \& \satexnoscale[scale=0.8]{cotrans-echde5} \tar[r]
      \&
      \satexnoscale[scale=0.8]{cotrans-echde6} \tar[r] \& \satexnoscale[scale=0.8]{cotrans-echde7} \tar[d] \\
      \satexnoscale[scale=0.8]{cotrans-echde1} \tar[r] \tar[u] \& \satexnoscale[scale=0.8]{cotrans-echde2} \tar[r] \&
      \satexnoscale[scale=0.8]{cotrans-echde3}
      \ar[r,equal]
      \&
      \satexnoscale[scale=0.8]{cotrans-echde3}
    \end{tikzcd}
  \end{gather*}
  \endgroup
  \caption{The critical branchings for Frobenius pseudomonoids}
  \label{fig:pseudofrobenius-cpsbisbis}
\end{figure}
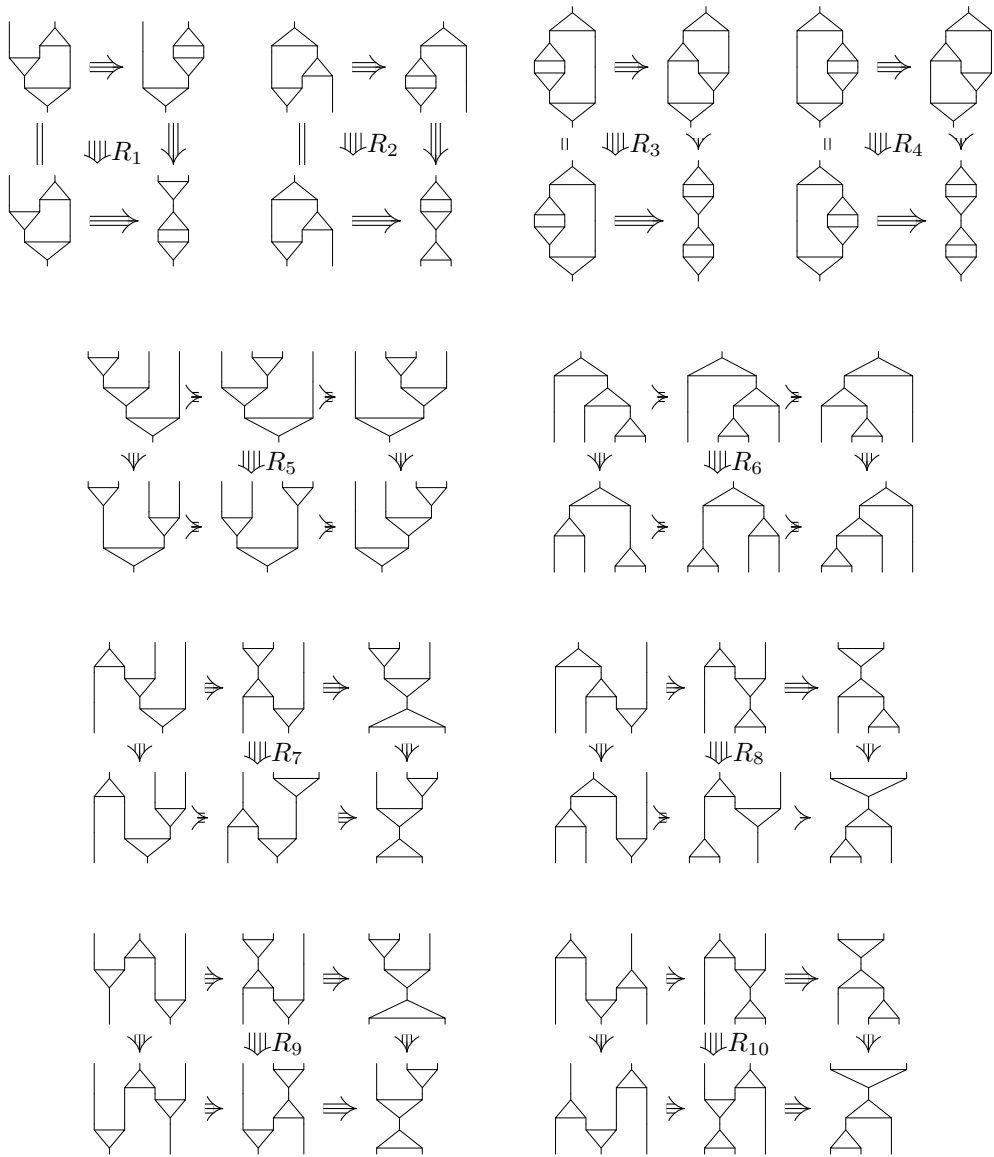%
We then define $\PFrob$ as the Gray presentation obtained from~$\P$ by adding
the $4$\generators $R_1,\ldots,R_{19}$ from above. Since we were not able to
show termination, we conjecture it:
\begin{conj}
 $\PFrob$ is terminating. 
\end{conj}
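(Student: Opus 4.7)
The plan is to follow the strategy of \Cref{thm:pseudomon-coherent} by building a reduction order on~$\freecat{\PFrob}_2$ as a lexicographic combination of (i)~an interpretation $F\co \freecat{\PFrob}_2 \to \Monex$ into the $2$\nbd-category of strictly monotone functions that witnesses the strict decrease of the non-interchange $3$-generators, and (ii)~the order~$\intord$ from \Cref{ssec:termination} to handle the interchange generators. The base hypothesis of \Cref{prop:term-criterion-interchanger} is satisfied: since $\len{\ctgt_1(\mu)} = 1 > 0$ and $\len{\ctgt_1(\delta)} = 2 > 0$, the presentation $\PFrob$ is positive, so $\intord$ is stable under whiskering and $1$-composition, and every interchange generator strictly decreases~$\intord$. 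It therefore suffices to exhibit a $2$-prefunctor $F$ determined by polynomial maps $f_\mu\co \N^2 \to \N^1$ and $f_\delta\co \N^1 \to \N^2$ such that $F(\csrc_2(A)) \gtex^2 F(\ctgt_2(A))$ for every $A \in \set{\frobN, \frobNinv, \frobA, \frobAco, \frobM, \frobMco}$, and $F$ constant on the interchangers; the latter is automatic because $\Monex$ is a strict $2$\nbd-category.

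The natural first attempt is to set $f_\mu(x,y) = 2x + y + 1$ as in \Cref{ssec:app-pseudomonoid}, forcing $\frobA$ to decrease, and dually to pick $f_\delta(x) = (ax + b, cx + d) \in \N^2$ with coefficients tuned so that $\frobAco$ decreases too. One would then verify the inequality for $\frobN$ and $\frobNinv$, which collapse a zigzag and so strictly decrease most reasonable size measures, and finally for the Frobenius-law generators $\frobM$ and $\frobMco$. Because the latter leave globally unchanged the number of $\mu$\nbd- and $\delta$\nbd-generators, and simply slide one past the other, a monovariate weight will not suffice: one has to exploit the full vector-valued nature of $\Monex_2$ and to fine-tune the coefficients so that the componentwise order $\ltex^1$ strictly decreases along $\frobM$ and $\frobMco$ in every whiskering context. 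After this, termination of $\PFrob$ would follow from \Cref{prop:term-order-implies-terminating} applied to the lexicographic combination of $F$ and $\intord$.

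I expect the main obstacle to be precisely the orientation of $\frobM$ and $\frobMco$: semantically both sides of each of these rewrites are equal in any Frobenius pseudomonoid, so any strict termination measure must discriminate between them in a purely combinatorial way, simultaneously compatible with whiskering by arbitrary $1$-cells on both sides and with the joint confluence of $R_{13},\ldots,R_{19}$. A bare interpretation into $\Monex$ is unlikely to do all the work, and one probably needs either (a)~to replace $\Monex$ by a richer $2$\nbd-category of polynomial maps $\N^m \to \N^n$ with strictly ordered coefficients, in the spirit of Lafont's $\N$-semiring interpretations for classical term rewriting, or (b)~to introduce a secondary lexicographic component built from a combinatorial invariant of the string-diagram representation, such as the total horizontal displacement between $\mu$\nbd- and $\delta$\nbd-vertices in the canonical whisker decomposition provided by \Cref{thm:precat-nf}, or a potential function counting the number of $\delta$\nbd-followed-by-$\mu$ patterns that $\frobM$ and $\frobMco$ are designed to eliminate. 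Checking that any such refined measure is well-defined on $\freecat{\PFrob}_2$ modulo the precategory axioms, and strictly decreasing on all six non-interchange generators together with all their whiskerings, is where the bulk of the technical work would lie, and is presumably why the authors leave the statement as a conjecture.
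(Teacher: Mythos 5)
There is a genuine gap: you have not proved the statement, and indeed the paper itself does not either — termination of $\PFrob$ is explicitly left as a conjecture, precisely because the step you defer is the whole difficulty. Your outline reproduces the architecture of the pseudomonoid argument (\Cref{ssec:app-pseudomonoid}, \Cref{thm:pseudomon-coherent}): positivity of $\PFrob$ holds since $\len{\ctgt_1(\mu)}=1$ and $\len{\ctgt_1(\delta)}=2$, so \Cref{prop:term-criterion-interchanger} does give the $\intord$-decrease for interchange generators, and a lexicographic combination with a $2$\nbd-prefunctor $F$ into a strict $2$\nbd-category would handle them automatically, exactly as in \Cref{prop:pseudomon-term-order} and \Cref{prop:term-order-implies-terminating}. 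But the load-bearing ingredient — an interpretation (into $\Monex$ or any replacement) under which \emph{all six} operational generators $\frobN$, $\frobNinv$, $\frobA$, $\frobAco$, $\frobM$, $\frobMco$ are strictly decreasing, compatibly with whiskering on both sides — is never constructed, and no candidate is even checked on a single generator. Saying that one ``probably needs'' a richer semiring interpretation, or a secondary combinatorial potential counting $\delta$\nbd-then-$\mu$ patterns, is a restatement of the open problem, not progress on it: the Frobenius moves $\frobM$ and $\frobMco$ preserve the multiset of $2$\nbd-generators and merely rearrange them, and it is exactly the absence of a known measure discriminating their two sides (while remaining stable under all contexts $\lambda \comp_1 (l \comp_0 - \comp_0 r) \comp_1 \rho$ and consistent with the chosen orientations of $\frobA$, $\frobAco$, $\frobN$, $\frobNinv$) that blocks the proof.

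Two further cautions if you pursue this. First, any proposed refined invariant must be well defined on cells of $\freecat{\PFrob}_2$, i.e.\ computed from the canonical whisker decomposition of \Cref{thm:precat-nf} and invariant under nothing more — but it must also be \emph{monotone under the interchange generators} in the sense required by the lexicographic order you set up (constant, or at least non-increasing, in the first component), which rules out many naive ``horizontal displacement'' counts, since interchangers do move $\mu$'s and $\delta$'s past one another. Second, your claim that the verification for $\frobN$ and $\frobNinv$ is easy because they ``strictly decrease most reasonable size measures'' conflicts with the need for $F$ to be constant on interchangers: the same $F$ must make the zigzag cells strictly decrease while the $\Monex$-image of an interchanger source equals that of its target, and whether such an $F$ coexists with strict decreases for $\frobM$, $\frobMco$ is precisely what is unknown. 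As it stands, your text is a reasonable research plan but establishes nothing beyond what the paper already concedes by calling the statement a conjecture.
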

\noindent From this assumption, we deduce that:
\begin{theo}
  \label{thm:frob-coherent}
  If $\PFrob$ is terminating, then $\PFrob$ is a coherent Gray presentation.
\end{theo}

\begin{proof}
  This is a consequence of \Thmr{squier}.
\end{proof}


\clearpage
\bibliographystyle{plain}
\bibliography{article-plain}
\newpage
\appendix
\allowdisplaybreaks

\section{Equivalence between definitions of precategories}
\label{sec:precat-equivalent-definitions}
\noindent We prove the equivalence between the equational and the enriched
definition of precategories:%
\ifx\precatequivalentdefsprop\undefined
\begin{prop}
  There is an equivalence of categories between $\nPCat{n+1}$ and
  categories enriched in $\nPCat n$ with the funny tensor product.
\end{prop}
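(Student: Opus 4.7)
The plan is to construct an equivalence of categories by unpacking both sides into explicit data and verifying that they match. On the one hand, an $(n{+}1)$-precategory $C$ is given by $0$-cells, $n$-precategories of higher cells between parallel $0$-cells, together with whiskering operations $\comp_0$ of higher cells by $1$-cells on either side (but no general $0$-composition of two higher cells). On the other hand, a category enriched in $(\nPCat n, \funny)$ consists of a set of objects $X$, hom-objects $\homcat(x,y)\in\nPCat n$, unit morphisms $u_x\co \termcat \to \homcat(x,x)$, and composition morphisms $c_{x,y,z}\co \homcat(x,y)\funny\homcat(y,z)\to\homcat(x,z)$ satisfying the usual associativity and unitality diagrams. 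The key observation is that, by the pushout definition of $\funny$, giving $c_{x,y,z}$ is the same as giving two morphisms
\[
  \homcat(x,y)\times \catsk 0\homcat(y,z)\to\homcat(x,z)
  \qtand
  \catsk 0\homcat(x,y)\times \homcat(y,z)\to\homcat(x,z)
\]
that agree on $\catsk 0\homcat(x,y)\times \catsk 0\homcat(y,z)$; these are exactly the right-whiskering of a higher cell by a $1$-cell and the left-whiskering of a $1$-cell by a higher cell, constrained to agree on $0$-composable pairs of $1$-cells.

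First, I would build a functor $F\co \nPCat{n+1}\to\Cat{\nPCat n\!,\funny}$. Given $C\in\nPCat{n+1}$, let the object set be $C_0$, and for $x,y\in C_0$ let $\homcat(x,y)\in\nPCat n$ be the $n$-precategory whose $i$-cells (for $0\le i\le n$) are the $(i{+}1)$-cells $u$ of $C$ with $\partial_{1,i}^\eps(u)=x$ and $\partial_{1,i}^\eps(u)=y$ for $\eps\in\set{-,+}$, equipped with the restrictions of the identities and compositions $\comp_j$ of $C$ for $1\le j \le n$ (renumbered to $\comp_{j-1}$). The unit morphism $u_x\co \termcat\to\homcat(x,x)$ picks out $\unit x$. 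The composition $c_{x,y,z}$ is defined out of the pushout by sending $\homcat(x,y)\times\catsk 0\homcat(y,z)$ via right whiskering $(u,g)\mapsto u\comp_0 g$ and $\catsk 0\homcat(x,y)\times\homcat(y,z)$ via left whiskering $(f,v)\mapsto f\comp_0 v$; compatibility on $\catsk 0\homcat(x,y)\times\catsk 0\homcat(y,z)$ is exactly the equality $f\comp_0 g\in C_1$. The axioms of precategories involving $\comp_0$ with a $0$-dimensional index, namely clauses (iii), (iv), (v) for $i=0$, translate directly into the enriched unit and associativity axioms for $c_{x,y,z}$; functoriality of $F$ is immediate.

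Next, I would construct a functor $G$ in the reverse direction. Given an enriched category $\mathcal C$, define $C_0$ to be its object set and, for $0<i\le n+1$, let $C_i = \bigsqcup_{x,y}\homcat(x,y)_{i-1}$; equip $C$ with the internal identities and compositions $\comp_j$ (for $1\le j\le n$) from each hom-object. The whiskering operations $\comp_0$ are extracted from the composition morphisms $c_{x,y,z}$ via their two components through the pushout decomposition. The enriched associativity (and more delicately the associativity of iterated whiskering involving three hom-objects) forces the precategory axioms (iv) and (v) for $i=0$: here one uses Lemma~\ref{lem:assoc-isom} to decompose $\homcat(x,y)\funny\homcat(y,z)\funny\homcat(z,w)$ as the colimit of the three-term diagram~\eqref{eq:funny-3-pushout}, so that a single enriched ternary morphism decomposes as three separate pieces of whiskering data, and equality of the two bracketings gives both the usual associativity of whiskering with $1$-cells on each side and the cross-distributivity axiom $(u\comp_0 v)\comp_0 w = u\comp_0(v\comp_0 w)$ as well as $u\comp_0(v\comp_j v')=(u\comp_0 v)\comp_j(u\comp_0 v')$. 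The unit axiom gives the remaining parts of (iii).

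Finally, I would check that $F$ and $G$ are mutually inverse up to natural isomorphism. By construction, the underlying data match on the nose: $F\circ G(\mathcal C)$ has the same objects as $\mathcal C$ and the same hom-objects, and the composition morphisms reconstructed from whiskering coincide with $c_{x,y,z}$ by the universal property of the pushout; conversely, $G\circ F(C)$ recovers $C$ because precategories have no $0$-composition beyond whiskering. The main obstacle I expect is the precise unpacking of the enriched associativity pentagon along the three-fold funny tensor product: one must verify that each of the three maps out of the colimit~\eqref{eq:funny-3-pushout} yields exactly one of the precategory axioms for $\comp_0$ across three cells in a row, and that nothing is missed. Once this bookkeeping is done, the equivalence follows. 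An alternative, more abstract route is to invoke Weber's characterization~\cite{weber2009free} of $\funny$ as the tensor product associated with a suitable monad for $(n{+}1)$-precategories, which packages this bookkeeping into a general theorem, but the hands-on verification above is more direct.
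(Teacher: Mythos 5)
Your proposal is correct and follows essentially the same route as the paper's proof: both directions are built by unpacking the pushout defining $\funny$ into the left/right whiskering maps (the paper's $l_{x,y,z}$ and $r_{x,y,z}$), and associativity is checked against the three-term colimit decomposition of the triple funny tensor product. One small bookkeeping point: the distributivity axiom $u\comp_0(v\comp_j v')=(u\comp_0 v)\comp_j(u\comp_0 v')$ comes from the composition morphisms being morphisms of $n$-precategories (so the whiskering maps preserve $\comp_j$ and identities), rather than from the enriched associativity diagram itself, which is how the paper derives it.
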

\else
\precatequivalentdefsprop*
\fi
\begin{proof}
  \renewcommand\tcomp{c}%
  \renewcommand\tunit{i}%
  Given~$C \in \nPCat {n+1}$, we define an associated object~$D \in
  \enrCatpar{\nPCat n}$ as follows. We put
  \[
    D_0 = C_0 \qtand D(x,y) = C_{\uparrow(x,y)}
  \]
  where~$C_{\uparrow(x,y)}$ is the $n$\precategory such that
  \[
    (C_{\uparrow(x,y)})_i = \set{ u \in C_{i+1} \mid \csrc_0(u) = x \text{ and }
      \ctgt_0(u) = y}
  \]
  for~$i \in \set{0,\ldots,n}$ and whose composition operation~$\pcomp_{k,l}$ is the
  operation~$\pcomp_{k+1,l+1}$ on~$C$ for~$k,l \in \set{1,\ldots,n}$. Given~$x \in D_0$,
  we define the identity morphism
  \[
    \tunit_{x}\co \termcat \to D(x,x)
  \]
  as the morphism which maps the unique $0$\cell~$\ast$ of~$\termcat$
  to~$\unit {x} \in C_1$. Given~$x,y,z \in C_0$, we define the composition
  morphism
  \[
    \tcomp_{x,y,z}\co D(x,y) \funny D(y,z) \to D(x,z) \in \nPCat {n}
  \]
  as the unique morphism such that~$l_{x,y,z} = \tcomp_{x,y,z} \circ
  \lincfun{D(x,y),D(y,z)}$ is the composite
  \[
    D(x,y) \times \catsk 0 {D(y,z)} \cong \coprod_{g \in D(y,z)_0} D(x,y)
    \xto{[(-)\pcomp_0 g]_{g \in D(y,z)_0}} D(x,z)
  \]
  and~$r_{x,y,z} = \tcomp_{x,y,z} \circ \rincfun{D(x,y),D(y,z)}$ is the
  composite
  \[
    \catsk 0 {D(x,y)} \times D(y,z) \cong \coprod_{f\in D(x,y)_0} D(y,z)
    \xto{[f\pcomp_0 (-)]_{f \in D(x,y)_0}} D(x,z).
  \]
  We verify that the composition morphism is left unital, \ie given~$x,y
  \in D_0$, the diagram
  \[
    \begin{tikzcd}[column sep=small]
      \termcat \funny D(x,y) \ar[rr,"{\tunit_x \funny D(x,y)}"] \ar[rd,"\funnyl_{D(x,y)}"'] & &
      D(x,x) \funny D(x,y) \ar[ld,"{\tcomp_{x,x,y}}"] \\
      &  D(x,y)
    \end{tikzcd}
  \]
  commutes. We compute that
  \begin{align*}
    \tcomp_{x,x,y} \circ (\tunit_x \funny D(x,y)) \circ \lincfun{\termcat,D(x,y)}
    &= \tcomp_{x,x,y} \circ \lincfun{D(x,x),D(x,y)} \circ (\tunit_x \times \catsk 0 {D(x,y)})
    \\
    &
      & \makebox[1cm][r]{(by definition of~$\funny$)} \\
    &= l_{x,x,y} \circ (\tunit_x \times \catsk 0 {D(x,y)}) \\
    &= \cucatsk {D(x,y)} \circ \pi_2
      &
      \makebox[1cm][r]{\hfill (by unitality of~$\unit x$)}
      \\
    &= \funnyl_{D(x,y)} \circ \lincfun{\termcat,D(x,y)}
      \shortintertext{and}
      \tcomp_{x,x,y} \circ (\tunit_x \funny D(x,y)) \circ \rincfun{\termcat,D(x,y)}
    &=  \tcomp_{x,x,y} \circ \rincfun{D(x,x),D(x,y)} \circ (\catsk 0 {(\tunit_x)} \times D(x,y)) \\
    &
      & \makebox[1cm][r]{(by definition of~$\funny$)} \\
    &=  r_{x,x,y} \circ (\catsk 0 {(\tunit_x)} \times D(x,y)) \\
    &=  \pi_2
      &
      \makebox[1cm][r]{\hfill (by unitality of~$\unit x$)}
    \\
    &= \funnyl_{D(x,y)} \circ \rincfun{\termcat,D(x,y)}
  \end{align*}
  Thus, by the colimit definition of~$\termcat \funny D(x,y)$, the above
  triangle commutes. Similarly, the triangle
  \[
    \begin{tikzcd}[column sep=small]
      D(x,y) \funny \termcat \ar[rr,"{D(x,y) \funny \tunit_y}"] \ar[rd,"\funnyr_{D(x,y)}"'] & &
      D(x,y) \funny D(y,y) \ar[ld,"{\tcomp_{x,y,y}}"] \\
      &  D(x,y)
    \end{tikzcd}
  \]
  commutes, so that the composition morphism is right unital. We now verify that
  it is associative, \ie given~$w,x,y,z \in D_0$, that the diagram
  \begingroup
  \makeatletter
  \renewcommand{\maketag@@@}[1]{\hbox to 0.000008pt{\hss\m@th\normalsize\normalfont#1}}%
  \makeatother
  \begin{equation}
    \label{eq:precat-enr-assoc}
    \begin{tikzpicture}[commutative diagrams/every diagram,xscale=2.4,yscale=0.6]
      \node at (0:2.7cm) {};
      \node at (180:2.7cm) {};
      \node (PB) at (90+72:2cm) {$\mathmakebox[3cm][c]{\hspace*{1cm}(D(w,x) \funny D(x,y)) \funny D(y,z)}$};
      \node (PL) at (90:2cm) {$D(w,y) \funny D(y,z)$};
      \node (PR1) at (90+2*72:2cm) {$D(w,x) \funny (D(x,y) \funny D(y,z))$};
      \node (PR2) at (90+3*72:2cm) {$D(w,x) \funny D(x,z)$};
      \node (PE) at (90+4*72:2cm) {$D(w,z)$};
      \path[commutative diagrams/.cd,every arrow,every label]
      (PB) edge node {${c_{w,x,y} \funny D(y,z)}$} (PL)
      (PL) edge node {${c_{w,y,z}}$} (PE)
      (PB) edge node[swap,pos=0.3] {$\mathstrut\smash{\funnyass_{D(w,x),D(x,y),D(y,z)}}$} (PR1)
      (PR1) edge node[swap] {${D(w,x) \funny c_{x,y,z}}$} (PR2)
      (PR2) edge node[swap,pos=0.7] {$\mathstrut\smash{c_{w,x,z}}$} (PE);
    \end{tikzpicture}
  \end{equation}
  \endgroup
  commutes. By a colimit definition analogous to~\eqref{eq:funny-3-pushout}, it
  is enough to show the commutation of the diagram when precomposing with the
  morphisms~$\varpi_1,\varpi_2,\varpi_3$ where
  \begin{align*}
    \varpi_1 &= \lincfun{D(w,x)\funny D(x,y),D(y,z)} \circ (\lincfun{D(w,x),D(x,y)}
      \times \catsk 0 {D(y,z)}), \\
    \varpi_2 &= \lincfun{D(w,x)\funny D(x,y),D(y,z)} \circ (\rincfun{D(w,x),D(x,y)}
      \times \catsk 0 {D(y,z)}), \\
    \varpi_3 &= \rincfun{D(w,x)\funny D(x,y),D(y,z)}\zbox.
  \end{align*}
  Writing~$D^1,D^2,D^3$ for~$D(w,x),D(x,y),D(y,z)$, we compute that
  \begin{align*}
    &\phantom{\;=\;} \tcomp_{w,x,z} \circ (D^1\funny\tcomp_{x,y,z}) \circ \funnyass_{D^1,D^2,D^3} \circ \varpi_1 \\
    &= \tcomp_{w,x,z} \circ (D^1\funny\tcomp_{x,y,z}) \circ \funnyass_{D^1,D^2,D^3} \circ  \lincfun{D^1\funny D^2,D^3} \circ (\lincfun{D^1,D^2}
      \times \catsk 0 {D^3})\\
    &= \tcomp_{w,x,z} \circ (D^1\funny\tcomp_{x,y,z}) \circ \lincfun{D^1,D^2\funny D^3} \circ  \alpha_{D^1,\catsk 0 D^2,\catsk 0 D^3}\\
    &= \tcomp_{w,x,z} \circ \lincfun{D^1,D(x,z)} \circ (D^1\times ((-)\pcomp_0(-))) \circ  \alpha_{D^1,\catsk 0 D^2,\catsk 0 D^3}\\
    &= ((-)\pcomp_0(-)) \circ (D^1\times ((-)\pcomp_0(-))) \circ  \alpha_{D^1,\catsk 0 D^2,\catsk 0 D^3}\\
    &= ((-)\pcomp_0(-)) \circ (((-)\pcomp_0(-))\times \catsk 0 D^3) & \makebox[1cm][r]{(by associativity of~$\pcomp_0$)}\\
    &= \tcomp_{w,y,z} \circ \lincfun{D(w,y),D^3} \circ (((-)\pcomp_0(-))\times \catsk 0 D^3)\\
    &= \tcomp_{w,y,z} \circ \lincfun{D(w,y),D^3} \circ (\tcomp_{w,x,y}\times \catsk 0 D^3) \circ (\lincfun{D^1,D^2}
      \times \catsk 0 {D^3})\\
    &= \tcomp_{w,y,z} \circ (\tcomp_{w,x,y}\funny D^3) \circ \lincfun{D^1\funny D^2,D^3} \circ (\lincfun{D^1,D^2}
      \times \catsk 0 {D^3})\\
    &= \tcomp_{w,y,z} \circ (\tcomp_{w,x,y}\funny D^3) \circ \varpi_1
  \end{align*}
  so that the diagram~\eqref{eq:precat-enr-assoc} commutes when precomposed with~$\varpi_1$ and, similarly, it commutes when precomposed with~$\varpi_2$
  and~$\varpi_3$. Thus, \eqref{eq:precat-enr-assoc} commutes. Hence,~$D$ is a
  category enriched in $n$\precategories. The operation~$C \mapsto D$ can easily
  be extended to morphisms of $(n{+}1)$\precategories, giving a functor
  \[
    F\co \nPCat{n+1} \to \enrCatpar{\nPCat n}.
  \]
  \medskip \noindent Conversely, given~$C \in \enrCatpar{\nPCat n}$, we define
  an associated object~$D \in \nPCat {n+1}$. We put
  \[
    D_0 = C_0 \qtand D_{i+1} = \coprod_{x,y \in
      C_0}C(x,y)_i
  \]
  for~$i \in \set{0,\ldots,n}$. In the following, given $x,y \in C_0$, we write
  $\iota_{x,y} \co C(x,y)_i \to D_{i+1}$ for the canonical coprojection.
  Given~$k \in \N$ with~$k\le n$,~$\iota_{x,y}(u)\in D_{k+1}$ and~$\eps \in
  \set{-,+}$, we put
  \[
    \csrctgt\eps_k(\iota_{x,y}(u)) =
    \begin{cases}
      x & \text{if~$k = 0$ and~$\eps = -$,} \\
      y & \text{if~$k = 0$ and~$\eps = +$,} \\
      \iota_{x,y}(\csrctgt\eps_{k-1}(u)) & \text{if~$k > 0$,}
    \end{cases}
  \]
  so that the operations~$\csrc,\ctgt$ equips~$D$ with a structure of
  $(n{+}1)$\globular set. Given~$x \in D_0$, we put
  \[
    \unit {x} =
    \iota_{x,x}(\tunit_x(\ast))
  \]
  and, given~$k \in \N$ with~$k \le n-1$ and~$\iota_{x,y}(u) \in D_{k+1}$, we put
  \[
    \unitp
    {k+2}{\iota_{x,y}(u)} = \iota_{x,y}(\unitp {k+1}{u})\zbox.
  \]
  Given~$i,{k_1},{k_2} \in \set{0,\ldots,n}$ with~$i = \min({k_1},{k_2}) - 1$, and~$u =
  \iota_{x,y}(\tilde u) \in D_{k_1},v = \iota_{x',y'}(\tilde v) \in D_{k_2}$
  that are $i$\composable, we put
  \[
    u \pcomp_i v =
    \begin{cases}
      \iota_{x,y}({\tilde u} \pcomp_i {\tilde v}) & \text{if~$i > 0$} \\
      \iota_{x,y'}(l_{x,y,y'}({\tilde u},\unitp {k_1 - 1} {\tilde v})) & \text{if~$i = 0$ and~${k_2} = 1$} \\
      \iota_{x,y'}(r_{x,y,y'}(\unitp {k_2 - 1} {\tilde u},{\tilde v})) & \text{if~$i = 0$ and~${k_1} = 1$}
    \end{cases}
  \]
  where~$l_{x,y,z}$ is the composite
  \[
    C(x,y) \times \catsk 0 {C(y,z)} \xto{\lincfun{C(x,y),C(y,z)}} C(x,y) \funny C(y,z)
    \xto{\tcomp_{x,y,z}} C(x,z)
  \]
  and~$r_{x,y,z}$ is the composite
  \[
    \catsk 0 {C(x,y)} \times C(y,z) \xto{\rincfun{C(x,y),C(y,z)}} C(x,y) \funny C(y,z)
    \xto{\tcomp_{x,y,z}} C(x,z).
  \]
  We now have to show that the axioms of $(n{+}1)$\precategories are satisfied.
  Note that, by the definition of~$D$, it is enough to prove the axioms for
  the~$\unitp 1{}$ and~$\pcomp_0$ operations. Given~$x \in D_0$ and~$\eps \in
  \set{-,+}$, we have
  \[
    \csrctgt\eps_0(\unit x) = \csrctgt\eps_0(\iota_{x,x}(\tunit_x(\ast))) = x
  \]
  so that~\Axr{precat:src-tgt-unit} holds. For~$k \in \set{1,\ldots,n+1}$, given~$u =
  \iota_{x,y}(\tilde u) \in D_k$ and~$v = \iota_{y,z}({\tilde v}) \in D_1$ such
  that~$u,v$ are $0$\composable, if~$k = 1$, then
  \[
    \csrc_0(u \pcomp_0 v) = \csrc_0(\iota_{x,z}(l_{x,y,z}(\tilde u,{\tilde v}))) = x,
  \]
  and, similarly,~$\ctgt_0(u \pcomp_0 v) = z$. Otherwise, if~$k > 1$, then, for~$\eps \in \set{-,+}$,
  \begin{align*}
    \csrctgt\eps_{k-1}(u \pcomp_0 v)
    &= \csrctgt\eps_{k-1}(\iota_{x,z}(l_{x,y,z}(\tilde u,\unitp{k-1} {\tilde v}))) \\
    &= \iota_{x,z}(\csrctgt\eps_{k-2}(l_{x,y,z}(\tilde u,\unitp {k-1}{\tilde v}))) \\
    &= \iota_{x,z}(l_{x,y,z}(\csrctgt\eps_{k-2}(\tilde u),\unitp {k-2}{\tilde v})) \\
    &= \iota_{x,y}(\csrctgt\eps_{k-2}(\tilde u)) \pcomp_0 \iota_{y,z}(\tilde v) \\
    &= \csrctgt\eps_{k-1}(u) \pcomp_0 v.
  \end{align*}
  Analogous equalities are satisfied for $0$\composable~$u \in D_1$ and~$v \in
  D_k$, so that~\Axr{precat:csrc-tgt} holds. Given~$k \in \set{1,\ldots,n+1}$ and~$u =
  \iota_{x,y}(\tilde u) \in D_k$, we have
  \begin{align*}
    u \pcomp_0 \unit y
    &= \iota_{x,y}(l_{x,y,y}(\tilde u,\unitp {k-1} {\tunit_y(\ast)})) \\
    &= \iota_{x,y}(c_{x,y,y} \circ (C(x,y) \funny \tunit_y) \circ \lincfun{C(x,y),\termcat}(\tilde u,\unitp {k-1} {\ast})) \\
    &= \iota_{x,y}(\funnyr_{C(x,y)} \circ \lincfun{C(x,y),\termcat}(\tilde u,\unitp {k-1} {\ast}))
      & \makebox[4cm][r]{(by the axioms of enriched categories)}
    \\
    &= \iota_{x,y}(\pi_1(\tilde u,\unitp {k-1} {\ast}))
    & \makebox[4cm][r]{(by definition of~$\funnyr$)} \\
    &= u\zbox.
  \end{align*}
  Moreover, given~$k \in \set{1,\ldots,n}$ and $0$\composable~$u = \iota_{x,y}(\tilde u)
  \in D_1$ and~$v = \iota_{y,z}(\tilde v) \in D_k$, we have
  \begin{align*}
    u \pcomp_0 \unitp{k+1} v
    &= \iota_{x,z}(r_{x,y,z}(\unitp k {\tilde u},\unitp k {\tilde v})) \\
    &= \iota_{x,z}(\unitp k {} (r_{x,y,z}(\unitp {k-1} {\tilde u},\tilde v))) \\
    &= \unitp {k+1}{}(\iota_{x,z}(r_{x,y,z}(\unitp {k-1} {\tilde u},\tilde v))) \\
    &= \unitp {k+1} {u \pcomp_0 v}\zbox.
  \end{align*}
  Analogous equalities hold when composing with identities on the left, so
  that~\Axr{precat:compat-id-comp} holds. Given~$k \in \set{1,\ldots,n+1}$ and
  $0$\composable~$u_1 = \iota_{w,x}(\tilde u_1) \in D_k$,~$u_2 =
  \iota_{x,y}(\tilde u_2) \in D_1$ and~$u_3 = \iota_{y,z}(\tilde u_3) \in D_1$,
  we have
  \begin{align*}
    (u_1 \pcomp_0 u_2) \pcomp_0 u_3
    &= \iota_{w,z}(l_{w,y,z}(l_{w,x,y}(\tilde u_1,\unitp{k-1}{\tilde u_2}),\unitp {k-1}{\tilde u_3}))\zbox.
  \end{align*}
  Writing~$C^1,C^2,C^3$ for~$C(w,x),C(x,y),C(y,z)$, we compute that
  \begin{align*}
    &\phantom{\;=\;}l_{w,y,z} \circ (l_{w,x,y} \times \catsk 0 {C^3}) \\
    &= \tcomp_{w,y,z} \circ \lincfun{C(w,y),C^3} \circ (\tcomp_{w,x,y} \times \catsk 0 {C^3}) \circ (\lincfun{C^1,C^2} \times \catsk 0 {C^3}) \\
    &= \tcomp_{w,y,z} \circ (\tcomp_{w,x,y} \funny C^3) \circ \lincfun{C^1\funny C^2,C^3} \circ (\lincfun{C^1,C^2} \times \catsk 0 {C^3})
      \shortintertextnobreakabove{\hfill(by definition of~$\funny$)}
    &= \tcomp_{w,x,z} \circ (C^1 \funny \tcomp_{x,y,z}) \circ \funnyass_{C^1,C^2,C^3} \circ \lincfun{C^1\funny C^2,C^3} \circ (\lincfun{C^1,C^2} \times \catsk 0 {C^3})
    \shortintertextnobreakabove{\hfill(by the axioms of enriched categories)}
    &= \tcomp_{w,x,z} \circ (C^1 \funny \tcomp_{x,y,z}) \circ \lincfun{C^1,C^2 \funny C^3} \circ \alpha_{C^1,\catsk 0 C^2,\catsk 0 C^3}
      \shortintertextnobreakabove{\hfill(by definition of~$\funnyass$)}
    &= \tcomp_{w,x,z} \circ \lincfun{C^1,C(x,z)} \circ (C^1 \times \catsk 0 {(\tcomp_{x,y,z})}) \circ \alpha_{C^1,\catsk 0 C^2,\catsk 0 C^3} \\
    &= l_{w,x,z} \circ (C^1 \times \catsk 0 {(l_{x,y,z})}) \circ \alpha_{C^1,\catsk 0 C^2,\catsk 0 C^3}\zbox.
  \end{align*}
  Thus,
  \begin{align*}
    (u_1 \pcomp_0 u_2) \pcomp_0 u_3
    &= \iota_{w,z}(l_{w,x,z}(\tilde u_1,\catsk 0 {(l_{x,y,z})}(\unitp{k-1}{\tilde u_2},\unitp{k-1}{\tilde u_3}))) \\
    &= \iota_{w,z}(l_{w,x,z}(\tilde u_1,\unitp{k-1}{\catsk 0 {(l_{x,y,z})}({\tilde u_2},{\tilde u_3})})) \\
    &= u_1 \pcomp_0 \iota_{x,z}(\catsk 0 {(l_{x,y,z})}({\tilde u_2},{\tilde u_3}))\\
    &= u_1 \pcomp_0 \iota_{x,z}(l_{x,y,z}({\tilde u_2},{\tilde u_3}))\\
    &= u_1 \pcomp_0 (u_2 \pcomp_0 u_3)
  \end{align*}
  and similar equalities can be shown for~$(u_1,u_2,u_3) \in (D_1 \times_0 D_k
  \times_0 D_1) \sqcup (D_1 \times_0 D_1 \times_0 D_k)$, so that
  \Axr{precat:assoc} holds. Finally, for~$i,k_1,k_2,k \in \set{1,\ldots,n+1}$ such that~$i = \min(k_1,k_2) - 1$,~$k = \max(k_1,k_2)$, given~$u = \iota_{x,y}(\tilde u)
  \in D_1$ and $i$\composable~$v_1 = \iota_{y,z}(\tilde v_1) \in D_{k_1}, v_2 =
  \iota_{y,z}(\tilde v_2) \in D_{k_2}$, we have
  \begin{align*}
    u \pcomp_0 (v_1 \pcomp_i v_2) 
    &= u \pcomp_0 \iota_{y,z}(\tilde v_1 \pcomp_{i-1} \tilde v_2) \\
    &= \iota_{x,z}(r_{x,y,z}(\unitp {k-1} u,\tilde v_1 \pcomp_{i-1} \tilde v_2)) \\
    &= \iota_{x,z}(r_{x,y,z}(\unitp {k_1-1} u \pcomp_{i-1} \unitp {k_2-1} u,\tilde v_1 \pcomp_{i-1} \tilde v_2)) \\
    &= \iota_{x,z}(r_{x,y,z}(\unitp {k_1-1} u,\tilde v_1) \pcomp_{i-1}r_{x,y,z}(\unitp {k_2-1} u,\tilde v_2)) \\
    &= \iota_{x,z}(r_{x,y,z}(\unitp {k_1-1} u,\tilde v_1)) \pcomp_{i} \iota_{x,z}(r_{x,y,z}(\unitp {k_2-1} u,\tilde v_2)) \\
    &= (u \pcomp_0 v_1) \pcomp_{i} (u \pcomp_0 v_2)
  \end{align*}
  and an analogous equality can be shown for~$((u_1,u_2),v) \in ((D_{k_1}
  \times_i D_{k_2}) \times_0 D_1)$, so that \Axr{precat:distrib} holds.
  Hence,~$D$ is an $(n{+}1)$\precategory. The construction~$C \mapsto D$ extends
  naturally to enriched functors, giving a functor~$G\co \enrCatpar{\nPCat n} \to
  \nPCat{n+1}$.

  Given~$C \in \nPCat {n+1}$ and~$C' = G \circ F(C)$, there is a
  morphism~$\alpha_C \co C \to C'$ which is the identity between~$C_0$
  and~$C'_0$ and, for~$k \in \N$ with~$k\le n$, maps~$u \in C_{k+1}$
  to~$\iota_{x,y}(u)$ where~$x = \csrc_0(u)$ and~$y = \ctgt_0(u)$, and one can
  verify that it is an isomorphism which is natural in~$C$.

  Conversely, given~$C \in \enrCatpar{\nPCat n}$ and~$C' = F \circ G (C)$, there
  is a morphism~$\beta\co C \to C'$ which is the identity between~$C_0$ and~$C'_0$, and, for~$x,y \in C_0$, maps~$u \in C(x,y)$ to~$\iota_{x,y}(u) \in
  C'(x,y)$, and one can verify that it is an isomorphism which is natural
  in~$C$. Hence,~$F$ is an equivalence of categories. 
\end{proof}

\section{Gray presentations induce Gray categories}
\label{sec:gray-pres-gray-cat}

Until the end of this section, we
suppose fixed a Gray presentation~$\P$. Our goal is to prove
\Thmr{gray-pres-gray-cat}, \ie that $\prespcat\P$
is a lax Gray category. We start by the exchange law for $3$-cells that we prove
first on rewriting steps:
\begin{lemapp}
  \label{lem:prespcat-peiffer-ctxt}
  Given rewriting steps $R_i\co \phi_i \TO \phi'_i \in \freecat\P_3$ for $i \in
  \set{1,2}$, such that $R_1,R_2$ are $1$\composable, we have, in
  $\prespcat\P_3$,
  \[
    (R_1 \comp_1  \phi_2) \comp_2 (\phi_1' \comp_1  R_2) = (\phi_1 \comp_1
    R_2) \comp_2 (R_1 \comp_1 \phi_2').
  \]
\end{lemapp}
\begin{proof}
  Let $l_i,r_i \in \prespcat\P_1$, $\lambda_i,\rho_i \in \prespcat\P_2$, $A_i \in
  \P_3$ such that $R_i = \lambda_i \comp_0 (l_i \comp_0 A_i \comp_0 r_i) \comp_i
  \rho_i$ for $i \in \set{1,2}$, and $\mu_i,\mu'_i\in \prespcat\P_2$ such that
  $A_i\co \mu_i \TO \mu_i'$ for $i \in \set{1,2}$. In $\prespcat\P_3$, we have
  \begin{align*}
& (R_1 \comp_1  \phi_2) \comp_2 (\phi_1' \comp_1  R_2) \\
    =\;&\lambda_1  \\
    & \comp_1 [((l_1 \comp_0 A_1 \comp_0 r_1) \comp_1 \rho_1 \comp_1 \lambda_2 \comp_1 ( l_2 \comp_0 \mu_2 \comp_0 r_2) ) \\
    & \phantom{{}\comp_1{}}\comp_2 ((l_1 \comp_0 \mu'_1 \comp_0 r_1) \comp_1 \rho_1 \comp_1 \lambda_2 \comp_1 ( l_2 \comp_0 A_2 \comp_0 r_2 ))] \\
    & \comp_1 \rho_2 && \text{(by the axioms of precategories)}\\
    =\;&\lambda_1  \\
    & \comp_1 [((l_1 \comp_0 \mu_1 \comp_0 r_1) \comp_1 \rho_1 \comp_1 \lambda_2 \comp_1 ( l_2 \comp_0 A_2 \comp_0 r_2 ) ) \\
    & \phantom{{}\comp_1{}}\comp_2 ((l_1 \comp_0 A_1 \comp_0 r_1) \comp_1 \rho_1 \comp_1 \lambda_2 \comp_1 ( l_2 \comp_0 \mu_2' \comp_0 r_2))] \\
    & \comp_1 \rho_2 && \text{(by independence generator)}\\
    &= (\phi_1 \comp_1  R_2) \comp_2 (R_1 \comp_1  \phi_2') &&\qedhere
  \end{align*}
\end{proof}
\noindent We can now conclude that the exchange law for $3$-cells holds:
\begin{lemapp}
  \label{lem:prespcat-peiffer}
  Given $F_i\co \phi_i \TO \phi_i' \in \prespcat\P_3$ for $i \in \set{1,2}$
  such that $F_1,F_2$ are $1$\composable,
  we have, in $\prespcat\P_3$,
  \[
    (F_1 \comp_1  \phi_2) \comp_2 (\phi_1' \comp_1  F_2) = (\phi_1 \comp_1
    F_2) \comp_2 (F_1 \comp_1 \phi_2').
  \]
\end{lemapp}
\begin{proof}
  As an element of $\prespcat\P_3$, $F_i$ can be written $F_i = R_{i,1} \comp_2
  \cdots \comp_2 R_{i,k_i}$ where
  \[
    R_{i,j} = \lambda_{i,j} \comp_1 (l_{i,j}
    \comp_0 A_{i,j} \comp_0 r_{i,j}) \comp_1 \rho_{i,j} 
  \]
  for some
  $k_{i} \in \N$, $\lambda_{i,j},\rho_{i,j} \in \prespcat\P_2$, $l_{i,j},r_{i,j}
  \in \prespcat\P_1$, $A_{i,j} \in \P_3$ for $1 \le j
  \le k_i$, for $i \in \set{1,2}$.
  Note that
  \[
    F_1 \comp_1 \phi_2 = (R_{1,1} \comp_1 \phi_2) \comp_2 \cdots \comp_2
    (R_{1,k_1} \comp_1 \phi_2)
  \] 
  and
  \[
    \phi_1' \comp_1 F_2 = (\phi_1' \comp_1 R_{2,1}) \comp_2 \cdots \comp_2
    (\phi_1' \comp_1 R_{2,k_2}).
  \]
  Then, by using \Lemr{prespcat-peiffer-ctxt}
  $k_1 k_2$ times as expected to reorder the $R_{1,j_1}$'s after the
  $R_{2,j_2}$'s for $1 \le j_i \le k_i$ for $i \in \set{1,2}$,
  we obtain that
  \[
    (F_1 \comp_1  \phi_2) \comp_2 (\phi_1' \comp_1  F_2) = (\phi_1 \comp_1
    F_2) \comp_2 (F_1 \comp_1 \phi_2').
    \qedhere
  \]
\end{proof}
\noindent We now prove the various conditions on~$X_{-,-}$. First, a technical lemma:
\begin{propapp}
  \label{prop:compat-Y-one-cells}
  Given $f\in \freecat\P_1$, $\phi,\psi \in \freecat\P_2$ with $f,\phi,\psi$
  $0$-composable, there is a canonical isomorphism $(f \comp_0 \phi) \shuffle
  \psi \cong \phi \shuffle \psi$ and for all $p \in \freecat{(\phi \shuffle
    \psi)}_1$, we have
  \[
    \winterp{p}_{f \comp_0 \phi,\psi} = f \comp_0 \winterp{p}_{\phi,\psi}
  \]
  Similarly, given $\phi, \psi \in \freecat\P_2$ and $h\in \freecat\P_1$ with
  $\phi,\psi,h$ $0$-composable, we have a canonical isomorphism $\phi \shuffle
  (\psi \comp_0 h) \cong \phi \shuffle \psi$ and for all $p \in \freecat{(\phi
    \shuffle (\psi \comp_0 h))}_1$, we have
  \[
    \winterp{p}_{\phi,\psi \comp_0 h} = \winterp{p}_{\phi,\psi} \comp_0 h.
  \]
  Finally, given $\phi,\psi \in \freecat\P_2$ and $g \in \freecat\P_1$ with
  $\phi,g,\psi$ $0$-composable, we have a canonical isomorphism $(\phi \comp_0
  g) \shuffle \psi \cong \phi \shuffle (g \comp_0 \psi)$ and for all $p \in
  \freecat {((\phi \comp_0 g) \shuffle \psi)}_1$, we have
  \[
    \winterp{p}_{\phi \comp_0 g,\psi} = \winterp{p}_{\phi,g \comp_0 \psi}.
  \]
\end{propapp}
\begin{proof}
  Let $f\in \freecat\P_1$, $\phi,\psi \in \freecat\P_2$ with $f,\phi,\psi$
  $0$-composable and let $r,s \ge 0$, $f_i,g_i \in \freecat\P_1$, $\alpha_i \in
  \P_2$ for $i \in \set{1,\ldots,r}$ and $f'_j,g'_j \in \freecat\P_1$,
  $\alpha'_j \in \P_2$ for $j \in \set{1,\ldots,s}$ such that
  \begin{align*}
    \phi &= (f_1 \comp_0 \alpha_1 \comp_0 g_1) \comp_1 \cdots \comp_1 (f_r
    \comp_0 \alpha_r \comp_0 g_r)
    \\
    \shortintertext{and}
    \psi &= (f'_1 \comp_0 \alpha'_1 \comp_0 g'_1) \comp_1 \cdots \comp_1 (f'_r
    \comp_0 \alpha'_r \comp_0 g'_r).
  \end{align*}
  By contemplating the definitions of $(f \comp_0 \phi) \shuffle \psi$ and $\phi
  \shuffle \psi$, we deduce a canonical isomorphism between them. Under this
  isomorphism, we easily verify that we have $\winterp{w}_{f \comp_0 \phi,\psi}
  = f \comp_0 \winterp{w}_{\phi,\psi}$ for $w \in ((f \comp_0 \phi) \shuffle
  \psi)_0$. Now, given $u \letter l_i \letter r_j v \in ((f\comp_0 \phi)
  \shuffle \psi)_0$, we have
  \begin{align*}
  \winterp{\wtrans_{u,v}}_{f \comp_0 \phi,\psi} &= \winterp{u}_{f \comp_0
    \phi,\psi} \comp_1 (f \comp_0 f_i \comp_0 X_{\alpha_i,g_i \comp_0
    f_j,\alpha'_j} \comp_0 g_j) \comp_1 \winterp{v}_{f \comp_0 \phi,\psi} \\
    &= f \comp_0 (\winterp{u}_{
    \phi,\psi} \comp_1 (f_i \comp_0 X_{\alpha_i,g_i \comp_0
      f_j,\alpha'_j} \comp_0 g_j) \comp_1 \winterp{v}_{\phi,\psi} ) \\
    &= f \comp_0 \winterp{\wtrans_{u,v}}_{\phi,\psi}.
  \end{align*}
  By functoriality of $\winterp{-}_{f \comp_0 \phi,\psi}$ and
  $\winterp{-}_{\phi,\psi}$, we deduce that, for all $p \in \freecat{(f \comp_0
    \phi) \shuffle \psi}$,
  \[
    \winterp{p}_{f \comp_0 \phi,\psi} = f \comp_0 \winterp{p}_{\phi,\psi}.
  \]
  The two other properties are shown similarly.
\end{proof}
\noindent We can now conclude the most simple properties of~$X_{-,-}$:
\begin{lemapp}
  \label{lem:compat-X-one-cells}
  Given $\phi \co f \To f'\in \prespcat\P_2$ and $\psi\co g \To g' \in
  \prespcat\P_2$, we have the following equalities in~$\prespcat\P_3$:
  \begin{enumerate}[label=(\roman*),ref=(\roman*)]
  \item \label{lem:compat-X-one-cells:units} $X_{\unit f,\psi} = \unit {f \comp_0 \psi}$ and $X_{\phi, \unit g} =
    \unit{\phi \comp_0 g}$ when $\phi,\psi$ are $0$\composable,
    
  \item \label{lem:compat-X-one-cells:left} $X_{l \comp_0 \phi,\psi} = l \comp_0 X_{\phi,\psi}$ for $l \in
    \freecat\P_1$ such that $l,\phi,\psi$ are $0$\composable,
  \item \label{lem:compat-X-one-cells:middle} $X_{\phi \comp_0 m,\psi} = X_{\phi, m \comp_0 \psi}$ for $m \in
    \freecat\P_1$ such that $\phi,m,\psi$ are $0$\composable,
  \item \label{lem:compat-X-one-cells:right} $X_{\phi,\psi \comp_0 r} =
    X_{\phi,\psi} \comp_0 r$ for $r \in \freecat\P_1$ such that $\phi,\psi,r$
    are $0$\composable.
  \end{enumerate}
\end{lemapp}
\begin{proof}
  \ref{lem:compat-X-one-cells:units} is clear, since both $\wtrans_{\unit
    f,\psi}$ and $\wtrans_{\phi,\unit g}$ are identity paths on the unique
  $0$-cells of~$\freecat{(\unit f\shuffle\psi)}$
  and~$\freecat{(\phi\shuffle\unit g)}$ respectively.
  \ref{lem:compat-X-one-cells:left} is a consequence of
  \Propr{compat-Y-one-cells}, since $\wtrans_{f\comp_0\phi,\psi}$ is sent to
  $\wtrans_{\phi,\psi}$ by the canonical isomorphism $(f \comp_0\phi) \shuffle
  \psi \cong \phi \shuffle \psi$. \ref{lem:compat-X-one-cells:middle}
  and~\ref{lem:compat-X-one-cells:right} follow similarly.
\end{proof}
The last required properties on~$X_{-,-}$ are more difficult to prove. In fact,
we need a proper coherence theorem showing that, for $0$\composable $\phi,\psi
\in \prespcat\P_2$, $X_{\phi,\psi} = \winterp{p}_{\phi,\psi}$ for all $p \in
\freecat{(\phi\shuffle\psi)}_1$ parallel to $\wtrans_{\phi,\psi}$. We
progressively introduce the necessary material to prove this fact below.

Given a
word $w \in (\phi \shuffle \psi)_0$, there is a function
\[
  \lindex_w\co \set{1,\ldots,\len\phi} \to \set{1,\ldots,\len{\phi} + \len{\psi}}
\]
defined such that, for $i \in \set{1,\ldots,\len{\phi}}$, if $w = w'\letter l_i
w''$, then $\lindex_w(i) = \len{w'}+1$. We have that the function~$\lindex$
characterizes the existence of path in $\freecat{(\phi\shuffle\psi)}$, as in:
\begin{lemapp}
  \label{lem:s-path-criterion}
  Given $0$\composable $\phi,\psi \in \freecat\P_2$ and $w,w' \in (\phi \shuffle
  \psi)_0$, there is a path
  \[
    p\co w \to w' \in \freecat{(\phi \shuffle \psi)}_1
  \]
  if and only if $\lindex_w(i) \le \lindex_{w'}(i)$ for $1 \le i \le
  \len{\phi}$.
\end{lemapp}
\begin{proof}
  Given $\wtrans_{u,v}\co u \letter l_r \letter r_s v \to u \letter r_s \letter
  l_r v \in (\phi \shuffle \psi)_1$, it is clear that $\lindex_{u \letter l_r
    \letter r_s v}(i) \le \lindex_{u \letter r_s \letter l_r v}(i)$ for all $1
  \le i \le \len\phi$, so that, given a path $p\co w \to w' \in \freecat{(\phi
    \shuffle \psi)}_1$, by induction on~$p$, we have $\lindex_w(i) \le
  \lindex_{w'}(i)$ for $1 \le i \le \len{\phi}$.

  Conversely, given $w,w' \in (\phi \shuffle \psi)_0$ such that $\lindex_w \le
  \lindex_{w'}$, we show by induction on~$N(w,w')$ defined by
  \[
    N(w,w') = \sum_{1 \le i \le
      \len\phi}\lindex_{w'}(i) - \lindex_w(i)
  \]
  that there is a path $p\co w \to w' \in \freecat{(\phi \shuffle \psi)}_1$. If
  $N(w,w') = 0$, then $w = w'$ and $\id w\co w \to w'$ is a suitable path.
  Otherwise, let $i_{\max}$ be the largest $i \le \len{\phi}$ such that
  $\lindex_{w'}(i) > \lindex_w(i)$. Then, either $i_{\max} = \len{\phi}$ or
  $\lindex_w(i_{\max}) + 1 < \lindex_w(i_{\max} + 1)$ since 
  \begin{align*}
    \lindex_w(i_{\max}) + 1 &\le \lindex_{w'}(i_{\max}) \\
    & < \lindex_{w'}(i_{\max} + 1) \\
                            &= \lindex_w(i_{\max} + 1)
  \end{align*}
  So we can write $w = u \letter l_{i_{\max}} \letter r_j v$ for some words
  $u,v$ and $j \in \set{1,\ldots,\len{\psi}}$. We have a path generator
  $\wtrans_{u,v}\co w \to \tilde w \in (\phi \shuffle \psi)_1$ where $\tilde w =
  u \letter r_j \letter l_{i_{\max}} v$. Then,
  \[
    \lindex_{\tilde w}(i) =
    \begin{cases}
      \lindex_w(i) & \text{if $i \neq i_{\max}$} \\
      \lindex_w(i_{\max}) + 1 & \text{if $i = i_{\max}$}
    \end{cases}
  \]
  so $\lindex{\tilde w} \le \lindex {w'}$ and $N(\tilde w,w') < N(w,w')$. Thus,
  by induction, we get 
  \[
    p'\co \tilde w \to w' \in \freecat{(\phi \shuffle
      \psi)}_1
  \]
  and we build a path $\wtrans_{u,v} \comp_0 p'\co w \to w' \in
  \freecat{(\phi \shuffle \psi)}_1$ as wanted.
\end{proof}
\noindent Given $0$\composable $\phi,\psi \in \freecat\P_2$ and $w = w_1\ldots w_{\len\phi+\len\psi} \in (\phi \shuffle \psi)_0$, we define
$\Inv(w)$ as
\begin{multline*}
  \Inv(w) = \setsize{\set{ (i,j) \mid 1 \le i < j \le \len\phi + \len\psi \text{ and } w_i = \letter r_{i'} \text{ and }
    w_j = \letter l_{j'} \\
    \text{ for some $i' \in \set{1,\ldots,\len\psi}$ and $j'
      \in \set{1,\ldots,\len\phi}$}}}.
\end{multline*}
\noindent We have that $\Inv$ characterizes the length of the paths
of~$\freecat{(\phi\shuffle\psi)}$, as in:
\begin{lemapp}
  \label{lem:s-path-length}
  Given $0$\composable $\phi,\psi \in \freecat\P_2$ and $p\co w \to w' \in \freecat{(\phi
  \shuffle \psi)}_1$, we have 
  \[
    \len{p} = \Inv(w') - \Inv(w). 
  \]
  In particular, given $w,w' \in (\phi \shuffle \psi)_0$, all the paths $p\co w
  \to w' \in \freecat{(\phi \shuffle \psi)}_1$ have the same length.
\end{lemapp}
\begin{proof}
  We show this by induction on the length of~$p$. If $p = \unit w$, then the
  conclusion holds. Otherwise, $p = \wtrans_{u,u'}\comp_0 r$ for some $u,u' \in
  \Sigma_{\phi,\psi}$ and $r\co \tilde w \to w' \in
  \freecat{(\phi\shuffle\psi)}_1$. Then, by induction hypothesis, $\len{r} =
  \Inv(w') - \Inv(\tilde w)$. Note that, by the definition of~$\wtrans_{u,u'}$,
  $w = u \letter l_i \letter r_j u'$ and $\tilde w = u \letter r_j\letter l_i r$
  for some $i \in \set{1,\ldots,\len\phi}$ and $j \in \set{1,\ldots,\len\psi}$.
  Hence,
  \[
    \len{p} = \len{r} + 1 = \Inv(w') - \Inv(\tilde w) + \Inv(\tilde w) -
    \Inv(w) = \Inv(w') - \Inv(w).\qedhere
  \]
\end{proof}
\noindent Given $0$\composable $\phi,\psi \in\freecat\P_2$, we now prove the
following coherence property for~$\freecat{(\phi\shuffle\psi)}$:
\begin{lemapp}
  \label{lem:s-relation}
  Let $\approx$ be a congruence on $\freecat{(\phi\shuffle\psi)}$. Suppose that, for all
  words $u_1,u_2,u_3 \in \Sigma_{\phi,\psi}$, $i,i' \in \set{1,\ldots,\len\phi}$
  and $j,j' \in \set{1,\ldots,\len\psi}$ such that $u_1 \letter l_{i}\letter
  r_{j} u_2 \letter l_{i'}\letter r_{j'} u_3 \in (\phi \shuffle \psi)_0$, we
  have
  \[
    \begin{tikzcd}[column sep={5em,between origins},cramped]
      & u_1 \letter l_{i}\letter r_{j} u_2 \letter l_{i'}\letter r_{j'} u_3
      \arrow[dl,pos=0.6,"\wtrans_{u_1,u_2 \letter l_{i'}\letter r_{j'} u_3}"']
      \arrow[dr,pos=0.6,"\wtrans_{u_1 \letter l_{i}\letter r_{j} u_2,u_3}"] & \\
      \makebox[6ex][c]{$u_1 {\letter r_{j}\letter l_{i}} u_2 \letter l_{i'}\letter r_{j'}
        u_3$}
      \arrow[dr,pos=0.3,"\wtrans_{u_1 \letter r_{j}\letter l_{i} u_2,u_3}"']
      & \approx & 
      \makebox[6ex][c]{$u_1 \letter l_{i}\letter r_{j} u_2\letter
        r_{j'}\letter l_{i'} u_3$}
      \arrow[dl,pos=0.3,"\wtrans_{u_1,u_2\letter r_{j'}\letter l_{i'} u_3}"]
      \\
      & u_1 \letter r_{j}\letter l_{i} u_2 \letter r_{j'}\letter l_{i'} u_3 &
    \end{tikzcd}
  \]%
  then, for all $p_1,p_2\co v \to w \in \freecat{(\phi\shuffle\psi)}_1$, we
  have $p_1 \approx p_2$.
\end{lemapp}
\begin{proof}
  We prove
  this by induction on~$\len{p_1}$. By \Lemr{s-path-length}, we have $\len{p_1}
  = \len{p_2}$. In particular, if $p_1 = \unit v$, then $p_2 = \unit v$.
  Otherwise, $p_i = q_i \comp_0 r_i$ with $q_i\co v \to v_i$ and $r_i\co v_i \to
  w$ and $\len{q_i} = 1$ for $i \in \set{1,2}$. If $q_1 = q_2$, then we conclude
  with the induction hypothesis on $r_1$ and $r_2$. Otherwise, up to symmetry,
  we have $q_1 = \wtrans_{u_1,u_2 \letter l_{i'}\letter r_{j'} u_3}$ and
  $q_2=\wtrans_{u_1 \letter{l}_i\letter{r}_j u_2, u_3}$ for some $u_1,u_2,u_3
  \in \freecat\Sigma_{\phi,\psi}$, $i,i' \in \set{1,\ldots,\len\phi}$ and
  $j,j' \in \set{1,\ldots,\len\psi}$. Let
  \begin{align*}
    q_1' &= \wtrans_{u_1\letter r_j \letter l_i u_2,u_3}, &
    q_2' &=
    \wtrans_{u_1, u_2\letter r_{j'}\letter l_{i'} u_3}, &
    v' &=u_1\letter
    r_{j}\letter l_i u_2 \letter r_{j'}\letter l_{i'}u_3.
  \end{align*}
  Since we have a path $v
  \xto{q_1} v_1 \xto{r_1} w$, by \Lemr{s-path-criterion}, we have $\lindex_v(s)
  \le \lindex_w(s)$ for $s \in \set{1,\ldots,\len\phi}$. Moreover,
  \[
    \lindex_v(i) < \lindex_{v_1}(i) \le \lindex_w(i) \qtand 
    \lindex_v(i') < \lindex_{v_2}(i') \le \lindex_w(i').
  \]
  Also, for $s \in \set{1,\ldots,\len\phi}$,
  \[
    \lindex_{v'}(s) =
    \begin{cases}
      \lindex_v(s) + 1 & \text{if $s \in \set{i,i'}$,} \\
      \lindex_v(s) & \text{otherwise.}
    \end{cases}
  \]
  From the preceding properties, we deduce that $\lindex_{v'}(s) \le
  \lindex_w(s)$ for $s \in \set{1,\ldots,\len\phi}$. Thus, by
  \Lemr{s-path-criterion}, there is a path $r'\co v' \to w\in \freecat{(\phi\shuffle\psi)}_1$ as in
  \[
    \begin{tikzcd}[column sep={4em,between origins}]
      & v_1
      \ar[dr,"{q_1'}"{description}]
      \ar[drrr,"r_1"]
      & &
      \\
      v 
      \ar[ur,"q_1"]
      \ar[dr,"q_2"'] 
      &
      &
      {v'}
      \ar[rr,"{r'}",pos=0.3]
      & & w
      \\
      & v_2 
      \ar[ur,"{q_2'}"'{description}]
      \ar[urrr,"{r_2}"']
    \end{tikzcd}
  \]
  Since $\len{r_i} = \len{p_i} - 1$ for $i \in \set{1,2}$, by induction
  hypothesis, we have $r_i \approx q_i' \comp_0 r'$ for $i \in \set{1,2}$, which
  can be extended to $q_i \comp_0 r_i \approx q_i \comp_0 q_i' \comp_0 r'$,
  since $\approx$ is a congruence. By hypothesis, we have $q_1 \comp_0 q_1'
  \approx q_2 \comp_0 q_2'$, which can be extended to $q_1 \comp_0 q_1' \comp_0
  r' \approx q_2 \comp_0 q_2' \comp_0 r'$. By transitivity of~$\approx$, we get
  that $q_1 \comp_0 r_1 \approx q_2 \comp_0 r_2$, that is, $p_1 \approx p_2$.
\end{proof}
\noindent We then apply this coherence property to~$\winterp{-}_{-,-}$ and get
that ``all exchange methods are equivalent'', as in:
\begin{propapp}
  \label{prop:interchange-coherence}
  Given $0$\composable $\phi,\psi \in \prespcat\P_2$, for all $p_1,p_2\co u \to v \in
  \freecat{(\phi\shuffle\psi)}_1$, we have, in $\prespcat\P_3$,
  \[
    \winterp{p_1}_{\phi,\psi} = \winterp{p_2}_{\phi,\psi}.
  \]
\end{propapp}
\begin{proof}
  By \Lemr{prespcat-peiffer}, for all words $u_1,u_2,u_3 \in
  \Sigma_{\phi,\psi}$, $i,i' \in \set{1,\ldots,\len\phi}$ and $j,j' \in
  \set{1,\ldots,\len\psi}$ such that $u_1 \letter l_{i}\letter r_{j} u_2 \letter
  l_{i'}\letter r_{j'} u_3 \in (\phi \shuffle \psi)_0$, we have
  \[
    \begin{tikzcd}[column sep={6em,between origins},cramped]
      & \winterp{u_1 \letter l_{i}\letter r_{j} u_2 \letter l_{i'}\letter r_{j'} u_3}_{\phi,\psi}
      \arrow[dl,pos=0.6,"\winterp{\wtrans_{u_1,u_2 \letter l_{i'}\letter r_{j'} u_3}}_{\phi,\psi}"']
      \arrow[dr,pos=0.6,"\winterp{\wtrans_{u_1 \letter l_{i}\letter r_{j} u_2,u_3}}_{\phi,\psi}"] & \\
      \winterp{u_1 {\letter r_{j}\letter l_{i}} u_2 \letter l_{i'}\letter r_{j'}
        u_3}_{\phi,\psi}
      \arrow[dr,pos=0.3,"\winterp{\wtrans_{u_1 \letter r_{j}\letter l_{i} u_2,u_3}}_{\phi,\psi}"']
      & = & 
      \winterp{u_1 \letter l_{i}\letter r_{j} u_2\letter
        r_{j'}\letter l_{i'} u_3}_{\phi,\psi}
      \arrow[dl,pos=0.3,"\winterp{\wtrans_{u_1,u_2\letter r_{j'}\letter l_{i'} u_3}}_{\phi,\psi}"]
      \\
      & \winterp{u_1 \letter r_{j}\letter l_{i} u_2 \letter r_{j'}\letter l_{i'} u_3}_{\phi,\psi} &
    \end{tikzcd}
  \]
  Moreover, the relation $\approx$ defined on parallel $p_1,p_2 \in
  \freecat{(\phi \shuffle \psi)}_1$ by $p_1 \approx p_2$ when
  $\winterp{p_1}_{\phi,\psi} = \winterp{p_2}_{\phi,\psi}$ is clearly a
  congruence. Hence, by \Lemr{s-relation}, we have that
  $\winterp{p_1}_{\phi,\psi} = \winterp{p_2}_{\phi,\psi}$ for all parallel
  $p_1,p_2 \in \freecat{(\phi\shuffle\psi)}_1$.
\end{proof}
\noindent The preceding property says in particular that $X_{\phi,\psi} =
\winterp{p}_{\phi,\psi}$ for all $0$\composable $\phi,\psi \in \freecat\P_2$
and paths $p\in \freecat{(\phi\shuffle\psi)}_1$ parallel
to~$\wtrans_{\phi,\psi}$. 

Let $\phi,\psi \in \freecat\P_2$ be $0$\composable $2$-cells, and $\phi',\psi'
\in \freecat\P_2$ be $0$\composable $2$-cells such that $\phi,\phi'$ and
$\psi,\psi'$ are $1$\composable. To obtain the last required properties
on~$X_{-,-}$, we need to relate $\phi\shuffle\psi$ and $\phi'\shuffle\psi'$ to
$(\phi\comp_1\phi')\shuffle(\psi\comp_1\psi')$. Given $w \in (\phi \shuffle
\psi)_0$, there is a functor
\[
  w\wcomp(-)\co \freecat{(\phi'\shuffle\psi')} \to
  \freecat{((\phi\comp_1\phi')\shuffle(\psi\comp_1\psi'))}
\]
which is uniquely defined by the mappings
\begin{align*}
  u & \mapsto w\wshiftup (u) \\
  \wtrans_{u_1,u_2} & \mapsto X_{w\wshiftup(u_1),\wshiftup(u_2)}
\end{align*}
for $u \in (\phi' \shuffle \psi')_0$ and $\wtrans_{u_1,u_2} \in (\phi' \shuffle
\psi')_1$ and where, for $v = v_1\ldots v_k \in \freecat\Sigma_{\phi',\psi'}$,
$\wshiftup(v) \in \freecat\Sigma_{\phi\comp_1\phi',\psi\comp_1\psi'}$ is defined
by
\[
  \wshiftup(v)_r =
  \begin{cases}
    \letter l_{\len{\phi} + i} & \text{if $v_r = \letter l_i$ for some $i \in \set{1,\ldots,\len{\phi'}}$} \\
    \letter r_{\len{\psi} + j} & \text{if $v_r = \letter r_j$ for some $j \in \set{1,\ldots,\len{\psi'}}$}
  \end{cases}
\]
for $r \in \set{1,\ldots,k}$. Similarly, given $w \in (\phi'\shuffle\psi')_0$,
there is a functor
\[
  (-)\wcomp w\co \freecat{(\phi\shuffle\psi)} \to \freecat{((\phi\comp_1\phi')\shuffle(\psi\comp_1\psi'))}
\]
which is uniquely defined by the mappings
\begin{align*}
  u & \mapsto u\wshiftup(w) \\
  \wtrans_{u_1,u_2} & \mapsto \wtrans_{u_1,u_2\wshiftup(w)}
\end{align*}
for $u \in (\phi\shuffle\psi)_0$ and $\wtrans_{u_1,u_2} \in
(\phi\shuffle\psi)_1$ and where $\wshiftup(-)$ is defined as above.

\bigskip\noindent The functors $w\wcomp(-)$ and $(-)\wcomp w$ satisfy the following compatibility property:
\begin{lemapp}
  \label{lem:compat-word-waction}
  Let $\phi,\psi \in \freecat\P_2$ be $0$\composable $2$-cells, and
  $\phi',\psi' \in \freecat\P_2$ be $0$\composable $2$-cells such that
  $\phi,\phi'$ and $\psi,\psi'$ are $1$\composable. Given $w \in
  (\phi\shuffle\psi)_0$, we have the following equalities in~$\freecat\P_3$:
  \begin{enumerate}[label=(\roman*),ref=(\roman*)]
  \item \label{lem:compat-word-waction:0-cells} $\winterp{w \wcomp (u)}_{\phi\comp_1\phi',\psi\comp_1\psi'} = \winterp{w}_{\phi,\psi}
    \comp_1 \winterp{u}_{\phi',\psi'}$ for $u \in (\phi'\shuffle\psi')_0$,
  \item \label{lem:compat-word-waction:1-cells} $\winterp{w \wcomp(p)}_{\phi\comp_1\phi',\psi\comp_1\psi'} =
    \winterp{w}_{\phi,\psi} \comp_1 \winterp{p}_{\phi',\psi'}$ for $p \in \freecat{(\phi'\shuffle\psi')}_1$.
  \end{enumerate}
  Similarly, given $w \in (\phi'\shuffle\psi')_0$, we have:
  \begin{enumerate}[label=(\roman*),ref=(\roman*)]
  \item $\winterp{(u)\wcomp w}_{\phi\comp_1\phi',\psi\comp_1\psi'} =
    \winterp{u}_{\phi,\psi} \comp_1 \winterp{w}_{\phi',\psi'}$ for $u \in (\phi\shuffle\psi)_0$,
    
  \item $\winterp{(p) \wcomp w}_{\phi\comp_1\phi',\psi\comp_1\psi'} =
    \winterp{p}_{\phi,\psi} \comp_1 \winterp{w}_{\phi',\psi'}$ for $p \in \freecat{(\phi\shuffle\psi)}_1$.
  \end{enumerate}
\end{lemapp}
\begin{proof}
  \newcommand\bigindices{{\phi\comp_1\phi',\psi\comp_1\psi'}}
  We only prove the first part, since the second part is similar.
  We start by~\ref{lem:compat-word-waction:0-cells}. We have
  $\winterp{w\wcomp(u)}_{\phi\comp_1\phi',\psi\comp_1\psi'} =
  \winterp{w\wshiftup(u)}^{1,1}_{\phi\comp_1\phi',\psi\comp_1\psi'}$. By a simple
  induction on $w$, we obtain
  \begin{equation*}
    \winterp{w\wshiftup(u)}^{1,1}_{\phi\comp_1\phi',\psi\comp_1\psi'} =
    \winterp{w}^{1,1}_{\phi\comp_1\phi',\psi\comp_1\psi'} \comp_1 \winterp{\wshiftup(u)}^{\len{\phi},\len{\psi}}_{\phi\comp_1\phi',\psi\comp_1\psi'}
  \end{equation*}
  and, by other simple inductions on~$w$ and~$u$, we get
  \begin{align*}
    \winterp{w}^{1,1}_{\phi\comp_1\phi',\psi\comp_1\psi'}
    &=
    \winterp{w}^{1,1}_{\phi,\psi} = \winterp{w}_{\phi,\psi}
      &
    \winterp{\wshiftup(u)}^{\len{\phi},\len{\psi}}_{\phi\comp_1\phi',\psi\comp_1\psi'}
    &=
    \winterp{u}^{1,1}_{\phi',\psi'} = \winterp{u}_{\phi,\psi}
  \end{align*}
  so that \ref{lem:compat-word-waction:0-cells} holds.

  For~\ref{lem:compat-word-waction:1-cells}, by induction on~$p$, it is
  sufficient to prove the equality for~$p = \wtrans_{u_1,u_2} \in
  (\phi\shuffle\psi)_1$. Let $m = \len{\phi}$, $n = \len{\psi}$, and
  \[
    (e_1 \comp_0 \alpha_1 \comp_0 f_1) \comp_1 \cdots
    \comp_1 (e_{m} \comp_0 \alpha_{m} \comp_0 f_m) \qquad
    (g_1 \comp_0 \beta_1 \comp_0 h_1) \comp_1 \cdots
    \comp_1 (g_{m} \comp_0 \beta_{m} \comp_0 h_m)
  \]
  be the unique decomposition of $\phi$ and $\psi$ respectively, for some
  $e_i,f_i,g_j,h_j \in \freecat\P_1$ and $\alpha_i,\beta_j \in \P_2$ for $i \in
  \set{1,\ldots,m}$ and $j\in \set{1,\ldots,n}$. We then have
  \begin{align*}
    \winterp{w\wcomp(\wtrans_{u_1,u_2})}_{\phi\comp_1\phi',\psi\comp_1\psi'}
    &= \winterp{\wtrans_{w\wshiftup(u_1),\wshiftup(u_2)}}_{\phi\comp_1\phi',\psi\comp_1\psi'} \\
    &= \winterp{w\wshiftup(u_1)}^{1,1}_{\phi\comp_1\phi',\psi\comp_1\psi'}\comp_1(e_i \comp_0X_{\alpha_i,f_i\comp_0g_j,\beta_j}\comp_0h_j) \comp_1\winterp{\wshiftup(u_2)}^{k_l,k_r}_{\phi\comp_1\phi',\psi\comp_1\psi'}
  \end{align*}
  where $i,j$ are such that $u_1\letter l_i\letter r_ju_2 \in
  (\phi'\shuffle\psi')_0$ and
  \begin{align*}
    k_l &= \len\phi + i + 1 & k_r &= \len\psi + j + 1.
  \end{align*}
  By simple inductions, we obtain
  \begin{align*}
    \winterp{w\wshiftup(u_1)}^{1,1}_\bigindices
    &=\winterp{w}^{1,1}_\bigindices\comp_1\winterp{\wshiftup(u_1)}^{\len{\phi},\len{\psi}}_\bigindices \\
    &=\winterp{w}^{1,1}_{\phi,\psi}\comp_1\winterp{u_1}^{1,1}_{\phi',\psi'} \\
    &=\winterp{w}_{\phi,\psi}\comp_1\winterp{u_1}^{1,1}_{\phi',\psi'} \\
    \shortintertext{and}
    \winterp{\wshiftup(u_2)}^{k_l,k_r}_{\phi\comp_1\phi',\psi\comp_1\psi'}
    &= \winterp{u_2}^{i+1,j+1}_{\phi',\psi'}
  \end{align*}
  so that
  \begin{align*}
    \winterp{w\wcomp(\wtrans_{u_1,u_2})}_{\phi\comp_1\phi',\psi\comp_1\psi'}
    &=\winterp{w}_{\phi,\psi}\comp_1\winterp{u_1}^{1,1}_{\phi',\psi'}
      \comp_1(e_i \comp_0X_{\alpha_i,f_i\comp_0g_j,\beta_j}\comp_0h_j)
      \comp_1\winterp{u_2}^{i+1,j+1}_{\phi',\psi'} \\
    &= \winterp{w}_{\phi,\psi} \comp_1 \winterp{\wtrans_{u_1,u_2}}_{\phi',\psi'}.
      \qedhere
  \end{align*}
\end{proof}
\goodbreak\noindent We can now conclude the last required properties on~$X_{-,-}$:
\begin{lemapp}
  \label{lem:compat-X-one-comp}
  Given $1$-composable $\phi,\phi' \in \prespcat\P_2$, $1$-composable
  $\psi,\psi' \in \prespcat\P_2$ such that $\phi,\psi$ are $0$-composable, we
  have the following equalities in~$\prespcat\P_3$:
  \begin{align*}
    X_{\phi \comp_1 \phi',\psi} &= ((\phi \comp_0 \csrc_1(\psi))\comp_1 X_{\phi',\psi})\comp_2 (X_{\phi,\psi}\comp_1(\phi' \comp_0 \ctgt_1(\psi))) \\
    \shortintertext{and}
    X_{\phi,\psi \comp_1 \psi'} &= (X_{\phi,\psi} \comp_1 (\ctgt_1(\phi) \comp_0 \psi'))
    \comp_2 ((\csrc_1(\phi) \comp_0 \psi) \comp_1 X_{\phi,\psi'}).
  \end{align*}
\end{lemapp}
\begin{proof}
  \newcommand\phii{\phi\comp_1\phi'}%
  We only prove the first equality, since the second one is similar.
  By definition of~$X_{\phii,\psi}$, we have $X_{\phii,\psi} = \winterp{\wtrans_{\phii,\psi}}_{\phii,\psi}$.
  Moreover, by~\Propr{interchange-coherence},
  $\winterp{\wtrans_{\phii,\psi}}_{\phii,\psi} = \winterp{p}_{\phii,\psi}$
  in~$\prespcat\P_3$ for
  all path $p \in ((\phii)\shuffle\psi)_1$ parallel to~$\wtrans_{\phii,\psi}$.
  In particular,
  \[
    \winterp{\wtrans_{\phii,\psi}}_{\phii,\psi} = \winterp{
      (w\wcomp(\wtrans_{\phi',\psi})) \comp_0 ((\wtrans_{\phi,\psi})\wcomp w')}_{\phii,\psi}
  \]
  where
  {\abovedisplayskip=0pt%
    \begin{align*}
    w &= \letter l_1\ldots\letter l_{\len{\phi}} & w' &= \letter l_1\ldots\letter l_{\len{\phi'}}
  \end{align*}}%
  are the only $0$-cells of $\phi' \shuffle \unit{\csrc(\phi)}$ and $\phi
  \shuffle \unit{\ctgt(\psi)}$ respectively. Thus,
  \vspace{\abovedisplayskip}%
  \par\noindent{%
    \abovedisplayshortskip=0pt%
    \abovedisplayskip=0pt%
    \belowdisplayskip=0pt%
    \belowdisplayshortskip=0pt%
    \newlength\rightwidth
    \settowidth\rightwidth{(by definition of~$\winterp{-}_{-,-}$ and~$X_{-,-}$).}%
    \newdimen\mymargin
    \setlength{\mymargin}{\linewidth-\rightwidth}%
    \begin{align*}
    \winterp{\wtrans_{\phii,\psi}}_{\phii,\psi} &= \winterp{
      (w\wcomp(\wtrans_{\phi',\psi})) \comp_0 ((\wtrans_{\phi,\psi})\wcomp w')}_{\phii,\psi} \\
    &= \winterp{
      (w\wcomp(\wtrans_{\phi',\psi}))}_{\phii,\psi} \comp_2 \winterp{((\wtrans_{\phi,\psi})\wcomp w')}_{\phii,\psi} \\
    \shortintertext{\hspace*{\mymargin}(by functoriality of~$\winterp{-}_{\phii,\psi}$)}
    &= (\winterp{w}_{\phi,\unit{\csrc(\psi)}} \comp_1 
      \winterp{\wtrans_{\phi',\psi}}_{\phi',\psi})
      \comp_2
      (\winterp{\wtrans_{\phi,\psi}}_{\phi,\psi}\comp_1\winterp{w'}_{\phi',\unit{\ctgt(\psi)}}) \\
    \shortintertext{\hspace*{\mymargin}(by \Lemr{compat-word-waction})}
    &= ((\phi \comp_0 \csrc_1(\psi)) \comp_1 
      X_{\phi',\psi})
      \comp_2
      (X_{\phi,\psi}\comp_1(\phi'\comp_0 \ctgt_1(\psi))
  \shortintertext{\hspace*{\mymargin}(by definition of~$\winterp{-}_{-,-}$ and~$X_{-,-}$).}
\end{align*}}%
\vskip-\baselineskip\vskip-\jot\vskip\belowdisplayskip

\noindent Hence,
\[
  X_{\phii,\psi} =
  ((\phi \comp_0 \csrc_1(\psi)) \comp_1 
  X_{\phi',\psi})
  \comp_2
  (X_{\phi,\psi}\comp_1(\phi'\comp_0 \ctgt_1(\psi)).\qedhere
\]
\end{proof}
\noindent We now prove the compatibility between $3$-cells and interchangers. We
start by proving the compatibility with $3$\generators:
\begin{lemapp}
  \label{lem:prespcat-exch-gen}
  Given $A\co \phi \TO \phi'\co f \To f' \in \P_3$ and $\psi\co g \To g' \in
  \prespcat\P_2$ such that $A,\psi$ are $0$\composable, we have, in
  $\prespcat\P_3$,
  \[
    ((A \comp_0 g) \comp_1 (f' \comp_0 \psi)) \comp_2 X_{\phi',\psi} =
    X_{\phi,\psi} \comp_2 ((f \comp_0 \psi) \comp_1 (A \comp_0 g')).
  \]
  Similarly, given $\phi\co f\To f' \in \prespcat\P_2$ and $B\co \psi\TO\psi'\co
  g\To g'$ such that $\phi,B$ are $0$\composable, we have, in $\prespcat\P$,
  \[
    X_{\phi,\psi} \comp_2 ((g \comp_0 B) \comp_1 (\phi \comp_0 f')) =
    ((\phi \comp_0 g) \comp_1 (f \comp_0 B)) \comp_2 X_{\phi,\psi'}.
  \]
\end{lemapp}

\begin{proof}
  We only prove the first part of the property, since the other one is
  symmetric, and we do so by an induction on~$\len\psi$. If $\len\psi = 0$,
  $\psi$ is an identity and the result follows. Otherwise, $\psi = w \comp_1
  \tilde\psi$ where $w = (l \comp_0 \alpha \comp_0 r)$ with $l,r \in
  \prespcat\P_1$, $\alpha\co h \To h' \in \P_2$ and $\tilde\psi \in
  \prespcat\P_2$ with $\len{\tilde\psi} = \len\psi-1$. Let $\tilde g =
  \ctgt_1(w)$. By \Lemr{compat-X-one-comp}, we have
  \begin{align}
    \label{eq:x-phi-psi} X_{\phi,\psi} &= (X_{\phi,w} \comp_1 (f'\comp_0 \tilde\psi)) \comp_2 ((f \comp_0 w) \comp_1 X_{\phi,\tilde\psi})  \\
    \label{eq:x-phi-psip} X_{\phi',\psi} &= (X_{\phi',w} \comp_1 (f'\comp_0 \tilde\psi)) \comp_2 ((f \comp_0 w) \comp_1 X_{\phi',\tilde\psi}).
  \end{align}
  Also, by \Lemr{compat-X-one-cells}\ref{lem:compat-X-one-cells:right}, we have
  \begin{align}
    \label{eq:x-phip?-w} X_{\phi,w} &= X_{\phi,l \comp_0 \alpha} \comp_0 r & X_{\phi',w} &= X_{\phi',l \comp_0 \alpha} \comp_0 r
  \end{align}
  so that
  \begin{equation}
    \label{eq:A-X-w}
  \begin{aligned}[c]
    & ((A \comp_0 g) \comp_1 (f' \comp_0 w)) \comp_2 X_{\phi',w} \\
    =\; & \left[((A \comp_0 l \comp_0 h) \comp_1 (f' \comp_0 l \comp_0 \alpha)) \comp_2 X_{\phi',l \comp_0 \alpha}\right] \comp_0 r \\
    =\; & \left[X_{\phi,l \comp_0 \alpha} \comp_2 ((f \comp_0 l \comp_0 \alpha) \comp_1 (A \comp_0 l \comp_0 h'))\right] \comp_0 r \\
    & \text{\hspace*{15em}(by interchange naturality generator)} \\
    =\; & X_{\phi,w} \comp_2 ((f \comp_0 w) \comp_1 (A \comp_0 g')).
  \end{aligned}
  \end{equation}
  Thus,
  \begin{align*}
    & ((A \comp_0 g) \comp_1 (f' \comp_0 \psi)) \comp_2
    X_{\phi',\psi} \\
    =\; & ((A \comp_0 g) \comp_1 (f' \comp_0 w) \comp_1 (f' \comp_0 \tilde\psi)) \\
    & \hspace*{1em}\comp_2 (X_{\phi',w} \comp_1 (f'\comp_0 \tilde\psi)) \comp_2 ((f \comp_0 w) \comp_1 X_{\phi',\tilde\psi}) && \text{(by \eqref{eq:x-phi-psip})}\\
    =\; & \left[ ( ((A \comp_0 g) \comp_1 (f' \comp_0 w) ) \comp_2 X_{\phi',w}  ) \comp_1 (f'\comp_0 \tilde\psi)\right] \\
    & \hspace*{1em}\comp_2 ((f \comp_0 w) \comp_1 X_{\phi',\tilde\psi}) \\
    =\; & \left[ ( X_{\phi,w} \comp_2 ((f \comp_0 w) \comp_1 (A \comp_0 \tilde g) )  ) \comp_1 (f'\comp_0 \tilde\psi)\right] \\
    & \hspace*{1em}\comp_2 ((f \comp_0 w) \comp_1 X_{\phi',\tilde\psi}) && \text{(by \eqref{eq:A-X-w})}\\
    =\; &  ( X_{\phi,w} \comp_1 (f'\comp_0 \tilde\psi)) \\
    & \hspace*{1em}\comp_2 ((f \comp_0 w) \comp_1 (A \comp_0 \tilde g) \comp_1 (f'\comp_0 \tilde\psi))\comp_2 ((f \comp_0 w) \comp_1 X_{\phi',\tilde\psi}) \\
    =\; &  ( X_{\phi,w} \comp_1 (f'\comp_0 \tilde\psi)) \\
    & \hspace*{1em}\comp_2 \left[  (f \comp_0 w) \comp_1 (((A \comp_0 \tilde g) \comp_1 (f'\comp_0 \tilde\psi))\comp_2  X_{\phi',\tilde\psi}) \right]\\
    =\; &  ( X_{\phi,w} \comp_1 (f'\comp_0 \tilde\psi)) \\
    & \hspace*{1em}\comp_2 \left[  (f \comp_0 w) \comp_1 (X_{\phi',\tilde\psi} \comp_2  ((f\comp_0 \tilde\psi) \comp_1 (A \comp_0 g'))) \right] && \text{(by induction)}\\
    =\; &  ( X_{\phi,w} \comp_1 (f'\comp_0 \tilde\psi)) \comp_2 ((f \comp_0 w) \comp_1 (X_{\phi',\tilde\psi})) \\
     & \hspace*{1em}\comp_2  ((f \comp_0 w) \comp_1 (f\comp_0 \tilde\psi) \comp_1 (A \comp_0 g')) \\
    =\; & X_{\phi,\psi} \comp_2 ((f \comp_0 \psi) \comp_1 (A \comp_0 g')) && \text{(by \eqref{eq:x-phi-psi})}. \qedhere
  \end{align*}
\end{proof}
\noindent Next, we prove the compatibility between interchangers and rewriting steps:
\begin{lemapp}
  \label{lem:prespcat-exch-ctxt}
  Given a rewriting step~$R\co \phi\TO\phi'\co f\To f' \in \freecat\P_3$ with $R
  = \lambda \comp_1 (l \comp_0 A \comp_0 r) \comp_1 \rho$ for some $l,r \in
  \freecat\P_1$, $\lambda,\rho \in \freecat\P_2$, $A\co \mu \TO \mu' \in \P_3$,
  and $\psi\co g\To g'\in \freecat\P_2$ such that $R,\psi$ are $0$\composable,
  we have, in $\prespcat\P_3$,
  \begin{equation}
    \label{eq:prespcat-exch-ctxt:goal}
    ((R \comp_0 g) \comp_1 (f' \comp_0 \psi)) \comp_2
    X_{\phi',\psi} = X_{\phi,\psi} \comp_2 ((f \comp_0 \psi) \comp_1 (R \comp_0 g')).
  \end{equation}
  Similarly, given $\phi \in \freecat\P_2$ and a rewriting step $S\co
  \psi\TO\psi'\co g\To g'\in \freecat\P_3$ with $S =
  \lambda \comp_1 (l \comp_0 B \comp_0 r) \comp_1 \rho$ for some $\lambda,\rho
  \in \freecat\P_2$, $l,r \in \freecat\P_1$, $B\co \nu \TO \nu' \in \P_3$ such that $\phi,S$
  are $0$\composable, we have, in $\prespcat\P_3$,
  \[
    X_{\phi,\psi} \comp_2 ((f \comp_0 B) \comp_1 (\phi \comp_0 g')) =
    ((\phi \comp_0 g) \comp_1 (f' \comp_0 B)) \comp_2 X_{\phi,\psi'}.
  \]
\end{lemapp}
\begin{proof}
  By symmetry, we only prove the first part. Let
  \begin{align*}
    \tilde \mu &= l \comp_0 \mu \comp_0 r
    & h &= \csrc_1(\mu)
    & \tilde h &= \csrc_1(\tilde \mu) \\
    \tilde \mu' &= l \comp_0 \mu' \comp_0 r
    & h' &= \ctgt_1(\mu')
    & \tilde h' &= \ctgt_1(\tilde \mu)
  \end{align*}
  We have
  \begin{align*}
    R \comp_0 g &= (\lambda \comp_0 g) \comp_1 (l \comp_0 A \comp_0 r \comp_0 g) \comp_1 (\rho \comp_0 g)
  \end{align*}
   and, by \Lemr{compat-X-one-comp},
   \begin{equation}
     \label{eq:prespcat-exch-ctxt:X-dec}
     \begin{aligned}
       X_{\phi,\psi} =\;
       & (((\lambda \comp_1 \tilde \mu) \comp_0 g) \comp_1 X_{\rho,\psi}) \\
       &\hspace*{1em}\comp_2 (((\lambda \comp_0 g) \comp_1 X_{\tilde \mu,\psi} \comp_1 (\rho \comp_0 g'))) \\
       &\hspace*{1em}\comp_2 ((X_{\lambda,\psi} \comp_1 ((\tilde \mu \comp_1 \rho) \comp_0 g')))
     \end{aligned}
   \end{equation}
   \begin{equation}
     \label{eq:prespcat-exch-ctxt:X-decp}
     \begin{aligned}
       X_{\phi',\psi} =\;
       & (((\lambda \comp_1 \tilde \mu') \comp_0 g) \comp_1 X_{\rho,\psi}) \\
       &\hspace*{1em}\comp_2 (((\lambda \comp_0 g) \comp_1 X_{\tilde \mu',\psi} \comp_1 (\rho \comp_0 g'))) \\
       &\hspace*{1em}\comp_2 ((X_{\lambda,\psi} \comp_1 ((\tilde \mu' \comp_1 \rho) \comp_0 g'))).
     \end{aligned}
   \end{equation}
   We start the calculation of the left-hand side
   of~\eqref{eq:prespcat-exch-ctxt:goal}, using~\eqref{eq:prespcat-exch-ctxt:X-decp}. We get
  \begin{align*}
    & ((R \comp_0 g) \comp_1 (f' \comp_0 \psi)) \comp_2 (((\lambda \comp_1 \tilde \mu') \comp_0 g) \comp_1 X_{\rho,\psi}) \\
    =\;& (\lambda \comp_0 g) \\
    & \hspace*{1em}\comp_1 \Big[((l \comp_0 A \comp_0 r \comp_0 g) \comp_1 (\rho \comp_0 g) \comp_1 (f' \comp_0 \psi)) 
    \comp_2 ((\mu'\comp_0 g) \comp_1 X_{\rho,\psi} ) \Big] \\
    =\;&(\lambda \comp_0 g) \\
    & \hspace*{1em}\comp_1 \Big[((\mu \comp_0 g) \comp_1 X_{\rho,\psi} )
    \comp_2 ((l \comp_0 A \comp_0 r \comp_0 g) \comp_1 (\tilde h' \comp_0 \psi) \comp_1 (\rho \comp_0 g'))\Big] && \text{(by \Lemr{prespcat-peiffer})} \\
     =\;&((\lambda \comp_0 g) \comp_1 (\tilde \mu \comp_0 g) \comp_1 X_{\rho,\psi} ) \\
    &\hspace*{1em}\comp_2 ((\lambda \comp_0 g) \comp_1 (l \comp_0 A \comp_0 r \comp_0 g) \comp_1 (\tilde h' \comp_0 \psi) \comp_1 (\rho \comp_0 g')).
  \end{align*}
  Symmetrically, we do a step of calculation for the right-hand side
  of~\eqref{eq:prespcat-exch-ctxt:goal},
  using~\eqref{eq:prespcat-exch-ctxt:X-dec}. We get
  \begin{align*}
    & (X_{\lambda,\psi} \comp_1 ((\tilde \mu \comp_1 \rho) \comp_0 g')) \comp_2 ((f \comp_0 \psi) \comp_1 (R \comp_0 g')) \\
    =\; & ((\lambda \comp_0 g) \comp_1 (\tilde h \comp_0 \psi) \comp_1 (l \comp_0 A \comp_0 r \comp_0 g') \comp_1 (\rho \comp_0 g')) \\
    &\hspace*{1em}\comp_2 (X_{\lambda,\psi} \comp_1 (\tilde \mu' \comp_0 g') \comp_1 (\rho \comp_0 g') ).
  \end{align*}
  Finally, we do the last step of calculation between the left-hand side and the
  right-hand side of~\eqref{eq:prespcat-exch-ctxt:goal}. Note that
  \begin{align*}
    &
     ((l \comp_0 A \comp_0 r \comp_0 g) \comp_1 (\tilde h' \comp_0 \psi)) \comp_2 X_{\tilde \mu',\psi}
          \\
    =\; &
           l \comp_0 (((A \comp_0 r \comp_0 g) \comp_1 (h' \comp_0 r \comp_0 \psi)) \comp_2 X_{\mu' \comp_0 r,\psi}) && \text{(by \Lemr{compat-X-one-cells}\ref{lem:compat-X-one-cells:left})}
          \\
    =\; &
           l \comp_0 (((A \comp_0 r \comp_0 g) \comp_1 (h' \comp_0 r \comp_0 \psi)) \comp_2 X_{\mu',r \comp_0 \psi}) && \text{(by \Lemr{compat-X-one-cells}\ref{lem:compat-X-one-cells:middle})}
           \\
    =\; & l \comp_0 (X_{\mu,r \comp_0 \psi} \comp_2 ((h \comp_0 r \comp_0 \psi) \comp_1 (A \comp_0 r \comp_0 g')))
        && \text{(by \Lemr{prespcat-exch-gen})}\\
    =\; &l \comp_0 (X_{\mu \comp_0 r,\psi} \comp_2 ((h \comp_0 r \comp_0 \psi) \comp_1 (A \comp_0 r \comp_0 g'))) && \text{(by \Lemr{compat-X-one-cells}\ref{lem:compat-X-one-cells:middle})}
          \\
    =\; &X_{\tilde \mu,\psi} \comp_2 ((\tilde h \comp_0 \psi) \comp_1 (l \comp_0 A \comp_0 r \comp_0 g')) && \text{(by \Lemr{compat-X-one-cells}\ref{lem:compat-X-one-cells:left})}
  \end{align*}
  so that
  \begin{align*}
    &((\lambda \comp_0 g) \comp_1 (l \comp_0 A \comp_0 r \comp_0 g) \comp_1 (\tilde h' \comp_0 \psi) \comp_1 (\rho \comp_0 g'))
      \comp_2 ((\lambda \comp_0 g) \comp_1 X_{\tilde \mu',\psi} \comp_1 (\rho \comp_0 g')) \\
    =\; &(\lambda \comp_0 g) \comp_1
          \left[  ((l \comp_0 A \comp_0 r \comp_0 g) \comp_1 (\tilde h' \comp_0 \psi)) \comp_2 X_{\tilde \mu',\psi}\right]
          \comp_1 (\rho \comp_0 g') \\
    =\; &(\lambda \comp_0 g) \comp_1 \left[  X_{\tilde \mu,\psi} \comp_2 ((\tilde h \comp_0 \psi) \comp_1 (l \comp_0 A \comp_0 r \comp_0 g'))\right] \comp_1 (\rho \comp_0 g') \\
    =\; &((\lambda \comp_0 g) \comp_1  X_{\tilde \mu,\psi} \comp_1 (\rho \comp_0 g'))\comp_2 ((\lambda \comp_0 g) \comp_1 (\tilde h \comp_0 \psi) \comp_1 (l \comp_0 A \comp_0 r \comp_0 g') \comp_1 (\rho \comp_0 g')).
  \end{align*}%
  By combining the previous equations, we obtain
  \begin{align*}
    & ((R \comp_0 g) \comp_1 (f' \comp_0 \psi)) \comp_2
      X_{\phi',\psi} \\
    =\; & ((\lambda \comp_0 g) \comp_1 (l \comp_0 A \comp_0 r \comp_0 g) \comp_1 (\rho \comp_0 g)\comp_1 (f' \comp_0 \psi)) \\
    & \hspace*{1em}\comp_2
      (((\lambda \comp_1 \tilde \mu') \comp_0 g) \comp_1 X_{\rho,\psi}) \\
       &\hspace*{1em}\comp_2 (((\lambda \comp_0 g) \comp_1 X_{\tilde \mu',\psi} \comp_1 (\rho \comp_0 g'))) \\
       &\hspace*{1em}\comp_2 ((X_{\lambda,\psi} \comp_1 ((\tilde \mu' \comp_1 \rho) \comp_0 g'))) \\
    =\; & (((\lambda \comp_1 \tilde \mu) \comp_0 g) \comp_1 X_{\rho,\psi}) \\
    &\hspace*{1em}\comp_2 (((\lambda \comp_0 g) \comp_1 X_{\tilde \mu,\psi} \comp_1 (\rho \comp_0 g'))) \\
    &\hspace*{1em}\comp_2 ((X_{\lambda,\psi} \comp_1 ((\tilde \mu \comp_1 \rho) \comp_0 g'))) \\
    & \hspace*{1em}\comp_2 ((f \comp_0 \psi) \comp_1 (\lambda \comp_0 g) \comp_1 (l \comp_0 A \comp_0 r \comp_0 g) \comp_1 (\rho \comp_0 g)) \\
    =\; & X_{\phi,\psi} \comp_2 ((f \comp_0 \psi) \comp_1 (R \comp_0 g'))
  \end{align*}
  which is what we wanted.
\end{proof}
\noindent We can deduce the complete compatibility between interchangers and $3$-cells:
\begin{lemapp}
  \label{lem:prespcat-nat-exch}
  Given $F\co \phi \TO \phi'\co f \To f' \in \prespcat\P_3$ and $\psi\co g \To
  g' \in \prespcat\P_2$ such that $F,\psi$ are $0$\composable, we have
  \[
    ((F \comp_0 g) \comp_1 (f' \comp_0 \psi)) \comp_2
    X_{\phi',\psi} = X_{\phi,\psi} \comp_2 ((f \comp_0 \psi) \comp_1 (F \comp_0 g')).
  \]
  Similarly, given $\phi\co f \To f' \in\prespcat\P_2$ and $G\co \psi \TO \psi'\co g \To g' \in
  \prespcat\P_3$ such that $\phi,G$ are $0$\composable, we have
  \[
    X_{\phi,\psi} \comp_2 ((f \comp_0 G) \comp_1 (\phi \comp_0 g')) =
    ((\phi \comp_0 g) \comp_1 (f' \comp_0 G)) \comp_2 X_{\phi,\psi'}.
  \]
\end{lemapp}
\begin{proof}
  Remember that each $3$-cell $\prespcat\P$ can be written as a sequence of rewriting
  steps of~$\P$. By induction on the length of such a sequence defining $F$ or $G$ as in
  the statement, we conclude using \Lemr{prespcat-exch-ctxt}.
\end{proof}
\noindent We can conclude that:
\ifx\graypresgraycatthm\undefined
\begin{theo}
  Some theorem.
\end{theo}
\else
\graypresgraycatthm*
\fi
\begin{proof}
  The axioms of lax Gray category follow from
  \Lemr{compat-X-one-cells}, \Lemr{compat-X-one-comp},
  \Lemr{prespcat-peiffer} and \Lemr{prespcat-nat-exch}.
\end{proof}

\section{Finiteness of critical branchings}
\label{sec:finiteness-cp}

In this section, we give a proof of \Thmr{finite-cp}, \ie that Gray
presentations, under some reasonable conditions, have a finite number of
critical branchings. Our proof is constructive, so that we can extract a program
to compute the critical branchings of such Gray presentations. First, we aim at
showing that there is no critical branching~$(S_1,S_2)$ of a Gray
presentation~$\P$ where both inner $3$\generators of $S_1$ and $S_2$ are
interchange generators. We begin with a technical lemma for minimal and
independent branchings:


\begin{lemapp}
  \label{lem:caract-min-indep}
  Given a minimal local branching $(S_1,S_2)$ of a Gray presentation~$\P$, with
  \[S_i = \lambda_i \comp_1 (l_i \comp_0 A_i \comp_0 r_i) \comp_1 \rho_i\]
  and
  $l_i,r_i \in \freecat\P_1$, $\lambda_i,\rho_i \in \freecat\P_2$, $A_i \in
  \P_3$ for $i \in \set{1,2}$, the followings hold:
  \begin{enumerate}[label=(\roman*),ref=(\roman*)]
  \item \label{lem:caract-min-indep:lambda} either $\lambda_1$ or $\lambda_2$ is
    an identity,
  \item \label{lem:caract-min-indep:rho} either $\rho_1$ or $\rho_2$ is an
    identity,
  \item \label{lem:caract-min-indep:indep} $(S_1,S_2)$ is independent if and
    only if
    \[
      \len{\csrc_2(A_1)} + \len{\csrc_2(A_2)} \le \len{\csrc_2(S_1)} \qqtand
      \len{\lambda_1}\len{\rho_1} = \len{\lambda_2}\len{\rho_2} = 0.
    \]
  \end{enumerate}
  If $(S_1,S_2)$ is moreover not independent:
  \begin{enumerate}[start=4,label=(\roman*),ref=(\roman*)]
  \item \label{lem:caract-min-indep:l} either $l_1$ or $l_2$ is an identity,
  \item \label{lem:caract-min-indep:r} either $r_1$ or $r_2$ is an identity.
  \end{enumerate}
\end{lemapp}
\begin{proof}
  Suppose that neither $\lambda_1$ nor $\lambda_2$ are identities. Then, since
  \[
    \lambda_1 \comp_1 (l_1 \comp_0 \csrc_2(A_1) \comp_0 r_1) \comp_1 \rho_1 =
    \lambda_2 \comp_1 (l_2 \comp_0 \csrc_2(A_2) \comp_0 r_2) \comp_1 \rho_2,
  \]
  we have $\lambda_i = w \comp_1 \lambda_i'$ for some $w \in \freecat\P_2$
  and $\lambda_i' \in \freecat\P_2$ for $i \in \set{1,2}$, such that $\len{w}
  \ge 1$, contradicting the minimality of $(S_1,S_2)$. So either $\lambda_1$ or
  $\lambda_2$ is an identity and similarly for $\rho_1$ and $\rho_2$, which
  concludes~\ref{lem:caract-min-indep:lambda}
  and~\ref{lem:caract-min-indep:rho}.
  
  By the definition of independent branching, the first implication
  of~\ref{lem:caract-min-indep:indep} is trivial. For the converse, suppose that
  $(S_1,S_2)$ is such that
  \[
    \len{\csrc_2(A_1)} + \len{\csrc_2(A_2)} \le \len{\csrc_2(S_1)} \qtand
    \len{\lambda_1}\len{\rho_1} = \len{\lambda_2}\len{\rho_2} = 0.
  \]
  We can suppose by symmetry that $\lambda_1$ is a unit. Since $\len{\csrc_2(S_1)} =
  \len{\lambda_1} + \len{\csrc_2(A_1)} + \len{\rho_1}$, we have that $\len{\csrc_2(A_2)} \le
  \len{\rho_1}$.

  If $\len{\rho_1} = 0$, then
  \[
    S_1 = l_1 \comp_0 A_1 \comp_0 r_1 \qtand \len{\csrc_2(A_2)} = 0,
  \]
  thus, since $\len{\lambda_2}\len{\rho_2} = 0$, we have
  \[
    \text{either} \quad S_2 = \csrc_2(S_1) \comp_1 (l_2 \comp_2 A_2 \comp_2
    r_2) \quad\text{or}\quad S_2 = (l_2 \comp_2 A_2 \comp_2 r_2) \comp_1
    \csrc_2(S_1).
  \]
  In both cases, $(S_1,S_2)$ is independent.

  Otherwise, $\len{\rho_1} > 0$ and,
  by~\ref{lem:caract-min-indep:rho}, we have $\len{\rho_2} = 0$ so that
  \[
    S_1 = (l_1 \comp_0 A_1 \comp_0 r_1) \comp_1 \rho_1 \qtand S_2 = \lambda_2
    \comp_1 (l_2 \comp_0 A_2 \comp_0 r_2).
  \]
  Since $\len{\csrc_2(A_2)} \le \len{\rho_1}$, we have $\rho_1 = \chi
  \comp_1 (l_2 \comp_0 \csrc_2(A_2) \comp_0 r_2)$ for some $\chi \in
  \freecat\P_2$ and, since $\csrc_2(S_1) = \csrc_2(S_2)$, we get
  \[
    (l_1 \comp_0 \csrc_2(A_1) \comp_0 r_1) \comp_1 \chi \comp_1 (l_2 \comp_0
    \csrc_2(A_2) \comp_0 r_2) = \lambda_2 \comp_1 (l_2 \comp_0 \csrc_2(A_2)
    \comp_0 r_2).
  \]
  So $\lambda_2 = (l_1 \comp_0 \csrc_2(A_1) \comp_0
  r_1) \comp_1 \chi$ and hence $(S_1,S_2)$ is an independent branching, which
  concludes the proof of~\ref{lem:caract-min-indep:indep}.
  
  Finally, suppose that $(S_1,S_2)$ is not independent. By~\ref{lem:caract-min-indep:indep}, it implies that
  \[
    \text{either}\quad \len{\csrc_2(A_1)} + \len{\csrc_2(A_2)} >
    \len{\csrc_2(S_1)} \quad\text{or}\quad \len{\lambda_1}\len{\rho_1} > 0
    \quad\text{or}\quad \len{\lambda_2}\len{\rho_2} > 0.
  \]
  If $\len{\lambda_1}\len{\rho_1} > 0$, then
  $\len{\lambda_2} = \len{\rho_2} = 0$ by~\ref{lem:caract-min-indep:lambda} and~\ref{lem:caract-min-indep:rho},
  so that
  \[
    \lambda_1 \comp_1 (l_1 \comp_0 A_1 \comp_0 r_1) \comp_1 \rho_1 = l_2 \comp_0
    A_2 \comp_0 r_2
  \]
  thus there exists $\lambda_1',\rho_1' \in \freecat\P_2$ such that
  \[
    \lambda_1 = l_2 \comp_0 \lambda_1' \comp_0 r_2 \qtand \rho_1 = l_2 \comp_0
    \rho_1' \comp_0 r_2,
  \]
  and we have
  \[
    l_2 \comp_0 \ctgt_1(\lambda_1') \comp_0 r_2 =
    \ctgt_1(\lambda_1) = l_1 \comp_0 \csrc_1(A_1) \comp_0 r_1.
  \]
  Thus, $l_1$ and
  $l_2$ have the same prefix~$l$ of size $k = \min(\len{l_1},\len{l_2})$ and we
  can write
  \begin{align*}
    S_1 &= l \comp_0 S_1' & S_2 &= l \comp_0 S_2'
  \end{align*}
  for some rewriting steps $S_1,S_2 \in \freecat\P_3$. Since $(S_1,S_2)$ is
  minimal, we have $k = 0$, so $\len{l_1}\len{l_2} = 0$. We show similarly that
  $\len{r_1}\len{r_2} = 0$. The case where $\len{\lambda_2}\len{\rho_2} > 0$ is
  handled similarly.

  So suppose that
  \begin{equation}
    \label{eq:not-indep-conditions}
    \len{\lambda_1}\len{\rho_1} = 0 \qtand
    \len{\lambda_2}\len{\rho_2} = 0 \qtand 
    \len{\csrc_1(A_1)} + \len{\csrc_1(A_2)}
    > \len{\csrc_2(S_1)}.
  \end{equation}
  In particular, we get that $\len{\csrc_2(A_i)} > 0$
  for $i \in \set{1,2}$. Let $u_i,v_i \in \freecat\P_1$ and $\alpha_i \in \P_2$
  for $i \in \set{1,\ldots,r}$ with $r = \len{\csrc_2(S_1)}$ such that
  \[
    \csrc_2(S_1) = (u_1 \comp_0 \alpha_1 \comp_0 v_1) \comp_1 \cdots \comp_1 (u_r
    \comp_0 \alpha_r \comp_0 v_r). 
  \]
  The condition last part of \eqref{eq:not-indep-conditions} implies that
  there is $i_0$ such that $l_1$ and $l_2$ are both prefix of $u_{i_0}$. So,
  $l_1$ and $l_2$ have the same prefix $l$ of length $k =
  \min(\len{l_1},\len{l_2})$.

  Now, we prove that $\lambda_1 = l \comp_0 \lambda_1'$ for some $\lambda_1' \in
  \freecat\P_2$. If $\len{\lambda_1} = 0$, then
  \[\lambda_1 = l_1 \comp_0
    \csrc_1(S_1) \comp_0 r_1,\] so $\lambda = l \comp_0 \lambda_1'$ for some
  $\lambda' \in \freecat\P_2$. Otherwise, if $\len{\lambda_1} > 0$, since
  $\len{\lambda_1}\len{\rho_1} = 0$, we have $\len{\rho_1} = 0$ and,
  by~\ref{lem:caract-min-indep:lambda}, $\len{\lambda_2} = 0$. Also, by the last
  part of~\eqref{eq:not-indep-conditions}, we have $\len{\lambda_1} <
  \len{\csrc_2(A_2)}$. Thus,
  \begin{center}
    $\lambda_1$ is a prefix of $l_2 \comp_0 \csrc_2(A_2) \comp_0 r_2$,
  \end{center}
   so $\lambda_1 = l \comp_0 \lambda_1'$ for some
  $\lambda_1 \in \freecat\P_2$. Similarly, there are $\rho_1',
  \lambda_2',\rho_2'\in \freecat\P_2$ such that
  \[
    \rho_1 = l \comp_0 \rho_1' \qtand \lambda_2 = l \comp_0 \lambda_2' \qtand
    \rho_2 = l \comp_0 \lambda_2'.
  \]
  Hence $S_1 = l \comp_0 S_1'$ and $S_2 = l \comp_0 S_2'$ for some rewriting
  steps $S'_1,S'_2 \in \freecat\P_3$. Since $(S_1,S_2)$ is minimal, we have
  $\len{l_1}\len{l_2} = \len{l} = 0$, which proves~\ref{lem:caract-min-indep:l}.
  The proof of~\ref{lem:caract-min-indep:r} is similar.
\end{proof}
\noindent We now have enough material to show that:
\begin{propapp}
  \label{prop:no-st-st-cp}
  Given a Gray presentation $\P$, there are no critical branching~$(S_1,S_2)$
  of~$\P$ such that both the inner $3$\generators of $S_1$ and $S_2$ are
  interchange generators.
\end{propapp}
\begin{proof}
  Let~$(S_1,S_2)$ be a local minimal branching such that, for~$i \in \set{1,2}$,
  \[
    S_i = \lambda_i \pcomp_1 (l_i \pcomp_0 X_{\alpha_i,g_i,\beta_i} \pcomp_0
    r_i) \pcomp_1 \rho_i
  \]
  for some~$l_i,r_i,g_i \in \freecat\P_1$,~$\lambda_i,\rho_i \in \freecat\P_2$
  and~$\alpha_i,\beta_i \in \P_2$, and let~$\phi$ be~$\csrc_2(S_1)$.
  Since~$\len{\csrc_2(X_{\alpha_1,g_1,\beta_1})} = 2$, we have~$\len{\phi} \ge
  2$.

  If~$\len{\phi} = 2$, then~$\len{\lambda_i} = \len{\rho_i} = 0$
  for~$i \in \set{1,2}$. Thus, since~$\csrc_2(S_1) = \csrc_2(S_2)$, we get
  \begin{align*}
    & (l_1 \pcomp_0 \alpha_1 \pcomp_0 g_1 \pcomp_0 \csrc_1(\beta_1) \pcomp_0 r_1) \pcomp_1
      (l_1 \pcomp_0 \ctgt_1(\alpha_1) \pcomp_0 g_1 \pcomp_0 \beta_1 \pcomp_0 r_1) \\
    =\;&
         (l_2 \pcomp_0 \alpha_2 \pcomp_0 g_2 \pcomp_0 \csrc_1(\beta_2) \pcomp_0 r_2) \pcomp_1
         (l_2 \pcomp_0 \ctgt_1(\alpha_2) \pcomp_0 g_2 \pcomp_0 \beta_2 \pcomp_0 r_2).
  \end{align*}
  By the unique decomposition property given by \Thmr{precat-nf}, we obtain
  \[
    l_1 = l_2,\quad
    r_1 = r_2,\quad \alpha_1 = \alpha_2,\quad \beta_1 = \beta_2 \qtand g_1 \pcomp_0
    \csrc_1(\beta_1) \pcomp_0 r_1 = g_2 \pcomp_0 \csrc_1(\beta_2) \pcomp_0 r_2.
  \]
  So~$g_1 \pcomp_0 \csrc_1(\beta_1) \pcomp_0 r_1 = g_2 \pcomp_0 \csrc_1(\beta_1)
  \pcomp_0 r_1$, which implies that~$g_1 = g_2$. Hence,~$(S_1,S_2)$ is trivial.

  If~$\len{\phi} = 3$, then~$\len{\lambda_i} + \len{\rho_i} = 1$
  for~$i \in \set{1,2}$, and, by \Lemr{caract-min-indep},
  \[
    \text{either\quad$\len{\rho_1} = \len{\lambda_2} = 1$\quad or
      \quad$\len{\lambda_1} = \len{\rho_2} = 1$\zbox.}
  \]
  By symmetry, we can suppose that~$\len{\rho_1} = \len{\lambda_2} = 1$, which
  implies that~$\len{\lambda_1} = \len{\rho_2} = 0$. By unique decomposition of
  whiskers, since~$\csrc_2(S_1) = \csrc_2(S_2)$, we have
  \begin{align*}
    l_1 \pcomp_0 \alpha_1 \pcomp_0 g_1 \pcomp_0 \csrc_1(\beta_1) \pcomp_0 r_1 & = \lambda_2 \\
    l_1 \pcomp_0 \ctgt_1(\alpha_1) \pcomp_0 g_1 \pcomp_0 \beta_1 \pcomp_0 r_1 & = l_2 \pcomp_0 \alpha_2 \pcomp_0 g_2 \pcomp_0 \csrc_1(\beta_2) \pcomp_0 r_2 \\
    \rho_1 & = l_2 \pcomp_0 \ctgt_1(\alpha_2) \pcomp_0 g_2 \pcomp_0 \beta_2 \pcomp_0 r_2 
  \end{align*}
  and the second line implies that~$l_1 \pcomp_0 \ctgt_1(\alpha_1) \pcomp_0 g_1 =
  l_2$,~$\beta_1 = \alpha_2$ and~$r_1 = g_2 \pcomp_0 \csrc_1(\beta_2) \pcomp_0 r_2$.
  Since~$(S_1,S_2)$ is minimal, we have~$\len{l_1} = \len{r_2} = 0$. So
  \begin{align*}
    S_1 &= (X_{\alpha_1,g_1,\beta_1} \pcomp_0 g_2 \pcomp_0 \csrc_1(\beta_2)) \pcomp_1 (\ctgt_1(X_{\alpha_1,g_1,\beta_1}) \pcomp_0 g_2 \pcomp_0 \beta_2) \\
    S_2 &= (\alpha_1 \pcomp_0 g_1 \pcomp_0 \csrc_1(\beta_1) \pcomp_0 g_2 \pcomp_0 \csrc_1(\beta_2)) \pcomp_1 (\ctgt_1(\alpha_1) \pcomp_0 g_1 \pcomp_0 X_{\beta_1,g_2,\beta_2})
  \end{align*}
  thus~$(S_1,S_2)$ is a natural branching, hence not a critical one.

  Finally, if~$\len{\phi} \ge 4$, then, since~$\len{\lambda_i} +
  \len{\rho_i} = \len{\phi} - 2 \ge 2$ for~$i \in \set{1,2}$, by
  \Lemr{caract-min-indep}, we have that
  \[
    \text{either\quad$\len{\lambda_1} = \len{\rho_2} = \len\phi - 2$\quad
      or\quad$\len{\rho_1} = \len{\lambda_2} = \len\phi - 2$\zbox.}
  \]
  In either case,
  \[
    \len{\lambda_1}\len{\rho_1} = \len{\lambda_2}\len{\rho_2} = 0
    \qtand
    \len{\csrc_2(X_{\alpha_1,g_1,\beta_1})} + \len{\csrc_2(X_{\alpha_2,g_2,\beta_2})} = 4 \le \len{\phi}
  \]
  so,
  by~\Lemr{caract-min-indep}\ref{lem:caract-min-indep:indep},~$(S_1,S_2)$ is
  independent, hence not critical.
\end{proof}

\noindent Until the end of this section, we denote by $\P$ a Gray presentation
such that $\P_2$ and $\P_3$ are finite and $\len{\csrc_2(A)} > 0$ for every $A
\in \P_3$, \ie a Gray presentation satisfying the hypothesis of
\Thmr{finite-cp}. The next result we prove is a characterization of independent
branchings among minimal ones:
\begin{lemapp}
  \label{lem:definite-indep}
  Given a minimal branching $(S_1,S_2)$ of~$\P$ with
  \[
    S_i = \lambda_i \comp_1 (l_i \comp_0 A_i \comp_0 r_i) \comp_1 \rho_i
  \]
  for some $l_i,r_i \in \freecat\P_1$, $\lambda_i,\rho_i \in \freecat\P_2$ and
  $A_i \in \P_3$ for $i \in \set{1,2}$, we have that $(S_1,S_2)$ is independent
  if and only if
  \begin{center}
    either $\len{\lambda_1} \ge \len{\csrc_2(A_2)}$ or $\len{\rho_1} \ge
    \len{\csrc_2(A_2)}$ (\resp $\len{\lambda_2} \ge \len{\csrc_2(A_1)}$ or
    $\len{\rho_2} \ge \len{\csrc_2(A_1)}$).
  \end{center}
\end{lemapp}

\begin{proof}
 If $(S_1,S_2)$ is independent, then, by
 \Lemr{caract-min-indep}\ref{lem:caract-min-indep:indep},
 \[
   \len{\csrc_2(A_1)} + \len{\csrc_2(A_2)} \le \len{\lambda_1} +
   \len{\csrc_2(A_1)} + \len{\rho_1} = \len{\lambda_2} + \len{\csrc_2(A_2)} +
   \len{\rho_2},
 \]
 that is,
 \[
   \len{\csrc_2(A_1)} \le \len{\lambda_2} + \len{\rho_2} \qtand
   \len{\csrc_2(A_2)} \le \len{\lambda_1} + \len{\rho_1}.
 \]
 By hypothesis, we have $\len{\csrc_2(A_1)} > 0$, so that $\len{\lambda_2} +
 \len{\rho_2} > 0$. If $\len{\lambda_2} > 0$, then, by
 \Lemr{caract-min-indep}\ref{lem:caract-min-indep:lambda}, $\len{\lambda_1} = 0$
 so $\len{\csrc_2(A_2)}\le \len{\rho_1}$. Similarly, if $\len{\rho_2} > 0$, then
 $\len{\csrc_2(A_2)} \le \len{\lambda_1}$, which proves the first implication.

 Conversely, if $\len{\lambda_1} \ge \len{\csrc_2(A_2)}$, then, since
 $\csrc_2(A_2) > 0$ by our hypothesis on~$\P$, we have $\len{\lambda_1} > 0$. By
 \Lemr{caract-min-indep}\ref{lem:caract-min-indep:lambda}, we get that
 $\len{\lambda_2} = 0$. Also,
 \[
   \len{\lambda_1}+ \len{\csrc_2(A_1)} +
   \len{\rho_1} = \len{\csrc_2(A_2)} + \len{\rho_2} \le \len{\lambda_1} +
   \len{\rho_2},
 \]
 so $\len{\rho_2} \ge \len{\csrc_2(A_1)} + \len{\rho_1}$, thus
 $\len{\rho_1} < \len{\rho_2}$. By
 \Lemr{caract-min-indep}\ref{lem:caract-min-indep:rho}, we have $\len{\rho_1} =
 0$. Moreover,
 \[
   \len{\csrc_2(A_1)} + \len{\csrc_2(A_2)} \le \len{\csrc_2(A_1)} +
   \len{\lambda_1} = \len{\csrc_2(S_1)}
 \]
 hence, by~\Lemr{caract-min-indep}\ref{lem:caract-min-indep:indep}, $(S_1,S_2)$
 is independent.
\end{proof}
\noindent Then, we prove that minimal non-independent branchings are uniquely
characterized by a small amount of information:
\begin{lemapp}
  \label{lem:definite-branching-data}
  Given a minimal non-independent branching $(S_1,S_2)$ of~$\P$ with
  \[
    S_i = \lambda_i \comp_1 (l_i \comp_0 A_i
    \comp_0 r_i) \comp_1 \rho_i
  \]
  for some $l_i,r_i \in \freecat\P_1$,
  $\lambda_i,\rho_i \in \freecat\P_2$ and $A_i \in \P_3$ for $i \in \set{1,2}$,
  we have that $(S_1,S_2)$ is uniquely determined by $A_1$, $A_2$,
  $\len{\lambda_1}$ and $\len{\lambda_2}$.
\end{lemapp}

\begin{proof}
  Let the unique $k_1,k_2 >0$, $u_i,u'_i,v_i,v'_i \in \freecat\P_1$ and $\alpha_i,\beta_i
  \in \P_2$ such that
  \[
    \csrc_2(A_1) = (u_1 \comp_0 \alpha_1 \comp_0 u'_1) \comp_1 \cdots \comp_1 (u_{k_1} \comp_0 \alpha_{k_1} \comp_0 u'_{k_1})
  \]
  and
  \[
    \csrc_2(A_2) = (v_1 \comp_0 \beta_1 \comp_0 v'_1) \comp_1 \cdots \comp_1 (v_{k_2} \comp_0 \beta_{k_2} \comp_0 v'_{k_2}).
  \]
  Let $i_1 = 1 + \len{\lambda_1}$ and $i_2 = 1 + \len{\lambda_2}$. Since
  \begin{equation}
    \label{eq:branching-src-equal}
    \lambda_1 \comp_1 (l_1 \comp_0 \csrc_2(A_1) \comp_0 r_1) \comp_1 \rho_1 =
    \lambda_2 \comp_1 (l_2 \comp_0 \csrc_2(A_2) \comp_0 r_2) \comp_1 \rho_2,
  \end{equation}
  and, by~\Lemr{definite-indep}, $\len{\lambda_1} < \len{\csrc_2(A_2)}$ and
  $\len{\lambda_2} < \len{\csrc_2(A_1)}$,
  we get 
  \[
    l_1 \comp_0 u_{i_2} \comp_0 \alpha_{i_2}
    \comp_0 u'_{i_2} \comp_0 r_1
    =
    l_2 \comp_0 v_{i_1} \comp_0 \beta_{i_1}
    \comp_0 v'_{i_1} \comp_0 r_2
  \]
  so that
  \[
    l_1 \comp_0 u_{i_2} = l_2 \comp_0 v_{i_1}
    \qtand u'_{i_2} \comp_0 r_1 = v'_{i_1}
    \comp_0 r_2.
  \]
  By~\Lemr{caract-min-indep}\ref{lem:caract-min-indep:l}, either
  $l_1$ or $l_2$ is an identity. Thus, if $\len{u_{i_2}} \le \len{v_{i_1}}$,
  then $\len{l_1} \ge \len{l_2}$ so $l_2$ is a unit and $l_2$ is the prefix of
  $u_{i_2}$ of size $\len{u_{i_2}} - \len{v_{i_1}}$. Otherwise, if $\len{u_{i_2}} \le \len{v_{i_1}}$, we
  obtain similarly that $l_1$ is the prefix of $v_{i_1}$ of size $\len{v_{i_1}}
  - \len{u_{i_2}}$ and $l_2$ is a unit. In both cases, $l_1$ and
  $l_2$ are completely determined by $A_1$, $A_2$, $\len{\lambda_1}$ and
  $\len{\lambda_2}$. A similar argument holds for $r_1$ and $r_2$.

  Now, if $\len{\lambda_1} >0$,
  by~\Lemr{caract-min-indep}\ref{lem:caract-min-indep:lambda}, $\len{\lambda_2}
  = 0$. By \eqref{eq:branching-src-equal} and since $\len{\lambda_1} <
  \len{\csrc_2(A_2)}$, $\lambda_1$ is the prefix of $l_2 \comp_0
  \csrc_2(A_2) \comp_0 r_2$ of length~$\len{\lambda_1}$. Otherwise, if
  $\len{\lambda_1} = 0$, then $\lambda_1 = \unit{l_1 \comp_0 \csrc_1(A_1)
    \comp_0 r_1}$. In both cases, $\lambda_1$ is completely determined by $A_1$,
  $A_2$, $\len{\lambda_1}$. A similar argument holds for $\lambda_2$. Note that,
  if we prove that $\len{\rho_1}$ and $\len{\rho_2}$ are completely determined
  by $A_1$, $A_2$, $\len{\lambda_1}$ and $\len{\lambda_2}$, the above argument
  also applies to $\rho_1$ and $\rho_2$ and the lemma is proved. But
\[
  \len{\lambda_1} + \len{\csrc_2(A_1)} + \len{\rho_1} =
  \len{\lambda_2} + \len{\csrc_2(A_2)} + \len{\rho_2}, 
\]
so that if
$\len{\lambda_1} + \len{\csrc_2(A_1)} \ge \len{\lambda_2} + \len{\csrc_2(A_2)}$,
then, by~\Lemr{caract-min-indep}\ref{lem:caract-min-indep:rho},
$\len{\rho_1} = 0$ and
\[
  \len{\rho_2} = \len{\lambda_1} + \len{\csrc_2(A_1)} -
  \len{\lambda_2} - \len{\csrc_2(A_2)}.
\]
Otherwise, if $\len{\lambda_1} +
\len{\csrc_2(A_1)} \le \len{\lambda_2} + \len{\csrc_2(A_2)}$, we get similarly
that
\[
  \len{\rho_1} = \len{\lambda_2} + \len{\csrc_2(A_2)} - \len{\lambda_1} -
  \len{\csrc_2(A_1)}
\]
and $\len{\rho_2} = 0$.
In both cases, $\len{\rho_1}$ and
$\len{\rho_2}$ are completely determined by $A_1$, $A_2$, $\len{\lambda_1}$ and
$\len{\lambda_2}$, which concludes the proof.
\end{proof}
\noindent Given~$A \in \P_3$, we say that~$A$ is an \emph{operational} generator
if it is not an interchange generator. We now prove that an operational
generator can form a critical branching with a finite number of interchange
generators:
\begin{lemapp}
  \label{lem:definite-finite-op-st-branching}
  Given an operational $A_1 \in \P_3$, there are a finite number interchange
  generator $A_2 \in P_3$ so that there is a critical branching~$(S_1,S_2)$
  of~$\P$ with
  \[
    S_i = \lambda_i \comp_1 (l_i \comp_0 A_i \comp_0 r_i) \comp_1 \rho_i
  \]
  for some
  $l_i,r_i \in \freecat\P_1$, $\lambda_i,\rho_i \in \freecat\P_2$ for $i \in
  \set{1,2}$.
\end{lemapp}

\begin{proof}
  Let~$\alpha,\beta \in \P_2$,~$u \in \freecat\P_1$, $A_2 =
  X_{\alpha,u,\beta}$,~$l_i,r_i \in \freecat\P_1$,~$\lambda_i,\rho_i \in
  \freecat\P_2$ for~$i \in \set{1,2}$, so that~$(S_1,S_2)$ is a critical
  branching of~$\P$ with
  \begin{center}
    $S_i = \lambda_i \comp_1 (l_i \comp_0 A_i \comp_0 r_i) \comp_1 \rho_i$ for
    $i \in \set{1,2}$
  \end{center}
  for~$i \in \set{1,2}$.
  Let the unique $k \ge 2$, $v_i,v'_i \in \freecat\P_1$, $\gamma_i \in \P_2$ for
  $i \in \set{1,\ldots,k}$ such that
  \[
    \csrc_2(A_1) = (v_1 \comp_0 \gamma_1 \comp_0 v'_1) \comp_1 \cdots \comp_1
    (v_k \comp_0 \gamma_k \comp_0 v'_k).
  \]
  By~\Lemr{definite-indep}, since $(S_1,S_2)$ is non-independent,
  \[
    2 = \len{\csrc_2(X_{\alpha,u,\beta})} > \max(\len{\lambda_1},\len{\rho_1}).
  \]
  Note that we cannot have $\len{\lambda_1} = \len{\rho_1} = 1$. Indeed,
  otherwise, by~\Lemr{caract-min-indep}, we would have $\len{\lambda_2} =
  \len{\rho_2} = 0$, so that
  \[
    2 = \len{\csrc_2(X_{\alpha,u,\beta})} = \len{\lambda_1} + \len{\csrc_2(A_1)}
    + \len{\rho_1}.
  \]
  and thus $\len{\csrc_2(A_1)} = 0$, contradicting our hypothesis on the
  $3$\generators of~$\P$. This leaves three cases to handle.

  Suppose that $\len{\lambda_1} = \len{\rho_1} = 0$. Then,
  \[
    l_1 \comp_0 \csrc_2(A_1) \comp_0 r_1 = \lambda_2 \comp_0 (l_2 \comp_0
    \csrc_2(X_{\alpha,u,\beta}) \comp_0 r_2) \comp_1 \rho_2.
  \]
  Thus,
  \begin{align*}
    l_1 \comp_0 v_{1 + \len{\lambda_2}} \comp_0 \gamma_{1 + \len{\lambda_2}}
    \comp_0 v'_{1 + \len{\lambda_2}} \comp_0 r_1 &= l_2 \comp_0 \alpha \comp_0 u
    \comp_0 \csrc_1(\beta) \comp_0 r_2 \\
    l_1 \comp_0 v_{2 + \len{\lambda_2}} \comp_0 \gamma_{2 + \len{\lambda_2}}
    \comp_0 v'_{2 + \len{\lambda_2}} \comp_0 r_1 &= l_2 \comp_0 \ctgt_1(\alpha) \comp_0 u
    \comp_0 \beta \comp_0 r_2
  \end{align*}
  so
  \begin{align*}
    \gamma_{1 + \len{\lambda_2}} &= \alpha,
    &\gamma_{2 + \len{\lambda_2}} &=
                                    \beta, &
    l_2 &= l_1 \comp_0 v_{1 + \len{\lambda_2}},
    & r_2 &= v'_{2 +
            \len{\lambda_2}} \comp_0 r_1
  \end{align*}
    and $u$ is the suffix of $l_1 \comp_0 v_{2 +
    \len{\lambda_2}}$ of length $\len{l_1 \comp_0 v_{2+\len{\lambda_2}}} -
  \len{l_2 \comp_0 \ctgt_1(\alpha)}$. In particular, $X_{\alpha,u,\beta}$ is
  completely determined by $A_1$ and $\len{\lambda_2}$. And since
  \[
    \len{\lambda_2} = \len{\csrc_2(A_1)} - \len{\csrc_2(X_{\alpha,u,\beta})} -
    \len{\rho_2} \in \set{0,\ldots,\len{\csrc_2(A_1)} - 2},
  \]
  there is a finite number of possible
  $X_{\alpha,u,\beta}$ which induce a critical branching~$(S_1,S_2)$.

  Suppose now that $\len{\lambda_1} = 1$ and $\len{\rho_1} = 0$. Then,
  by~\Lemr{caract-min-indep}, $\len{\lambda_2} = 0$. So
  \[
    \lambda_1 = l_2 \comp_0 \alpha \comp_0 u \comp_0 \csrc_1(\beta) \comp_0 r_2
  \]
  and
  \[
    l_1 \comp_0 v_1 \comp_0 \gamma_1 \comp_0 v'_1 \comp_0 r_1 = l_2 \comp_0
    \ctgt_1(\alpha) \comp_0 u \comp_0 \beta \comp_0 r_2.
  \]
  In particular, we have $\beta = \gamma_1$ and $r_2 = v'_1 \comp_0 r_1$, so
  $\len{r_1} \le \len{r_2}$.
  By~\Lemr{caract-min-indep}\ref{lem:caract-min-indep:r}, we have $\len{r_1} =
  0$ and $r_2 = v'_1$. Note that we have $\len{u} < \len{v_1}$. Indeed,
  otherwise $u = u' \comp_0 v_1$ for some $u'$ and, since
  \[
    \len{l_1} + \len{v_1} = \len{l_2} +
    \len{\ctgt_1(\alpha)} + \len{u},
  \]
  we get that $\len{l_2} \le \len{l_1}$.
  By~\Lemr{caract-min-indep}\ref{lem:caract-min-indep:l}, it implies that
  $\len{l_2} = 0$ and $l_1 = \ctgt_1(\alpha) \comp_0 u'$, which gives
  \[
    S_1 = (\alpha \comp_0 u' \comp_0 \csrc_1(A_1)) \comp_1 (\ctgt_1(\alpha)
    \comp_0 u' \comp_0 A_1)
  \]
  and
  \[
    S_2 = (X_{\alpha,u' \comp_0 v_1,\gamma_1} \comp_0 v'_1) \comp_0
    ((\ctgt_1(\alpha) \comp_0 u') \comp_0 ((v_2 \comp_0 \gamma_2 \comp_0 v'_2)
    \comp_1 \cdots \comp_1 (v_k \comp_0 \gamma_k \comp_0 v'_k)))
  \]
  so that $(S_1,S_2)$ is a natural branching, contradicting the fact that
  $(S_1,S_2)$ is a critical branching.

  Hence, $\len{u} < \len{v_1}$ and $u$ is a strict suffix of $v_1$, thus there are
  $\len{v_1}$ such possible $u$. Moreover, since $\P_2$ is finite,
  there are a finite number of possible $\alpha \in \P_2$. Hence, there are a
  finite number of possible $X_{\alpha,u,\beta} \in \P_2$ that induces a
  critical branching $(S_1,S_2)$ such that $\len{\lambda_1} = 1$ and
  $\len{\rho_1} = 0$.
  The case where~$\len{\lambda_1} = 0$ and~$\len{\rho_1} = 1$ is similarly
  handled, which concludes the proof.
\end{proof}

\noindent
We can now conclude the finiteness property for critical
branchings of Gray presentations:

\ifx\grayfinitecriticalbranchings\undefined
\begin{theo}
  Some theorem.
\end{theo}
\else
\grayfinitecriticalbranchings*
\fi
\begin{proof}
  Let~$S_i = \lambda_i \pcomp_1 (l_i \pcomp_0 A_i \pcomp_0 r_i) \pcomp_1 \rho_i$
  with~$l_i,r_i \in \freecat\Q_1$,~$\lambda_i,\rho_i \in \freecat\Q_2$ and~$A_i
  \in \Q_3$ for~$i \in \set{1,2}$ such that~$(S_1,S_2)$ is a critical branching
  of~$\Q$. By~\Lemr{definite-branching-data}, such a branching is uniquely
  determined by~$A_1$,~$A_2$,~$\len{\lambda_1}$ and~$\len{\lambda_2}$.
  By~\Lemr{definite-indep},
  \[
    \len{\lambda_1} <
    \len{\csrc_2(A_2)}
    \qtand
    \len{\lambda_2} < \len{\csrc_2(A_1)}.
  \]
  Hence, for a given pair~$(A_1,A_2)$, there are a finite number of
  tuples~$(l_1,l_2,r_1,r_2,\lambda_1,\lambda_2,\rho_1,\rho_2)$ such
  that~$(S_1,S_2)$ is a critical branching. Moreover, by \Propr{no-st-st-cp},
  either~$A_1$ or~$A_2$ is an operational generator. By symmetry, we can suppose
  that~$A_1$ is operational. Since~$\Q_3$ is finite, there is a finite number of
  such~$A_1$. Moreover, there are a finite number of pairs~$(A_1,A_2)$
  where~$A_2$ is operational too. If~$A_2$ is an interchange generator, then,
  by~\Cref{lem:definite-finite-op-st-branching}, there are a finite number of
  possible~$A_2$ for a given~$A_1$ such that~$(S_1,S_2)$ is a critical
  branching, which concludes the finiteness analysis.
\end{proof}


\end{document}